\newtheorem{thm}{Theorem}[subsection]
\newtheorem{defn}[thm]{Definition}
\newtheorem{prop}[thm]{Proposition}
\newtheorem{lem}[thm]{Lemma}
\newtheorem{rem}[thm]{Remark}
\newtheorem{ex}[thm]{Example}
\newtheorem{cor}[thm]{Corollary}
\numberwithin{equation}{subsection}
\begin{document}

\title{Causal-net category}

\author{Xuexing Lu}

\affil{\small School of Mathematical Sciences,
Zaozhuang University\\
Zaozhuang, Shandong, China}

\renewcommand\Authands{ and }

\maketitle

\begin{abstract}
A causal-net is a finite acyclic directed graph. In this paper, we introduce a category, denoted by $\mathbf{Cau}$ and called causal-net category, whose objects are causal-nets and morphisms between two causal-nets are the functors between their path categories. The category  $\mathbf{Cau}$ is in fact the Kleisli category of the "free category on a causal-net" monad. Firstly, we motivate the study of $\mathbf{Cau}$ and illustrate its application in the framework of causal-net condensation. We show that there are exactly six types of indecomposable morphisms, which correspond to six conventions of graphical calculi for monoidal categories. Secondly, we study several composition-closed classes of morphisms in $\mathbf{Cau}$, which characterize interesting partial orders among causal-nets, such as coarse-graining, merging, contraction, immersion-minor, topological minor, etc., and prove several useful decomposition theorems.  Thirdly, we introduce a categorical framework for minor theory and use it to study several types of generalized minors in $\mathbf{Cau}$. In addition, we prove a fundamental theorem that any morphism in $\mathbf{Cau}$ is a composition of the six types of indecomposable morphisms, and show that the notions of coloring and exact minor can be understood as special kinds of minimal-quotient and sub-quotient in $\mathbf{Cau}$, respectively. Base on these results, we conclude that $\mathbf{Cau}$ is a natural setting for studying causal-nets, and the theory of $\mathbf{Cau}$ should shed new light on the category-theoretic understanding of graph theory.
\end{abstract}


\textit{Keywords}: causal-net, graphical calculus, causal-net condensation, categorical minor

\tableofcontents

\section{Introduction}
In this paper, a \textbf{causal-net} is a finite acyclic directed graph, which may be empty, or has isolated vertices and multi-edges. For each causal-net $G$, there is a \textbf{path category} $\mathbf{P}(G)$ with vertices of $G$ as objects,  directed paths in $G$ as morphisms, directed paths of length zero as identity morphisms and  juxtaposition of directed paths as composition. The source and target of a directed path are the starting-vertex and ending-vertex, respectively. We define a \textbf{morphism} between two causal-nets $G_1$ and $G_2$ to be a functor between their path categories,  that is, $Hom(G_1, G_2)\triangleq Fun\big(\mathbf{P}(G_1),\mathbf{P}(G_2)\big)$. All causal-nets and their morphisms form a category, called \textbf{causal-net category}, and denoted by $\mathbf{Cau}$. It is a full sub-category of the category $\mathbf{Cat}$ of small categories. The construction of path category is same as that of free category for a general directed graph (II.7, \cite{[M98]}) and the category $\mathbf{Cau}$ is in fact the Kleisli category of the "free category on a causal-net" monad (VI.5, \cite{[M98]}).

\begin{ex}
The following figure shows an example of a causal-net. The objects of its path category are $v_1,v_2, v_3, v_4$, and morphisms are listed as follows:
$Hom(v_1,v_1)=\{Id_{v_1}\}$, $Hom(v_2,v_2)=\{Id_{v_2}\}$, $Hom(v_3,v_3)=\{Id_{v_3}\},$ $Hom(v_4,v_4)=\{Id_{v_4}\},$
$Hom(v_1,v_2)=\{e_1,e_2\}$, $Hom(v_2,v_3)=\{e_3\}$, $Hom(v_1,v_3)=\{e_3e_1,e_3e_2,e_4\},$ $Hom(v_2,v_1)=Hom(v_3,v_2)=Hom(v_3,v_1)=\emptyset,$  $Hom(v_i,v_4)=Hom(v_4,v_i)=\emptyset$, $i=1,2,3$.

\begin{figure}[H]
\centering
\begin{tikzpicture}[scale=0.9]
\node (v1) at (-0.5,0.5) {};
\node (v2) at (-1,-1.5) {};
\node (v3) at (0.8,-2.4) {};
\draw [fill](v1) circle [radius=0.07];
\draw [fill](v2) circle [radius=0.07];
\draw [fill](v3) circle [radius=0.07];
\draw  (-0.5,0.5) -- (-1,-1.5) [postaction={decorate, decoration={markings,mark=at position .5 with {\arrow[black]{stealth}}}}];
\draw  (-1,-1.5)  -- (0.8,-2.4)[postaction={decorate, decoration={markings,mark=at position .5 with {\arrow[black]{stealth}}}}];
\draw  (-0.5,0.5) -- (0.8,-2.4)[postaction={decorate, decoration={markings,mark=at position .5 with {\arrow[black]{stealth}}}}];
\draw  plot[smooth, tension=.7] coordinates {(-0.5,0.5) (-1.2,-0.6) (-1,-1.5)}[postaction={decorate, decoration={markings,mark=at position .6 with {\arrow[black]{stealth}}}}];
\node at (-0.6,0.8) {$v_1$};
\node at (-1.3,-1.6) {$v_2$};
\node at (1.2,-2.4) {$v_3$};
\node at (-1.2,0) {$e_1$};
\node at (-0.6,-0.8) {$e_2$};
\node at (-0.2,-2.2) {$e_3$};
\node at (0.4,-0.6) {$e_4$};
\node at (1.6,-0.6) {};
\draw [fill](1.6,-0.6) circle [radius=0.07];
\node at (2,-0.6) {$v_4$};
\end{tikzpicture}
\end{figure}
\end{ex}

\subsection{Relation with graphical calculi}\label{Relation}
The category $\mathbf{Cau}$ has a close connection with the graphical calculi for monoidal categories \cite{[JS91],[S10],[BS10],[HLY16]}.  A morphism in $\mathbf{Cau}$ is called \textbf{fundamental} or \textbf{indecomposable} if it can not be decomposed non-trivially as a composition of two morphisms.
There are exactly six types of fundamental morphisms in $\mathbf{Cau}$, which correspond to six basic conventions in graphical calculi for monoidal categories. Note that the first three types have appeared in the Baez construction of spin network states for quantum gauge theories \cite{[B96]}.

The first type is called \textbf{subdividing an edge}, which coincides with the \textbf{identity convention} \cite{[HLY16]}.
\begin{center}
\begin{tikzpicture}[scale=0.5]
\node (v1) at (0.5,0) {};
\node (v2) at (3.5,0) {};
\draw [fill](v1) circle [radius=0.09];
\draw [fill](v2) circle [radius=0.09];
\draw (0.5,0) node (v7) {} -- (3.5,0) node (v8) {}[postaction={decorate, decoration={markings,mark=at position .55 with {\arrow[black]{stealth}}}}];
\node [scale=0.7]at (0.5,0.5) {$f$};
\node [scale=0.7]at (2,0.5) {$X$};
\node [scale=0.7]at (3.5,0.5) {$g$};
\node at (5.5,0) {$\Rightarrow$};
\node (v3) at (7.5,0) {};
\node at (9.5,0) {};
\node (v4) at (9.5,0) {};
\node (v5) at (11.5,0) {};
\draw (7.5,0) node (v11) {} -- (9.5,0)[postaction={decorate, decoration={markings,mark=at position .55 with {\arrow[black]{stealth}}}}];
\draw (9.5,0)--(11.5,0) node (v12) {}[postaction={decorate, decoration={markings,mark=at position .55 with {\arrow[black]{stealth}}}}];
\node [scale=0.7]at (7.5,0.5) {$f$};
\node [scale=0.7]at (8.5,0.5) {$X$};
\node [scale=0.7]at (9.5,0.5) {$Id_X$};
\node [scale=0.7]at (10.5,0.5) {$X$};
\node [scale=0.7]at (11.5,0.5) {$g$};
\draw [fill](v3) circle [radius=0.09];
\draw [fill](v4) circle [radius=0.09];
\draw [fill](v5) circle [radius=0.09];
\node (v6) at (0,0) {};
\node (v9) at (4,0) {};
\draw  (0,0) -- (0.5,0);
\draw (3.5,0) -- (4,0);
\node (v10) at (7,0) {};
\node (v13) at (12,0) {};
\draw (7,0) --(7.5,0);
\draw  (11.5,0) -- (12,0);
\end{tikzpicture}

\end{center}

The second type is called \textbf{adding an edge}, which coincides  with the \textbf{unit convention} \cite{[HLY16]}.

\begin{center}
\begin{tikzpicture}[scale=0.5]
\node at (5.25,0) {$\Rightarrow$};
\node (v2) at (1,0) {};
\node (v4) at (4,0) {};
\node (v3) at (3,0) {};
\node (v1) at (0,0) {};
\draw [fill](v2)  circle [radius=0.09];
\draw [fill](v3)  circle [radius=0.09];
\draw (0,0)--(1,0)[postaction={decorate, decoration={markings,mark=at position .45 with {\arrow[black]{stealth}}}}];
\draw  (3,0)--(4,0)[postaction={decorate, decoration={markings,mark=at position .55 with {\arrow[black]{stealth}}}}];
\node (v5) at (7.5,0) {};
\node (v6) at (10.5,0) {};
\draw  (7.5,0) --(10.5,0)[postaction={decorate, decoration={markings,mark=at position .55 with {\arrow[black]{stealth}}}}];
\draw [fill](v6)  circle [radius=0.09];
\draw [fill](7.5,0)  circle [radius=0.09];
\node [scale=0.7]at (7.5,0.5) {$f$};
\node [scale=0.7]at (9,0.5) {$I$};
\node [scale=0.7]at (10.5,0.5) {$g$};
\node (v7) at (6.5,0) {};
\node (v8) at (11.5,0) {};
\draw (6.5,0)-- (7.5,0)[postaction={decorate, decoration={markings,mark=at position .55 with {\arrow[black]{stealth}}}}];
\draw (10.5,0) --(11.5,0)[postaction={decorate, decoration={markings,mark=at position .55 with {\arrow[black]{stealth}}}}];
\node [scale=0.7]at (1,0.5) {$f$};
\node [scale=0.7]at (3,0.5) {$g$};
\end{tikzpicture}
\end{center}

The third type is called \textbf{adding an isolated vertex}, which is a consequence of the unit convention and the identity convention \cite{[HLY16]}.

\begin{center}
\begin{tikzpicture}[scale=0.5]
\node at (5.5,0) {$\Rightarrow$};
\draw[dotted]  (2,1) rectangle (4,-1);
\node at (7.5,0) {};
\node [scale=0.7] at (8,0.5) {$Id_I$};
\draw [fill](8,0)  circle [radius=0.09];
\draw[dotted]  (7,1) rectangle (9,-1);
\end{tikzpicture}
\end{center}

The fourth type is called \textbf{merging two vertices}, which coincides with the tensor product of two morphisms.

\begin{center}
\begin{tikzpicture}[scale=0.7]
\node (v2) at (2.5,-2) {};
\node (v1) at (2.5,-0.5) {};
\node (v3) at (2.5,-3.5) {};
\node (v5) at (3.5,-2) {};
\node (v4) at (3.5,-0.5) {};
\node (v6) at (3.5,-3.5) {};
\node at (5.5,-2) {$\Rightarrow$};
\node (v8) at (7.5,-2) {};
\node (v7) at (6.75,-0.5) {};
\node (v9) at (8.25,-0.5) {};
\node (v10) at (6.75,-3.5) {};
\node (v11) at (8.25,-3.5) {};
\draw  (v1) --(2.5,-2)[postaction={decorate, decoration={markings,mark=at position .55 with {\arrow[black]{stealth}}}}];
\draw  (2.5,-2)-- (v3)[postaction={decorate, decoration={markings,mark=at position .55 with {\arrow[black]{stealth}}}}];
\draw  (v4) --(3.5,-2)[postaction={decorate, decoration={markings,mark=at position .55 with {\arrow[black]{stealth}}}}];
\draw  (3.5,-2)-- (v6)[postaction={decorate, decoration={markings,mark=at position .55 with {\arrow[black]{stealth}}}}];
\draw  (v7) -- (7.5,-2) [postaction={decorate, decoration={markings,mark=at position .55 with {\arrow[black]{stealth}}}}];
\draw  (v9) --  (7.5,-2) [postaction={decorate, decoration={markings,mark=at position .55 with {\arrow[black]{stealth}}}}];
\draw   (7.5,-2) --(v10) [postaction={decorate, decoration={markings,mark=at position .55 with {\arrow[black]{stealth}}}}];
\draw   (7.5,-2) -- (v11)[postaction={decorate, decoration={markings,mark=at position .55 with {\arrow[black]{stealth}}}}];
\node [scale=0.7]at (2,-2) {$f$};
\node [scale=0.7]at (4,-2) {$g$};
\node [scale=0.7]at (8.85,-2) {$f\otimes g/g\otimes f$};
\draw [fill](v2)  circle [radius=0.06];
\draw [fill](v5)  circle [radius=0.06];
\draw [fill](v8)  circle [radius=0.06];
\end{tikzpicture}
\end{center}

The fifth type is called \textbf{coarse-graining two parallel edges}, which coincides with the tensor product of two objects.

\begin{center}
\begin{tikzpicture}[scale=0.7]
\node (v1) at (0,2) {};
\node (v2) at (0,-0.5) {};
\draw  plot[smooth, tension=.7] coordinates {(0,2) (-0.5,0.5) (0,-0.5)}[postaction={decorate, decoration={markings,mark=at position .6 with {\arrow[black]{stealth}}}}];
\draw  plot[smooth, tension=.7] coordinates {(0,2) (0.5,1) (0,-0.5) }[postaction={decorate, decoration={markings,mark=at position .45 with {\arrow[black]{stealth}}}}];
\node (v3) at (5,2) {};
\node (v4) at (5,-0.5) {};
\draw (5,2)-- (5,-0.5)[postaction={decorate, decoration={markings,mark=at position .55 with {\arrow[black]{stealth}}}}];
\node at (2.75,0.75) {$\Rightarrow$};
\draw [fill](v1) circle [radius=0.06];
\draw [fill](v2) circle [radius=0.06];
\draw [fill](v3) circle [radius=0.06];
\draw [fill](v4) circle [radius=0.06];
\node[scale=0.6] at (-0.73,1) {$A$};
\node [scale=0.6]at (0.7,0.25) {$B$};
\node[scale=0.6] at (6.25,0.75) {$A\otimes B/ B\otimes A$};
\end{tikzpicture}
\end{center}

The sixth type is called \textbf{contracting an edge}, which coincides with the composition of two morphisms.

\begin{center}
\begin{tikzpicture}
\node (v2) at (0,1.5) {};
\node (v1) at (0,2.5) {};
\node (v3) at (0,0.5) {};
\node (v4) at (0,-0.5) {};
\node (v5) at (4,2.25) {};
\node (v6) at (4,1) {};
\node (v7) at (4,-0.25) {};
\draw  (v1) -- (0,1.5)[postaction={decorate, decoration={markings,mark=at position .55 with {\arrow[black]{stealth}}}}];
\draw  (0,1.5) -- (0,0.5)[postaction={decorate, decoration={markings,mark=at position .55 with {\arrow[black]{stealth}}}}];
\draw  (0,0.5) -- (v4)[postaction={decorate, decoration={markings,mark=at position .55 with {\arrow[black]{stealth}}}}];
\draw  (v5) -- (4,1)[postaction={decorate, decoration={markings,mark=at position .55 with {\arrow[black]{stealth}}}}];
\draw(4,1)-- (v7)[postaction={decorate, decoration={markings,mark=at position .55 with {\arrow[black]{stealth}}}}];
\draw [fill](v2) circle [radius=0.04];
\draw [fill](v3) circle [radius=0.04];
\draw [fill](v6) circle [radius=0.04];
\node at (2,1) {$\Rightarrow$};
\node [scale=0.7]at (-0.25,1.5) {$f$};
\node [scale=0.7]at (-0.25,0.5) {$g$};
\node [scale=0.7]at (4.5,1) {$g\circ f$};
\end{tikzpicture}
\end{center}

One main result of this paper is the fundamental theorem of $\mathbf{Cau}$  (Theorem \ref{com}), which says that any morphism of causal-nets can be decomposed as a composition of fundamental morphisms. In other words, the six types of fundamental morphisms generate all morphisms of $\mathbf{Cau}$.

The six types of fundamental morphisms can be classified into three classes:
$(1)$ the first type of fundamental morphisms are called \textbf{fundamental subdivisions};
$(2)$ the second and third types of fundamental morphisms are called \textbf{fundamental embeddings}; $(3)$ the fourth, fifth and sixth types of fundamental morphisms are called \textbf{fundamental coarse-grainings}. Any subdivision (or embedding, coarse-graining) is a composition of fundamental subdivisions(or embeddings, coarse-grainings). As shown first by Power \cite{[Po90]} that there is  a substantial connection between graph theory and 2-category theory (further explained in Remark 4.3 of \cite{[HLY16]}), here the unity of the six types of fundamental morphisms of $\mathbf{Cau}$ and the six basic conventions of graphical calculi shows a fundamental connection between graph theory and monoidal category theory.

\subsection{Causal-net condensation}

Motivated by the Baez construction of spin network states for quantum gauge theories \cite{[B96]}, we proposed a mathematical framework, called \textbf{causal-net condensation}, for the theory of quantum gravity. The main motivation for introducing the category $\mathbf{Cau}$ is that it plays a key role in the framework of causal-net condensation. The framework of causal-net condensation is parallel to that of factorization homology \cite{[AF15]}, where the roles of $\mathbf{Cau}$ and small symmetric monoidal categories in this framework are same as those of the little $n$-disc operad and $E_n$-algebras in factorization homology, respectively. We show the framework below, where $\mathbf{Cau}$, $\mathbf{Set}$, $\mathbf{Top}$ and $\mathbf{Cat}$ denote the causal-net category, the category of sets and mappings, the category of topological spaces and continuous mappings and the category of small categories and functors, respectively; $\mathbf{I}: \mathbf{Cau}\to \mathbf{Cat}$ is the embedding functor;  $\mathcal{P}: \mathbf{Top}\to \mathbf{Cat}$ is the functor taking a topological space to its path category; $\mathcal{F}_{\mathbf{S}}:\mathbf{Cau}\to \mathbf{Set}$ is a functor associated with an arbitrarily fixed small symmetric monoidal category $\mathbf{S}$; the functor $\mathcal{L}=Lan_{\mathbf{I}}\mathcal{F}_{\mathbf{S}}:\mathbf{Cat}\to \mathbf{Set}$ is the left Kan extension of $\mathcal{F}_{\mathbf{S}}$ along $\mathbf{I}$; and $\eta:\mathcal{F}_{\mathbf{S}}\to \mathcal{L}\circ \mathbf{I}$ is the natural transformation of the left Kan extension. The composition $\mathcal{L}\circ \mathcal{P}:\mathbf{Top}\to \mathbf{Set}$ is called the causal-net condensation associated with $\mathbf{S}$.

\begin{center}
\begin{tikzpicture}
\node (v1) at (0,0) {$\mathbf{Cau}$};
\node (v2) at (3.5,0) {$\mathbf{Set}$};
\node (v3) at (0,-2.5) {$\mathbf{Cat}$};
\draw [-latex] (v1) edge (v2);
\draw [-latex] (v1) edge (v3);
\draw [dashed,-latex] (v3) edge (v2);
\node at (2.7,-1.5) {$\mathcal{L}=Lan_{\mathbf{I}}\mathcal{F}_{\mathbf{S}}$};
\node at (1.6,0.25) {$\mathcal{F}_{\mathbf{S}}$};
\node at (-0.25,-1.25) {$\mathbf{I}$};
\node [rotate=-50]at (1.25,-0.75) {$\Longrightarrow$};
\node at (1.5,-0.5){$\eta$};
\node (v4) at (-3.5,-2.5) {$\mathbf{Top}$};
\draw [-latex] (v4) edge (v3);
\node at (-1.75,-2.25) {$\mathcal{P}$};
\node at (-1.75,-2.75) {path category};
\node at (0,-3.5) {$\mathbf{S}$$=$ small symmetric monoidal category};
\node at (0,-4) {$\mathcal{L}\circ \mathcal{P}$ $=$ causal-net condensation associated with $\mathbf{S}$};
\end{tikzpicture}
\end{center}

Recall that for any topological space $X$, its path category $\mathcal{P}(X)$ is the small category with points of $X$ as objects and with unparametrised oriented paths as morphisms. If we view a directed graph as a combinatorial model of a "finite topological space", where vertices and directed paths play the roles of points and unparametrised oriented paths, respectively, then the topological and the algebraic constructions of  path categories coincide. Moreover, a morphism of directed graphs (defined similarly as that of causal-nets in $\mathbf{Cau}$) models a "continuous mapping of finite topological spaces". It is this analogy that inspires us to introduce the category $\mathbf{Cau}$.

Given a small symmetric monoidal category $\mathbf{S}$, we can define a functor $\mathcal{F}_{\mathbf{S}}:\mathbf{Cau}\to \mathbf{Set}$, called the \textbf{causal-algebra associated with $\mathbf{S}$}. To give a precise definition of $\mathcal{F}_{\mathbf{S}}$, we need some preliminaries.

A \textbf{closed-diagram} (or simply \textbf{diagram}) in $\mathbf{S}$ consists of a causal-net $G$, a polarization  of $G$ (a choice of linear orders on each incoming-edge set $I(v)$ and outgoing-edge set $O(v)$, \cite{[JS91]}), an \textbf{edge-decoration} $d_o:E(G)\to Ob(\mathbf{S})$ (a mapping from the edge set to the object set) and a \textbf{vertex-decoration}
$d_m:V(G)\to Mor(\mathbf{S})$ (a mapping from the vertex set to the morphism set), such that  for each vertex $v\in V(G)$, $d_m(v)$ is a morphism from the object $d_o(h_1)\otimes\cdots\otimes d_o(h_k)$ to the object $d_o(h'_1)\otimes\cdots\otimes d_o(h'_l)$, where $h_1<\cdots<h_k$ and $h'_1<\cdots<h'_l$ are the linearly ordered incoming-edges and outgoing-edges of $v$, respectively; especially, when $v$ is  a source vertex, the domain of $d_m(v)$ is defined to be the unit object and when $v$ is a sink vertex, the codomain of $d_m(v)$ is defined to be the unit object. This is consistent with the unit convention. When a diagram has $G$ as the underlying causal-net, we call it a diagram on $G$.

An \textbf{open-diagram} in $\mathbf{S}$ is just a closed-diagram in the \textbf{zero-extension} of $\mathbf{S}$, which is the small symmetric monoidal category obtained by canonically adding a \textbf{zero-object} $\mathbf{0}$ (or called a \textbf{tensor zero}) to $\mathbf{S}$ such that for any object $X$ of $\mathbf{S}$, $X\otimes \mathbf{0}=\mathbf{0}=\mathbf{0}\otimes X$.    The correspondence between open diagrams and \textbf{progressive polarised diagrams} \cite{[JS91]} is given by the \textbf{zero-convention}, which requires to remove those source and sink vertices if they are decorated by zero-morphisms. The following picture shows an example of the zero-convention.

\begin{center}
\begin{tikzpicture}[scale=1]
\node (v1) at (0.5,0.5) {};
\node (v2) at (0,-1) {};
\node (v3) at (1.5,-1) {};
\draw [fill](v1) circle [radius=0.05];
\draw [fill](v2) circle [radius=0.05];
\draw [fill](v3) circle [radius=0.05];
\draw  (0.5,0.5) -- (0,-1)[postaction={decorate, decoration={markings,mark=at position .65 with {\arrow[black]{stealth}}}}];
\draw  (0.5,0.5) -- (1.5,-1)[postaction={decorate, decoration={markings,mark=at position .65 with {\arrow[black]{stealth}}}}];
\node at (3,-0.25) { $\Longleftrightarrow$};
\node (v4) at (4.5,0.5) {};
\node (v5) at (4.5,-1) {};
\node (v6) at (5.5,0.5) {};
\node (v7) at (5.5,-1) {};
\draw  (4.5,0.5) -- (4.5,-1)[postaction={decorate, decoration={markings,mark=at position .65 with {\arrow[black]{stealth}}}}];
\draw  (5.5,0.5)-- (5.5,-1)[postaction={decorate, decoration={markings,mark=at position .65 with {\arrow[black]{stealth}}}}];
\draw [fill](v5) circle [radius=0.05];
\draw [fill](v7) circle [radius=0.05];
\node [scale=0.7]at (0.2,0.2) {$1$};
\node  [scale=0.7]at (1,0.2) {$2$};
\node  [scale=0.7]at (0,-0.4) {$X$};
\node  [scale=0.7]at (1.4,-0.4) {$Y$};
\node [scale=0.7] at (0.8,0.8) {$0_{I,\  X\otimes Y}:I\to X\otimes Y$};
\node [scale=0.7]at (4.2,-0.4) {$X$};
\node [scale=0.7]at (5.8,-0.4) {$Y$};
\node at (6.8,-0.25) {$\Longleftrightarrow$};
\node (v8) at (9,0.4) {};
\node (v9) at (8.4,-1) {};
\node (v10) at (10,-1) {};
\draw [fill](v8) circle [radius=0.05];
\draw [fill](v9) circle [radius=0.05];
\draw [fill](v10) circle [radius=0.05];
\draw  (9,0.4)-- (8.4,-1)[postaction={decorate, decoration={markings,mark=at position .65 with {\arrow[black]{stealth}}}}];
\draw  (9,0.4)-- (10,-1)[postaction={decorate, decoration={markings,mark=at position .65 with {\arrow[black]{stealth}}}}];
\node  [scale=0.7] at (8.6,0.2) {$2$};
\node  [scale=0.7] at (9.4,0.2) {$1$};
\node  [scale=0.7] at (8.4,-0.4) {$X$};
\node  [scale=0.7] at (9.8,-0.4) {$Y$};
\node [scale=0.7]  at (9.2,0.8) {$0_{I,\  Y\otimes X}:I\to Y\otimes X$};
\end{tikzpicture}
\end{center}

Two (open) diagrams on $G$ are called \textbf{externally gauge equivalent} if they have  the same edge-decoration $d_o$ and for each vertex $v$ of $G$ ,  its two vertex-decorations $d_m(v), d'_m(v)$ satisfy the relation $$d_m'(v)=s_\tau\circ d_m(v)\circ s_{\sigma^{-1}},$$ where $\sigma:I(v)\to I(v)$, $\tau: O(v)\to O(v)$ are two permutations connecting its two polarizations and $$s_{\sigma^{-1}}: d_o(h_{\sigma(1)})\otimes\cdots d_o(h_{\sigma(k)})\to d_o(h_{1})\otimes\cdots d_o(h_{k}),$$
$$s_\tau:d_o(h'_1)\otimes\cdots d_o(h'_l)\to d_o(h'_{\tau(1)})\otimes\cdots d_o(h'_{\tau(l)})$$
are the natural isomorphisms given by the symmetry of $\mathbf{S}$. The external gauge equivalence relation can be depicted as follows.
\begin{center}
\begin{tikzpicture}[scale=1.3]
\node (v11) at (-2.2,0){};
\draw [fill](v11) circle [radius=0.05];
\node [scale=0.7]at (-2.5,0){$f$};
\node (v2) at (2.3,0) {};
\node [scale=0.7] at (2.96,0) {$s_\tau\scriptstyle{\circ}\textstyle f\scriptstyle{\circ}\textstyle s_{\sigma^{-1}}$};
\draw [fill](v2) circle [radius=0.05];
\node [scale=0.7](v10) at (-3,1) {$X_1$};
\node[scale=0.7] (v12) at (-2.5,1) {$X_2$};
\node at (-2,1) {$\cdots$};
\node[scale=0.7] (v13) at (-1.5,1) {$X_m$};
\node [scale=0.7](v14) at (-3,-1) {$Y_1$};
\node[scale=0.7] (v15) at (-2.5,-1) {$Y_2$};
\node at (-2,-1) {$\cdots$};
\node [scale=0.7](v16) at (-1.5,-1) {$Y_n$};
\node[scale=0.7] (v1) at (1.5,1) {$X_1$};
\node[scale=0.7] (v3) at (2,1) {$X_2$};
\node (v4) at (2.5,1) {$\cdots$};
\node [scale=0.7](v5) at (3,1) {$X_m$};
\node [scale=0.7](v6) at (1.5,-1) {$Y_1$};
\node [scale=0.7](v7) at (2,-1) {$Y_2$};
\node (v8) at (2.5,-1) {$\cdots$};
\node [scale=0.7](v9) at (3,-1) {$Y_n$};
\draw  (v1) -- (2.3,0)[postaction={decorate, decoration={markings,mark=at position .5 with {\arrow[black]{stealth}}}}];
\draw  (v3) -- (2.3,0)[postaction={decorate, decoration={markings,mark=at position .5 with {\arrow[black]{stealth}}}}];
\draw  (v5) --(2.3,0)[postaction={decorate, decoration={markings,mark=at position .5 with {\arrow[black]{stealth}}}}];
\draw  (2.3,0) -- (v6)[postaction={decorate, decoration={markings,mark=at position .55 with {\arrow[black]{stealth}}}}];

\draw  (2.3,0) -- (v9)[postaction={decorate, decoration={markings,mark=at position .55 with {\arrow[black]{stealth}}}}];
\draw  (v10) -- (-2.2,0)[postaction={decorate, decoration={markings,mark=at position .5 with {\arrow[black]{stealth}}}}];
\draw  (v12) -- (-2.2,0)[postaction={decorate, decoration={markings,mark=at position .5 with {\arrow[black]{stealth}}}}];
\draw  (v13) -- (-2.2,0)[postaction={decorate, decoration={markings,mark=at position .5 with {\arrow[black]{stealth}}}}];
\draw  (-2.2,0) -- (v14)[postaction={decorate, decoration={markings,mark=at position .55 with {\arrow[black]{stealth}}}}];
\draw  (-2.2,0) -- (v15)[postaction={decorate, decoration={markings,mark=at position .55 with {\arrow[black]{stealth}}}}];
\draw  (-2.2,0) -- (v16)[postaction={decorate, decoration={markings,mark=at position .55 with {\arrow[black]{stealth}}}}];
\draw  (2.3,0) --(v7)[postaction={decorate, decoration={markings,mark=at position .55 with {\arrow[black]{stealth}}}}];
\node[scale=0.6] at (-2.8,0.5) {$1$};
\node [scale=0.6]at (-2.2,0.6) {$2$};
\node[scale=0.6] at (-1.6,0.5) {$m$};
\node[scale=0.6] at (-2.8,-0.5) {$1$};
\node [scale=0.6]at (-2.2,-0.6) {$2$};
\node [scale=0.6]at (-1.6,-0.5) {$n$};
\node [scale=0.6]at (1.6,0.5) {$\sigma(1)$};
\node[scale=0.6] at (2.33,0.6) {$\sigma(2)$};
\node [scale=0.6]at (2.9,0.5) {$\sigma(m)$};
\node [scale=0.6]at (1.6,-0.5) {$\tau(1)$};
\node [scale=0.6]at (2.33,-0.6) {$\tau(2)$};
\node [scale=0.6]at (2.9,-0.5) {$\tau(n)$};
\node (v17) at (-1,0) {};
\node (v18) at (1,0) {};
\draw [double distance=1pt,stealth-stealth] (v17) -- node[sloped,scale=0.8, below] {externally gauge equivalent}(v18);
\end{tikzpicture}
\end{center}

Two (open) diagrams on $G$ are called \textbf{internally gauge equivalent} if they have the same polarizations and their decorations $d_o,d_o', d_m, d_m'$ satisfy the following conditions:
$(1)$ for any edge $e$ of $G$, its decorations $d_o(e), d_o'(e)$ are naturally isomorphic, that is, there exist finite objects $A_1,\cdots, A_n$ and a permutation $\sigma:\{1,\cdots,n\}\to \{1,\cdots,n\}$ such that  $d_o(e)$ and $d'_o(e)$ can be represented as $A_1\otimes \cdots\otimes A_n$ and $A_{\sigma(1)}\otimes \cdots\otimes A_{\sigma(n)}$, respectively, with the natural isomorphism $s_\sigma:A_1\otimes \cdots\otimes A_n \to A_{\sigma(1)}\otimes \cdots\otimes A_{\sigma(n)}$ given by the symmetry of $\mathbf{S}$;
$(2)$ for each vertex $v$ of $G$ with incoming-edges $h_1<\cdots< h_k$ and outgoing-edges $h'_1<\cdots<h'_l$,   its two vertex-decorations $d_m(v), d'_m(v)$ satisfy the relation $$d_m'(v)=(s_{\tau_1}\otimes\cdots\otimes s_{\tau_l})\circ d_m(v)\circ (s_{\sigma_1^{-1}}\otimes\cdots\otimes s_{\sigma_k^{-1}}),$$ where $s_{\sigma_i^{-1}}$ $(1\leq i\leq k)$ is the natural isomorphism, associated with $h_i$, from the decoration $\lambda'_o(h_i)=A_{i,\sigma_i(1)}\otimes\cdots\otimes A_{i,\sigma_i(\alpha_i)}$ to the decoration $\lambda_o(h_i)=A_{i,1}\otimes\cdots\otimes A_{i,\alpha_i}$, and  $s_{\tau_j}$ $(1\leq j\leq l)$ is the natural isomorphism, associated with $h'_j$, from the decoration $\lambda_o(h'_j)=B_{j,1}\otimes\cdots\otimes B_{j,\beta_j}$ to the decoration  $\lambda'_o(h'_j)=B_{j,\tau_j(1)}\otimes\cdots\otimes B_{j,\tau_j(\beta_j)}$. The following picture shows an example of internal gauge equivalence.
\begin{center}
\begin{tikzpicture}[scale=0.9]
\node[scale=0.7] (v3) at (-2.5,2) {$B_1\otimes\cdots\otimes B_n$};
\node (v2) at (-2.5,3.5) {};
\draw [fill](v2) circle [radius=0.07];
\node [scale=0.7] (v1) at (-2.5,5) {$A_1\otimes \cdots\otimes A_m$};
\node (v5) at (4,3.5) {};
\node [scale=0.7] (v4) at (4,5) {$A_{\sigma(1)}\otimes \cdots\otimes A_{\sigma(m)}$};
\draw [fill](v5) circle [radius=0.07];
\node [scale=0.7] (v6) at (4,2) {$B_{\tau(1)}\otimes\cdots\otimes B_{\tau(n)}$};

\draw  (v1) --(-2.5,3.5)[postaction={decorate, decoration={markings,mark=at position .45 with {\arrow[black]{stealth}}}}];
\draw (-2.5,3.5) -- (v3)[postaction={decorate, decoration={markings,mark=at position .65 with {\arrow[black]{stealth}}}}];
\draw  (v4) -- (4,3.5)[postaction={decorate, decoration={markings,mark=at position .45 with {\arrow[black]{stealth}}}}];
\draw (4,3.5) -- (v6)[postaction={decorate, decoration={markings,mark=at position .65 with {\arrow[black]{stealth}}}}];
\node [scale=0.7] at (-3,3.5) {$f$};
\node [scale=0.7] at (5,3.5) {$s_{\tau}\scriptstyle{\circ}\textstyle f\scriptstyle{\circ}\textstyle s_{\sigma^{-1}}$};
\node(v17) at (-0.5,3.5) {};
\node (v18)at (2,3.5) {};
\draw [double distance=1pt,stealth-stealth] (v17) -- node[sloped,scale=0.8, below] {internally gauge equivalent}(v18);
\end{tikzpicture}
\end{center}

The \textbf{gauge equivalence relation} of (open) diagrams is the minimal equivalence relation generated by the external and internal gauge equivalence relations. It is not difficult to see that the zero-convention is compatible with the gauge equivalence relation. For any causal-net $G$, the set $\mathcal{F}_\mathbf{S}(G)$ is defined to be the set of equivalence classes of (open) diagrams on $G$ under the gauge equivalence relation. There are two versions of $\mathcal{F}_\mathbf{S}(G)$: one is defined through close-diagrams; the other is through open-diagrams. The first version is called a \textbf{closed causal-algebra} and the second version is called an $\textbf{open causal-algebra}$.  When the versions are irrelevant, we simply call them causal-algebras. By the definition of $\mathcal{F}_\mathbf{S}$, the fundamental theorem (Theorem \ref{com}) and  the graphical calculus for symmetric monoidal categories \cite{[JS91]}, it is not difficult to show that both versions of $\mathcal{F}_\mathbf{S}$ are functors (or pre-cosheaves). Moreover, we have the following theorem.

\begin{thm}
The construction of causal-algebra $\mathcal{F}_{\mathbf{S}}$ defines a functor from the category of small symmetric monoidal categories and symmetric monoidal functors to the category of pre-cosheaves on $\mathbf{Cau}$.
\end{thm}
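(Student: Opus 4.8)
The plan is to construct, for each symmetric monoidal functor $\Phi\colon\mathcal{S}\to\mathcal{S}'$, a morphism of pre-cosheaves on $\mathbf{Cau}$ (that is, a natural transformation) $F_\Phi\colon F_{\mathcal{S}}\Rightarrow F_{\mathcal{S}'}$, and then to check that $\Phi\mapsto F_\Phi$ respects identities and composition. Recall that $\Phi$ carries structure isomorphisms $\phi_{A,B}\colon\Phi(A)\otimes'\Phi(B)\to\Phi(A\otimes B)$ and $\phi_0\colon I'\to\Phi(I)$ satisfying the associativity, unit and symmetry coherence axioms. First I would define the components: given a causal-net $G$ and a diagram $(G,\text{pol},d_o,d_m)$ in $\mathcal{S}$, apply $\Phi$ objectwise to $d_o$ and morphismwise to $d_m$, using the iterated structure isomorphism $\phi_{I(v)}\colon\Phi(d_o(h_1))\otimes'\cdots\otimes'\Phi(d_o(h_k))\to\Phi\big(d_o(h_1)\otimes\cdots\otimes d_o(h_k)\big)$ (and its analogue $\phi_{O(v)}$ on outgoing edges) to conjugate $\Phi(d_m(v))$ into a morphism with the correct domain and codomain; this produces a diagram on $G$ in $\mathcal{S}'$ with edge-decoration $\Phi\circ d_o$ and vertex-decoration $v\mapsto \phi_{O(v)}^{-1}\circ\Phi(d_m(v))\circ\phi_{I(v)}$. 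I set $F_\Phi(G)$ to be the induced map on gauge-equivalence classes $F_{\mathcal{S}}(G)\to F_{\mathcal{S}'}(G)$.

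Then three verifications are needed. (i) \textbf{Well-definedness.} External gauge equivalence is conjugation by symmetry isomorphisms $s_\sigma,s_\tau$ of $\mathcal{S}$; the symmetry coherence axiom for $\Phi$, namely $\Phi(s_{A,B})\circ\phi_{A,B}=\phi_{B,A}\circ s'_{\Phi A,\Phi B}$ extended to arbitrary permutations via Mac Lane coherence, shows the transported diagrams stay externally gauge equivalent in $\mathcal{S}'$. Internal gauge equivalence, which allows replacing an edge-decoration by a tensor-reordering of it and correspondingly adjusting vertex-decorations by symmetry isomorphisms, is preserved for the same reason; moreover the non-strictness of $\Phi$ (the maps $\phi_{A,B},\phi_0$ being genuine isomorphisms) is \emph{absorbed} exactly by internal gauge equivalence, so any two choices of bracketing or reordering made while transporting give internally gauge-equivalent outputs, and $F_\Phi(G)$ is a well-defined map of sets. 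For the open-diagram version, $\Phi$ extends canonically to a symmetric monoidal functor of the zero-extensions (zero-object to zero-object, zero-morphisms to zero-morphisms), and this extension is compatible with the zero-convention, so the same construction applies.

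(ii) \textbf{Naturality in $G$.} By the fundamental theorem (Theorem \ref{com}) every morphism of $\mathbf{Cau}$ is a composition of the six fundamental types, so it suffices to check the naturality square $F_{\mathcal{S}'}(\phi)\circ F_\Phi(G_1)=F_\Phi(G_2)\circ F_{\mathcal{S}}(\phi)$ for $\phi$ one of: subdividing an edge (insert $Id$), adding an edge or an isolated vertex (insert $Id_I$), merging two vertices (tensor of two morphisms), coarse-graining two parallel edges (tensor of two objects), contracting an edge (composition of two morphisms). For contraction this is functoriality of $\Phi$; for the identity and unit insertions it follows from $\Phi(Id)=Id$ together with the unit coherence via $\phi_0$; for merging and coarse-graining it follows from the monoidality of $\Phi$, the structure isomorphisms $\phi_{A,B}$ intertwining $\Phi(-\otimes-)$ with $\Phi(-)\otimes'\Phi(-)$ and once more being absorbed by internal gauge equivalence. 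Hence each square commutes and $F_\Phi$ is a natural transformation.

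(iii) \textbf{Functoriality of $\Phi\mapsto F_\Phi$.} When $\Phi=\mathrm{id}_{\mathcal{S}}$ all structure maps are identities and $F_{\mathrm{id}_{\mathcal{S}}}=\mathrm{id}_{F_{\mathcal{S}}}$; for a composite $\Psi\circ\Phi$, whose structure isomorphisms are $\Psi(\phi_{A,B})\circ\psi_{\Phi A,\Phi B}$ and $\Psi(\phi_0)\circ\psi_0$, unwinding the conjugations shows $F_{\Psi\circ\Phi}=F_\Psi\circ F_\Phi$. The main obstacle is bookkeeping the non-strictness of monoidal functors: the cleanest route is to observe that internal gauge equivalence was defined precisely so that edge-decorations matter only up to the symmetry-induced natural isomorphisms, and more broadly that all the associativity, unit and structure isomorphisms one meets lie in the part of the data killed by internal gauge equivalence; making this precise — so that iterated tensor products and the transport along $\phi$ are well-defined on gauge-equivalence classes without coherent choices — is the technical heart, handled by a careful but routine coherence argument invoking Mac Lane's theorem for symmetric monoidal categories.
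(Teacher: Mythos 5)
The paper states this theorem without proof: immediately after asserting that each $F_{\mathcal{S}}$ is a pre-cosheaf ("by the definition and the fundamental theorem... it is not difficult to see"), it declares that the concrete results about causal-net condensation "will be elaborated elsewhere." So there is no argument in the paper to compare yours against. Your architecture --- transport decorations along $\Phi$ via the structure isomorphisms $\phi_{I(v)},\phi_{O(v)}$ to define components $F_\Phi(G)$, check invariance under the two gauge relations, reduce naturality in $G$ to the six generators via Theorem \ref{com}, then verify $F_{\mathrm{id}}=\mathrm{id}$ and $F_{\Psi\circ\Phi}=F_\Psi\circ F_\Phi$ --- is the natural one and is correct in outline; reducing naturality to generators is legitimate since naturality squares paste under composition regardless of which decomposition of a morphism one uses.

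The one soft spot is your repeated claim that the non-strictness of $\Phi$ is "absorbed exactly by internal gauge equivalence." Internal gauge equivalence, as the paper defines it, identifies edge-decorations only when they differ by a \emph{permutation} of tensor factors realized by the symmetry $s_\sigma$; it says nothing about the unit isomorphism $\phi_0\colon I'\to\Phi(I)$ or the comparison maps $\phi_{A,B}$. Concretely, in the naturality square for the generator "adding an edge," the path through $F_{\mathcal{S}'}$ decorates the new edge by $I'$ while the path through $F_\Phi$ decorates it by $\Phi(I)$, and these are not related by any $s_\sigma$; the same mismatch (by $\phi_{A,B}$ rather than $\phi_0$) occurs for "coarse-graining two parallel edges" and "merging two vertices." So those squares commute only up to the structure isomorphisms of $\Phi$, not on the nose modulo the paper's gauge relation. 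To close this you must do one of: restrict to strict (or at least normal) symmetric monoidal functors, strictify $\mathcal{S}$, $\mathcal{S}'$ and $\Phi$ first, or enlarge the gauge equivalence to include unit and comparison isomorphisms. What genuinely is handled by coherence (Mac Lane for monoidal functors) is the canonicity of the iterated maps $\phi_{I(v)}$ independent of bracketing, and what handles preservation of the two gauge relations is the symmetry axiom $\Phi(s_\sigma)\circ\phi=\phi\circ s'_\sigma$; you correctly identify coherence as the technical heart but attribute the resolution to the wrong mechanism.
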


From our point of view, the Baez construction for quantum gauge theories \cite{[B96],[B94]} and the wave function renormalization scheme for topological orders \cite{[LW04]} are just two sides of one coin, that is,  the studies of different aspects of background independent quantum measures, and both of them can be put into the framework of causal-net condensation. There are many interesting features of causal-net condensation, such as coarse-graining, causality and  homeomorphic invariance, etc., which show its potential applications in the theory of quantum gravity. It is expected that there are many concrete connections among the Baez construction, wave function renormalization, causal-net condensation and the theory of quantum gravity, where the category $\mathbf{Cau}$ will play an indispensable role.
\begin{rem}
It is well-known that causal-nets play important roles in the field of artificial intelligence and it is not surprising that causality would serve as a fundamental structure in the theory of quantum gravity. Therefore, it is natural to expect that the category $\mathbf{Cau}$ and the framework of causal-net condensation would be beneficial both for the development of artificial intelligence and for the enhancement of connections among artificial intelligence, quantum gravity and category theory.
\end{rem}
\begin{rem}
Just as that appearing in topological graph theory \cite{[A96]}, a polarization structure \cite{[JS91]} (linear orders on $I(v)$ and $O(v)$ for each vertex $v$) on a causal-net $G$ turns out to be equivalent to an upward embedding of $G$ in a surface. We guess that it is a hint about some connections between causal-net condensation and string theory.
\end{rem}

\subsection{Categorical understanding of graph theory}\label{cug}
In this paper, we mainly focus on the other application of $\mathbf{Cau}$, that is, to demonstrate a new idea of bringing together graph theory and category theory, which is summarized as the following slogan.
\begin{center}
  Graph Theory $=$ Kleisli Category
\end{center}

It is better to explain this slogan at first from a geometrical perspective. It is well-known that a \textbf{conformal space-time} $\mathcal{M}$ can be totally characterized by its causal structure \cite{[Ma77]}, which is usually represented by the causal relation of physical events (points of $\mathcal{M}$). From a categorical point of view, a natural way to represent the causal structure of a conformal space-time should be the category of causal curves (non-spacelike curves), just like the way to represent a topological space by its path category (as mentioned in the former subsection). Meanwhile, a \textbf{continuous causal-preserving  mapping}  between two conformal space-times  should then be represented as a functor between their categories of causal curves. This way of categorifying conformal space-times is  similar in spirit to  Baez's categorical understanding of gauge theory \cite{[B96]}, which interprets a connection on a principle fiber bundle as a functor from the path category of the base space to the category of right principal homogeneous spaces of the structure group, and interprets a gauge transformation as a natural transformation.

Moreover, taking points as objects and causal curves as morphisms, the category of causal curves of a conformal space-time turns out to be an interesting mathematical object, which is called an \textbf{acyclic category}  \cite{[K08]}  (see Definition \ref{acy}),  a small category with all endo-morphisms and isomorphisms being identity morphisms. The famous theorem of Penrose, Hawking, Malament, etc., \cite{[P72],[HKM76],[Ma77]} tells us that the category of conformal space-times and continuous causal-preserving mappings can be fully faithfully embedded into the category of acyclic categories and their functors, which means that acyclic categories are indeed good categorical models of conformal space-times.

Thinking alone this line, nothing would prevent us from viewing a free acyclic category as a categorical model of an affine conformal space-time, from viewing a causal-net as a model of a local coordinate chart (or a finite conformal space-time), and from viewing a morphism of causal-nets as a model of a coordinate transformation (or a continuous causal-preserving mapping between finite conformal space-times). These similarities provide us a new vision of space-time, where a causal-net in a conformal space-time is analogous to an open set in a topological space, a morphism of causal-nets is analogous to a restriction of open sets. In this topological scenario, causal-net condensation can be understood as a kind of cosheaf theory on conformal space-times. We list these analogues in the following table.

\begin{center}

\begin{tikzpicture}[scale=1]
\draw  (-3.5,0) rectangle (8,-4);
\node [scale=0.8] (v9) at (-1,-0.5) {acyclic category};
\node [scale=0.8] (v10) at (5,-0.5) {CST};
\node (v1) at (2,0) {};
\node (v2) at (2,-4) {};
\draw  (2,0) edge (2,-4);
\node (v3) at (0,-1) {};
\node (v4) at (4,-1) {};
\draw  (-3.5,-1) edge (8,-1);
\node (v5) at (0,-2) {};
\node (v6) at (4,-2) {};
\node (v7) at (0,-3) {};
\node (v8) at (4,-3) {};
\draw  (-3.5,-2) edge (8,-2);
\draw  (-3.5,-3) edge (8,-3);
\node [scale=0.8] (v11) at (-1,-1.5) {free acyclic category};
\node  [scale=0.8] (v12) at (5,-1.5) {affine CST};
\node  [scale=0.8] (v15) at (-1,-3.5) {morphism of causal-nets};
\node  [scale=0.8] (v16) at (5,-3.5) {coordinate transformation};
\node [scale=0.8] (v13) at (-1,-2.5) {causal-net};
\node [scale=0.8] (v14) at (5,-2.5) {finite coordinate chart};
\draw  (-3.5,1) rectangle (2,0);
\draw  (2,1) rectangle (8,0);
\node[scale=0.8] at (-1,0.5) {\textbf{Combinatorics}};
\node [scale=0.8] at (5,0.5) {\textbf{Geometry}};
\draw [latex-latex] (v9) edge (v10);
\draw   [latex-latex](v11) edge (v12);
\draw  [latex-latex] (v13) edge (v14);
\draw   [latex-latex](v15) edge (v16);
\draw  (-3.5,-4) rectangle (2,-6);
\draw  (2,-4) rectangle (8,-5);
\node [scale=0.8] (v18) at (-1,-4.5) { functor between acyclic categories};
\node [scale=0.8] (v19) at (5,-4.5) {CCM between CSTs};
\draw[latex-latex]  (v18) edge (v19);
\draw  (-3.5,-5) rectangle (8,-6);
\node [scale=0.8] (v17) at (-1,-5.5) {causal-net condensation};
\node [scale=0.8] (v20) at (5,-5.5) {cosheaf on CST};
\draw  [latex-latex]  (v17) edge (v20);
\node  [scale=0.8] at (2,-6.5) {CST = conformal space-time};
\node  [scale=0.8] at (2,-7) {CCM = continuous causal-preserving mapping};
\end{tikzpicture}
\end{center}

Reasonably, the construction of the category of causal curves of a conformal space-time and the construction of the path category of a topological space can be viewed, respectively, as a geometrical and a topological analogue of the Kleisli construction in category theory. We think that these analogies are not accidental and these path constructions are in fact different aspects of one unified theme concerning the nature of space-time, gauge fields and the mathematical language of category theory.

As a brief summation, the slogan "graph theory = Kleisli category" just emphasizes the methodological aspect of these ideas, but what is more convincing should be its geometrical background. It is the categorification of conformal space-times that leads to this new categorical understanding of graph theory. A more intuitive and enlightening slogan  would be stated as follows.
\begin{center}
Causal-net = Finite conformal space-time
\end{center}

Treating these analogies seriously, we benefit a lot. Concretely, we find that many basic graph-theoretic operations on causal-nets can be naturally represented by morphisms of causal-nets (functors between path categories), and some common relations of causal-nets, such as coarse-graining, merging, contraction, immersion-minor, topological minor, etc., can be characterized by composition-closed classes of morphisms. For minor relation, we introduce a categorical framework and use it to study two types of generalized minors: coarse-graining minors and contraction minors. This categorical framework is rather general and meaningful for any small categories.
We also invent the notions of a causal-coloring and a linear-coloring, both of which are fruits of categorical thinking.
We show that a causal-coloring can be naturally interpreted as a special kind of minimal-quotient and an exact minor as a kind of sub-quotient. We believe that these facts are sufficient to justify the fundamental role of $\mathbf{Cau}$ in the theory of causal-nets and this  new category-theoretic understanding of graph theory.

Evidently, the ideas in this paper can be directly applied to general directed graphs. As for undirected graphs, the ideas may be considered in the framework of dagger categories \cite{[K19]}, and the general philosophy should be stated as follows.
\begin{center}
Ordinary Graph Theory $=$ Dagger Kleisli Category
\end{center}

This paper is organized as follows. In Section $2$,  we introduce several special types of quotients, including coarse-graining, vertex-coarse-graining, edge-coarse-graining, merging and contraction, fusion, coloring, linear-coloring, etc., explain their graph-theoretic means and  show their relations through several decomposition theorems.  We also interpret the notion of a causal-coloring of a causal-net as a special kind of minimal-quotient, that is, as a minimal fusion under edge-coarse-grainings. In Section $3$, we introduce several special but natural inclusion morphisms, such as immersion, topological embedding, subdivision and embedding, etc., explain their graph-theoretic means and  show several decomposition theorems. In Section $4$, we introduce a categorical framework for minor theory and use it to study several types of generalized minors in $\mathbf{Cau}$.

\section{Quotient=epimorphism}
In this section, we study various type of quotients, some of which are essential for understanding graphical calculi for  kinds of monoidal categories and are naturally appears in many physical theories.
In $\mathbf{Cau}$, there are two types of quotients: one is category-theoretic (epimorphism) and the other is graph-theoretic (coarse-graining). The later type turns out to be a special case of the former one.

We start with the category-theoretic notion of a quotient.

\begin{defn}\label{quo}
A morphism of causal-nets is called a \textbf{quotient} in $\mathbf{Cau}$ if it is surjective both on vertices (objects) and on edges (morphisms of length one).
\end{defn}
Definition  \ref{quo}  here coincides with the one defined in \cite{[BBP99]},
where a functor $F:\mathcal{C}_1\to \mathcal{C}_2$ between two small categories $\mathcal{C}_1$ and $\mathcal{C}_2$ is called a \textbf{quotient} in $\mathbf{Cat}$ if the images of objects and morphisms of $\mathcal{C}_1$ in $\mathcal{C}_2$, via $F$, \textbf{generate} $\mathcal{C}_2$.
In $\mathbf{Cau}$, if $\lambda:G_1\to G_2$ is a quotient, then the image of $G_1$ must generate the path category of $G_2$, therefore $\lambda$ can be viewed as a \textbf{quotient functor} \cite{[BBP99]} in  $\mathbf{Cat}$. Conversely, if a morphism $\lambda:G_1\to G_2$ of causal-nets is a quotient functor in $\mathbf{Cat}$, then by the freeness of the target of $\lambda$, we see that $\lambda$ must be a quotient in $\mathbf{Cau}$. In a word, the notions of quotients coincide in $\mathbf{Cau}$ and in $\mathbf{Cat}$.

Please make a distinction between Definition \ref{quo} and  the following more restricted notion, which also coincides with the one defined in \cite{[BBP99]}.
\begin{defn}
A morphism of causal-nets is called a \textbf{surjection} in $\mathbf{Cau}$ if it is surjective both on vertices (objects) and on directed paths (morphisms).
\end{defn}
In \cite{[BBP99]}, a functor $F:\mathcal{C}_1\to \mathcal{C_2}$ between small categories is called a \textbf{surjection} in $\mathbf{Cat}$ if the images of objects and morphisms of $\mathcal{C}_1$ in $\mathcal{C}_2$, via $F$, \textbf{span} $\mathcal{C}_2$. The notions of surjections also coincide in $\mathbf{Cau}$ and in $\mathbf{Cat}$.

Clearly, a surjection is a quotient, but the opposite is not true in general.

\begin{ex}\label{str}
The following shows an example of a quotient, which is not a surjection. The quotient $\lambda$ is defined as follows: $\lambda(v_1)=w_1$, $\lambda(v_2)=\lambda(v_3)=w_2$, $\lambda(v_4)=w_3$, $\lambda(e_1)=h_1$ and $\lambda(e_2)=h_2$. Note that the directed path $h_2h_1$ has no pre-images, so $\lambda$ is not a surjection.

\begin{center}
\begin{tikzpicture}[scale=0.9]
\node (v1) at (0,1.5) {};
\node (v2) at (0,0) {};
\node (v3) at (-0.5,-0.5) {};
\node (v4) at (-0.5,-2) {};
\node (v5) at (5.5,1.25) {};
\node (v6) at (5.5,-0.25) {};
\node (v7) at (5.5,-1.75) {};
\draw [fill](v1) circle [radius=0.07];
\draw [fill](v2) circle [radius=0.07];
\draw [fill](v3) circle [radius=0.07];
\draw [fill](v4) circle [radius=0.07];
\draw [fill](v5) circle [radius=0.07];
\draw [fill](v6) circle [radius=0.07];
\draw [fill](v7) circle [radius=0.07];
\draw  (0,1.5) -- (0,0)[postaction={decorate, decoration={markings,mark=at position .5 with {\arrow[black]{stealth}}}}];
\draw (-0.5,-0.5) -- (-0.5,-2)[postaction={decorate, decoration={markings,mark=at position .5 with {\arrow[black]{stealth}}}}];
\draw (5.5,1.25) -- (5.5,-0.25)[postaction={decorate, decoration={markings,mark=at position .5 with {\arrow[black]{stealth}}}}];
\draw  (5.5,-0.25) -- (5.5,-1.75)[postaction={decorate, decoration={markings,mark=at position .5 with {\arrow[black]{stealth}}}}];
\node (v8) at (1.25,-0.25) {};
\node (v9) at (4,-0.25) {};
\draw[-latex]  (v8) -- node[sloped,scale=0.8, below] {non-surjective quotient}node[sloped,scale=0.8, above] {$\lambda$}(v9);
\node [scale=0.9] at (0.3,1.5) {$v_1$};
\node [scale=0.9]at (0.3,0) {$v_2$};
\node  [scale=0.9]at (-0.8,-0.5) {$v_3$};
\node  [scale=0.9]at (0.3,0.8) {$e_1$};
\node  [scale=0.9]at (-0.8,-2) {$v_4$};
\node  [scale=0.9]at (-0.8,-1.2) {$e_2$};
\node  [scale=0.9]at (5.85,1.25) {$w_1$};
\node  [scale=0.9]at (5.85,-1.7) {$w_3$};
\node  [scale=0.9]at (5.85,-0.2) {$w_2$};
\node  [scale=0.9]at (5.85,0.5) {$h_1$};
\node  [scale=0.9]at (5.85,-1) {$h_2$};
\end{tikzpicture}
\end{center}
\end{ex}

For a graph-theoretic understanding of quotients, we introduce some terminologies.
Given a morphism $\lambda:G_1\to G_2$ of causal-nets. Edges of $G_2$ are classified into three classes: $(1)$ $e\in E(G_2)$ is called a \textbf{null-edge} of $\lambda$ if the pre-image $\lambda^{-1}(e)$ is an empty-set; $(2)$ $e\in E(G_2)$ is called a \textbf{simple-edge} of $\lambda$ if $\lambda^{-1}(e)$ has exactly one element; $(3)$  $e\in E(G_2)$ is called a \textbf{multiple-edge} of $\lambda$ if $\lambda^{-1}(e)$ has at least two elements.

Similarly, vertices of $G_2$ are classified into three classes: $(1)$ $v\in V(G_2)$ is called a \textbf{null-vertex} of $\lambda$ if the pre-image $\lambda^{-1}(v)$ is an empty-set; $(2)$ $v\in V(G_2)$ is called a \textbf{simple-vertex} of $\lambda$ if $\lambda^{-1}(v)$ has exactly one element; $(3)$  $v\in V(G_2)$ is called a \textbf{multiple-vertex} of $\lambda$ if $\lambda^{-1}(v)$ has at least two elements.

By definition, a morphism is a quotient if and only if it has no null-vertices and no null-edges. Since any morphism $\lambda:G_1\to G_2$ of causal-nets is a functor between free categories, then any pre-image of an edge $e$ of $G_2$, if exists,  must be an edge of $G_1$. By this, it is not difficult to see that quotients are closed under composition. Clearly, surjections are also closed under composition.

The following result shows that quotients are exactly epimorphisms in $\mathbf{Cau}$.
\begin{thm}
In $\mathbf{Cau}$, a morphism is a quotient if and only if it is an epimorphism.
\end{thm}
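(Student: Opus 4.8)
The plan is to prove both directions. For the easy direction, I would show that every quotient is an epimorphism. Suppose $\lambda\colon G_1\to G_2$ is a quotient and $f,g\colon \mathbf{P}(G_2)\to \mathbf{P}(H)$ are functors with $f\circ\lambda = g\circ\lambda$. Since $\lambda$ is surjective on objects, $f$ and $g$ agree on objects. A functor out of $\mathbf{P}(G_2)$ is determined by its values on the generating morphisms, i.e. on the edges of $G_2$; since $\lambda$ is surjective on edges, every edge $e$ of $G_2$ is $\lambda(h)$ for some edge $h$ of $G_1$, whence $f(e)=f(\lambda(h))=g(\lambda(h))=g(e)$. Therefore $f=g$, so $\lambda$ is epic. (This uses only that $\mathbf{P}(G_2)$ is a free category on the graph $G_2$, so that functors are freely determined by their action on vertices and edges — a fact recalled in the introduction.)

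For the converse I would argue the contrapositive: if $\lambda\colon G_1\to G_2$ is not a quotient, I build two distinct functors out of $\mathbf{P}(G_2)$ that agree after precomposition with $\lambda$. By the trichotomy just above, "not a quotient" means $\lambda$ has a null-vertex $v_0$ or a null-edge $e_0$. In the null-edge case, let $G_2'$ be $G_2$ with a second parallel copy $e_0'$ of $e_0$ added (same source and target); there is an obvious inclusion-type functor $f\colon\mathbf{P}(G_2)\hookrightarrow\mathbf{P}(G_2')$ and a second functor $g$ which is identical to $f$ except it sends $e_0$ to $e_0'$. These are distinct since $e_0\neq e_0'$ in $\mathbf{P}(G_2')$, yet $f$ and $g$ agree on every edge of $G_2$ in the image of $\lambda$ (as $e_0$ is not in that image), hence $f\circ\lambda=g\circ\lambda$. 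In the null-vertex case one argues similarly: since $v_0$ has no preimage, I can alter a functor on the connected component of $v_0$ — for instance, map $G_2$ into a causal-net with two disjoint copies of that component and let $f,g$ land in different copies — again obtaining $f\neq g$ with $f\circ\lambda = g\circ\lambda$. So $\lambda$ is not epic.

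I expect the null-vertex case to be the main nuisance: one has to be slightly careful that the two functors really do differ (the component containing $v_0$ must receive at least one object or edge of $G_2$, which it does since it is a nonempty component) and that they still agree on everything in the image of $\lambda$. A clean uniform way to handle both cases at once is to take the target to be $G_2\sqcup G_2$ (two disjoint copies) with $f$ the inclusion into the first copy and $g$ the functor sending everything in $\mathrm{im}(\lambda)$ into the first copy but sending the missing vertex $v_0$ or missing edge $e_0$ (and, for consistency, we may need to send its whole component, or just that one generator if it can be routed consistently) into the second copy; since $\mathbf{P}(G_2)$ is free, such a $g$ is well-defined as long as the assignment on edges is consistent with sources and targets, and one checks this can always be arranged. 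The remaining details are the routine verification that the constructed $g$ is a genuine functor and that $f\circ\lambda=g\circ\lambda$, which follows immediately from $v_0$ (resp. $e_0$) lying outside the image of $\lambda$.
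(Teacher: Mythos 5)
Your forward direction is correct and is essentially the paper's own argument: $\mathbf{P}(G_2)$ is free on the graph $G_2$, so two functors out of it that agree on all vertices and all edges agree everywhere, and surjectivity of $\lambda$ on vertices and edges extracts exactly that agreement from $\lambda_1\circ\lambda=\lambda_2\circ\lambda$.

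The converse, however, has a genuine gap at the step where you conclude $f\circ\lambda=g\circ\lambda$ because ``$e_0$ is not in the image of $\lambda$.'' The equation $f\circ\lambda=g\circ\lambda$ requires $f$ and $g$ to agree on every morphism $\lambda(h)$ for $h$ an edge of $G_1$, and $\lambda(h)$ is in general a directed path of length $\geq 2$. A null-edge $e_0$ is one that no edge of $G_1$ hits \emph{as a length-one path}, but $e_0$ may still occur as a constituent of some image path $\lambda(h)$; in that case your $g$, which replaces $e_0$ by the parallel copy $e_0'$, sends $\lambda(h)$ to a different path than $f$ does, so $f\circ\lambda\neq g\circ\lambda$. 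The null-vertex case suffers the same way: if $v_0$ is an internal vertex of some $\lambda(h)$ (it need not be isolated, and its component typically meets the image), you cannot reroute $v_0$ or its component without disturbing $f$ on image paths. You flagged this as ``the main nuisance'' but did not resolve it, and it cannot be resolved by a cleverer witness: the paper's own proof makes the very same implicit assumptions (its $\lambda_1,\lambda_2$ in the null-vertex case are not even functors when $v_0$ is non-isolated, and its parallel-edge construction breaks when $e_0$ lies inside an image path). Worse, the obstruction is essential. Take $G_1$ with edges $h_1:x\to y$, $h_2:x\to z$, take $G_2$ to be the path $a\xrightarrow{e_1}v\xrightarrow{e_2}b$, and let $\lambda$ send $x,y,z$ to $a,v,b$ with $\lambda(h_1)=e_1$ and $\lambda(h_2)=e_2e_1$. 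Then $e_2$ is a null-edge, so $\lambda$ is not a quotient; yet any $\lambda_1,\lambda_2$ with $\lambda_1\circ\lambda=\lambda_2\circ\lambda$ agree on all objects, on $e_1$, and on $e_2e_1$, and right-cancellativity of concatenation in the free category $\mathbf{P}(G_3)$ then forces $\lambda_1(e_2)=\lambda_2(e_2)$, i.e.\ $\lambda$ \emph{is} an epimorphism. So the implication ``epi $\Rightarrow$ quotient'' fails as the definitions are literally stated, and any correct treatment must either add the hypothesis that no null-vertex or null-edge lies on an image path, or reinterpret ``surjective on edges'' accordingly; your argument (like the paper's) is only valid in that restricted situation.
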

\begin{proof}
$(\Rightarrow)$ Let $\lambda:G_1\to G_2$ be a quotient and $\lambda_1:G_2\to G_3$, $\lambda_2:G_2\to G_3$ be any two morphisms such that $\lambda_1\circ \lambda=\lambda_2\circ\lambda$. We want to show that $\lambda_1=\lambda_2$. For this, we only need to show that for each $w\in V(G_2)$, $\lambda_1(w)=\lambda_2(w)$ and for each $h\in E(G_2)$, $\lambda_1(h)=\lambda_2(h)$. Since $\lambda$ is a quotient, then  for each $w\in V(G_2)$ and each $h\in E(G_2)$, there exist a $v\in V(G_1)$ and an  $e\in E(G_1)$, such that $w=\lambda(v)$ and $h=\lambda(e)$. Since $\lambda_1\circ \lambda=\lambda_2\circ\lambda$, then we have $\lambda_1(w)=\lambda_1(\lambda(v))=\lambda_2(\lambda(v))=\lambda_2(w)$ and  $\lambda_1(h)=\lambda_1(\lambda(e))=\lambda_2(\lambda(e))=\lambda_2(h)$.

$(\Leftarrow)$ Let $\lambda:G_1\to G_2$ be an epimorphism, we prove by contradiction that it has no null-vertices and no null-edges. Suppose $v_0\in V(G_2)$ be a null-vertex of $\lambda$, we consider the causal-net $G_3=G_2+\{w_1,w_2\}$, which is obtained from $G_2$ by adding two isolated vertices $w_1,w_2$, and define two morphisms $\lambda_1,\lambda_2:G_2\to G_3$ as follows. For any $v\in V(G_2)-\{v_0\}$, $\lambda_1(v)=\lambda_2(v)=v$, $\lambda_1(v_0)=w_1,\lambda_2(v_0)=w_2$; for any $e\in E(G_2)$, $\lambda_1(e)=\lambda_2(e)=e$. Clearly, $\lambda_1\circ\lambda=\lambda_2\circ \lambda$, but $\lambda_1\neq\lambda_2$, which leads a contradiction.

Similarly, suppose $e_0\in E(G_2)$ be a null-edge of $\lambda$, we consider the causal-net $G_3=G_2+\{e_1,e_2\}$, which is obtained from $G_2$ by adding two directed paths $e_1,e_2$ with $s(e_1)=s(e_2)=s(e_0)$ and $t(e_1)=t(e_2)=t(e_0)$, and define two morphisms $\lambda_1,\lambda_2:G_2\to G_3$ as follows. For any $v\in V(G_2)$, $\lambda_1(v)=\lambda_2(v)=v$; for any $e\in E(G_2)-\{e_0\}$, $\lambda_1(e)=\lambda_2(e)=e$, $\lambda_1(e_0)=e_1, \lambda_2(e_0)=e_2$. Clearly, $\lambda_1\circ\lambda=\lambda_2\circ \lambda$, but $\lambda_1\neq\lambda_2$, which leads a contradiction.
\end{proof}
\subsection{Coarse-graining and quotient causal-net}
In this subsection, we introduce the notion of a coarse-graining, which characterizes the graph-theoretic notion of a quotient-causal-net and naturally appears in the monadic description of a monoidal category \cite{[HLY15],[HLY16]}.

As before, given a morphism $\lambda:G_1\to G_2$, edges of $G_1$ are classified into three classes: $(1)$ $e\in E(G_1)$ is called a \textbf{contraction} of $\lambda$ if $l(\lambda(e))=0$, that is, the length $l(\lambda(e))$ of the directed path $\lambda(e)$ is $0$, which means that $\lambda(e)$ is an identity morphism; $(2)$ $e\in E(G_1)$ is called a \textbf{segment} of $\lambda$ if $l(\lambda(e))=1$, that is, $\lambda(e)$ is an edge; $(3)$   $e\in E(G_1)$ is called a \textbf{subdivision} of $\lambda$ if $l(\lambda(e))\geq 2$, that is, $\lambda(e)$ is a directed path of length no less than two. We have $$E(G_1)=Con(\lambda)\sqcup Seg(\lambda)\sqcup Subd(\lambda),$$
where $Con(\lambda), Seg(\lambda)$ and $Subd(\lambda)$ denote the sets of contractions, segments and subdivisions of $\lambda$, respectively.
\begin{defn}
A quotient is called a \textbf{coarse-graining} if it has no subdivisions.
\end{defn}

In other words, a coarse-graining is a morphism with no null-vertices, no null-edges and no subdivisions. If $\lambda:G\to H$ is a coarse-graining, then $Subd(\lambda)=\emptyset$ and $E(G)=Seg(\lambda)\sqcup Con(\lambda)$. Moreover, $\lambda$ maps $V(G)$ and  $Seg(\lambda)$ surjectively onto $V(H)$ and $E(H)$, respectively. Clearly, coarse-grainings are closed under composition.

\begin{ex}
In the following figure, $\lambda_1,\lambda_2$ show two examples of coarse-grainings of causal nets, where $\lambda_1(v_1)=\lambda_1(v_2)=w_1$, $\lambda_1(v_3)=w_2$, $\lambda_1(e_1)=\lambda_1(e_2)=Id_{w_1}$, $\lambda_1(e_3)=\lambda_1(e_4)=h$; and
$\lambda_2(v_1)=w_1$, $\lambda_2(v_2)=\lambda_2(v_3)=w_2$, $\lambda_2(e_1)=\lambda_2(e_2)=\lambda_2(e_4)=h$, $\lambda_2(e_3)=Id_{w_2}$.
\begin{figure}[H]
\centering
\begin{tikzpicture}[scale=0.9]
\node (v1) at (-0.5,0.5) {};
\node (v2) at (-1,-1.5) {};
\node (v3) at (0.8,-2.4) {};
\draw [fill](v1) circle [radius=0.07];
\draw [fill](v2) circle [radius=0.07];
\draw [fill](v3) circle [radius=0.07];
\draw  (-0.5,0.5) -- (-1,-1.5) [postaction={decorate, decoration={markings,mark=at position .5 with {\arrow[black]{stealth}}}}];
\draw  (-1,-1.5)  -- (0.8,-2.4)[postaction={decorate, decoration={markings,mark=at position .5 with {\arrow[black]{stealth}}}}];
\draw  (-0.5,0.5) -- (0.8,-2.4)[postaction={decorate, decoration={markings,mark=at position .5 with {\arrow[black]{stealth}}}}];
\draw  plot[smooth, tension=.7] coordinates {(-0.5,0.5) (-1.2,-0.6) (-1,-1.5)}[postaction={decorate, decoration={markings,mark=at position .6 with {\arrow[black]{stealth}}}}];
\node at (-0.6,0.8) {$v_1$};
\node at (-1.3,-1.6) {$v_2$};
\node at (1.2,-2.4) {$v_3$};
\node at (-1.2,0) {$e_1$};
\node at (-0.6,-0.8) {$e_2$};
\node at (-0.2,-2.2) {$e_3$};
\node at (0.4,-0.6) {$e_4$};
\node (v4) at (4,0.6) {};
\node (v5) at (4.8,-2.4) {};
\draw [fill](v4) circle [radius=0.07];
\draw [fill](v5) circle [radius=0.07];
\draw  (4,0.6) --(4.8,-2.4)[postaction={decorate, decoration={markings,mark=at position .5 with {\arrow[black]{stealth}}}}];
\node at (4.4,0.6) {$w_1$};
\node at (5.2,-2.4) {$w_2$};
\node at (1.6,-0.6) {};
\node at (3.4,-0.6) {};
\node at (4.6,-0.8) {$h$};
\draw[-latex]  (1.3,-0.6) -- node[sloped,scale=0.8, below] {coarse-graining}node[sloped,scale=0.8, above] {$\lambda_1$ or $\lambda_2$}(3.4,-0.6);
\end{tikzpicture}
\end{figure}
\end{ex}

\begin{rem}
One feature of our approach to graph theory is that it provides a natural way (using a morphism) to represent the graphical operation of contracting an edge. In our setting, "contracting an edge $e$" is equivalently formulated as "mapping  $e$ to an identity".
\end{rem}

\begin{ex} \label{r2}
The following morphism is a surjection but not a coarse-graining, where $\lambda(v_1)=w_1$, $\lambda(v_2)=w_2$, $\lambda(v_3)=w_3$, $\lambda(e_1)=h_1$, $\lambda(e_2)=h_2$ and $\lambda(e_3)=h_2h_1$.  The edge $e_3$ is a subdivision of $\lambda$.

\begin{center}
\begin{tikzpicture}[scale=1.1]

\node (v1) at (-0.5,1) {};
\node (v2) at (-0.5,-0.5) {};
\node (v3) at (-0.5,-2) {};
\draw  (-0.5,1)-- (-0.5,-0.5)[postaction={decorate, decoration={markings,mark=at position .5 with {\arrow[black]{stealth}}}}];
\draw  (-0.5,-0.5) -- (-0.5,-2)[postaction={decorate, decoration={markings,mark=at position .5 with {\arrow[black]{stealth}}}}];
\draw  plot[smooth, tension=.7] coordinates {(-0.5,1) (0.1,-0.5) (-0.5,-2)}[postaction={decorate, decoration={markings,mark=at position .55 with {\arrow[black]{stealth}}}}];

\node (v4) at (5.5,1) {};
\node (v5) at (5.5,-0.5) {};
\node (v6) at (5.5,-2) {};

\draw  (5.5,1) -- (5.5,-0.5)[postaction={decorate, decoration={markings,mark=at position .5 with {\arrow[black]{stealth}}}}];
\draw  (5.5,-0.5)-- (5.5,-2)[postaction={decorate, decoration={markings,mark=at position .5 with {\arrow[black]{stealth}}}}];
\node (v7) at (1.5,-0.5) {};
\node (v8) at (4,-0.5) {};
\draw[-latex]  (v7) --node[sloped,scale=0.8, above] {surjection but not coarse-graining} (v8);
\node at (-1,1) {$v_1$};
\node at (-1,-0.6) {$v_2$};
\node at (-1,-2) {$v_3$};

\node at (-0.8,0.2) {$e_1$};
\node at (-0.8,-1.2) {$e_2$};
\node at (0.4,0) {$e_3$};
\node at (5.2,1) {$w_1$};
\node at (5.2,-0.4) {$w_2$};
\node at (5.2,-2) {$w_3$};
\node at (5.8,0.2) {$h_1$};
\node at (5.8,-1.2) {$h_2$};
\node at (2.8,-0.8) {$\lambda$};
\draw [fill](v1) circle [radius=0.07];
\draw [fill](v2) circle [radius=0.07];
\draw [fill](v3) circle [radius=0.07];
\draw [fill](v4) circle [radius=0.07];
\draw [fill](v5) circle [radius=0.07];
\draw [fill](v6) circle [radius=0.07];
\end{tikzpicture}
\end{center}

\end{ex}
The quotient in Example \ref{str} is a coarse-graining but not a surjection. In $\mathbf{Cau}$, the relations among quotients, surjections and coarse-grainings can be depicted as follows.

\begin{center}
\begin{tikzpicture}[scale=0.9]
\draw  (-0.5,-1) rectangle (5,-2.5);
\draw  (-2,-2) rectangle (2,-3.5);
\node [scale=0.8] at (0,-3) {surjection};
\node [scale=0.8] at (2,-1.5) {coarse-graining};
\draw  (-2.5,0) rectangle (5.5,-4);
\node [scale=0.8] at (1,-0.5) {quotient=epimorphism};
\end{tikzpicture}
\end{center}

A subset $S$ of edges of causal-net $G$ is called \textbf{induced} if  $e\in S$ implies that all edges connecting $s(e)$ and $t(e)$ are in $S$, or equivalently,  $e\in S$ if and only if the multi-edge $\varepsilon$ containing $e$ is contained inside $S$. In other words, an induced subset of edges is just a disjoint union of several multi-edges.  Immediately, a subset $S$ of edges is induced if and only if its complement $E(G)-S$ is induced. A partition of $E(G)$  is called \textbf{induced} if each of its blocks is induced. An induced partition of edge set can be equivalently viewed as a partition of the set of multi-edges.

A sub-causal-net $H$ of $G$ is called \textbf{induced} (or \textbf{vertex-induced}), if any edges of $G$ connecting two vertices of $H$ are edges of $H$. An induced sub-causal-net is totally determined by its vertex set. The edge set of a sub-causal-net is induced, but the converse is not true in general, that is, an induced subset of edges may not be the edge set of an induced sub-causal-net. A sub-causal-net $H$ of $G$ is called \textbf{edge-induced}, if the edge set of $H$ is an induced subset of edges of $G$. An edge-induced sub-causal-net is totally determined by its edge set. Clearly, any induced sub-causal-net is edge-induced.

\begin{defn}\label{quotient}
Let $H$ and $G$ be two causal-nets. $H$ is called a \textbf{quotient-causal-net} of $G$, if
there exist an equivalence relation $\sim_\nu$ on $V(G)$, an induced partition $E(G)=S\sqcup C$  and an equivalence relation $\sim_\epsilon$ on the induced block $S$, such that $(1)$ for any $e\in S$, $s(e)\not\sim_\nu t(e)$; $(2)$ for any $e\in C$, $s(e)\sim_\nu t(e)$; $(3)$ for any $e_1, e_2\in S$, $e_1\sim_\epsilon e_2$ implies that $s(e_1)\sim_\nu s(e_2)$ and $t(e_1)\sim_\nu t(e_2)$; $(4)$ $V(H)=\displaystyle{\frac{V(G)}{\sim_\nu}}$; $(5)$ $E(H)=\displaystyle{\frac{S}{\sim_\epsilon}}$.
\end{defn}
In other words, $\sim_{\varepsilon}$ is a partial equivalence relation with the induced subset $S$ of edges as domain.
We call $S$  a \textbf{segment set} of $G$, and call its complement $C$ a \textbf{contraction set}. Elements of $S$ and elements of $C$ are called \textbf{segments} and \textbf{contractions} of $G$, respectively.
We call the equivalence relation $\sim_\nu$ a \textbf{contraction-relation} of $G$ and call the equivalence relation $\sim_\epsilon$ a \textbf{segment-relation} of $G$.

The conditions $(1)$ and $(2)$ in the definition can be combined into an equivalent condition that for any $e\in E(G)$, $s(e)\sim_\nu t(e)$ if and only if $e$ is a contraction, that is, $e\in C$.
The source and target maps of the quotient-causal-net $H$ are defined in the natural and unique way that for any segment $e\in S$, $s([e])=[s(e)]$ and $t([e])=[t(e)]$, where we use the notations $[e]$ and $[v]$ to represent the equivalence classes of segment $e$ and vertex $v$, respectively. The condition $(3)$ is the compatible condition of $\sim_\nu$ and $\sim_\epsilon$, from which we can easily see that the source and target maps are well defined.

The following theorem shows that quotient-causal-nets are exactly characterized by coarse-grainings.
\begin{thm}\label{cc}
Let $H$ and $G$ be two causal-nets. $H$ is a quotient-causal-net of $G$ if and only if there is a coarse-graining $\lambda:G\to H$.
\end{thm}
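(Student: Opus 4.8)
The plan is to prove the two directions separately, producing an explicit dictionary between the combinatorial data $(\sim_\nu, E, \sim_\epsilon)$ of a quotient-causal-net and the edge-classification data $(Con(\lambda), Seg(\lambda))$ of a coarse-graining. For the ($\Leftarrow$) direction, suppose $\lambda:G\to H$ is a coarse-graining. I would define $\sim_\nu$ on $V(G)$ by $v_1\sim_\nu v_2 \iff \lambda(v_1)=\lambda(v_2)$, set $E=Seg(\lambda)$ (the edges $e$ with $l(\lambda(e))=1$), and define $\sim_\epsilon$ on $E$ by $e_1\sim_\epsilon e_2\iff\lambda(e_1)=\lambda(e_2)$. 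Then I must verify the five conditions of Definition \ref{quotient}. Condition (2) holds because if $e\notin E$ then $e\in Con(\lambda)$ (since a coarse-graining has no subdivisions, $E(G)=Seg(\lambda)\sqcup Con(\lambda)$), so $\lambda(e)=Id_{\lambda(s(e))}$, forcing $\lambda(s(e))=\lambda(t(e))$, i.e. $s(e)\sim_\nu t(e)$; condition (1) is the converse, which holds because if $e\in E$ then $\lambda(e)$ is a genuine edge of $H$, and since $H$ is acyclic its source and target differ, so $\lambda(s(e))\neq\lambda(t(e))$. Condition (3) is immediate from functoriality: $e_1\sim_\epsilon e_2$ means $\lambda(e_1)=\lambda(e_2)$, hence they have equal sources and targets in $H$, which is exactly $s(e_1)\sim_\nu s(e_2)$ and $t(e_1)\sim_\nu t(e_2)$. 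Conditions (4) and (5) hold because $\lambda$ is a coarse-graining: it is surjective on vertices (giving $V(H)\cong V(G)/\sim_\nu$) and maps $Seg(\lambda)$ surjectively onto $E(H)$ (giving $E(H)\cong E/\sim_\epsilon$). I also need that $E$ is an induced subset: if $e\in E$ and $e'$ is parallel to $e$ (same endpoints), then $\lambda(s(e'))\neq\lambda(t(e'))$, so $\lambda(e')$ cannot be an identity, hence $l(\lambda(e'))\geq 1$, and $e'\notin Subd(\lambda)=\emptyset$ forces $l(\lambda(e'))=1$, so $e'\in E$.

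For the ($\Rightarrow$) direction, suppose $H$ is a quotient-causal-net of $G$ via data $(\sim_\nu, E, \sim_\epsilon)$. I must construct a functor $\lambda:\mathbf{P}(G)\to\mathbf{P}(H)$ and check it is a coarse-graining. On objects, send $v\mapsto[v]\in V(H)=V(G)/\sim_\nu$. On edges: if $e\in E$, send $e\mapsto[e]\in E(H)=E/\sim_\epsilon$, which is a length-one path with source $[s(e)]$ and target $[t(e)]$ (well-defined endpoints by the construction of $H$'s source/target maps); if $e\in E(G)-E$, send $e\mapsto Id_{[s(e)]}$, which is legitimate since condition (2) gives $s(e)\sim_\nu t(e)$, so $[s(e)]=[t(e)]$. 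This assignment respects sources and targets of all edges, so it extends uniquely to a functor on path categories by sending a directed path $e_n\cdots e_1$ to the composite $\lambda(e_n)\cdots\lambda(e_1)$ (with identity factors simply dropped); associativity and unit laws are automatic since $\mathbf{P}(H)$ is a genuine category. Finally I check the coarse-graining conditions: surjectivity on vertices follows from $V(H)=V(G)/\sim_\nu$; surjectivity on edges follows from $E(H)=E/\sim_\epsilon$ (every class $[e]$ is hit by $e\in E$); no subdivision because every edge of $G$ maps to either a length-one path (if in $E$) or an identity (if not), never a path of length $\geq 2$.

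The main obstacle, and the step deserving the most care, is verifying in the ($\Rightarrow$) direction that the edge-assignment genuinely defines a functor between the path categories — specifically, that composition is well-behaved when contraction-edges (mapped to identities) are interleaved with segment-edges along a directed path of $G$. One must check that for a path $p = e_n \cdots e_1$ in $G$, the target of $e_i$ equals the source of $e_{i+1}$ implies $\lambda(e_i)$ and $\lambda(e_{i+1})$ are composable in $\mathbf{P}(H)$; this reduces to the observation that $\lambda$ sends $s(e)$ and $t(e)$ consistently (to $[s(e)]$ and $[t(e)]$ for segments, and to the common class $[s(e)]=[t(e)]$ for contractions), which I established above. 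A secondary subtlety is confirming that the induced-subset hypothesis on $E$ is actually needed — it is what guarantees, in the ($\Leftarrow$) direction, that $Seg(\lambda)$ forms an induced subset, matching Definition \ref{quotient}; without it the correspondence would fail to be a bijection of structures. Once these points are nailed down, the remaining verifications are the routine bookkeeping sketched above.
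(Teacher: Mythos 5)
Your proposal is correct and follows essentially the same route as the paper's proof: in the ($\Leftarrow$) direction you take $\sim_\nu$ and $\sim_\epsilon$ to be the equivalence relations induced by $\lambda$ with $E=Seg(\lambda)$ and use $Subd(\lambda)=\emptyset$ to get inducedness, and in the ($\Rightarrow$) direction you send vertices to their classes, segments to their classes, and contractions to identities. The only difference is that you spell out a few verifications (acyclicity of $H$ forcing distinct endpoints of segments, and the composability check for the functor) that the paper leaves implicit.
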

\begin{proof}
$(\Leftarrow).$  Let $\lambda:G\to H$ be a coarse-graining, then it is surjective both on vertices and edges. Therefore, we  have $$V(H)=\displaystyle{\frac{V(G)}{\sim_\lambda}},$$ where we use $\sim_f$ to denote the standard equivalence relation on a set $X$ defined by a map $f:X\to Y$ of sets in the way that $x_1\sim_f x_2\Longleftrightarrow f(x_1)=f(x_2)$, $x_1,x_2\in X$.

Since $\lambda$ is surjective on edges and all contraction of $\lambda$ are mapped to identity morphisms (directed paths with zero length), so we have $$E(H)=\displaystyle{\frac{Seg(\lambda)}{\sim_\lambda}}.$$

Now we show that $Seg(\lambda)$ is induced. Let $e_1,e_2$ be two parallel edges of $G$, that is, $s(e_1)=s(e_2)$ and $t(e_1)=t(e_2)$. If $e_1\in Seg(\lambda)$, then $s(\lambda(e_1))\neq t(\lambda(e_1))$, which implies that $s(\lambda(e_2))\neq t(\lambda(e_2))$. Therefore, $\lambda(e_2)$ is either a segment or a subdivision. But $Subd(\lambda)=\emptyset$, so we must have $e_2\in Seg(\lambda)$, from which it follows that $Seg(\lambda)$ is induced.

Now we check the remain conditions in the definition of a quotient-causal-net. Clearly, for any contraction $e\in Con(\lambda)$, $s(e)\sim_\lambda t(e)$, and for any segment $e\in Seg(\lambda)$, $s(e)\not\sim_{\lambda} t(e)$. For any $e_1,e_2\in Seg(\lambda)$, if $\lambda(e_1)=\lambda(e_2)$, then we must have  $\lambda(s(e_1))=\lambda(s(e_2))$ and $\lambda(t(e_1))=\lambda(t(e_2))$, which means that $e_1\sim_\lambda e_2$ implies that $s(e_1)\sim_\lambda s(e_2)$ and $t(e_1)\sim_\lambda t(e_2)$. Therefore, $H$ is a quotient-causal-net of $G$.

$(\Rightarrow).$ Let $H$ be a quotient-causal-net of $G$, then
there exist an equivalence relation $\sim_\nu$ on $V(G)$, an induced subset $S$ of segments and an equivalence relation $\sim_\epsilon$ on $S$, such that $(1)$ for any segment $e\in S$, $s(e)\not\sim_\nu t(e)$; $(2)$ for any contraction $e\in C$, $s(e)\sim_\nu t(e)$; $(3)$ for any two segments $e_1, e_2\in S$, $e_1\sim_\epsilon e_2$ implies that $s(e_1)\sim_\nu s(e_2)$ and $t(e_1)\sim_\nu t(e_2)$; $(4)$ $V(H)=\displaystyle{\frac{V(G)}{\sim_\nu}}$; $(5)$ $E(H)=\displaystyle{\frac{S}{\sim_\epsilon}}$.

Now we define a morphism $\lambda:G\to H$ as follows. For any vertex $v$ of $G$, $\lambda(v)=[v]$, which is the equivalence class of $v$ with respect to $\sim_\nu$. For any segment $e\in S$, $\lambda(e)=[e]$, which is the equivalence class of $e$ with respect to $\sim_\epsilon$. For any contraction $e\in C$, $\lambda(e)$ is defined as the identity morphism of the equivalence class $[s(e)]$ (or equally, $[t(e)]$), which means that $e$ is a contraction of $\lambda$.

By condition $(3)$, we can check, from the above definitions, that for any segment $e\in S$, $s(\lambda(e))=s([e])=[s(e)]=\lambda(s(e))$ and $t(\lambda(e))=t([e])=[t(e)]=\lambda(t(e))$.
For any directed path $e_ne_{n-1}\cdots e_1$ in $G$, $\lambda(e_ne_{n-1}\cdots e_1)=[e_n]\circ [e_{n-1}]\circ\cdots\circ[e_1]$ is a directed path in $H$, where $[e_i]$ denotes the equivalence class of $e_i$, if $e_i\in S$, $i=1,2,\cdots,n$; otherwise, $[e_i]$ denotes the identity morphism of $[s(e_i)]$ or $[t(e_i)]$.

Defined in this way, $\lambda$ is indeed a quotient of causal-nets with $S=Seg(\lambda)$, $C=Con(\lambda)$, which means that $\lambda$ is a coarse-graining.
\end{proof}

\begin{rem}
The main motivation for introducing the notion of a coarse-graining is its natural connection with graphical calculi for monoidal categories \cite{[HLY15]}. It is a construction that also naturally appears in many physical theories, especially in theories of renormalizations \cite{[EV15]}. But in ordinary graph theory, this construction is seldom studied. We hope our work can  attract the attention of graph theorists.
The notion of a quotient-causal-net seems strange at the first glance, but it is indeed a natural quotient construction if viewed from a topological perspective.  In fact, if $H$ is a quotient-causal-net of $G$, then the geometric realization of $H$ is  a quotient topological space of the geometric realization of $G$. As explained in subsection \ref{cug},  a causal-net serves as a model of a finite conformal space-time, therefore it is natural to expect that a quotient-causal-net is a proper model of a quotient of a finite conformal space-time.
\end{rem}

\subsection{Vertex and edge coarse-graining}
In this subsection, we introduce two important sub-classes of coarse-grainings, both of which are closed under composition and naturally appear in some physical theories, such as loop quantum gravity \cite{[AKRZ14]}, tensor network renormalization \cite{[EV15]}, etc.

\begin{defn}\label{vcg}
A coarse-graining is called a \textbf{vertex-coarse-graining} if it has no multiple-edges.
\end{defn}
Clearly, a vertex-coarse-graining can be characterized as a morphism with the property that its simple-edges and segments are in bijective with each other. In other words, a vertex-coarse-graining is a coarse-graining with the corresponding segment-relation $\sim_\epsilon$ being the identity relation. That is, if $\lambda:G\to H$ is a vertex-coarse-graining, then $Seg(\lambda)$ is identical with $E(H)$.

\begin{ex}The following vertex-coarse-graining is represented by the morphism $\lambda$ with
$\lambda(v_1)=\lambda(v_2)=\lambda(v_3)=\lambda(v_4)=w$,
$\lambda(e_1)=\lambda(e_2)=\lambda(e_3)=Id_w$.
\begin{center}
\begin{tikzpicture}[scale=0.9]
\node (v2) at (-1.5,0.5) {};
\node (v4) at (-1.5,-1) {};
\node (v6) at (-0.5,0) {};
\node (v7) at (0.5,0) {};
\node (v9) at (1.5,1) {};
\node (v8) at (1.5,-1) {};
\node (v1) at (-2,2) {};
\node (v3) at (-1,2) {};
\node (v5) at (-1.5,-2.5) {};
\draw [fill](v1) circle [radius=0.09];
\draw [fill](v2) circle [radius=0.09];
\draw [fill](v3) circle [radius=0.09];
\draw [fill](v4) circle [radius=0.09];
\draw [fill](v5) circle [radius=0.09];
\draw [fill](v6) circle [radius=0.09];
\draw [fill](v7) circle [radius=0.09];
\draw [fill](v8) circle [radius=0.09];
\draw  (-2,2) --(-1.5,0.5)[postaction={decorate, decoration={markings,mark=at position .5 with {\arrow[black]{stealth}}}}];
\draw  (-1,2) --(-1.5,0.5)[postaction={decorate, decoration={markings,mark=at position .5 with {\arrow[black]{stealth}}}}];

\draw  (-1.5,0.5) --  (-1.5,-1)[postaction={decorate, decoration={markings,mark=at position .5 with {\arrow[black]{stealth}}}}];
\draw   (-1.5,-1) -- (-1.5,-2.5)[postaction={decorate, decoration={markings,mark=at position .5 with {\arrow[black]{stealth}}}}];
\draw  (-1.5,0.5) -- (-0.5,0)[postaction={decorate, decoration={markings,mark=at position .5 with {\arrow[black]{stealth}}}}];
\draw  (-0.5,0)--(-1.5,-1)[postaction={decorate, decoration={markings,mark=at position .5 with {\arrow[black]{stealth}}}}];
\draw   (0.5,0)-- (1.5,-1)[postaction={decorate, decoration={markings,mark=at position .5 with {\arrow[black]{stealth}}}}];
\draw   (1.5,1)--  (0.5,0)[postaction={decorate, decoration={markings,mark=at position .5 with {\arrow[black]{stealth}}}}];
\node (v10) at (2.5,0) {};
\draw   (1.5,1)--  (2.5,0)[postaction={decorate, decoration={markings,mark=at position .5 with {\arrow[black]{stealth}}}}];

\node (v10) at (2.5,0) {};
\draw  (v9) -- (v10)[postaction={decorate, decoration={markings,mark=at position .5 with {\arrow[black]{stealth}}}}];
\draw [dotted] plot[smooth,  tension=.7] coordinates {(-0.3,0.8) (-2.2,0.6) (-2.5,-0.8) (-1.2,-1.5) (0.5,-1.1) (1.3,0) (-0.3,0.8)};
\node (v11) at (7,1.5) {};
\node (v13) at (8.5,1.5) {};
\node (v12) at (8,0) {};
\node (v14) at (7.5,-2) {};
\node (v15) at (9.5,1) {};
\node (v17) at (10.5,0) {};
\node (v16) at (9,-1) {};
\draw  (7,1.5)  -- (8,0)[postaction={decorate, decoration={markings,mark=at position .5 with {\arrow[black]{stealth}}}}];
\draw (8.5,1.5) -- (8,0)[postaction={decorate, decoration={markings,mark=at position .5 with {\arrow[black]{stealth}}}}];
\draw (8,0)-- (7.5,-2)[postaction={decorate, decoration={markings,mark=at position .5 with {\arrow[black]{stealth}}}}];
\draw   (9.5,1) -- (8,0)[postaction={decorate, decoration={markings,mark=at position .5 with {\arrow[black]{stealth}}}}];
\draw  (8,0) -- (9,-1)[postaction={decorate, decoration={markings,mark=at position .5 with {\arrow[black]{stealth}}}}];
\draw   (9.5,1) -- (10.5,0)[postaction={decorate, decoration={markings,mark=at position .5 with {\arrow[black]{stealth}}}}];
\node (v18) at (3.5,0) {};
\node (v19) at (6,0) {};
\draw[-latex]  (v18) edge  node[sloped,scale=0.8, above] {$\lambda$}node[sloped,scale=0.8, below] {vertex-coarse-graining}(v19);
\draw [fill](v9) circle [radius=0.09];
\draw [fill](v10) circle [radius=0.09];
\draw [fill](v11) circle [radius=0.09];
\draw [fill](v12) circle [radius=0.09];
\draw [fill](v13) circle [radius=0.09];
\draw [fill](v14) circle [radius=0.09];
\draw [fill](v15) circle [radius=0.09];
\draw [fill](v16) circle [radius=0.09];
\draw [fill](v17) circle [radius=0.09];
\node at (-1.8,0.5) {$v_1$};
\node at (-1.8,-0.9) {$v_3$};
\node at (-0.25,-0.2) {$v_2$};
\node at (0.3,0.265) {$v_4$};
\node at (7.7,0) {$w$};
\node at (-0.9,0.44) {$e_2$};
\node at (-1.8,-0.2) {$e_1$};
\node at (-0.8,-0.65) {$e_3$};
\end{tikzpicture}
\end{center}
\end{ex}

Given a morphism $\lambda:G_1\to G_2$ and a vertex $v$ of $G_2$, the \textbf{fiber} $\mathcal{F}(v)$ of $\lambda$ at $v$ is defined as the induced sub-causal-net of $G_1$ with $\lambda^{-1}(v)$ as vertex set and $\lambda^{-1}(Id_v)$ as edge set. Since causal-nets are loop-free, fibers are necessarily induced sub-causal-nets.
It is easy to see that a vertex-coarse-graining is totally determined by its fibers, that is, to specify a vertex-coarse-graining is equivalent to specify all its fibers.

\begin{rem}
Just as coarse-grainings naturally appear in the graphical description of the monad that controls all  monoidal categories, vertex-coarse-grainings would naturally appear in the graphical description of the monad that controls all colored-props (monoidal categories that are free on objects). The inverse operation of a vertex-coarse-graining is also called a \textbf{substitution} \cite{[LV12]} in the theory of operads and PROPs.
\end{rem}

\begin{defn}\label{edge-coarse-graining}
A coarse-graining is called an \textbf{edge-coarse-graining} if it has no multiple-vertices and no contractions.
\end{defn}
An edge-coarse-graining can be characterized as a morphism with only simple-vertices and without null-edges, contractions and subdivisions. In other words, an edge-coarse-graining is a coarse-graining with the corresponding vertex-relation $\sim_\nu$ being the identity relation. If $\lambda:G\to H$ is an edge-coarse-graining, then $V(G)$ is identical with $V(H)$.

\begin{ex}The following edge-coarse-graining is represented by the morphism $\lambda$ with
$\lambda(e_1)=\lambda(e_2)=h_1$, $\lambda(e_3)=\lambda(e_4)=h_2$.

\begin{center}
\begin{tikzpicture}[scale=1]

\node (v1) at (-1,1) {};
\node (v2) at (-1,-1.5) {};
\node (v3) at (4,1) {};
\node (v4) at (4,-1.5) {};
\draw  plot[smooth, tension=.7] coordinates {(-1,1) (-2,0) (-1,-1.5)}[postaction={decorate, decoration={markings,mark=at position .5 with {\arrow[black]{stealth}}}}];
\draw  plot[smooth, tension=.7] coordinates {(-1,1)(-0.6,0.4) (-0.6,-0.6) (-1,-1.5) }[postaction={decorate, decoration={markings,mark=at position .5 with {\arrow[black]{stealth}}}}];
\draw  plot[smooth, tension=.7] coordinates {(4,1) (4.8,0)(4,-1.5)}[postaction={decorate, decoration={markings,mark=at position .5 with {\arrow[black]{stealth}}}}][postaction={decorate, decoration={markings,mark=at position .5 with {\arrow[black]{stealth}}}}];
\node (v5) at (0.8,-0.2) {};
\node (v6) at (3.4,-0.2) {};
\draw [-latex](0.8,-0.2) --  node[sloped,scale=0.8, above] {$\lambda$}node[sloped,scale=0.8, below] {edge-coarse-graining}(3.4,-0.2);
\node at (-1.78,-1) {$e_1$};
\node at (-1.42,0) {$e_2$};
\node at (-0.25,0.2) {$e_3$};
\node at (3.84,0.2) {$h_1$};
\node at (4.9,-0.6) {$h_2$};
\draw  plot[smooth, tension=.7] coordinates {(-1,1) (0,0.4) (-0.2,-1) (-1,-1.5)}[postaction={decorate, decoration={markings,mark=at position .5 with {\arrow[black]{stealth}}}}];
\node at (0.3,-0.6) {$e_4$};
\draw [fill](v1) circle [radius=0.08];
\draw [fill](v2) circle [radius=0.08];
\draw [fill](v3) circle [radius=0.08];
\draw [fill](v4) circle [radius=0.08];
\draw  plot[smooth, tension=.7] coordinates {(v1) (-1.2,0.2) (-1,-0.8) (v2)}[postaction={decorate, decoration={markings,mark=at position .5 with {\arrow[black]{stealth}}}}];
\draw  plot[smooth, tension=.7] coordinates {(v3) (4.2,0.2) (3.8,-0.8) (v4)}[postaction={decorate, decoration={markings,mark=at position .5 with {\arrow[black]{stealth}}}}];
\end{tikzpicture}
\end{center}
\end{ex}

We call a causal-net \textbf{simple} if it has no non-trivial multi-edges, or for any two vertices there is at most one edge connecting them. Note that edge-coarse-grainings define a partial order among causal-nets, and simple causal-nets are exactly those minimal ones under the partial order.

Edge-coarse-grainings can be characterized by congruences.

\begin{defn}
A \textbf{congruence} on a causal-net $G$ is an equivalence relation $\sim$ on the edge set $E(G)$ satisfying that  $e_1\sim e_2$ implies that $s(e_1)=s(e_2)$ and $t(e_1)=t(e_2)$.
\end{defn}
The following characterization is immediate.
\begin{prop}
For any causal-net, its edge-coarse-grainings are in bijective with its congruences.
\end{prop}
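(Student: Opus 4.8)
The plan is to exhibit two mutually inverse constructions: one turning an edge-coarse-graining out of $G$ into a congruence on $G$, and one turning a congruence on $G$ into an edge-coarse-graining out of $G$. Here, as is customary for quotients, two edge-coarse-grainings $\lambda\colon G\to H$ and $\lambda'\colon G\to H'$ are regarded as the same if there is an isomorphism $\phi\colon H\to H'$ of causal-nets with $\phi\circ\lambda=\lambda'$; making this identification explicit is what makes the word ``bijective'' unambiguous, and I would say so at the outset.

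First, from an edge-coarse-graining $\lambda\colon G\to H$ I would define $\sim_\lambda$ on $E(G)$ by $e_1\sim_\lambda e_2\iff\lambda(e_1)=\lambda(e_2)$. This is evidently an equivalence relation; to see it is a congruence, recall that an edge-coarse-graining has no multiple-vertex and (being a quotient) no null-vertex, hence is a bijection on vertices, and has no contraction and (being a coarse-graining) no subdivision, hence sends every edge to an edge. Thus $e_1\sim_\lambda e_2$ gives $\lambda(s(e_1))=s(\lambda(e_1))=s(\lambda(e_2))=\lambda(s(e_2))$ by functoriality, and vertex-injectivity of $\lambda$ forces $s(e_1)=s(e_2)$; likewise $t(e_1)=t(e_2)$. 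So $\sim_\lambda$ is a congruence.

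Conversely, from a congruence $\sim$ on $G$ I would build the directed graph $G/{\sim}$ with $V(G/{\sim})=V(G)$, with $E(G/{\sim})=E(G)/{\sim}$, and with $s([e])=s(e)$, $t([e])=t(e)$, which are well defined exactly because $\sim$ is a congruence. I would then check $G/{\sim}$ is again a causal-net: it is finite, it is loop-free since $s([e])=t([e])$ would give $s(e)=t(e)$ in the loop-free $G$, and it is acyclic since any directed cycle $[e_1][e_2]\cdots[e_n]$ lifts edge by edge (the consecutive endpoints already match in $G$) to a closed directed walk $e_1e_2\cdots e_n$ in $G$, contradicting acyclicity. Equivalently, a congruence $\sim$ is precisely the data of a quotient-causal-net of $G$ with identity vertex-relation, segment set $E(G)$, and segment-relation $\sim$, so the existence of $G/{\sim}$ and of the projection is a special case of Theorem~\ref{cc}. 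Let $q_\sim\colon G\to G/{\sim}$ be the identity on vertices and $e\mapsto[e]$ on edges, extended to paths by $e_1\cdots e_n\mapsto[e_1]\cdots[e_n]$; this is a functor of path categories, surjective on vertices and edges, with each edge sent to an edge (so no contraction, no subdivision) and injective on vertices (so no multiple-vertex), hence an edge-coarse-graining.

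It remains to check the two constructions are mutually inverse. On one side, $e_1\sim_{q_\sim}e_2\iff[e_1]=[e_2]\iff e_1\sim e_2$, so $\sim_{q_\sim}={\sim}$. On the other side, given an edge-coarse-graining $\lambda\colon G\to H$, its vertex-bijection together with the bijection $E(G)/{\sim_\lambda}\to E(H)$ induced by $\lambda$ (which is surjective on edges with fibres the $\sim_\lambda$-classes) assemble, compatibly with source and target, into an isomorphism $\phi\colon G/{\sim_\lambda}\to H$ of causal-nets with $\phi\circ q_{\sim_\lambda}=\lambda$, i.e.\ $\lambda$ and $q_{\sim_\lambda}$ agree under the identification above. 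The only genuine mathematical content is the verification that $G/{\sim}$ is acyclic, i.e.\ the ``cycles lift'' observation; everything else is routine bookkeeping with the definitions of quotient, coarse-graining, and edge-coarse-graining, and the mild care point is simply pinning down the identification convention so that both sides of the claimed bijection are counted consistently.
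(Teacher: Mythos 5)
Your proof is correct; the paper states this proposition without proof (calling the characterization ``immediate''), and your argument is exactly the expected one: the kernel relation of an edge-coarse-graining is a congruence by vertex-injectivity and functoriality, and conversely a congruence yields the quotient causal-net with identity vertex-relation, the two constructions being mutually inverse up to isomorphism of the codomain. Your explicit check that $G/{\sim}$ remains acyclic and your care in pinning down the identification convention are appropriate details to supply.
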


The following decomposition theorem is just a simple graph-theoretic fact and is useful for the presentation of the monad of the graphical calculus for monoidal categories \cite{[JS91],[HLY16],[HLY15]}.
\begin{thm}\label{v-e}
Any coarse-graining $\lambda$ can be uniquely represented as a composition of  a vertex-coarse-graining $\lambda_\nu$ and an edge-coarse-graining $\lambda_\varepsilon$.
\begin{center}
\begin{tikzpicture}[scale=1.5]
\node (v1) at (-2,0.5) {$G_1$};
\node (v2) at (4,0.5) {$G_2$};
\draw[-latex]  (v1) edge node[sloped,scale=0.8, below] {coarse-graining} node[sloped,scale=0.8, above] {$\lambda$}(v2);
\node (v4) at (1,-0.4) {$G_\nu$};
\draw[-latex]  (v1) edge node[sloped,scale=0.8, above] {$\lambda_\nu$}node[sloped,scale=0.8, below] {vertex-coarse-graining} (v4);
\draw[-latex]  (v4) edge node[sloped,scale=0.8, above] {$\lambda_\varepsilon$}node[sloped,scale=0.8, below] {edge-coarse-graining}(v2);
\end{tikzpicture}
\end{center}
\end{thm}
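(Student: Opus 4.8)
The plan is to build the intermediate causal-net $G_\nu$ explicitly from the fibre data of $\lambda$, verify that the two resulting morphisms have the required types, and then prove uniqueness by showing that any valid first factor is forced to have the same vertex-relation and the same contraction set as $\lambda$. Since $\lambda:G_1\to G_2$ is a coarse-graining, by the discussion following its definition we have $E(G_1)=Seg(\lambda)\sqcup Con(\lambda)$ and $\lambda$ restricts to surjections $V(G_1)\twoheadrightarrow V(G_2)$ and $Seg(\lambda)\twoheadrightarrow E(G_2)$. Write $\sim_\nu$ for the equivalence relation $\sim_\lambda$ on $V(G_1)$. I would define $G_\nu$ to have vertex set $V(G_1)/\sim_\nu$, edge set $Seg(\lambda)$, and, for each segment $e$, source $[s_{G_1}(e)]$ and target $[t_{G_1}(e)]$; then set $\lambda_\nu:G_1\to G_\nu$ to be $v\mapsto[v]$, $e\mapsto e$ for $e\in Seg(\lambda)$, and $e\mapsto Id_{[s_{G_1}(e)]}$ for $e\in Con(\lambda)$, and set $\lambda_\varepsilon:G_\nu\to G_2$ to identify $V(G_\nu)$ with $V(G_2)$ via $\lambda$ and to send each $e\in Seg(\lambda)$ to $\lambda(e)$. (Equivalently, $G_\nu$ is the quotient-causal-net of $G_1$ with vertex-relation $\sim_\nu$, segment set $Seg(\lambda)$ and trivial segment-relation, and $\lambda_\nu$ is the associated coarse-graining of Theorem~\ref{cc}.)

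Next I would check that all of this is legitimate. First, $G_\nu$ is a causal-net: finiteness is clear, and acyclicity holds because a directed cycle in $G_\nu$ would be carried by $\lambda_\varepsilon$ to a directed closed walk of positive length in $G_2$, which is impossible. Second, $\lambda_\nu$ and $\lambda_\varepsilon$ are genuine morphisms of causal-nets, i.e.\ functors between path categories; this reduces to compatibility with source and target, which is immediate from the construction together with the fact that $\sim_\nu$ identifies the endpoints of an edge precisely when that edge is a contraction. Then $\lambda_\nu$ is surjective on vertices and on edges and has no subdivision, hence is a coarse-graining; it restricts to a bijection $Seg(\lambda)\to E(G_\nu)$, so it has no multiple-edge and is a vertex-coarse-graining. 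Dually, $\lambda_\varepsilon$ is bijective on vertices, surjective on edges (as $\lambda|_{Seg(\lambda)}$ is surjective onto $E(G_2)$), and has no subdivision, no contraction and no multiple-vertex, hence is an edge-coarse-graining. A direct check on vertices, segments and contractions gives $\lambda_\varepsilon\circ\lambda_\nu=\lambda$, establishing existence.

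For uniqueness, suppose $\lambda=\lambda_\varepsilon'\circ\lambda_\nu'$ with $\lambda_\nu':G_1\to G'$ a vertex-coarse-graining and $\lambda_\varepsilon':G'\to G_2$ an edge-coarse-graining. Since an edge-coarse-graining is a bijection on vertices, $\lambda_\varepsilon'$ identifies $V(G')$ with $V(G_2)$, so $\lambda=\lambda_\varepsilon'\circ\lambda_\nu'$ forces $\sim_{\lambda_\nu'}=\sim_\lambda=\sim_\nu$ on $V(G_1)$; as $\lambda_\nu'$ is a quotient this gives a canonical bijection $V(G')\cong V(G_1)/\sim_\nu=V(G_\nu)$. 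Next one shows $Con(\lambda_\nu')=Con(\lambda)$: if $\lambda_\nu'(e)$ is an identity then so is $\lambda(e)$; conversely, if $\lambda(e)$ is an identity, then since $\lambda_\varepsilon'$ has no contraction it carries edges to edges and since $\lambda_\nu'$ has no subdivision it cannot produce a path of length $\ge 2$, so $\lambda_\nu'(e)$ must be an identity. Hence $Seg(\lambda_\nu')=Seg(\lambda)$, and because a vertex-coarse-graining restricts to a bijection from its segments onto the edges of its target, we get a canonical bijection $E(G')\cong Seg(\lambda)=E(G_\nu)$ compatible with source and target. These two bijections form an isomorphism of causal-nets $G'\cong G_\nu$ intertwining $\lambda_\nu'$ with $\lambda_\nu$, and then $\lambda_\varepsilon'$ is forced to coincide with $\lambda_\varepsilon$ under it, since on objects it is the vertex-identification and on each segment $e$ one has $\lambda_\varepsilon'(e)=\lambda_\varepsilon'(\lambda_\nu'(e))=\lambda(e)=\lambda_\varepsilon(e)$.

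The only step that is not pure bookkeeping is checking that $G_\nu$ is acyclic, and hence really is a causal-net, which is what the pushforward-along-$\lambda_\varepsilon$ argument handles; I expect this to be the main (though small) obstacle. Everything else follows formally from the decomposition $E(G_1)=Seg(\lambda)\sqcup Con(\lambda)$ for coarse-grainings and from the defining properties of vertex- and edge-coarse-grainings, and the uniqueness is completely pinned down once one observes that the vertex-relation and the contraction set of any admissible first factor must agree with those of $\lambda$.
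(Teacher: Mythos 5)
Your proposal is correct and takes the same route as the paper: the paper's proof simply declares $V(G_\nu)=V(G_1)/\sim_\lambda$ and $E(G_\nu)=Seg(\lambda)$ and calls the rest easy, which is exactly the intermediate causal-net you construct. Your write-up supplies the verification (acyclicity of $G_\nu$, the type checks on $\lambda_\nu$ and $\lambda_\varepsilon$, and the uniqueness argument via matching vertex-relations and contraction sets) that the paper leaves to the reader.
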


\begin{proof}
The proof is easy. We only point out that
$$V(G_\nu)=\displaystyle{\frac{V(G)}{\sim_\lambda}},\ \ E(G_\nu)=Seg(\lambda),$$
with notations as in the proof of Theorem \ref{cc}.
\end{proof}

\begin{rem}
To abuse of terminologies, we are free to say that $H$ is a coarse-graining, vertex-coarse-graining, edge-coarse-graining, or any other types of quotients of $G$, in case that there is a corresponding quotient morphism from $G$ to $H$, respectively.
\end{rem}

\subsection{Merging and coclique}\label{MC}

In this and next two subsections, we introduce several special classes of vertex-coarse-grainings, which are characterized by their fibers.

The following notion is parallel to that of a vertex-identification in ordinary graph theory.
\begin{defn}
A vertex-coarse-graining is called a \textbf{merging} if it has no contractions.
\end{defn}

If $\lambda$ is a merging, then for any $v\in V(G_2)$, the fiber $\mathcal{F}(v)$ is discrete, i.e., a sub-causal-net with no edges. Clearly, a merging can be characterized as a vertex-coarse-graining with all fibers discrete.

We say causal-net $H$ is a merging of $G$ if there is a merging $\lambda:G\to H$. Clearly, mergings are closed under composition, which implies that the merging relation is a partial order.

For any causal-net $G$, its vertex set $V(G)$ equipped with the reachable order $\rightarrow$ is a poset, called the \textbf{vertex poset} of $G$, where the reachable order $v_1\to v_2$ means that there is a directed path starting from $v_1$ and ending with $v_2$.
Two vertices $v_1$ and $v_2$ are called \textbf{comparable} if either $v_1\to v_2$ or $v_2\to v_1$; otherwise they are called \textbf{incomparable}.
\begin{defn}
A \textbf{coclique} (or \textbf{incomparable set}) of a causal-net is a subset of vertices with no two vertices comparable.
\end{defn}

For any two vertices $v_1$, $v_2$ of an incomparable set, we have neither $v_1\to v_2$ nor $v_2\to v_1$, which implies that
all cocliques are  independent set (a subset of vertices with no two vertices sharing an edge).

The following result shows that cocliques are exactly fibers of mergings.
\begin{thm}\label{inc}
Let $S$ be a subset of vertices of causal-net $G$. $S$ is a fiber of a merging $\lambda:G\to H$ if and only if $S$ is a coclique of $G$.
\end{thm}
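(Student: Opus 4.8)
The plan is to prove the two implications directly; in both, the common engine is that for any causal-net $H$ the path category $\mathbf{P}(H)$ is acyclic, so $Hom(w,w)=\{Id_w\}$ for every vertex $w$. For the ``only if'' direction, suppose $S=\lambda^{-1}(w)$ for a merging $\lambda\colon G\to H$ and some $w\in V(H)$ (a merging is a vertex-coarse-graining with all fibers discrete, so the fiber at $w$ is exactly the discrete sub-causal-net on $S$). If $S$ were not a coclique there would be distinct $v_1,v_2\in S$ and, say, a directed path $p$ in $G$ from $v_1$ to $v_2$; since $v_1\neq v_2$, $p$ has length $\geq 1$. Then $\lambda(p)$ is a morphism $w\to w$ in $\mathbf{P}(H)$, hence equals $Id_w$. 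Writing $p=e_n\cdots e_1$, the length of $\lambda(p)$ is $\sum_i l(\lambda(e_i))=0$, so $l(\lambda(e_i))=0$ for every $i$, i.e.\ $e_i\in Con(\lambda)$; thus $Con(\lambda)\neq\emptyset$, contradicting that a merging has no contraction. Hence $S$ is a coclique.

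For the ``if'' direction, given a coclique $S$ I would collapse $S$ to a single vertex: let $\sim_\nu$ be the equivalence relation on $V(G)$ whose only non-singleton class is $S$, let the segment set be all of $E(G)$ (so the contraction set is empty), and let $\sim_\epsilon$ be the identity relation on $E(G)$. One first checks conditions $(1)$--$(5)$ of Definition \ref{quotient}: $(1)$ asks that no edge have both endpoints $\sim_\nu$-equivalent, which holds because a coclique is an independent set and $G$ is loop-free; $(2)$ is vacuous; $(3)$ is immediate since $\sim_\epsilon$ is the identity; and $(4)$, $(5)$ define $H$. The step that genuinely uses the coclique hypothesis, rather than mere independence, is verifying that $H$ is a causal-net, i.e.\ acyclic: a directed cycle of $H$ avoiding the collapsed vertex $w$ lifts verbatim to a directed cycle of $G$, which is impossible; and a shortest directed cycle of $H$ through $w$ lifts to a directed path in $G$ joining two vertices of $S$, which is either a directed cycle of $G$ or a witness that two vertices of $S$ are comparable — again impossible. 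With $H$ now a causal-net it is a quotient-causal-net of $G$, so Theorem \ref{cc} (via its explicit construction $\lambda(v)=[v]$, $\lambda(e)=[e]$) supplies a coarse-graining $\lambda\colon G\to H$. By construction $Con(\lambda)=\emptyset$, and since $\sim_\epsilon$ is the identity $\lambda$ has no multiple-edge, so $\lambda$ is a vertex-coarse-graining with no contraction, i.e.\ a merging; finally $\lambda^{-1}(w)=S$ and $\lambda^{-1}(Id_w)=\emptyset$, so $S$ is the fiber of $\lambda$ at $w$.

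The routine part of the write-up is the bookkeeping of Definition \ref{quotient}, the well-definedness of the source and target maps of $H$, and the functoriality of $\lambda$ on arbitrary directed paths. The one load-bearing point is the acyclicity of the collapsed graph $H$; this is exactly the reason the statement reads ``coclique'' and not the weaker ``independent set'', since identifying two comparable vertices — adjacent or not — manufactures a directed cycle in the quotient.
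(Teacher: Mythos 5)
Your proof is correct and follows essentially the same route as the paper: a directed path between two fiber vertices forces a contradiction between acyclicity of $H$ and the no-contraction property of a merging (you merely invoke these two facts in the opposite order), and the converse uses the same collapse-$S$-to-a-point construction, for which you helpfully spell out the acyclicity check that the paper leaves as "not difficult to verify."
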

\begin{proof}
$(\Rightarrow)$
We prove this direction by contradiction. Suppose $v_1,v_2\in S$ and $v_1\to v_2$, then there is a directed path $e_ne_{n-1}\cdots e_1$ starting with $v_1$ and ending with $v_2$.  Since $\lambda$ is a merging, $e_1,\cdots, e_n$ all must be segments of $\lambda$, hence $\lambda(e_n)\lambda(e_{n-1})\cdots\lambda(e_1)$ must be a directed path in $H$ starting with $\lambda(v_1)$ and ending with $\lambda(v_2)$. Since $S$ is a fiber of $\lambda$, then $\lambda(v_1)=\lambda(v_2)$, which implies that $\lambda(e_n)\lambda(e_{n-1})\cdots\lambda(e_1)$ is a directed cycle. This contracts with the acyclicity of $H$.

$(\Leftarrow)$ Let $S$ be an incomparable set of $G$. We define a quotient-causal-net $H=G/S$ and a merging $\lambda:G\to H$ as follows. $(1)$ $V(H)=(V(G)-S)\sqcup\{w_0\}$, $E(H)=E(G)$; $(2)$ for any edge $e$ of $H$, if $s_G(e)\in S$ in $G$, then define $s_H(e)=w_0$  in $H$, otherwise define $s_H(e)=s_G(e)$; the definition of $t_H(e)$ is similar as $s_H(e)$. $(3)$ for any $v\in V(G)-S$, $\lambda(v)=v$, and for any $v\in S$, $\lambda(v)=w_0$; on edges, $\lambda$ is the identity mapping.

Now we prove by contradiction that $H$ is acyclic. Without loss of generality, suppose $\lambda(e_n)\cdots\lambda(e_1)$ is an directed cycle in $H$ with only $s_H(\lambda(e_1))=t_H(\lambda(e_n))=w_0$, then $e_n\cdots e_1$ must be a directed path in $G$ with $s_G(e_1),t_G(e_n)\in S$, which means that $s_G(e_1)\to t_G(e_n)$ in $G$. This contradicts the fact that $S$ is an incomparable set.

By the definition, it is easy to see that $\lambda$ is a merging.
\end{proof}

\subsection{Contraction and connectivity}
A causal-net is called \textbf{connected} if its underlying graph is connected. Unlike mergings with all fibers discrete, in the opposite direction, we introduce the following notion.

\begin{defn}
A vertex-coarse-graining is called a \textbf{contraction} if all its fibers are connected.
\end{defn}

Both mergings and contractions are closed under composition.
\begin{thm}\label{c-m}
Any vertex-coarse-graining $\lambda$ can be uniquely represented as a composition of  a merging $\lambda_m$ and a contraction $\lambda_{con}$.
\begin{center}
\begin{tikzpicture}[scale=1.5]
\node (v1) at (-2,0.5) {$G_1$};
\node (v2) at (4,0.5) {$G_2$};
\draw[-latex]  (v1) edge node[sloped,scale=0.8, below] {vertex-coarse-graining} node[sloped,scale=0.8, above] {$\lambda$}(v2);
\node (v4) at (1,-0.4) {$G_{con}$};
\draw[-latex]  (v1) edge node[sloped,scale=0.8, above] {$\lambda_{con}$}node[sloped,scale=0.8, below] {contraction} (v4);
\draw[-latex]  (v4) edge node[sloped,scale=0.8, above] {$\lambda_m$}node[sloped,scale=0.8, below] {merging}(v2);
\end{tikzpicture}
\end{center}
\end{thm}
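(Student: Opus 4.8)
Given a vertex-coarse-graining $\lambda : G_1 \to G_2$, the goal is to factor it as $G_1 \xrightarrow{\lambda_{con}} G_{con} \xrightarrow{\lambda_m} G_2$ with $\lambda_{con}$ a contraction and $\lambda_m$ a merging, and to show this factorization is unique. The natural construction of $G_{con}$ is to refine the vertex-relation $\sim_\lambda$ on $V(G_1)$ (which identifies two vertices iff they lie in the same fiber of $\lambda$) to a finer equivalence relation $\approx$ whose classes are the \emph{connected components} of the fibers $\mathcal{F}(v)$ of $\lambda$. Explicitly, for $u,u' \in V(G_1)$, set $u \approx u'$ iff $\lambda(u) = \lambda(u')$ and $u,u'$ lie in the same connected component of the fiber $\mathcal{F}(\lambda(u))$. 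Since $\lambda$ is a vertex-coarse-graining it has no multiple-edge and no subdivision, so $E(G_1) = Seg(\lambda) \sqcup Con(\lambda)$ with $Seg(\lambda)$ in bijection with $E(G_2)$; define $V(G_{con}) = V(G_1)/\approx$, take $E(G_{con})$ to be a copy of $Seg(\lambda)$ (equivalently $E(G_2)$), with source/target maps induced by $\approx$, and let $\lambda_{con}$ be the evident quotient (identity on segments, sending each contraction edge to an identity morphism) and $\lambda_m$ the induced map $V(G_1)/\approx \twoheadrightarrow V(G_1)/\sim_\lambda$, identity on edges.

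First I would check that $G_{con}$ is a well-defined causal-net: the source and target maps are well-defined because $\approx$ is compatible with $Seg(\lambda)$ (two $\approx$-equivalent vertices have the same $\lambda$-image, and the segment structure only sees $\lambda$-images), and acyclicity of $G_{con}$ follows since $G_{con}$ maps onto the acyclic $G_2$ by $\lambda_m$ and any would-be cycle in $G_{con}$ would project either to a cycle in $G_2$ or, if it projected to an identity, would have to be a closed walk inside a single fiber's worth of segment edges — but segment edges of $\lambda$ never stay inside a fiber, so there are no such nontrivial closed walks. Next I would verify $\lambda_{con}$ is a contraction: its fibers are exactly the connected components of the fibers of $\lambda$, which are connected by construction, and it is a vertex-coarse-graining because it inherits "no multiple-edge, no subdivision" from $\lambda$ (segments are preserved bijectively). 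Then $\lambda_m$ is a merging: it is identity on edges hence has no contraction, is surjective on vertices and edges, and its fibers are discrete since all edges are already segments and no two of them are identified. Finally $\lambda = \lambda_m \circ \lambda_{con}$ is a routine check on vertices and on edges, splitting into the segment and contraction cases.

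For uniqueness, suppose $\lambda = \mu_m \circ \mu_{con}$ is another such factorization through some $G'$. Since $\mu_m$ is a merging it is injective on edges up to relabeling and identity on the edge set structure, while $\mu_{con}$ is a vertex-coarse-graining; comparing how edges are treated forces $E(G') \cong Seg(\lambda)$ compatibly. On vertices, $\mu_{con}$ must identify $u$ and $u'$ whenever they are joined by a contraction edge of $\lambda$ (since contraction edges of $\lambda$ are contraction edges of $\mu_{con}$, the only place contractions can occur), hence whenever they lie in the same connected component of a $\lambda$-fiber; conversely $\mu_{con}$ cannot identify vertices in different components, because such an identification would either be undone by $\mu_m$ — impossible, as $\mu_m$ merges vertices only across distinct fibers — or would create in $G'$ a closed walk of segment edges, forbidden as above. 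So the vertex-relation of $\mu_{con}$ is exactly $\approx$, giving $G' \cong G_{con}$ and $\mu_{con} = \lambda_{con}$, $\mu_m = \lambda_m$ under this isomorphism.

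**Main obstacle.** The routine parts (well-definedness, the two morphisms having the claimed properties, the equation $\lambda = \lambda_m \circ \lambda_{con}$) are genuinely mechanical given Theorems \ref{cc} and \ref{inc} and the fiber characterizations already established. The one place that needs care is the uniqueness argument — specifically, pinning down that \emph{any} intermediate factorization must use precisely the connected-components equivalence, neither coarser nor finer. The "neither finer" direction is forced because $\mu_{con}$, being a vertex-coarse-graining, has no subdivisions and thus must send every contraction edge of $\lambda$ to an identity, so it cannot separate vertices in the same fiber-component; the "neither coarser" direction uses acyclicity of $G'$ together with the fact that segment edges never lie within a fiber. I expect this acyclicity/segment bookkeeping in the uniqueness step to be the subtle point; everything else is bookkeeping with the definitions.
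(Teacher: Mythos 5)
Your construction is the right one and supplies the detail the paper omits: the paper states Theorem \ref{c-m} only as ``a direct consequence of these definitions,'' and the intended factorization is exactly yours --- quotient $V(G_1)$ by the connected components of the fibers of $\lambda$, keep $Seg(\lambda)$ (which for a vertex-coarse-graining is identified with $E(G_2)$) as the edge set of $G_{con}$, contract first, merge second. The existence half of your argument is sound, including the verification that $\lambda_{con}$ is a contraction, that $\lambda_m$ is a merging, and that $G_{con}$ is acyclic (indeed more simply than you state it: every edge of $G_{con}$ is a segment of $\lambda_m$, so a directed cycle in $G_{con}$ would push forward to a closed directed walk of positive length in $G_2$).

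The one step justified by the wrong reason is the ``neither coarser'' half of uniqueness. To rule out $\mu_{con}$ identifying vertices $u,u'$ lying in different connected components of the same fiber $\mathcal{F}(v)$ of $\lambda$, you offer a dichotomy (``undone by $\mu_m$'' or ``a closed walk of segment edges in $G'$'') and in your closing paragraph attribute this direction to acyclicity of $G'$. Neither is the operative fact, and the second branch of the dichotomy does not actually arise from such an identification. The operative fact is one you have already established: since $\mu_m$ is a merging (no contraction) and $\mu_{con}$ has no subdivision, $Con(\mu_{con})=Con(\lambda)$; since $\mu_{con}$ is a contraction, the fiber of $\mu_{con}$ containing $u$ and $u'$ must be connected, and all of its edges lie in $Con(\lambda)$, hence inside the single $\lambda$-fiber $\mathcal{F}(v)$. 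A path of such edges from $u$ to $u'$ would therefore be a path inside $\mathcal{F}(v)$ using only edges of $\mathcal{F}(v)$, witnessing that $u$ and $u'$ lie in the same component --- a contradiction. With that substitution (and the observation, which the paper makes explicitly, that a vertex-coarse-graining is determined by its fibers) the uniqueness argument closes; acyclicity of $G'$ plays no role there.
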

\begin{proof}
We prove this theorem by induction on the number $n$ of vertices of $G_1$. If $n=2$, a non-trivial vertex coarse-graining is either a merging or a contraction and this theorem is evidently true. Assume this theorem is true for $n\leq k$. Let $G_1$ be causal-net with $k+1$ vertices, $\lambda:G_1\to G_2$ be a vertex-coarse-graining and $v$ be a maximal vertex of $G_1$ under the reachable order $\to$. Without loss of generality, we assume that the fiber of $\lambda(v)$ has at least two vertices. Then the restricted morphism $\overline{\lambda}:G_1-\{v\}\to G_2$ of $\lambda$ is still a vertex-coarse-graining, where $G_1-\{v\}$ is the causal-net obtained form $G_1$ by removing the vertex $v$ and all edges ending with $v$. By the induction hypothesis, $\overline{\lambda}$ is a composition of contraction $\overline{\lambda}_1:G_1-\{v\}\to G_{con}$ and  merging $\overline{\lambda}_2:G_{con}\to G_2$. Here we have two cases. If $v$ is an isolated vertex of $G_1$, then the natural extension $\lambda_1:G_1\to G_{con}\sqcup \{v\}$ of $\overline{\lambda}_1$ is still a contraction,  the natural extension $\lambda_2:G_{con}\sqcup \{v\}\to G_2$ of $\overline{\lambda}_2$ with $\lambda_2(v)=\lambda(v)$ is still a merging, and $\lambda=\lambda_2\circ\lambda_1$, which complete the proof. Otherwise,  the natural extension $\lambda_1:G_1\to G_{con}$ of $\overline{\lambda}_1$ with $\lambda_1(v)=\lambda(v)$ is still a contraction and $\lambda=\overline{\lambda}_2\circ\lambda_1$, which also complete the proof.
\end{proof}

\begin{defn}
A contraction is called \textbf{simple} if it has a multi-edge as the only non-trivial fiber.
\end{defn}

The operation of contracting a multi-edge $\epsilon=\{e_1,\cdots, e_n\}$ of $G$ is called a \textbf{multi-edge-contraction},  which can be uniquely represented by the simple contraction $\mathcal{C}_\epsilon:G\to G/\{\epsilon\}$ with the unique non-trivial fiber being the multi-edge $\epsilon$, where $G/\{\epsilon\}$ denotes the resulting causal-net.

A causal-net is called a \textbf{point} if it has exactly one vertex and no edges.
Connectivity of causal-nets can be characterized in terms of multi-edge-contractions.
\begin{thm}\label{connect}
A causal-net is connected if and only if it can be transformed into a point through a series of multi-edge-contractions.
\end{thm}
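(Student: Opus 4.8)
The plan is to isolate an elementary lemma and then run two straightforward inductions. The lemma: for any multi-edge-contraction $\mathcal{C}_\epsilon : G \to G/\{\epsilon\}$, the causal-net $G$ is connected if and only if $G/\{\epsilon\}$ is connected. Granting it, the ``if'' part of the theorem follows by induction on the length $m$ of a series $G = G_0 \xrightarrow{\mathcal{C}_{\epsilon_1}} G_1 \xrightarrow{\mathcal{C}_{\epsilon_2}} \cdots \xrightarrow{\mathcal{C}_{\epsilon_m}} G_m$ with $G_m$ a point: a point is connected, and applying the lemma $m$ times shows each $G_i$, in particular $G = G_0$, is connected. The ``only if'' part follows by induction on $|V(G)|$: if $|V(G)| = 1$ then $G$, being loop-free, has no edges and is already a point; if $|V(G)| \geq 2$, connectedness gives an edge $e$, its endpoints $u = s(e)$ and $v = t(e)$ are distinct, and all edges of $G$ between $u$ and $v$ run from $u$ to $v$ (a pair of oppositely directed edges would give a $2$-cycle), so they form a multi-edge $\epsilon$; then $G' = G/\{\epsilon\}$ has $|V(G')| = |V(G)| - 1$ and, by the lemma, is still connected, so by the induction hypothesis $G'$ is carried to a point by some series of multi-edge-contractions, and prefixing $\mathcal{C}_\epsilon$ does the same for $G$.

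To prove the lemma I would work with underlying undirected graphs. The underlying graph of $G/\{\epsilon\}$ is obtained from that of $G$ by identifying $u$ and $v$ to a single vertex $w$ and deleting the loops thereby created (these are exactly the images of the edges in $\epsilon$). For the direction ``$G/\{\epsilon\}$ connected $\Rightarrow$ $G$ connected'', take $a, b \in V(G)$, join their images by a path in $G/\{\epsilon\}$, and lift that path edge by edge to $G$ (each edge of $G/\{\epsilon\}$ lifts to the unique edge of $G$ it comes from, only its endpoints being relabelled); wherever two consecutive lifted edges meet the split vertex on opposite sides, bridge them with a fixed edge of $\epsilon$ joining $u$ and $v$, and similarly adjust at the two ends. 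This produces a walk from $a$ to $b$ in $G$. The reverse implication is immediate because the quotient functor $G \to G/\{\epsilon\}$ is surjective on vertices and sends each path of $G$ to a path of $G/\{\epsilon\}$ with the same endpoints up to the identification. One also needs that $G/\{\epsilon\}$ is a causal-net, i.e.\ acyclic; this is automatic since $\mathcal{C}_\epsilon$ is a morphism of $\mathbf{Cau}$, but directly a directed cycle in $G/\{\epsilon\}$ would pull back to a closed directed walk in $G$, which is impossible as $G$ is acyclic and the only identified vertices $u, v$ admit no directed path from $v$ to $u$.

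I do not expect a real obstacle here: the statement is genuinely a graph-theoretic triviality dressed in the categorical language of $\mathbf{Cau}$. The only step requiring a little care is the path-lifting argument in the lemma, where one must keep track of which preimage ($u$ or $v$) of the merged vertex $w$ is used at each pass and insert the bridging edge of $\epsilon$ exactly where the sides disagree. I would also note the degenerate convention: the empty causal-net admits no multi-edge-contraction and cannot be reduced to a point, so the statement is to be read for nonempty causal-nets (or one declares the empty causal-net disconnected).
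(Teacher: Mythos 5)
There is a genuine gap in your ``only if'' direction. In the inductive step you pick an arbitrary edge $e$ of a connected $G$ and contract the multi-edge $\epsilon$ spanned by its endpoints, but this operation need not stay inside $\mathbf{Cau}$: the quotient can acquire a directed cycle, in which case $G/\{\epsilon\}$ is not a causal-net and $\mathcal{C}_\epsilon$ is not a multi-edge-contraction at all. Concretely, take $V(G)=\{u,v,x\}$ with edges $u\to v$, $u\to x$, $x\to v$ (the triangle of Example \ref{r3}); contracting the multi-edge $\{u\to v\}$ identifies $u$ and $v$ to a vertex $w$ and leaves the directed $2$-cycle $w\to x\to w$. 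Your justification --- that a directed cycle downstairs would pull back to a closed directed walk in $G$ --- overlooks exactly this case: a cycle through the merged vertex $w$ pulls back to a directed path from $u$ to $v$ in $G$ avoiding $\epsilon$, which is not closed and contradicts nothing. You rule out a path from $v$ to $u$ (your case ``$v\not\to u$''), but not a second path from $u$ to $v$. So the induction does not go through with an arbitrary choice of $\epsilon$.

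The paper's proof differs precisely at this point: it contracts a carefully chosen multi-edge. Take $v$ maximal under the reachable order (so $v$ is a sink), let $X$ be the set of vertices sharing an edge with $v$, and let $w$ be maximal in $X$ under the reachable order; then contracting the multi-edge between $w$ and $v$ cannot create a cycle, since a cycle would force either a directed path out of the sink $v$ or a directed path $w\to\cdots\to x_m\to v$ with $x_m\in X$ and $w\to x_m$, contradicting the maximality of $w$ in $X$. Your treatment of the ``if'' direction (connectivity is preserved in both directions by a legal multi-edge-contraction, then induct on the length of the series) is fine --- the paper dismisses it as obvious --- and your path-lifting lemma is sound for that half, where the contractions are given and hence legal. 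To repair your argument you must replace ``any edge'' by a choice of multi-edge whose contraction provably preserves acyclicity, e.g.\ the paper's choice above.
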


\begin{proof}
The $(\Leftarrow)$ direction is obvious. We prove the $(\Rightarrow)$ direction by induction on the numbers $n$ of vertices of connected causal-nets.

For $n=2$, this theorem is clear. Assume this theorem is true for $n=k$.
Let $G$ be a connected causal-net with $k+1$ vertices, and $v$ be a maximal vertex of $G$ under the reachable order $\rightarrow$. Let $X$ be the set of all vertices of $G$ that share an edge with $v$, which is non-empty by the facts that $G$ is connected and has more than two vertices. Let $w$ be a maximal element of $X$ under  $\rightarrow$ (as shown below), then contracting the multi-edge connecting $w$ and $v$ will not produce an oriented cycle and the resulting causal-net $G'$ is connected with $k$ vertices.
\begin{center}
  \begin{tikzpicture}[scale=0.7]
\node (v2) at (0,-1.5) {};
\node (v3) at (1,2.5) {};
\node (v4) at (-0.5,1.5) {};
\node (v5) at (-1.5,0.5) {};
\node (v1) at (-2.5,-0.5) {};
\draw [fill](v1) circle [radius=0.08];
\draw [fill](v2) circle [radius=0.08];
\draw [fill](v3) circle [radius=0.08];
\draw [fill](v4) circle [radius=0.08];
\draw [fill](v5) circle [radius=0.08];
\draw  (-2.5,-0.5) -- (0,-1.5)[postaction={decorate, decoration={markings,mark=at position .5 with {\arrow[black]{stealth}}}}];
\draw  (1,2.5)-- (0,-1.5)[postaction={decorate, decoration={markings,mark=at position .5 with {\arrow[black]{stealth}}}}];
\draw   (-0.5,1.5) -- (0,-1.5)[postaction={decorate, decoration={markings,mark=at position .5 with {\arrow[black]{stealth}}}}];
\draw   (-1.5,0.5)-- (0,-1.5) node (v6) {}[postaction={decorate, decoration={markings,mark=at position .5 with {\arrow[black]{stealth}}}}];
\draw  (1,2.5) --  (-0.5,1.5)[postaction={decorate, decoration={markings,mark=at position .5 with {\arrow[black]{stealth}}}}];
\draw   (-0.5,1.5) -- (-1.5,0.5)[postaction={decorate, decoration={markings,mark=at position .5 with {\arrow[black]{stealth}}}}];
\draw   (-1.5,0.5) --  (-2.5,-0.5)[postaction={decorate, decoration={markings,mark=at position .5 with {\arrow[black]{stealth}}}}];
\draw  plot[smooth, tension=.7] coordinates { (-0.5,1.5)  (-1.8,1) (-2.5,-0.5)}[postaction={decorate, decoration={markings,mark=at position .5 with {\arrow[black]{stealth}}}}];
\node at (-3,-0.5) {$w$};
\node at (0.5,-1.5) {$v$};
\draw  plot[smooth, tension=.7] coordinates {(-2.5,-0.5) (-1.7,-1.15) (0,-1.5)}[postaction={decorate, decoration={markings,mark=at position .5 with {\arrow[black]{stealth}}}}];
\end{tikzpicture}
\end{center}

By the induction hypothesis, $G'$ and hence $G$ can be transformed into a point through a series of multi-edge-contractions. This completes the proof.
\end{proof}

Using this theorem, we can prove the following result.
\begin{thm}\label{contraction}
A morphism is a contraction if and only if it is a composition of simple contractions.
\end{thm}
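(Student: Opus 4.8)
The plan is to prove the two implications separately, with the forward direction being essentially immediate and the reverse direction requiring a careful decomposition. For the ``if'' direction, suppose $\lambda$ is a composition of simple contractions $\lambda = \mathcal{C}_k \circ \cdots \circ \mathcal{C}_1$. Each simple contraction is a contraction by definition, and contractions are closed under composition (stated just before Theorem~\ref{c-m}), so $\lambda$ is a contraction. Nothing more is needed here.

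For the ``only if'' direction, let $\lambda : G \to H$ be a contraction, so $\lambda$ is a vertex-coarse-graining whose every fiber $\mathcal{F}(w)$, $w \in V(H)$, is a connected sub-causal-net of $G$. The strategy is to handle the fibers one at a time. Fix an enumeration $w_1, \dots, w_m$ of $V(H)$. The idea is to produce an intermediate sequence of causal-nets $G = G_0 \to G_1 \to \cdots \to G_m = H$ where the step $G_{i-1} \to G_i$ collapses the fiber over $w_i$ to a single vertex while leaving the other fibers (or their partially-collapsed images) untouched; then each such step is in turn a composition of simple contractions by the theorem immediately preceding this one (``a causal-net is connected iff it can be transformed into a point through a series of multi-edge-contractions''), applied to the connected fiber $\mathcal{F}(w_i)$ viewed inside $G_{i-1}$. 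Composing all these simple contractions gives the desired decomposition of $\lambda$.

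The key steps, in order, are: (1) verify that contracting all edges interior to a single connected fiber $\mathcal{F}(w_i)$ of $G_{i-1}$ is itself a well-defined contraction of causal-nets onto $G_i$ (no oriented cycle is created, because $\mathcal{F}(w_i)$ is a fiber of a vertex-coarse-graining and hence $\lambda$ restricted to it lands on an identity; acyclicity of the quotient follows as in the proof of Theorem~\ref{inc}); (2) observe that this single-fiber contraction, being a contraction with exactly one nontrivial and connected fiber, is by the previous theorem a composition of multi-edge-contractions, i.e. of simple contractions, performed inside that fiber; (3) check that $\lambda$ factors as $G_m = H$ after all $m$ steps, i.e. that the composite collapsing map agrees with $\lambda$ on vertices and edges — this is routine since on edges $\lambda$ is (up to the segment-relation, which is the identity for a vertex-coarse-graining) essentially the identity, and on vertices it is exactly the quotient by the fiber partition.

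The main obstacle is step~(1)–(2): making precise that the ``transform a connected causal-net into a point'' procedure of the previous theorem can be carried out \emph{inside} the ambient causal-net $G_{i-1}$ without disturbing edges or vertices outside the fiber, and that each multi-edge-contraction used there is legitimate as a morphism of $\mathbf{Cau}$ (in particular that no contraction of a multi-edge connecting two vertices of the fiber ever creates an oriented cycle in $G_{i-1}$, which follows because any such cycle would have to exit and re-enter the fiber and thus descend to an oriented cycle through $w_i$ in $H$). Once this localization is set up, the induction on $m$ and the appeal to the preceding theorem make the rest formal.
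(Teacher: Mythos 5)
Your proof is correct and follows exactly the route the paper intends: the paper omits an explicit proof, remarking only that the result follows "by this theorem," i.e.\ the preceding characterization of connectedness via multi-edge-contractions, which is precisely the tool you apply fiber by fiber. Your added care about localizing the multi-edge-contraction sequence inside the ambient causal-net (ruling out new oriented cycles by pushing any would-be cycle down to a positive-length cycle in $H$) is a genuine detail the paper glosses over, and you handle it correctly.
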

\begin{proof}

The $(\Leftarrow)$ direction is obvious and we only need to prove the $(\Rightarrow)$ direction. Let $\lambda:G_1\to G_2$ be a non-trivial contraction (i.e., not an identity), we will prove by induction on the numbers $n$ of vertices of $G_1$ and $m$ of vertices of $G_2$ that $\lambda$ is a composition of simple contractions.

For $n=2$ and $m=1$, $\lambda$ is clearly a simple contraction. Assume the result is true for $n\leq k, m\leq  k-1$. Now let $G_1$, $G_2$ be two causal-nets with the numbers of vertices no larger than $k+1$ and $ k$, respectively,  and $v$ be a maximal vertex of $G_2$ under the reachable order $\to$. We have two cases: $(1)$ the fiber of $v$ has exactly one vertex $w$ of $G_1$; $(2)$ the fiber of $v$ has no less than two vertices.

In the first case, $w$ must be a maximal vertex of $G_1$ under $\to$. We consider the causal-nets $G_1-\{w\}$ and $G_2-\{v\}$, which are obtained from $G_1$ and $G_2$ by removing $w$, $v$ and the edges connecting to them, respectively. The restricted morphism $\overline{\lambda}:G_1-\{w\}\to G_2-\{v\}$ is also a contraction. Since contractions would not change the number of connected components,  then $G_1-\{w\}$ and $G_2-\{v\}$ must have the same number of connected components. Suppose $G_1-\{w\}=\bigsqcup_{i\in I} H_i$ and $G_2-\{v\}=\bigsqcup_{i\in I} K_i$, then the restrictions $\overline{\lambda}_i:H_i\to K_i$ $(i\in I)$ of $\lambda$ are also contractions. For each $i\in I$, the numbers of vertices of $H_i$ and $K_i$ are not larger than $k$ and $k-1$, respectively. By the induction hypothesis, each $\lambda_i$ is a composition of simple contractions. Then $\overline{\lambda}=\bigsqcup_{i\in I}\overline{\lambda}_i:G_1-\{w\}\to G_2-\{v\}$ is a composition of simple contractions. Notice that the sets $I(w)$ and $I(v)$ are in bijective with each other through $\lambda$ and both $w$ and $v$ are maximal vertices, it is not difficult to see that $\lambda$, same as $\overline{\lambda}$, is a composition of simple contractions.

In the second case, $\lambda$ has a natural  decomposition, as a composition of $\lambda_1:G_1\to G_1/\mathcal{F}(v)$ and $\lambda_2:G_1/\mathcal{F}(v)\to G_2$, where $G_1/\mathcal{F}(v)$ is the quotient causal-net obtained by contracting the fiber $\mathcal{F}(v)$ of $v$.  Evidently, the number of vertices of $G_1/\mathcal{F}(\lambda(v))$ is not larger than $k$ and the number of vertices of $G_2$ is not larger than $k-1$, therefore, by the induction hypothesis, $\lambda_2$ is a composition of simple contractions. Since $\lambda$ is a contraction, then $\mathcal{F}(v)$ is a connected induced sub-causal-net. By Theorem \ref{connect}, it is not difficult to see that $\lambda_1$ is a composition of simple contractions. Therefore, $\lambda=\lambda_2\circ\lambda_1$ is a composition of simple contractions.

\end{proof}

\subsection{Causal-Tree and tree-contraction}

A causal-net is called a \textbf{causal-tree} (or a \textbf{poly-tree} \cite{[RP87]}), if its underlying undirected graph is a tree.  Note that a tree contains no cycles, we can arbitrarily define its orientation to produce a causal-tree. Especially, we view a point as a trivial causal-tree.

Each causal-net has a \textbf{simplification}, which is the unique simple causal-net obtaining by coarse-graining all its multi-edges.
The following generalization may be more useful.
\begin{defn}\label{cau-tree}
A causal-net is called a \textbf{causal-Tree} (or a \textbf{poly-Tree}), if its simplification is a causal-tree.
\end{defn}
Straightforward, a causal-Tree is a causal-tree possibly with multi-edges.

\begin{ex}
The following figure shows a causal-Tree and one of its multi-edge-contraction, which is represented by a morphism $\lambda$. The multi-edge $\{e_1,e_2\}$ is contracted, which is represented by the condition $\lambda(e_1)=\lambda(e_2)=Id_v$.
\begin{center}
\begin{tikzpicture}[scale=1]
\node (v2) at (-1.5,0) {};
\node (v1) at (-2.5,-0.75) {};
\node (v3) at (0,-1) {};
\node (v4) at (-1,-2.25) {};
\node (v5) at (0.5,-2.5) {};
\draw [fill](v1) circle [radius=0.08];
\draw [fill](v2) circle [radius=0.08];
\draw [fill](v3) circle [radius=0.08];
\draw [fill](v4) circle [radius=0.08];
\draw [fill](v5) circle [radius=0.08];
\draw  (-2.5,-0.75) -- (-1.5,0)[postaction={decorate, decoration={markings,mark=at position .5 with {\arrow[black]{stealth}}}}];
\draw  (0,-1) -- (-1,-2.25)[postaction={decorate, decoration={markings,mark=at position .5 with {\arrow[black]{stealth}}}}];
\draw  plot[smooth, tension=.7] coordinates {(-1.5,0) (-0.75,-0.75) (0,-1)}[postaction={decorate, decoration={markings,mark=at position .65 with {\arrow[black]{stealth}}}}];
\draw  plot[smooth, tension=.7] coordinates {(-1.5,0) (-0.75,-0.25) (0,-1)}[postaction={decorate, decoration={markings,mark=at position .5 with {\arrow[black]{stealth}}}}];
\draw  plot[smooth, tension=.7] coordinates {(0.5,-2.5) (0,-1.75) (0,-1)}[postaction={decorate, decoration={markings,mark=at position .61 with {\arrow[black]{stealth}}}}];
\draw  plot[smooth, tension=.7] coordinates {(0.5,-2.5) (0.25,-1.5) (0,-1)}[postaction={decorate, decoration={markings,mark=at position .48 with {\arrow[black]{stealth}}}}];
\node (v7) at (5.5,-0.75) {};
\node (v8) at (4.4,-1.5) {};
\node (v9) at (5,-2.2) {};
\node (v6) at (6.5,-2.3) {};
\draw [fill](v7) circle [radius=0.08];
\draw [fill](v8) circle [radius=0.08];
\draw [fill](v9) circle [radius=0.08];
\draw [fill](v6) circle [radius=0.08];
\draw  (4.4,-1.5) -- (5.5,-0.75)[postaction={decorate, decoration={markings,mark=at position .5 with {\arrow[black]{stealth}}}}];
\draw  (5.5,-0.75) -- (5,-2.2)[postaction={decorate, decoration={markings,mark=at position .5 with {\arrow[black]{stealth}}}}];
\draw  plot[smooth, tension=.7] coordinates {(6.5,-2.3) (5.75,-1.5) (5.5,-0.75)}[postaction={decorate, decoration={markings,mark=at position .6 with {\arrow[black]{stealth}}}}];
\draw  plot[smooth, tension=.7] coordinates {(6.5,-2.3) (6,-1.25) (5.5,-0.75)}[postaction={decorate, decoration={markings,mark=at position .51 with {\arrow[black]{stealth}}}}];
\node (v10) at (1.25,-1.5) {};
\node (v11) at (3.5,-1.5) {};
\draw [-latex] (v10) edge node[sloped,scale=0.8, above] {$\lambda$}node[sloped,scale=0.8, below] {multi-edge contraction} (v11);
\node at (-0.25,-0.25) {$e_1$};
\node at (-1.25,-0.75) {$e_2$};
\node at (5.5,-0.5) {$v$};
\end{tikzpicture}
\end{center}
\end{ex}

To characterize causal-Trees, we introduce the following notion.
\begin{defn}
Let $\lambda:G\to H$ be a contraction. A multi-edge $\varepsilon=\{e_1,\cdots, e_n\}$ of $H$ is called \textbf{G-primitive} (or simply  \textbf{primitive}) if its pre-image $\lambda^{-1}(\varepsilon)=\{\lambda^{-1}(e_1),\cdots,\lambda^{-1}(e_n)\}$ is a multi-edge of $G$.
\end{defn}

Since isomorphisms are trivially contractions, according to the definition, we can say all multi-edges of $G$ are $G$-primitive.

\begin{ex}\label{r3}
The operation of contracting a multi-edge does not produce new edges but may produce new multi-edges, which are non-primitive. In the following example, all edges of $H$ are (naturally identified with) edges of $G$. $\{e_1,e_3\}$ is a multi-edge of $H$, but not a multi-edge of $G$, so it is a non-primitive multi-edge produced by the contraction.

\begin{center}
\begin{tikzpicture}[scale=1]

\node (v1) at (0,0) {};
\node (v2) at (0,-2.5) {};
\node (v3) at (1.5,-1) {};
\draw [fill](v1) circle [radius=0.068];
\draw [fill](v2) circle [radius=0.068];
\draw [fill](v3) circle [radius=0.068];
\draw (0,0) -- (0,-2.5)[postaction={decorate, decoration={markings,mark=at position .5 with {\arrow[black]{stealth}}}}];
\draw(0,0)  -- (1.5,-1) [postaction={decorate, decoration={markings,mark=at position .5 with {\arrow[black]{stealth}}}}];
\draw  (1.5,-1)  -- (0,-2.5)[postaction={decorate, decoration={markings,mark=at position .5 with {\arrow[black]{stealth}}}}];
\node (v4) at (6.4,-0.2) {};
\node (v5) at (6.4,-2.4) {};
\node at (1.6,-2.6) {$G$};
\node at (5.6,-2.6) {$H$};
\draw  plot[smooth, tension=.7] coordinates {(6.4,-0.2)  (6,-1.2) (6.4,-2.4) }[postaction={decorate, decoration={markings,mark=at position .5 with {\arrow[black]{stealth}}}}];
\draw  plot[smooth, tension=.7] coordinates {(6.4,-0.2) (6.6,-1.2) (6.4,-2.4)}[postaction={decorate, decoration={markings,mark=at position .5 with {\arrow[black]{stealth}}}}];
\node (v6) at (2.4,-1.2) {};
\node (v7) at (4.8,-1.2) {};
\draw [fill](v4) circle [radius=0.068];
\draw [fill](v5) circle [radius=0.068];
\draw  [-latex](2.4,-1.2)  --node[sloped,scale=0.8, below] { contracting $\{e_2\}$}(4.8,-1.2) ;
\node at (-0.4,0) {$v_1$};
\node at (-0.4,-2.4) {$v_2$};
\node at (1.6,-0.6) {$v_3$};
\node at (6.6,0) {$w$};
\node at (6.8,-2.4) {$v_2$};
\node at (-0.4,-1.2) {$e_1$};
\node at (0.8,-0.2) {$e_2$};
\node at (1,-2) {$e_3$};
\node at (5.8,-0.8) {$e_1$};
\node at (6.8,-1.6) {$e_3$};
\end{tikzpicture}
\end{center}

\end{ex}

The following theorem gives a characterization of causal-Trees.
\begin{thm}\label{c-t}
A causal-net is a causal-Tree if and only if it can be transformed into a point through a series of operations of contracting a primitive multi-edge.
\end{thm}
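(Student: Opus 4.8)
The plan is to prove the two implications separately by induction: the forward implication by induction on the number of vertices, using a leaf of the simplification; and the backward implication by induction on the number of contraction steps, where the primitivity hypothesis does the essential work.

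For $(\Longrightarrow)$, suppose $G$ is a causal-Tree with $n\geq 2$ vertices. Its simplification is a tree, so choose a leaf $u$ of the simplification; then $u$ is adjacent in $G$ only to its unique simplification-neighbour $w$, the set $\varepsilon$ of all edges of $G$ between $u$ and $w$ is a multi-edge of $G$ (hence $G$-primitive), and every edge of $G$ incident to $u$ lies in $\varepsilon$. Acyclicity of $G$ forces all edges of $\varepsilon$ to carry the same orientation, say $u\to w$, so $u$ is a source whose only incident edges are those of $\varepsilon$; consequently $G/\{\varepsilon\}$ is nothing but $G$ with the vertex $u$ and its incident edges deleted, so it is again a causal-net, its simplification is the tree obtained from that of $G$ by removing the leaf $u$, and it has $n-1$ vertices. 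Since deleting a leaf creates no new pairs of parallel edges, no contraction arising along this recursion ever produces a multi-edge that is not the image of a multi-edge of the original $G$; therefore, tracking the composite contraction $G\to G_i$, every multi-edge contracted is $G$-primitive. The base case $n=1$ is a point and needs no contractions. I expect this direction to be routine.

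For $(\Longleftarrow)$, suppose $G=G_0\xrightarrow{\mu_1}G_1\to\cdots\to G_k$ with $G_k$ a point, where at stage $i$ a multi-edge $\varepsilon_i$ of $G_i$ is contracted that is $G$-primitive, primitivity being measured against the composite contraction $\lambda_i\colon G\to G_i$ (this is the reading consistent with Example~\ref{r3}; the weaker ``primitive against the immediately preceding graph'' reading would make the statement false, as the triangle $v_1\to v_2\to v_3$, $v_1\to v_3$ shows). Multi-edge-contractions preserve the number of connected components and a point is connected, so $G$ is connected. A short lemma, using that $\mu_1$ is a bijection on the surviving edges and a surjection on all edges, shows that every multi-edge contracted after $G_1$, being $G$-primitive, is also $G_1$-primitive; hence by the induction hypothesis $G_1$ is a causal-Tree, so its simplification is a tree. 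Writing $\varepsilon_0$ for the multi-edge of $G$ contracted by $\mu_1$ and $u_0,w_0$ for its endpoints, a vertex-count and edge-count bookkeeping (contract the single image edge $\bar\varepsilon_0$ inside $\mathrm{simp}(G)$, then re-simplify) yields $b_1(\mathrm{simp}(G))=t$, where $t$ is the number of common neighbours of $u_0$ and $w_0$ in $\mathrm{simp}(G)$; since $\mathrm{simp}(G)$ is connected, it is a tree exactly when $t=0$.

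It remains to show $t=0$, and this is the crux. Suppose $z_1$ were a common neighbour of $u_0$ and $w_0$. Examining the four possible orientations of the $u_0$–$z_1$ and $w_0$–$z_1$ edges against $u_0\to w_0$: two cases force either a directed path $u_0\to z_1\to w_0$ of length $2$ (which would make $G_1=G/\{\varepsilon_0\}$ contain an oriented cycle) or a directed triangle in $G$ (contradicting acyclicity of $G$), while in the two surviving cases the edges between $z_1$ and $u_0$ and those between $z_1$ and $w_0$ are merged in $G_1$ into one multi-edge $\delta$ whose $\mu_1$-preimage joins two distinct vertex-pairs of $G$ and so is not a multi-edge of $G$. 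Since every edge of $G_1$ must be destroyed somewhere along $G_1\to\cdots\to G_k$, the edges of $\delta$ are contracted at some stage $G_j$ as part of a multi-edge $\delta_j$ of $G_j$, and then $\lambda_j^{-1}(\delta_j)\supseteq\mu_1^{-1}(\delta)$ again joins two distinct vertex-pairs of $G$, so $\delta_j$ is not $G$-primitive, contradicting the hypothesis. Hence $t=0$ and $G$ is a causal-Tree. The main obstacle is exactly this last step: one has to recognize that a common neighbour of the contracted endpoints manufactures a multi-edge that can never be destroyed by a $G$-primitive contraction, which is precisely what the (otherwise innocuous) primitivity condition enforces and what distinguishes causal-Trees from arbitrary connected causal-nets.
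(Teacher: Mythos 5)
Your argument is correct. Note first that the paper does not actually supply a proof of Theorem \ref{c-t}: it only remarks that the statement "is not difficult to prove by induction on numbers of vertices," so there is nothing to compare against beyond that hint. Your forward direction is exactly the induction the paper gestures at (peel off a leaf of the simplification; since all edges at a leaf form one parallel class, the contraction is deletion of that leaf, and no new parallel classes are ever created, so every multi-edge of every $G_i$ pulls back to a multi-edge of $G$). Your backward direction is where the real content lies, and it is sound: your reading of "primitive" as "$G$-primitive with respect to the composite contraction" is the only one compatible with Example \ref{r3} and with Theorems \ref{prim}--\ref{tree}; your reduction lemma (a $G$-primitive multi-edge of $G_j$ is automatically $G_1$-primitive) holds because any extra parallel edge of $G_1$ between the relevant pair would have to survive into the complete parallel class $\delta_j$ of $G_j$ (it cannot be contracted earlier, as its endpoints have distinct images in $G_j$); and the count $b_1(\mathrm{simp}(G))=t$ checks out. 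The one place you should add a line is the final step: to get $\lambda_j^{-1}(\delta_j)\supseteq\mu_1^{-1}(\delta)$ you need that \emph{all} edges of the merged class $\delta$ are contracted at the same stage, which follows because contracting only part of $\delta$ would identify its two endpoints and turn the remaining edges of $\delta$ into loops, contradicting acyclicity. With that sentence inserted, the proof is complete, and it correctly isolates the role of primitivity: a common neighbour of the contracted endpoints manufactures a multi-edge whose preimage joins two distinct vertex-pairs of $G$ and hence can never be destroyed by a $G$-primitive contraction.
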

\begin{proof}
Let $G$ be a causal-net and $\overline{G}$ be the simplification of $G$. By definition, $G$ is a causal-Tree if and only if $\overline{G}$ is a causal-tree. So to prove this theorem, we only need to show that $\overline{G}$ is a causal-tree if and only if $G$ can be transformed into a point through a series of operations of contracting a primitive multi-edge.

 It is well known that a graph is a tree if and only if it can be transformed into a point through a series of operations of contracting an edge and through all these processes, no cycle is produced. So $\overline{G}$ is a causal-tree if and only if it can be transformed into a point through a series of operations of contracting an edge and through all these processes, acyclicity is preserved.

By the definition of a simplification, the set $\{e_i,i\in I\}$ of edges of $\overline{G}$ is in bijective with the set $\{\varepsilon_i,i\in I\}$ of (primitive) multi-edges of $G$. It is easy to see that the simplification of $G/\varepsilon_i$ is equal to $\overline{G}/e_i$, for all $i\in I$. Therefore, $G$ can be transformed into a point through a series of operations of contracting a primitive multi-edge if and only if $\overline{G}$ can be transformed into a point through a series of operations of contracting an edge, which, combining with the former results,  gives a proof of this theorem.
\end{proof}

There is also a simple topological characterization, which says that a causal-net is a causal-Tree if and only if the geometric realization of its simplification is a contractible space.

\begin{defn}
A contraction is called a \textbf{tree-contraction}  if all its fibers are causal-Trees.
\end{defn}
A simple contraction is called \textbf{primitive} if it contracts a primitive multi-edge. Similar to Theorem \ref{contraction}, we have the following characterization.

\begin{thm}\label{prim}
A morphism is a tree-contraction if and only if it is a composition of primitive simple contractions.
\end{thm}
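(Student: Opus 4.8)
The plan is to deduce this from Theorem~\ref{c-t} (reduction of a causal-Tree to a point by contracting primitive multi-edges) and Theorem~\ref{contraction} (a morphism is a contraction iff it is a composition of simple contractions), exploiting that, by definition, a tree-contraction is a contraction all of whose fibers are causal-Trees, and that fibers are always \emph{induced} sub-causal-nets. Two small bookkeeping facts will be used throughout. First, whenever a multi-edge is contracted it must consist of \emph{all} parallel edges between its two endpoints (otherwise the quotient would create a loop), so its preimage along a contraction is again a full multi-edge provided that preimage sits between a single pair of vertices; consequently, if $K$ is an induced sub-causal-net of $G$, a multi-edge of $G$ contained in $K$ is a multi-edge of $K$, and primitivity transfers: a multi-edge that is $G$-primitive and lies inside an induced fiber is also primitive with respect to that fiber. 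Second, for a sequence of simple contractions $G=G_0\xrightarrow{\mathcal C_{\epsilon_1}}G_1\to\cdots\to G_n=H$, whether the $i$-th step is a \emph{primitive} contraction is relative to $G$: it means $\epsilon_i$, pulled back along the composite $G\to G_{i-1}$, is a multi-edge of $G$.

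($\Leftarrow$) Suppose $\lambda:G\to H$ is such a composition of primitive contractions. By Theorem~\ref{contraction} it is a contraction, so it remains to show each fiber $\mathcal F_\lambda(w)$, the induced sub-causal-net on $\lambda^{-1}(w)$ with edge set $\lambda^{-1}(\mathrm{Id}_w)$, is a causal-Tree. I would restrict the given sequence to the part of each $G_{i-1}$ that ultimately maps to $w$: since every $\epsilon_i$ is the full set of parallel edges between its endpoints, it either lies entirely inside this evolving fiber (in which case, being $G$-primitive and contained in the induced sub-causal-net $\mathcal F_\lambda(w)$, it is a multi-edge of $\mathcal F_\lambda(w)$ that is primitive with respect to $\mathcal F_\lambda(w)$) or is disjoint from it. The inside-steps collapse $\mathcal F_\lambda(w)$ to a point by contracting primitive multi-edges, so by Theorem~\ref{c-t} it is a causal-Tree; hence $\lambda$ is a tree-contraction.

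($\Rightarrow$) Let $\lambda:G\to H$ be a tree-contraction. For each $w\in V(H)$ the fiber $\mathcal F(w)$ is a causal-Tree, so by Theorem~\ref{c-t} there is a sequence of operations of contracting primitive multi-edges collapsing $\mathcal F(w)$ to a point. I would carry out all of these on $G$, handling the fibers one at a time. At each atomic step the current graph $G_i$ carries a contraction $G_i\to H$ whose fiber over the vertex currently being processed is an induced sub-causal-net of $G_i$; the multi-edge prescribed by the Theorem~\ref{c-t} reduction is a genuine (full) multi-edge of $G_i$ lying inside that induced fiber, so contracting it is a simple contraction of $G_i$, and by the first bookkeeping fact (pulling back through $\mathcal F(w)\hookrightarrow G$) its contracted multi-edge is $G$-primitive, i.e. the step is a primitive contraction. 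After all fibers are processed we reach a graph $G_N$ with one vertex per vertex of $H$ and, since a contraction has no multiple edge, the induced map $G_N\to H$ is an isomorphism; absorbing it into the last factor exhibits $\lambda$ as a composition of primitive contractions.

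The step I expect to be the main obstacle is precisely the bookkeeping just sketched: verifying, uniformly along the growing sequence $G=G_0\to G_1\to\cdots$, that primitivity of a contracted multi-edge is preserved under restriction to an induced fiber and relative to the fixed source $G$, and simultaneously checking that each atomic step in the ($\Rightarrow$) direction has exactly one non-trivial fiber (so that it genuinely is a \emph{simple} contraction) and does not accidentally identify vertices outside the fiber being processed. These are routine consequences of acyclicity, of fibers being induced sub-causal-nets, and of the full-multi-edge convention, but they must be spelled out carefully; once granted, the theorem drops out of Theorems~\ref{c-t} and~\ref{contraction} in exact parallel with the proof of Theorem~\ref{contraction}.
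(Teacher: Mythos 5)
Your proposal is correct and follows exactly the route the paper intends: the paper offers no written proof, merely asserting that the statement "follows directly from Theorem \ref{c-t}" in analogy with Theorem \ref{contraction}, and your argument is precisely that deduction with the fiber-restriction and primitivity bookkeeping made explicit. The two facts you isolate (fibers are induced, so full multi-edges and primitivity transfer between $G$, its fibers, and the intermediate quotients) are indeed the only points needing verification, and they hold for the reasons you give.
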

\begin{proof}
Let $\lambda:G_1\to G_2$ be a morphism. $\lambda$ is a tree-contraction if and only if each fiber $\mathcal{F}(w)$, $w\in V(G_2)$ is a causal-Tree. By Theorem \ref{c-t}, each $\mathcal{F}(w)$ can be transformed into a point by a series of operations of contracting a $\mathcal{F}(w)$-primitive multi-edge. Since all $\mathcal{F}(w)$s  are induced sub-causal-nets of $G_1$, then all $\mathcal{F}(w)$-primitive multi-edges are $G_1$-primitive multi-edges. So $G_1$ can be transformed into $G_2$ by a series of operations of contracting a $G_1$-primitive multi-edge, which means that $\lambda$ is a composition of primitive simple contractions.
\end{proof}

Due to this characterization, we also name a tree-contraction as a \textbf{primitive contractions}, which can be equivalently defined as a contraction with all contracted multi-edges being primitive.
We say that causal-net $H$ is a tree-contraction of $G$, if there is a tree-contraction $\lambda:G\to H$. A multi-edge-contraction is called \textbf{primitive} if the contracted multi-edge is primitive.  Theorem \ref{prim} can be restated as follows.
\begin{thm}\label{tree}
Let $H,G$ be two causal-nets. $H$ is a tree-contraction of $G$ if and only if $H$ can be obtained from $G$ through a series of primitive multi-edge-contractions.
\end{thm}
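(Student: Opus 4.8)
The plan is to obtain Theorem~\ref{tree} as a straight translation of Theorem~\ref{prim} between the morphism language and the operational language of multi-edge-contractions; no genuinely new combinatorics is needed beyond what Theorem~\ref{prim} (and, behind it, Theorem~\ref{c-t}) already provides. For the forward direction I would unwind the hypothesis: ``$H$ is a tree-contraction of $G$'' means, by the standing convention for phrasing such statements, that there is a tree-contraction $\lambda\colon G\to H$. Theorem~\ref{prim} factors it as $\lambda=\lambda_n\circ\cdots\circ\lambda_1$ with each $\lambda_i$ a primitive contraction, yielding a chain $G=G_0\xrightarrow{\lambda_1}G_1\xrightarrow{\lambda_2}\cdots\xrightarrow{\lambda_n}G_n=H$. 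Each $\lambda_i$ is a simple contraction whose unique non-trivial fiber is a primitive multi-edge $\varepsilon_i$ of $G_{i-1}$, so by the uniqueness in the representation of a multi-edge-contraction by a simple contraction we have $\lambda_i=\mathcal{C}_{\varepsilon_i}$ with $G_i\cong G_{i-1}/\{\varepsilon_i\}$; reading the chain from left to right displays $H$ as obtained from $G$ by the series of primitive multi-edge-contractions $\varepsilon_1,\dots,\varepsilon_n$.

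For the converse I would run the same identification backwards: if $G=G_0\to G_1\to\cdots\to G_n=H$ is a chain whose $i$-th step contracts a primitive multi-edge $\varepsilon_i$ of $G_{i-1}$, then that step is the simple contraction $\mathcal{C}_{\varepsilon_i}\colon G_{i-1}\to G_i$, which is a primitive contraction by hypothesis, so the composite $\mathcal{C}_{\varepsilon_n}\circ\cdots\circ\mathcal{C}_{\varepsilon_1}\colon G\to H$ is a composition of primitive contractions and hence a tree-contraction by Theorem~\ref{prim}. Therefore $H$ is a tree-contraction of $G$, and the two conditions are equivalent.

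The only delicate point---and where I expect the small amount of real work to sit---is keeping the bookkeeping straight. A multi-edge-contraction is specified only up to the canonical isomorphism $G_i\cong G_{i-1}/\{\varepsilon_i\}$, so in the forward direction one must check that these isomorphisms compose back to the given $\lambda$; and in both directions one must make sure that ``primitive'' for a step is read consistently, namely as $G$-primitivity of $\varepsilon_i$ with respect to the accumulated contraction $G\to G_{i-1}$, which is precisely how Theorem~\ref{prim} reads it. That this $G$-primitivity propagates correctly along such a chain is exactly the combinatorial fact underlying Theorem~\ref{prim} via Theorem~\ref{c-t}: each $\mathcal{C}_{\varepsilon_j}$ merely collapses a multi-edge to a vertex and creates no new edges, so a multi-edge of $G_{i-1}$ pulls back to a multi-edge of $G$ exactly when it does so at every intermediate stage. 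Consequently the proof of Theorem~\ref{tree} is nothing more than the careful restatement just sketched.
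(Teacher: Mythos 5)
Your proposal is correct and matches the paper exactly: the paper gives no separate proof of Theorem \ref{tree}, explicitly presenting it as a restatement of Theorem \ref{prim} in the operational language of primitive multi-edge-contractions, which is precisely the translation you carry out. Your additional care about the canonical isomorphisms and the consistency of $G$-primitivity along the chain is the bookkeeping the paper leaves implicit.
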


Two causal-nets are called \textbf{homotopy equivalent} if their geometric realizations are homotopy equivalent; and are called \textbf{almost homotopy equivalent} if their simplifications are homotopy equivalent. Clearly, a causal-net is a causal-tree if and only if it is homotopy equivalent to a point, and a causal-net is a causal-Tree if and only if it is almost homotopy equivalent to a point.

Primitivity can be characterized topologically as follows.
\begin{prop}
A contraction $\lambda:G\to H$ is primitive if and only if $G$ and $H$ are almost homotopy equivalent.
\end{prop}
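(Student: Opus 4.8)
The plan is to prove both directions by reducing to the characterization of tree-contractions via primitive multi-edge-contractions (Theorem~\ref{tree}) together with the topological characterization of causal-Trees (a causal-net is a causal-Tree iff the geometric realization of its simplification is contractible, equivalently iff it is almost homotopy equivalent to a point). The key observation is that a single primitive multi-edge-contraction $\mathcal{C}_\varepsilon:G\to G/\{\varepsilon\}$ does not change the almost-homotopy type: contracting a primitive multi-edge $\varepsilon$ amounts, at the level of simplifications, to collapsing a single edge of a simple causal-net, which is a homotopy equivalence of geometric realizations (collapsing an edge with distinct endpoints is a deformation retraction). Hence the simplification of $G$ and the simplification of $G/\{\varepsilon\}$ are homotopy equivalent, so $G$ and $G/\{\varepsilon\}$ are almost homotopy equivalent.

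For the ($\Rightarrow$) direction, suppose $\lambda:G\to H$ is primitive. By Theorem~\ref{tree}, $H$ is obtained from $G$ by a finite sequence of primitive multi-edge-contractions $G=G_0\to G_1\to\cdots\to G_n=H$. By the observation above, each step preserves almost-homotopy type, so $G$ and $H$ are almost homotopy equivalent by transitivity. (One must note that the composite of the almost-homotopy equivalences is again one, which is immediate since homotopy equivalence is an equivalence relation on topological spaces and ``almost homotopy equivalent'' just means the simplifications are homotopy equivalent.)

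For the ($\Leftarrow$) direction, suppose $\lambda:G\to H$ is a contraction (so all fibers $\mathcal{F}(w)$, $w\in V(H)$, are connected) and that $G$ and $H$ are almost homotopy equivalent. I want to show every contracted multi-edge is primitive, i.e. that $\lambda$ is in fact a tree-contraction, i.e. (by Theorem~\ref{prim}/\ref{tree}) each fiber $\mathcal{F}(w)$ is a causal-Tree. Here is the Euler-characteristic bookkeeping that does the work. Since each fiber $\mathcal{F}(w)$ is connected, its geometric realization is connected, and contracting it collapses it to a point; doing this over all $w$ realizes (the geometric realization of) the simplification of $H$ as a quotient of that of $G$. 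Comparing Euler characteristics: passing from $G$ to $H$ removes, for each $w$, exactly $|V(\mathcal{F}(w))|-1$ vertices net and exactly the contracted edges inside $\mathcal{F}(w)$ (those mapping to $\mathrm{Id}_w$); one computes that the simplification of $H$ is homotopy equivalent to the simplification of $G$ with each fiber collapsed, and that this collapse is a homotopy equivalence \emph{if and only if} each connected fiber is contractible, i.e. a causal-Tree. Concretely, if some fiber $\mathcal{F}(w)$ is connected but not a causal-Tree, its simplification has a cycle, collapsing it to a point kills a nontrivial loop (or more precisely decreases the first Betti number or changes $\pi_1$), so the simplification of $H$ cannot be homotopy equivalent to that of $G$ — contradiction. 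Hence every fiber is a causal-Tree, so $\lambda$ is a tree-contraction, and by the remark following Theorem~\ref{tree} a tree-contraction is the same thing as a primitive contraction.

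The main obstacle is making the ($\Leftarrow$) direction rigorous: one needs to argue cleanly that ``collapse of a connected subcomplex to a point is a homotopy equivalence iff the subcomplex is contractible'' in the precise combinatorial setting of simplifications of causal-nets — in particular that a connected causal-net which is not a causal-Tree has a non-contractible simplification. This follows from the stated topological characterization (causal-Tree $\iff$ simplification contractible) plus the standard fact that collapsing a contractible subcomplex of a CW-complex is a homotopy equivalence, while collapsing a non-contractible connected subcomplex changes the homotopy type (detectable on $H_1$ or $\pi_1$). An alternative, more self-contained route avoiding algebraic topology is to argue purely combinatorially via Theorem~\ref{c-t}: a connected causal-net reduces to a point by contracting \emph{primitive} multi-edges iff it is a causal-Tree, and then show directly that if $\lambda$ is a contraction with some non-primitive contracted multi-edge then $G$ and $H$ have simplifications with different numbers of edges-minus-vertices in their connected components, contradicting almost-homotopy-equivalence. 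I would present the topological argument as the main proof and mention the combinatorial one as a remark.
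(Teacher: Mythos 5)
The paper states this proposition without proof, so your argument has to stand on its own, and the forward direction does not. Your ``key observation'' --- that a single primitive multi-edge-contraction $\mathcal{C}_\varepsilon\colon G\to G/\{\varepsilon\}$ preserves the almost-homotopy type because at the level of simplifications it just collapses one edge --- is false. Contracting a primitive multi-edge can create \emph{new, non-primitive} multi-edges in the target (this is precisely the point of Example \ref{r3}), and those get identified when you pass to the simplification of $G/\{\varepsilon\}$, which changes the homotopy type. Concretely, take $G$ to be the acyclic triangle of Example \ref{r3} and contract the edge $e_2$: this is a primitive simple contraction whose only non-trivial fiber is the single edge $e_2$, a causal-Tree, yet $s(G)$ is a circle while $s(H)$ is a single edge. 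So ``tree-contraction $\Rightarrow$ almost homotopy equivalent'' fails outright, and no reshuffling of the induction over Theorem \ref{tree} can repair it: one needs the additional hypothesis that every multi-edge of $H$ is $G$-primitive (no new multi-edges are created). Either ``primitive contraction'' in the proposition must be read as including that condition --- in which case your reduction to Theorem \ref{tree} alone proves the wrong statement --- or the proposition itself is missing a hypothesis.

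Your Euler-characteristic bookkeeping in the converse direction, once made precise, actually exposes exactly this. For a contraction with connected fibers one gets, on each component,
\[
b_1\bigl(s(G)\bigr)-b_1\bigl(s(H)\bigr)\;=\;\sum_{w\in V(H)} b_1\bigl(s(\mathcal{F}(w))\bigr)\;+\;\Bigl(\#\{\text{inter-fiber multi-edges of }G\}-\#\{\text{multi-edges of }H\}\Bigr),
\]
and \emph{both} summands are non-negative (the second because distinct inter-fiber multi-edges of $G$ surject onto the multi-edges of $H$, injectively exactly when all multi-edges of $H$ are primitive). Almost homotopy equivalence therefore forces both terms to vanish: all fibers must be causal-Trees \emph{and} no non-primitive multi-edges may be created. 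This makes your backward direction (a.h.e.\ $\Rightarrow$ all fibers are causal-Trees $\Rightarrow$ tree-contraction) correct, and you should replace the vague appeal to ``collapsing a non-contractible subcomplex changes the homotopy type'' by this identity, which also shows nothing can compensate for a cyclic fiber. But as it stands the equivalence you claim to prove is broken in the forward direction, and the counterexample is already in the paper.
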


A causal-net is called a \textbf{directed multi-path} or \textbf{directed-Path} if its simplification is a directed path. In short, a directed multi-path is directed path with multi-edges.
The following special case of tree-contraction will  be used to understand subdivisions in Section $3.2$.
\begin{defn}
A vertex-coarse-graining is called a \textbf{path-contraction} if all its fibers are directed-Paths.
\end{defn}

The relations of various types of vertex-coarse-grainings are listed as follows.\\
\begin{center}
\begin{tikzpicture}[scale=1.5]
\draw  (-2.5,3) rectangle (0.5,2) node (v1) {};
\draw  (0.5,3) rectangle (3.5,2);
\node [scale=0.8] at (-1,2.5) {\textbf{Type}};
\node [scale=0.8] at (2,2.5) {\textbf{Fiber}};

\node[scale=0.8] (v2) at (-1,0.5) {contraction};
\node[scale=0.8] (v3) at (2,0.5) {connected causal-net};
\node [scale=0.8] (v6) at (-1,-0.5) {tree-contraction};
\node[scale=0.8] (v7) at (2,-0.5) {causal-Tree};
\node [scale=0.8] (v8) at (-1,-1.5) {path-contraction};
\node [scale=0.8] (v9) at (2,-1.5) {directed-Path};
\draw  (-2.5,2) node (v12) {} rectangle (0.5,-2);
\draw  (v1) rectangle (3.5,-2);
\node [rotate=90] (v4) at (-1,0) {$\subseteq$};
\node [rotate=90] at (-1,-1) {$\subseteq$};
\node[rotate=90] (v5) at (2,0) {$\subseteq$};
\node[rotate=90] at (2,-1) {$\subseteq$};
\draw [dashed, latex-latex] (v2) -- (v3);
\draw [dashed,latex-latex] (v6) -- (v7);
\draw  [dashed,latex-latex](v8) -- (v9);
\node [scale=0.8] (v10) at (-1,1.5) {merging};
\node [scale=0.8] (v11) at (2,1.5) {discrete causal-net};
\draw  [dashed, latex-latex](v10) edge (v11);
\draw  (v12) rectangle (3.5,1);
\end{tikzpicture}
\end{center}

\subsection{Fusion and causal-coloring}
In this subsection, we discuss fusions, which are another special kind of vertex-coarse-grainings, and use them to establish a coloring theory for causal-nets.

\begin{defn}
A coarse-graining is called a \textbf{fusion}  if it has no contractions.
\end{defn}
Clearly,  a morphism $\lambda:G_1\to G_2$ without subdivisions and  contractions, or equivalently with all edges of $G_1$ being segments is exactly a \textbf{homomorphism}  \cite{[HN04]} of causal-nets, and a  fusion is just a \textbf{surjective homomorphism} of causal-nets. We say $H$ is a fusion of $G$, if there is a fusion $\lambda:G\to H$.  Clearly, fusions are closed under composition and the fusion relation defines a partial order among causal-nets, which we call \textbf{fusion order}.

The following result is a direct consequence of Theorem \ref{v-e} and Theorem \ref{c-m}.

\begin{cor}\label{fusion-decomposition}
Any fusion $\lambda$ can be uniquely represented as a composition of  a merging $\lambda_m$ and an edge-coarse-graining $\lambda_{\varepsilon}$.
\begin{center}
\begin{tikzpicture}[scale=1.5]
\node (v1) at (-2,0.5) {$G$};
\node (v2) at (4,0.5) {$H$};
\draw[-latex]  (v1) edge node[sloped,scale=0.8, below] {fusion} node[sloped,scale=0.8, above] {$\lambda$}(v2);
\node (v4) at (1,-0.4) {$G_m$};
\draw[-latex]  (v1) edge node[sloped,scale=0.8, above] {$\lambda_m$}node[sloped,scale=0.8, below] {merging} (v4);
\draw[-latex]  (v4) edge node[sloped,scale=0.8, above] {$\lambda_{\varepsilon}$}node[sloped,scale=0.8, below] {edge-coarse-graining}(v2);
\end{tikzpicture}
\end{center}
\end{cor}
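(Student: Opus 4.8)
The plan is to read off the statement from the two factorization theorems already established, so the whole argument reduces to bookkeeping on contraction sets. Recall that a fusion is, by definition, a coarse-graining $\lambda:G\to H$ with $Con(\lambda)=\emptyset$, equivalently one for which every edge of $G$ is a segment, $E(G)=Seg(\lambda)$. First I would apply Theorem \ref{v-e} to $\lambda$, obtaining a unique factorization $\lambda=\lambda_\varepsilon\circ\lambda_\nu$ with $\lambda_\nu:G\to G_\nu$ a vertex-coarse-graining and $\lambda_\varepsilon:G_\nu\to H$ an edge-coarse-graining; from the proof of that theorem we additionally have the explicit descriptions $V(G_\nu)=V(G)/\!\sim_\lambda$ and $E(G_\nu)=Seg(\lambda)$, with $\lambda_\nu$ acting as the identity on $Seg(\lambda)$ and sending each edge of $Con(\lambda)$ to an identity morphism.

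The key step is to observe that this $\lambda_\nu$ is in fact a merging. Indeed, an edge $e$ of $G$ is a contraction of $\lambda_\nu$ precisely when $e\notin Seg(\lambda)$, i.e. precisely when $e$ is a contraction of $\lambda$; hence $Con(\lambda_\nu)=Con(\lambda)=\emptyset$, so $\lambda_\nu$ is a vertex-coarse-graining with no contraction, which is exactly the definition of a merging. (Equivalently, one may feed $\lambda_\nu$ into Theorem \ref{c-m}: since all its fibers are discrete, the contraction part of its merging--contraction factorization is an isomorphism, so $\lambda_\nu$ is itself a merging.) Setting $\lambda_m:=\lambda_\nu$ then gives the asserted decomposition $\lambda=\lambda_\varepsilon\circ\lambda_m$.

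For uniqueness, suppose $\lambda=\lambda_\varepsilon\circ\lambda_m=\lambda'_\varepsilon\circ\lambda'_m$ are two decompositions of the required form. Since every merging is in particular a vertex-coarse-graining, both are vertex-coarse-graining/edge-coarse-graining factorizations of $\lambda$, so the uniqueness clause of Theorem \ref{v-e} forces $\lambda_m=\lambda'_m$ and $\lambda_\varepsilon=\lambda'_\varepsilon$ (up to the evident isomorphism of the intermediate causal-net). As a sanity check in the other direction, the composition of a merging followed by an edge-coarse-graining is genuinely a fusion: at each stage an edge is sent to an edge (no contraction and no subdivision is introduced), so the composite has $Con=\emptyset$ and $Subd=\emptyset$, and quotients are closed under composition.

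I do not anticipate any real obstacle here; the only point needing care is the identification $Con(\lambda_\nu)=Con(\lambda)$ — that is, making sure the intermediate causal-net $G_\nu$ produced by Theorem \ref{v-e} really has $Seg(\lambda)$ as its whole edge set and carries no spurious contractions — and this is exactly what the short proof of Theorem \ref{v-e} provides.
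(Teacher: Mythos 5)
Your proof is correct and follows exactly the route the paper intends: the paper states this corollary as a direct consequence of Theorem \ref{v-e} and Theorem \ref{c-m} without writing out the details, and your argument supplies precisely those details, the key observation being that $Con(\lambda_\nu)=Con(\lambda)=\emptyset$ so the vertex-coarse-graining factor of Theorem \ref{v-e} is already a merging. The uniqueness claim via the uniqueness clause of Theorem \ref{v-e} and the converse sanity check are both sound.
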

Due to this result, we also call a merging and an edge-coarse-graining a \textbf{vertex-fusion} and an \textbf{edge-fusion}, respectively.

If $\lambda:G\to H$ is a merging or a fusion, then all its fibers form a partition of the vertex set of $G$ with each block (i.e., fiber) being a coclique. This partition can be understood as a kind of vertex-coloring, such that two vertices are labelled by the same color if and only if they are in the same coclique.

\begin{defn}\label{coloring}
A fusion $\lambda:G\to H$ is called a \textbf{causal-coloring} (or simply \textbf{coloring}) of $G$, if $H$ is a simple causal-net.
\end{defn}

In other words, a coloring is just a fusion that is minimal under edge-coarse-grainings.
Note that a causal-coloring $\lambda:G\to H$ is not just a partition of cocliques of $G$, because the order relation of cocliques (the vertex poset of $H$) also provides important informations.

Recall that any causal-net has a unique simplification, we can easily see the following result from Corollary \ref{fusion-decomposition}.
\begin{cor}\label{color}
For any causal-net, its mergings are in bijective with its colorings.
\end{cor}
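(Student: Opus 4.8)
The plan is to construct the bijection explicitly, using the unique decomposition of a fusion into a merging followed by an edge-coarse-graining (Corollary \ref{fusion-decomposition}), together with the single non-formal input that the simplification is the \emph{only} edge-coarse-graining from a causal-net onto a simple causal-net. As in the convention recorded just before Definition \ref{coloring}, I treat a merging (resp.\ a coloring) of $G$ as an isomorphism class over $G$ of a morphism $G\to H$ of the relevant type.

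First I would define the two maps. Given a merging $\lambda_m\colon G\to G_m$, let $s_{G_m}\colon G_m\to\mathrm{Simp}(G_m)$ be the simplification, which is an edge-coarse-graining; since mergings and edge-coarse-grainings are fusions and fusions are closed under composition, $c(\lambda_m):=s_{G_m}\circ\lambda_m$ is a fusion whose target is simple, hence a coloring by Definition \ref{coloring}. Conversely, given a coloring $\lambda\colon G\to H$, Corollary \ref{fusion-decomposition} writes it uniquely as $\lambda=\lambda_\varepsilon\circ\lambda_m$ with $\lambda_m\colon G\to G_m$ a merging and $\lambda_\varepsilon\colon G_m\to H$ an edge-coarse-graining; set $m(\lambda):=\lambda_m$. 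Both assignments respect isomorphism over $G$: an isomorphism is simultaneously a merging and an edge-coarse-graining, and simplification is functorial on isomorphisms, so the uniqueness clause of Corollary \ref{fusion-decomposition} transports decompositions along any such isomorphism.

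Then I would verify that $m$ and $c$ are mutually inverse. For $m\circ c=\mathrm{id}$: the coloring $c(\lambda_m)=s_{G_m}\circ\lambda_m$ is already exhibited as a composite of a merging and an edge-coarse-graining, so by the uniqueness in Corollary \ref{fusion-decomposition} its merging part is $\lambda_m$, i.e.\ $m(c(\lambda_m))=\lambda_m$. For $c\circ m=\mathrm{id}$: from $\lambda=\lambda_\varepsilon\circ\lambda_m$ we get $c(m(\lambda))=s_{G_m}\circ\lambda_m$, so it suffices to show $\lambda_\varepsilon=s_{G_m}$ after identifying $H$ with $\mathrm{Simp}(G_m)$. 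Here the key observation enters: by the proposition characterizing edge-coarse-grainings by congruences, $\lambda_\varepsilon$ corresponds to a congruence on $E(G_m)$, which identifies only parallel edges; its target is simple precisely when every pair of parallel edges is identified, and that maximal congruence is exactly the one defining $\mathrm{Simp}(G_m)$. Hence $\lambda_\varepsilon$ and $s_{G_m}$ are given by the same congruence and therefore coincide, so $c(m(\lambda))=\lambda_\varepsilon\circ\lambda_m=\lambda$.

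The only step that is more than bookkeeping is the identification of an edge-coarse-graining onto a simple causal-net with the simplification; everything else is forced by the uniqueness in Corollary \ref{fusion-decomposition}. I expect that step to be routine once the definitions of a congruence and of the simplification are unwound, so the main work of the proof is really just organizing the two directions cleanly and checking well-definedness up to isomorphism over $G$.
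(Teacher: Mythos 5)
Your proposal is correct and follows exactly the route the paper intends: the paper derives this corollary in one line from the unique merging/edge-coarse-graining decomposition of a fusion (Corollary \ref{fusion-decomposition}) together with the uniqueness of the simplification, and you have simply supplied the details, including the one genuinely non-trivial point that an edge-coarse-graining onto a simple causal-net must be the maximal congruence and hence the simplification. Your explicit handling of well-definedness up to isomorphism over $G$ is more careful than the paper's informal treatment but changes nothing of substance.
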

Clearly, the merging and edge-coarse-graining relations are sub-partial-orders of the fusion order. Conversely, by Corollary \ref{fusion-decomposition}, we may view the fusion order as a kind of product of the merging and edge-coarse-graining orders.

Now we turn to those minimal causal-nets under the fusion order.

\begin{defn}\label{harmonic}
A causal-net is called \textbf{harmonic} if it is minimal under the fusion order, that is, all its fusions are isomorphisms.
\end{defn}
Obviously, harmonic causal-nets are necessarily simple and connected.
A directed path in a causal-net  is called a \textbf{hamiltonian path} if it visits each vertex exactly once.  Harmonic causal-nets can be characterized by hamiltonian paths.
\begin{thm}
A simple causal-net is harmonic if and only if it has a hamiltonian path.
\end{thm}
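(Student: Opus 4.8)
The plan is to prove both implications by translating ``harmonic'' into a condition on the vertex poset. First I would record the key reduction: since $G$ is simple it has no nontrivial congruence, hence no nontrivial edge-coarse-graining, so by Corollary~\ref{fusion-decomposition} every fusion of $G$ factors as a merging followed by an isomorphism. Thus, for simple $G$, being harmonic is equivalent to admitting no nontrivial merging. Combining this with Theorem~\ref{inc} (cocliques are exactly the fibers of mergings) and the observation that a merging with a fiber of size $\ge 2$ strictly decreases the number of vertices and so cannot be an isomorphism, one gets: a simple causal-net is harmonic if and only if every coclique is a singleton, i.e.\ if and only if its vertex poset $(V(G),\to)$ is a chain (a finite poset all of whose antichains are singletons is totally ordered).

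It then remains to prove that a simple causal-net has a hamiltonian path if and only if its vertex poset is a chain. For the ``only if'' direction, if $v_1\to v_2\to\cdots\to v_n$ is a hamiltonian path then every pair $v_i,v_j$ with $i<j$ is joined by the corresponding sub-path, so $(V(G),\to)$ is the chain $v_1<\cdots<v_n$. For the ``if'' direction, suppose $V(G)=\{v_1,\dots,v_n\}$ is totally ordered by $\to$ with $v_1<\cdots<v_n$. For each $i$ there is a directed path from $v_i$ to $v_{i+1}$; any internal vertex $v_k$ of such a path would satisfy $v_i< v_k< v_{i+1}$, which is impossible, so the path is a single edge $v_i\to v_{i+1}$. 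Concatenating these $n-1$ edges yields a hamiltonian path.

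Alternatively, and perhaps more transparently, the ``if'' half of the theorem (hamiltonian path $\Rightarrow$ harmonic) can be argued directly, bypassing the factorization: given a fusion $\lambda\colon G\to H$, i.e.\ a surjective homomorphism, if $\lambda$ identified two distinct vertices $u,w$ then, taking $u$ before $w$ along the hamiltonian path, the $\lambda$-image of the sub-path from $u$ to $w$ would be a closed directed walk of positive length in $H$, contradicting the acyclicity of $H$ (equivalently, the triviality of endomorphism monoids in $\mathbf{P}(H)$). Hence $\lambda$ is injective, and then bijective, on vertices; since $G$ is simple, $\lambda$ is also injective on edges, and being a surjective homomorphism it is an isomorphism. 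So every fusion of $G$ is an isomorphism, and $G$ is harmonic.

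I expect the only real subtlety to be the bookkeeping in the first paragraph, namely matching ``no nontrivial fusion'' with ``no nontrivial merging'' via Corollary~\ref{fusion-decomposition} and Theorem~\ref{inc} and checking that a merging collapsing a $2$-element coclique is genuinely not an isomorphism, together with the small lemma that a chain vertex poset forces an edge between consecutive vertices. The actual mathematical content is just the equivalence between hamiltonicity and the reachability relation being a total order; everything else is routine.
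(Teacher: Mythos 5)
Your proof is correct, but it takes a genuinely different route from the paper's. The paper dismisses the direction (hamiltonian path $\Rightarrow$ harmonic) as obvious and proves the converse by induction on the number of vertices: it deletes a maximal vertex $w$, obtains a hamiltonian path of $G-\{w\}$ from the induction hypothesis, and appends $w$, using a merging of two maximal vertices to derive a contradiction if the final edge were missing. You instead translate harmonicity into an order-theoretic condition: via Corollary~\ref{fusion-decomposition} and Theorem~\ref{inc} you show that a simple causal-net is harmonic exactly when its vertex poset $(V(G),\to)$ is a chain, and then prove the elementary equivalence between hamiltonicity and the reachability order being total (the key point being that consecutive elements of the chain must be joined by an actual edge, since an internal vertex of a connecting path would sit strictly between them). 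This is arguably cleaner: it avoids the induction entirely and in particular sidesteps the point, left implicit in the paper, that the induction hypothesis requires $G-\{w\}$ to still be harmonic. Your direct argument that a hamiltonian path forces every fusion to be injective on vertices (else the image of a sub-path would be a closed directed walk of positive length) is also a good self-contained replacement for the ``obvious'' direction.

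One sentence in your first paragraph is stated too strongly: simplicity of $G$ does not by itself imply that every fusion of $G$ factors as a merging followed by an isomorphism, because the merging factor $\lambda_m\colon G\to G_m$ may create multi-edges in $G_m$, which a subsequent nontrivial edge-coarse-graining can then collapse (for instance, merge the two sources of the simple causal-net $a\to c\leftarrow b$ and then identify the two resulting parallel edges). What you actually need, and what is true, is one implication only: if $G$ is simple and admits no nontrivial merging, then in the decomposition of any fusion the merging factor is an isomorphism, so $G_m\cong G$ is simple and the edge-coarse-graining factor is forced to be trivial as well; hence $G$ is harmonic. Since the converse (a nontrivial merging is already a nontrivial fusion) is immediate, your stated equivalence between harmonicity and the absence of nontrivial mergings for simple causal-nets survives, and the rest of your argument is unaffected.
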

\begin{proof}
$(\Leftarrow)$ This direction is obvious, that is because any non-trivial merging produces at least one directed cycle. We prove the $(\Rightarrow)$ direction by induction on the number $n$ of vertices.
If $n=2$, the conclusion is trivial. Assume the $(\Rightarrow)$ direction is true for $n=k$, we want to show it is also true for $n=k+1$. Let $G$ be a harmonic causal-net with $k+1$ vertices, and $w$ be a maximal vertex of $G$ under the reachable order $\rightarrow$. Let $G'=G-\{w\}$ be the induced sub-causal-net of $G$ obtained by removing $w$ and all edges connecting $w$. Clearly, $G'$ is simple and has $k$ vertices. Since $G$ is harmonic, that is, it has no non-trivial mergings and edge-coarse-grainings, then by the construction of $G'$, it is not difficult to see that $G'$ also has no non-trivial mergings and edge-coarse-grainings and therefore is harmonic. By the induction hypothesis, $G'$ has a hamiltonian path $v_1\to v_2\to\cdots\to v_k$, we claim that $v_1\to\cdots\to v_k\to w$ must be a hamiltonian path of $G$. If not, $v_k$ must be a maximal vertex of $G$ under $\rightarrow$. Obviously, $v_k$ and $w$ are incomparable and we can merge $v_k$ and $w$ to obtain a causal-net $H$, which is  smaller than $G$ under the fusion order. This contradicts  the fact that $G$ is harmonic.
\end{proof}

What is more special is the following notion.
\begin{defn}\label{complete}
A causal-net is called \textbf{complete} if any two of its vertices are connected by exactly one edge.
\end{defn}

In other words, a complete causal-net is just a causal-net with its underlying undirected graph being a complete graph. Complete causal-nets are necessarily harmonic.
As the following two examples shown, for each natural number $n\in \mathbb{N}$, there is exactly one isomorphism class of complete causal-nets, denoted by $K_n$.
\begin{ex}\label{K3}
There are two isomorphism classes of harmonic causal-nets with three vertices. The second is the complete causal-net $K_3$.
\begin{center}
\begin{tikzpicture}[scale=0.7]
\node (v4) at (-7,1.5) {};
\node (v5) at (-7,0) {};
\node (v6) at (-7,-1.5) {};
\draw [fill](v5) circle [radius=0.09];
\draw [fill](v6) circle [radius=0.09];
\draw [fill](v4) circle [radius=0.09];
\draw (-7,1.5) --(-7,0)[postaction={decorate, decoration={markings,mark=at position .5 with {\arrow[black]{stealth}}}}];
\draw  (-7,0) -- (-7,-1.5)[postaction={decorate, decoration={markings,mark=at position .5 with {\arrow[black]{stealth}}}}];

\node (v1) at (-1,1.5) {};
\node (v2) at (-1,0) {};
\node (v3) at (-1,-1.5) {};
\draw [fill](v1) circle [radius=0.09];
\draw [fill](v2) circle [radius=0.09];
\draw [fill](v3) circle [radius=0.09];
\draw (-1,1.5) --(-1,0)[postaction={decorate, decoration={markings,mark=at position .5 with {\arrow[black]{stealth}}}}];
\draw  (-1,0) -- (-1,-1.5)[postaction={decorate, decoration={markings,mark=at position .5 with {\arrow[black]{stealth}}}}];
\draw  plot[smooth, tension=.7] coordinates {(-1,1.5) (-0.3,0) (-1,-1.5)}[postaction={decorate, decoration={markings,mark=at position .55 with {\arrow[black]{stealth}}}}];
\end{tikzpicture}
\end{center}
\end{ex}

\begin{ex}\label{K4}
There are eight isomorphism classes of harmonic causal-nets with four vertices. The last one is the complete causal-net $K_4$.
\begin{center}
\begin{tikzpicture}[scale=0.75]
\node (v1) at (0.5,1) {};
\node (v2) at (0.5,0) {};
\node (v3) at (0.5,-1) {};
\node (v4) at (0.5,-2) {};
\draw [fill](v1) circle [radius=0.09];
\draw [fill](v2) circle [radius=0.09];
\draw [fill](v3) circle [radius=0.09];
\draw [fill](v4) circle [radius=0.09];
\draw  (0.5,1)-- (0.5,0)[postaction={decorate, decoration={markings,mark=at position .5 with {\arrow[black]{stealth}}}}];
\draw  (0.5,0) -- (0.5,-1)[postaction={decorate, decoration={markings,mark=at position .5 with {\arrow[black]{stealth}}}}];
\draw  (0.5,-1) -- (0.5,-2)[postaction={decorate, decoration={markings,mark=at position .5 with {\arrow[black]{stealth}}}}];
\end{tikzpicture}
\ \ \ \ \ \ \ \ \ \ \ \
\begin{tikzpicture}[scale=0.75]
\node (v1) at (0.5,1) {};
\node (v2) at (0.5,0) {};
\node (v3) at (0.5,-1) {};
\node (v4) at (0.5,-2) {};
\draw [fill](v1) circle [radius=0.09];
\draw [fill](v2) circle [radius=0.09];
\draw [fill](v3) circle [radius=0.09];
\draw [fill](v4) circle [radius=0.09];
\draw  (0.5,1)-- (0.5,0)[postaction={decorate, decoration={markings,mark=at position .5 with {\arrow[black]{stealth}}}}];
\draw  (0.5,0) -- (0.5,-1)[postaction={decorate, decoration={markings,mark=at position .5 with {\arrow[black]{stealth}}}}];
\draw  (0.5,-1) -- (0.5,-2)[postaction={decorate, decoration={markings,mark=at position .5 with {\arrow[black]{stealth}}}}];
\draw  plot[smooth, tension=.7] coordinates {(0.5,1) (-0.1,-0.5) (0.5,-2)}[postaction={decorate, decoration={markings,mark=at position .5 with {\arrow[black]{stealth}}}}];
\end{tikzpicture}
\ \ \ \ \ \ \ \ \ \ \ \
\begin{tikzpicture}[scale=0.75]
\node (v1) at (0.5,1) {};
\node (v2) at (0.5,0) {};
\node (v3) at (0.5,-1) {};
\node (v4) at (0.5,-2) {};
\draw [fill](v1) circle [radius=0.09];
\draw [fill](v2) circle [radius=0.09];
\draw [fill](v3) circle [radius=0.09];
\draw [fill](v4) circle [radius=0.09];
\draw  (0.5,1)-- (0.5,0)[postaction={decorate, decoration={markings,mark=at position .5 with {\arrow[black]{stealth}}}}];
\draw  (0.5,0) -- (0.5,-1)[postaction={decorate, decoration={markings,mark=at position .5 with {\arrow[black]{stealth}}}}];
\draw  (0.5,-1) -- (0.5,-2)[postaction={decorate, decoration={markings,mark=at position .5 with {\arrow[black]{stealth}}}}];
\draw  plot[smooth, tension=.7] coordinates {(0.5,1) (-0.1,0) (0.5,-1)}[postaction={decorate, decoration={markings,mark=at position .5 with {\arrow[black]{stealth}}}}];
\end{tikzpicture}
\ \ \ \ \ \ \ \ \ \ \ \
\begin{tikzpicture}[scale=0.75]
\node (v1) at (0.5,1) {};
\node (v2) at (0.5,0) {};
\node (v3) at (0.5,-1) {};
\node (v4) at (0.5,-2) {};
\draw [fill](v1) circle [radius=0.09];
\draw [fill](v2) circle [radius=0.09];
\draw [fill](v3) circle [radius=0.09];
\draw [fill](v4) circle [radius=0.09];
\draw  (0.5,1)-- (0.5,0)[postaction={decorate, decoration={markings,mark=at position .5 with {\arrow[black]{stealth}}}}];
\draw  (0.5,0) -- (0.5,-1)[postaction={decorate, decoration={markings,mark=at position .5 with {\arrow[black]{stealth}}}}];
\draw  (0.5,-1) -- (0.5,-2)[postaction={decorate, decoration={markings,mark=at position .5 with {\arrow[black]{stealth}}}}];
\draw  plot[smooth, tension=.7] coordinates {(0.5,0) (-0.1,-1) (0.5,-2)}[postaction={decorate, decoration={markings,mark=at position .6 with {\arrow[black]{stealth}}}}];
\end{tikzpicture}
\ \ \ \ \ \ \ \ \ \ \ \
\begin{tikzpicture}[scale=0.75]
\node (v1) at (0.5,1) {};
\node (v2) at (0.5,0) {};
\node (v3) at (0.5,-1) {};
\node (v4) at (0.5,-2) {};
\draw [fill](v1) circle [radius=0.09];
\draw [fill](v2) circle [radius=0.09];
\draw [fill](v3) circle [radius=0.09];
\draw [fill](v4) circle [radius=0.09];
\draw  (0.5,1)-- (0.5,0)[postaction={decorate, decoration={markings,mark=at position .5 with {\arrow[black]{stealth}}}}];
\draw  (0.5,0) -- (0.5,-1)[postaction={decorate, decoration={markings,mark=at position .5 with {\arrow[black]{stealth}}}}];
\draw  (0.5,-1) -- (0.5,-2)[postaction={decorate, decoration={markings,mark=at position .5 with {\arrow[black]{stealth}}}}];
\draw  plot[smooth, tension=.7] coordinates {(0.5,1) (-0.1,-0.5) (0.5,-2)}[postaction={decorate, decoration={markings,mark=at position .5 with {\arrow[black]{stealth}}}}];
\draw  plot[smooth, tension=.7] coordinates {(0.5,1) (1,0) (0.5,-1)}[postaction={decorate, decoration={markings,mark=at position .6 with {\arrow[black]{stealth}}}}];
\end{tikzpicture}
\ \ \ \ \ \ \ \ \ \ \ \
\begin{tikzpicture}[scale=0.75]
\node (v1) at (0.5,1) {};
\node (v2) at (0.5,0) {};
\node (v3) at (0.5,-1) {};
\node (v4) at (0.5,-2) {};
\draw [fill](v1) circle [radius=0.09];
\draw [fill](v2) circle [radius=0.09];
\draw [fill](v3) circle [radius=0.09];
\draw [fill](v4) circle [radius=0.09];
\draw  (0.5,1)-- (0.5,0)[postaction={decorate, decoration={markings,mark=at position .5 with {\arrow[black]{stealth}}}}];
\draw  (0.5,0) -- (0.5,-1)[postaction={decorate, decoration={markings,mark=at position .5 with {\arrow[black]{stealth}}}}];
\draw  (0.5,-1) -- (0.5,-2)[postaction={decorate, decoration={markings,mark=at position .5 with {\arrow[black]{stealth}}}}];
\draw  plot[smooth, tension=.7] coordinates {(0.5,1) (-0.1,-0.5) (0.5,-2)}[postaction={decorate, decoration={markings,mark=at position .5 with {\arrow[black]{stealth}}}}];
\draw  plot[smooth, tension=.7] coordinates {(0.5,0) (1,-1) (0.5,-2)}[postaction={decorate, decoration={markings,mark=at position .6 with {\arrow[black]{stealth}}}}];
\end{tikzpicture}
\ \ \ \ \ \ \ \ \ \ \ \
\begin{tikzpicture}[scale=0.75]
\node (v1) at (0.5,1) {};
\node (v2) at (0.5,0) {};
\node (v3) at (0.5,-1) {};
\node (v4) at (0.5,-2) {};
\draw [fill](v1) circle [radius=0.09];
\draw [fill](v2) circle [radius=0.09];
\draw [fill](v3) circle [radius=0.09];
\draw [fill](v4) circle [radius=0.09];
\draw  (0.5,1)-- (0.5,0)[postaction={decorate, decoration={markings,mark=at position .5 with {\arrow[black]{stealth}}}}];
\draw  (0.5,0) -- (0.5,-1)[postaction={decorate, decoration={markings,mark=at position .5 with {\arrow[black]{stealth}}}}];
\draw  (0.5,-1) -- (0.5,-2)[postaction={decorate, decoration={markings,mark=at position .5 with {\arrow[black]{stealth}}}}];
\draw  plot[smooth, tension=.7] coordinates {(0.5,0) (-0.1,-1) (0.5,-2)}[postaction={decorate, decoration={markings,mark=at position .6 with {\arrow[black]{stealth}}}}];
\draw  plot[smooth, tension=.7] coordinates {(0.5,1) (1,0) (0.5,-1)}[postaction={decorate, decoration={markings,mark=at position .6 with {\arrow[black]{stealth}}}}];
\end{tikzpicture}
\ \ \ \ \ \ \ \ \ \ \ \
\begin{tikzpicture}[scale=0.75]
\node (v1) at (0.5,1) {};
\node (v2) at (0.5,0) {};
\node (v3) at (0.5,-1) {};
\node (v4) at (0.5,-2) {};
\draw [fill](v1) circle [radius=0.09];
\draw [fill](v2) circle [radius=0.09];
\draw [fill](v3) circle [radius=0.09];
\draw [fill](v4) circle [radius=0.09];
\draw  (0.5,1)-- (0.5,0)[postaction={decorate, decoration={markings,mark=at position .5 with {\arrow[black]{stealth}}}}];
\draw  (0.5,0) -- (0.5,-1)[postaction={decorate, decoration={markings,mark=at position .5 with {\arrow[black]{stealth}}}}];
\draw  (0.5,-1) -- (0.5,-2)[postaction={decorate, decoration={markings,mark=at position .5 with {\arrow[black]{stealth}}}}];
\draw  plot[smooth, tension=.7] coordinates {(0.5,0) (-0.1,-1) (0.5,-2)}[postaction={decorate, decoration={markings,mark=at position .6 with {\arrow[black]{stealth}}}}];
\draw  plot[smooth, tension=.7] coordinates {(0.5,1) (1,0) (0.5,-1)}[postaction={decorate, decoration={markings,mark=at position .6 with {\arrow[black]{stealth}}}}];
\draw  plot[smooth, tension=.7] coordinates {(0.5,1) (1.5,-0.5) (0.5,-2)}[postaction={decorate, decoration={markings,mark=at position .6 with {\arrow[black]{stealth}}}}];
\end{tikzpicture}
\end{center}
\end{ex}

It is natural to introduce the following  notions.

\begin{defn}\label{minimal}
A coloring $\lambda:G\to H$ is called \textbf{minimal}, if $H$ is minimal, that is, $H$ is harmonic; and is called \textbf{complete}, if $H$ is complete.
\end{defn}

At the end of this subsection, we depict the relations among various types of (non-isomorphic) coarse-grainings as follows.
\begin{center}
\begin{tikzpicture}[scale=1]
\draw  (-2,0.25) rectangle (4,-3);
\draw  (4,-1) node (v1) {} rectangle (7,-3);
\draw  (-1.75,0) rectangle (1.6,-2.2);
\draw  (2,-1) node (v2) {} rectangle (7.4,-3.8);
\draw  (2,-1) rectangle (4,-3);
\node [scale=0.8] at (-0.6,-0.4) {contraction};
\node [scale=0.8] at (3,-2) {merging};
\node[scale=0.8] at (5.5,-2) {edge-coarse-graining};
\node [scale=0.8] at (6.4,-3.4) {fusion};
\draw  (-2.25,0.5) rectangle (7.75,-4);
\node [scale=0.8] at (-0.6,-3.6) {coarse-graining};
\node[scale=0.8] at (0.2,-2.6) {vertex-coarse-graining};
\draw  (-1.5,-0.75) rectangle (1.4,-2);
\node [scale=0.8] at (-0.25,-1) {tree-contraction};
\draw  (-1.2,-1.4) rectangle (1.2,-1.8);
\node [scale=0.8] at (0,-1.6) {path-contraction};
\draw  (3,-2.6) rectangle (5,-3.6);
\node [scale=0.8]at (4,-3.4) {coloring};
\end{tikzpicture}
\end{center}

\subsection{Linear-coloring and sorting-net}
In this subsection, we discuss a special kind of vertex-coloring, which has been considered in \cite{[HR86]} as a generalization of the notion of a \textbf{topological sorting}.
\begin{defn}
A \textbf{linear-coloring} of causal-net $G$ is a labelling of vertices of $G$ with natural numbers (or elements of any linear poset) such that for any edge $e$, the labelling number of $s(e)$ is smaller than that of $t(e)$.
\end{defn}
In other words, a linear-coloring $L:V(G)\to \mathbb{N}$ of $G$ is a strictly isotone from the vertex poset of $G$ to the linearly ordered poset $\mathbb{N}$.
A linear-coloring is a topological sorting if and only if it is an one-to-one labelling, and in this case, it is equivalent to
a \textbf{linear-extension} of the vertex poset.
Since any two comparable vertices must be labelled by two different natural numbers, a linear-coloring partitions the vertex set into cocliques. Moreover, it also produces a linear order on cocliques. The number of cocliques (or labelling numbers) is called the \textbf{order} of the linear-coloring.
Recall that a non-empty causal-net is called a path if all its edges form a directed path. A linear-coloring of causal-net $G$ can be naturally viewed as a morphism  with no contractions from $G$ to a path.

The following notion may be useful for developing a theory of causal-net complex  analog to the Kontsevich  graph complex \cite{[K93]}.
\begin{defn}
A  \textbf{sorting-net of $n$-th order} is a simple causal-net with $n$ linearly-ordered vertices $v_1<v_2<\cdots<v_n$ such that for any directed edge $e$,  $s(e)<t(e)$.
\end{defn}

In short, a sorting-net is just a simple causal-net equipped with a topological sorting. A non-simple causal-net equipped with a topological sorting is called a \textbf{sorting-Net}.

A pair  $\{v_i,v_{i+1}\}$ of successive vertices of a sorting-net $G$ is called a \textbf{gap} if they share no edges. Clearly, gaps must be cocliques.
If $\{v_i,v_{i+1}\}$ is a gap of a sorting-net $G$ with $n$-th  order, then we can define a quotient sorting-net $G//\{i,i+1\}$ to be the simplification of the merging of $G$ by  identifying $v_i$ and $v_{i+1}$. The quotient $n-1$-th sorting-net $G//\{i,i+1\}$ is called the \textbf{condensation} of $G$ at $\{i,i+1\}$. We call the process of eliminating a gap  a \textbf{condensation} of a sorting-net.

\begin{ex}
The following is an example of a sorting-net, which has three gaps $\{v_1,v_{2}\}$, $\{v_2,v_{3}\}$ and $\{v_3,v_{4}\}$.
\begin{center}
\begin{tikzpicture}
\node (v1) at (-2,0) {};
\node (v3) at (0,0) {};
\node (v2) at (2,0) {};
\node(v5) at (4,0) {};
\node (v4) at (6,0) {};
\draw  plot[smooth, tension=.7] coordinates {(-2,0) (0,0.5) (2,0)}[postaction={decorate, decoration={markings,mark=at position .5 with {\arrow[black]{stealth}}}}];
\draw  plot[smooth, tension=.7] coordinates {(0,0) (3,-0.5) (6,0)}[postaction={decorate, decoration={markings,mark=at position .5 with {\arrow[black]{stealth}}}}];
\draw (4,0)-- (6,0)[postaction={decorate, decoration={markings,mark=at position .5 with {\arrow[black]{stealth}}}}];
\draw [fill](v1) circle [radius=0.06];
\draw [fill](v2) circle [radius=0.06];
\draw [fill](v3) circle [radius=0.06];
\draw [fill](v4) circle [radius=0.06];
\draw [fill](v5) circle [radius=0.06];
\node at (-2,-0.4) {$v_1$};
\node at (0,-0.4) {$v_2$};
\node at (2,0.4) {$v_3$};
\node at (4,0.4) {$v_4$};
\node at (6,0.4) {$v_5$};
\end{tikzpicture}
\end{center}
\end{ex}

\begin{ex}
The following shows an example of a $6$-th sorting-net and its condensation at $\{2,3\}$.\\

\begin{center}
\begin{tikzpicture}[scale=1.3]
\node (v1) at (-3,0) {};
\node (v3) at (-1,0) {};
\node (v2) at (1,0) {};
\node (v4) at (3,0) {};
\node (v5) at (5,0) {};
\draw  plot[smooth, tension=.7] coordinates {(-3,0)  (-1,0.5) (1,0)}[postaction={decorate, decoration={markings,mark=at position .5 with {\arrow[black]{stealth}}}}];

\draw  (-3,0)  -- (-1,0)[postaction={decorate, decoration={markings,mark=at position .6 with {\arrow[black]{stealth}}}}];
\draw  (1,0) -- (3,0)[postaction={decorate, decoration={markings,mark=at position .6 with {\arrow[black]{stealth}}}}];
\draw  (3,0) -- (5,0)[postaction={decorate, decoration={markings,mark=at position .6 with {\arrow[black]{stealth}}}}];
\draw  plot[smooth, tension=.7] coordinates {(-1,0)  (1,-0.5) (3,0)}[postaction={decorate, decoration={markings,mark=at position .5 with {\arrow[black]{stealth}}}}];
\node (v6) at (7,0) {};
\draw  plot[smooth, tension=.7] coordinates {(-3,0) (1,0.5) (5,0)}[postaction={decorate, decoration={markings,mark=at position .5 with {\arrow[black]{stealth}}}}];
\draw  plot[smooth, tension=.7] coordinates {(1,0) (4,0.5) (7,0)}[postaction={decorate, decoration={markings,mark=at position .5with {\arrow[black]{stealth}}}}];
\draw  plot[smooth, tension=.7] coordinates {(1,0) (3,-0.5) (5,0)}[postaction={decorate, decoration={markings,mark=at position .5 with {\arrow[black]{stealth}}}}];
\draw [fill](v1) circle [radius=0.05];
\draw [fill](v2) circle [radius=0.05];
\draw [fill](v3) circle [radius=0.05];
\draw [fill](v4) circle [radius=0.05];
\draw [fill](v5) circle [radius=0.05];
\draw [fill](v6) circle [radius=0.05];
\node at (-3,-0.25) {$v_1$};
\node at (-1,-0.25) {$v_2$};
\node at (1,-0.25) {$v_3$};
\node at (3,-0.25) {$v_4$};
\node at (5,-0.25) {$v_5$};
\node at (7,-0.25) {$v_6$};
\end{tikzpicture}
\end{center}\ \\ \ \

\begin{center}
\begin{tikzpicture}[scale=1.3]
\node (v1) at (-1,0) {};
\node (v3) at (-1,0) {};
\node (v2) at (1,0) {};
\node (v4) at (3,0) {};
\node (v5) at (5,0) {};
\draw  (-1,0)  -- (1,0)[postaction={decorate, decoration={markings,mark=at position .6 with {\arrow[black]{stealth}}}}];
\draw  (1,0) -- (3,0)[postaction={decorate, decoration={markings,mark=at position .6 with {\arrow[black]{stealth}}}}];
\draw  (3,0) -- (5,0)[postaction={decorate, decoration={markings,mark=at position .6 with {\arrow[black]{stealth}}}}];
\node (v6) at (7,0) {};
\draw  plot[smooth, tension=.7] coordinates {(-1,0) (2,0.5) (5,0)}[postaction={decorate, decoration={markings,mark=at position .5 with {\arrow[black]{stealth}}}}];
\draw  plot[smooth, tension=.7] coordinates {(1,0) (4,0.5) (7,0)}[postaction={decorate, decoration={markings,mark=at position .5with {\arrow[black]{stealth}}}}];
\draw  plot[smooth, tension=.7] coordinates {(1,0) (3,-0.5) (5,0)}[postaction={decorate, decoration={markings,mark=at position .5 with {\arrow[black]{stealth}}}}];
\draw [fill](v1) circle [radius=0.05];
\draw [fill](v2) circle [radius=0.05];
\draw [fill](v4) circle [radius=0.05];
\draw [fill](v5) circle [radius=0.05];
\draw [fill](v6) circle [radius=0.05];
\node at (-1,-0.25) {$v_1$};
\node at (1,-0.25) {$v_2=v_3$};
\node at (3,-0.25) {$v_4$};
\node at (5,-0.25) {$v_5$};
\node at (7,-0.25) {$v_6$};
\end{tikzpicture}
\end{center}
\end{ex}

The relation of  linear-colorings and sorting-nets is same as that of colorings and simple causal-nets.

\begin{thm}
A linear-coloring $L$ of causal-net $G$ is equivalent to a causal-coloring $\lambda_L:G\to S$ from $G$ to a sorting-net $S$.
\end{thm}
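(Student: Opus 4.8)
The plan is to exhibit an explicit bijection between linear-colorings of $G$ and causal-colorings of $G$ onto sorting-nets, and then to check that the two passages are mutually inverse, up to the obvious identifications (order-isomorphism of the labelling poset on one side, isomorphism over $G$ of sorting-nets on the other).

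First I would treat the passage from a linear-coloring $c$ to a causal-coloring. The crucial preliminary observation is that along any directed path of $G$ the labels strictly increase, so that $c(v_1)<c(v_2)$ whenever $v_1\to v_2$; in particular each fibre $c^{-1}(i)$ is a coclique. Using this, I would build a sorting-net $S$ whose vertices are the non-empty fibres of $c$, linearly ordered by the order induced from $\mathbb{N}$, and with a single edge from the fibre over $i$ to the fibre over $j$ exactly when $i<j$ and $G$ has some edge from a vertex labelled $i$ to a vertex labelled $j$. Then $S$ is simple by construction, and acyclic because all of its edges go upward in the linear order, so it is a sorting-net. The map $\lambda$ sending each vertex of $G$ to its fibre and each edge $e$ to the edge of $S$ determined by the pair $\big(c(s(e)),c(t(e))\big)$ is a morphism of causal-nets (composability of the images of consecutive edges follows from $t(e_i)=s(e_{i+1})$); it has no contraction since $c(s(e))\neq c(t(e))$, no subdivision since it sends edges to edges, and it is surjective on vertices and on edges by the very definition of $S$. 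Hence $\lambda$ is a fusion, and since $S$ is simple it is a causal-coloring by Definition \ref{coloring}. (Alternatively, one could route this direction through Theorem \ref{inc} and Corollary \ref{fusion-decomposition}, first merging along the cocliques and then simplifying, but the direct construction is cleaner.)

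Next I would treat the converse passage. Given a causal-coloring $\lambda:G\to S$ with $S$ a sorting-net whose vertices are $w_1<\cdots<w_n$, recall that a fusion has no contraction and no subdivision, so $\lambda(e)$ is an edge of $S$ for every edge $e$ of $G$. Setting $c(v)=i$ whenever $\lambda(v)=w_i$, the edge $\lambda(e)$ runs from $w_{c(s(e))}$ to $w_{c(t(e))}$, and since $S$ is a sorting-net this forces $c(s(e))<c(t(e))$; thus $c$ is a linear-coloring of $G$. Finally I would check that the two assignments are inverse: starting from $c$, the sorting-net just built has its linear order matching the order of the labels and its edge set matching exactly the set of label-pairs realised by edges of $G$, so re-reading off labels recovers $c$ up to an order-isomorphism of the label poset; conversely, starting from $\lambda$, simplicity of $S$ together with surjectivity of $\lambda$ on vertices and edges forces the reconstructed sorting-net and colouring to coincide with $(S,\lambda)$ up to isomorphism over $G$.

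The content here is almost entirely bookkeeping; the only points that need a genuine (if short) argument are the monotonicity of labels along directed paths — which simultaneously gives that the fibres are cocliques and that $S$ is acyclic — and the verification that $\lambda$ really is a functor between path categories rather than merely a map on vertices and edges. The main obstacle, such as it is, is cosmetic: pinning down the precise sense in which the correspondence is a bijection, namely that a linear-coloring is data only up to order-preserving relabelling of the labels and a sorting-net only up to isomorphism over $G$; once that convention is fixed, the remaining verifications are routine.
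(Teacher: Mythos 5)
Your proposal is correct and follows essentially the same route as the paper: build the sorting-net $S$ on the set of labels (equivalently, the non-empty fibres), put an edge $(i,j)$ exactly when some edge of $G$ goes from an $i$-vertex to a $j$-vertex, send each vertex to its label and each edge $e$ to $(c(s(e)),c(t(e)))$, and read the construction backwards for the converse. You simply spell out the verifications (surjectivity, absence of contractions and subdivisions, acyclicity of $S$, and the mutual-inverse check up to relabelling) that the paper leaves as "not difficult to check."
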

\begin{proof}
Let $L:V(G)\to \mathbb{N}$ be a linear-coloring of $G$. Without loss of generality, we assume the labelling numbers are $\{1,2,\cdots,n\}$, and define a $n$-th sorting-net $S$ as follows. The vertex set of $S$ is $\{1,2,\cdots,n\}$ with the natural order relation. For any two vertices $i<j$, there is an edge, denoted as $(i,j)$, with source $i$ and target $j$ if and only if there exists an edge of $G$ starting with an $i$-vertex (a vertex labelled by $i$), ending with a $j$-vertex. It is not difficult to check that $S$ is simple and acyclic.

The causal-coloring $\lambda_L:G\to S$ associated with $L$ is defined as follows. $\lambda_L$ maps each $i$-vertex of $G$ to the vertex $i$ of $S$, and maps each edge $e$ of $G$ to the edge $(L(s(e)), L(t(e)))$ of $S$. From this definition, we see that $\lambda_L$ has no subdivisions and no contractions. By the definition of $S$, we can see that $\lambda_L$ is surjective both on vertices and edges. So, $\lambda_L$ is a fusion from $G$ to $S$ and is totally determined by $L$.

Conversely, it is easy to see that any causal-coloring $\lambda:G\to S$ from $G$ to a sorting-net $S$ naturally defines a linear-coloring $L_{\lambda}$ of $G$. It is not difficult to see that the two constructions,  from $L$ to $\lambda_L$ and from $\lambda$ to $L_\lambda$, are inverse mappings of each other.
\end{proof}
In a word, we say that
\begin{center}
\begin{tikzpicture}[scale=0.8]
\draw  (-4.5,1) rectangle (-0.5,0);
\draw  (0.5,1) rectangle (4.5,0);
\draw  (5.5,1) rectangle (9.5,0);
\node at (-2.5,0.5) {linear-coloring};
\node at (0,0.5) {$=$};
\node at (2.5,0.5) {causal-coloring};
\node at (7.5,0.5) {topological sorting};
\node at (5,0.5) {$+$};
\end{tikzpicture}
\end{center}
\begin{defn}
Two $n$-th order linear-colorings $L_1,L_2:V(G)\to \{1,2,\cdots,n\}$ of causal-net $G$ are called \textbf{similar} if there is a permutation $\sigma:\{1,2,\cdots,n\}\to \{1,2,\cdots,n\}$ such that  $L_2=\sigma\circ L_1$.
\end{defn}
\begin{center}
\begin{tikzpicture}
\node (v1) at (-1.5,0) {$V(G)$};
\node (v3) at (1.5,-1) {$\{1,2,\cdots,n\}$};
\node (v2) at (1.5,1) {$\{1,2,\cdots,n\}$};
\draw[-latex]  (v1) edge (v2);
\draw [-latex] (v1) edge (v3);
\draw [latex-] (v3) edge (v2);
\node at (1.75,0) {$\sigma$};
\node at (-0.25,0.75) {$L_1$};
\node at (-0.25,-0.75) {$L_2$};
\end{tikzpicture}
\end{center}

\begin{ex}
The following two linear-colorings are similar.
\begin{center}
\begin{tikzpicture}

\node (v1) at (-3,0) [above]{$1$};
\node (v2) at (-1,0)[above] {$2$};
\node (v3) at (1,0) [above]{$3$};
\node (v4) at (3,0)[above] {$4$};
\node  at (-3,0) [below]{$w$};
\node  at (-1,0)[below] {$x$};
\node at (1,0) [below]{$y$};
\node at (3,0)[below] {$z$};
\draw  (-3,0)--(-1,0)[postaction={decorate, decoration={markings,mark=at position .5 with {\arrow[black]{stealth}}}}];
\draw  (1,0) -- (3,0)[postaction={decorate, decoration={markings,mark=at position .5 with {\arrow[black]{stealth}}}}];
\node (v5) at (-3,-1.5)[above] {$1$};
\node (v7) at (-1,-1.5) [above]{$2$};
\node (v6) at (1,-1.5)[above] {$3$};
\node (v8) at (3,-1.5)[above] {$4$};
\node at (-3,-1.5)[below] {$w$};
\node at (-1,-1.5) [below]{$y$};
\node at (1,-1.5)[below] {$x$};
\node at (3,-1.5)[below] {$z$};
\draw  plot[smooth, tension=.7] coordinates {(-3,-1.5) (-1,-1) (1,-1.5)}[postaction={decorate, decoration={markings,mark=at position .5 with {\arrow[black]{stealth}}}}];
\draw  plot[smooth, tension=.7] coordinates {(-1,-1.5) (1,-2) (3,-1.5)}[postaction={decorate, decoration={markings,mark=at position .5 with {\arrow[black]{stealth}}}}];
\draw [fill] (-3,0)  circle [radius=0.07];
\draw [fill](-1,0) circle [radius=0.07];
\draw [fill](1,0) circle [radius=0.07];
\draw [fill](3,0) circle [radius=0.07];
\draw [fill](-3,-1.5) circle [radius=0.07];
\draw [fill](-1,-1.5) circle [radius=0.07];
\draw [fill](1,-1.5) circle [radius=0.07];
\draw [fill](3,-1.5) circle [radius=0.07];

\end{tikzpicture}
\end{center}
\end{ex}

The following proposition shows that causal-colorings can be characterized as similarity classes of linear-colorings.

\begin{prop}
For any causal-net, the set of causal-colorings is in bijective with the set of similarity classes of linear-colorings.
\end{prop}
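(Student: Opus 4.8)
The plan is to read the statement off the preceding theorem together with one structural remark: a sorting-net is exactly a simple causal-net equipped with a topological sorting, and every simple causal-net admits at least one such sorting, being a finite acyclic directed graph. By the preceding theorem, a linear-coloring $L$ of $G$ (which, as implicit above, we take to be surjective onto $\{1,\dots,n\}$, $n$ its order) corresponds to a causal-coloring $\lambda_L:G\to S_L$ to a sorting-net $S_L$; forgetting the topological sorting on $S_L$ gives a simple causal-net $\underline{S_L}$ and hence an honest causal-coloring $\Phi(L):G\to\underline{S_L}$. Concretely $\Phi(L)$ is the fusion sending each vertex to its $L$-fiber, followed by the simplification of the resulting quotient, so $\Phi(L)$ depends only on the partition of $V(G)$ into the fibers of $L$.

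Next I would pin down when two linear-colorings are similar. If $L_2=\sigma\circ L_1$ for a permutation $\sigma$ of $\{1,\dots,n\}$, then $L_1$ and $L_2$ have the same fibers; conversely, if $L_1,L_2:V(G)\to\{1,\dots,n\}$ are linear-colorings with the same fibers, then $L_1(v)\mapsto L_2(v)$ is a well-defined permutation $\sigma$ of $\{1,\dots,n\}$ with $\sigma\circ L_1=L_2$ (well-definedness and bijectivity both use surjectivity onto $\{1,\dots,n\}$). Hence $L_1\sim L_2$ if and only if $L_1$ and $L_2$ induce the same partition of $V(G)$; in particular $\Phi$ is constant on similarity classes and descends to a map $\bar\Phi$ from similarity classes of linear-colorings of $G$ to causal-colorings of $G$.

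To invert $\bar\Phi$ I would run the construction backwards. Given a causal-coloring $\lambda:G\to H$ with $H$ simple, choose a topological sorting $\tau$ of $H$; this turns $H$ into a sorting-net $S_\tau$, and by the preceding theorem $\lambda$ then corresponds to a linear-coloring $L$ of $G$ whose fibers are precisely the point-preimages $\lambda^{-1}(w)$, $w\in V(H)$. Since $\Phi(L)$ is the simplified fiber-quotient for this partition, and $\lambda$, being a fusion onto a simple causal-net, is itself (up to the canonical identification of targets) the simplified fiber-quotient for the same partition, we get $\bar\Phi([L])=\lambda$, proving surjectivity. For injectivity, if $\bar\Phi([L_1])=\bar\Phi([L_2])$ then $L_1$ and $L_2$ induce the same partition of $V(G)$ — namely the preimages of the vertices of the common target — so $[L_1]=[L_2]$ by the previous paragraph. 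I would also record, using the earlier corollary that mergings of $G$ are in bijection with colorings of $G$, that similarity classes of linear-colorings, mergings, and colorings of $G$ are all parametrized by the same set of partitions of $V(G)$ into cocliques, namely those whose edge-quotient is acyclic.

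The only non-formal point — needing care rather than ingenuity — is the bookkeeping around what counts as the same causal-coloring: one must be precise that a causal-coloring of $G$ is determined by, and recovers, its induced partition of $V(G)$ into cocliques, so that forgetting the topological sorting in passing from $S_L$ to $\underline{S_L}$ discards exactly the data that the similarity relation is designed to discard, and nothing more. Once this identification is made explicit, both halves of the bijection follow directly from the preceding theorem (and, if one wishes, the preceding corollary).
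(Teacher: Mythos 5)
Your proof is correct and follows the same underlying idea as the paper's: both rest on the observation that two linear-colorings are similar exactly when they induce the same partition of $V(G)$, combined with the preceding theorem identifying linear-colorings with causal-colorings onto sorting-nets. In fact your write-up is more complete than the paper's own proof, which only verifies the well-definedness direction (similar colorings give the same partition) and leaves the converse, injectivity, and surjectivity implicit; your explicit treatment of these, and of the point that a causal-coloring is recovered from its vertex partition, fills in exactly what the paper omits.
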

\begin{proof}
By Corollary \ref{color}, a causal-coloring is totally determined by its vertex partition, that is, all its fibers. So
to prove this theorem, we only need to show that for any two linear-colorings $L_1,L_2:V(G)\to \{1,2,\cdots,n\}$ of causal-net $G$ are similar if and only if they produce the same partition of $V(G)$. In fact, if $L_1, L_2$ are similar, that is, there exists a permutation $\sigma:\{1,\cdots,n\}\to \{1,\cdots,n\}$, such that $\sigma\circ L_1=L_2$, then it is easy to see that for any two vertices $v_1$, $v_2$ of $G$,  $L_1(v_1)=L_1(v_2)$ if and only if  $L_2(v_1)=L_2(v_2)$, which implies that $L_1, L_2$ produces the same partition of $V(G)$. Conversely, if $L_1, L_2$ produces the same partition of $V(G)$, then we define $\sigma(L_1(B_i))=L_2(B_i)$, for each block $B_i$ $(1\leq i\leq n)$, which is a permutation.
\end{proof}

\begin{defn}
A sorting-net is called \textbf{minimal} if it has no gaps and a linear-coloring is \textbf{minimal} if its corresponding sorting-net is minimal.
\end{defn}

Clearly, a sorting-net is minimal if and only if it has a hamiltonian path.  Any sorting-net can be transformed into a minimal sorting-net through a sequence of condensations. Moreover, the resulting minimal sorting-net is unique, which is independent of the order of condensations.

Harmonic causal-nets, as shown in Example \ref{K3} and Example \ref{K4}, can be naturally viewed as minimal sorting-nets with their vertices linearly ordered by their unique topological sortings (defined through hamiltonian paths). That is, a harmonic causal-net has a unique (canonical) way to be a minimal sorting-net. Conversely, it is not difficult to see that the underlying causal-net of a minimal sorting-net must be harmonic. In summary, harmonic causal-nets and minimal sorting-nets are essentially equivalent notions.

\begin{prop}
The set of harmonic causal-nets are naturally in bijective with the set of minimal sorting-nets.
\end{prop}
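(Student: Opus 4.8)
The plan is to exhibit a pair of explicit, mutually inverse maps $\Phi$ and $\Psi$ between the two sets. First I would recall two facts already in hand: a harmonic causal-net is simple and connected, and (by the hamiltonian-path characterization of harmonic causal-nets) it carries a hamiltonian path. The key preliminary step is to upgrade this to the assertion that the vertex poset of a harmonic causal-net $G$ is in fact a \emph{total} order. If two vertices $v_1,v_2$ of $G$ were incomparable, then $\{v_1,v_2\}$ would be a coclique, so by Theorem \ref{inc} there would be a merging $G\to G/\{v_1,v_2\}$, which is a fusion strictly decreasing the number of vertices and hence not an isomorphism --- contradicting harmonicity. Thus the reachable order on $V(G)$ is linear, and consequently $G$ admits a \emph{unique} topological sorting $\tau_G$, namely this linear order itself, since a topological sorting is a linear extension of the vertex poset and a total order has only itself as a linear extension.

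Next I would set $\Phi(G)\triangleq (G,\tau_G)$ and check it lands among minimal sorting-nets. As $\tau_G$ respects edge directions, $(G,\tau_G)$ is a sorting-net; minimality means it has no gap, i.e.\ any two successive vertices $v_i<v_{i+1}$ in $\tau_G$ share an edge. Indeed $v_i\to v_{i+1}$ in the (total) reachable order, so there is a directed path from $v_i$ to $v_{i+1}$; any intermediate vertex on such a path would lie strictly between $v_i$ and $v_{i+1}$ in $\tau_G$, which is impossible, so the path is a single edge, and it is the only one because $G$ is simple. Conversely I would let $\Psi(S)$ be the underlying causal-net of a minimal sorting-net $S$; by the observation that a sorting-net is minimal exactly when its underlying causal-net is harmonic, $\Psi(S)$ is harmonic, so $\Psi$ is well defined.

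Finally I would verify the composites. That $\Psi\circ\Phi=\mathrm{id}$ is immediate, since $\Phi$ merely adjoins the order $\tau_G$ that $\Psi$ forgets. For $\Phi\circ\Psi=\mathrm{id}$, write $S=(H,<)$ with $H=\Psi(S)$ harmonic; the given order $<$ is a linear order on $V(H)$ with $s(e)<t(e)$ for every edge, hence a topological sorting of $H$, hence equal to $\tau_H$ by the uniqueness established above, so $\Phi(\Psi(S))=(H,\tau_H)=(H,<)=S$. A closing remark would note that both constructions are manifestly compatible with isomorphisms of causal-nets and of sorting-nets, which is the sense in which the bijection is "natural".

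The main obstacle, modest as it is, is the first step: showing that harmonicity forces the reachable order to be total and hence the topological sorting to be unique. Everything afterwards --- the no-gap check and the inverse-pair verification --- is routine once that rigidity is available; the only point requiring care is bookkeeping the data a (minimal) sorting-net carries, namely a simple causal-net \emph{together with} a compatible linear order, and confirming that this extra datum is uniquely pinned down by the underlying harmonic causal-net.
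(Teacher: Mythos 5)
Your proposal is correct and follows essentially the same route as the paper, which justifies this proposition only by the preceding discussion (harmonic causal-nets carry a unique topological sorting with no gaps, and minimal sorting-nets have harmonic underlying causal-nets); your write-up simply makes that discussion precise, with the totality of the reachable order derived from Theorem \ref{inc} being the right key observation. The one step you cite rather than prove --- that a minimal sorting-net has harmonic underlying causal-net --- is exactly the statement the paper also leaves as "not difficult to see," and it follows in one line from the hamiltonian-path characterization you already invoke.
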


\section{Inclusion=monomorphism}
In this section, we introduce various types of inclusions, whose parallel notions are of broad interest in ordinary graph theory. We start with the most general type---the purely category-theoretic inclusions (monomorphisms).

\begin{defn}
A morphism of causal-nets is called an \textbf{inclusion} if it is injective on objects and faithful on morphisms.
\end{defn}
In other words, an inclusion is a morphism which is injective both on vertices and on directed paths. An inclusion can be characterized as a morphism with no multiple-vertices, no multiple-edges and  no contractions.

A common vertex of two directed path is called a \textbf{joint-vertex}; a common edge of two directed path is called a \textbf{joint-edge}.   As the following example shown, an inclusion may create joint-vertices and joint-edges. Clear, the class of inclusions is closed under composition.

\begin{ex}\label{joint-edge}
The following figure shows an example of an inclusion with $\lambda(h_1)=e_3e_1$, $\lambda(h_2)=e_3e_2$. In this example, the inclusion creates a joint-edge $e_3$ and a joint-vertex  $v$.
\begin{center}
\begin{tikzpicture}[scale=0.75]
\node (v1) at (-1.5,2) {};
\draw [fill](v1) circle [radius=0.08];
\node (v2) at (-1.5,-0.5) {};
\draw [fill](v2) circle [radius=0.08];
\draw  plot[smooth, tension=.7] coordinates {(-1.5,2) (-2,0.5) (-1.5,-0.5)}[postaction={decorate, decoration={markings,mark=at position .6 with {\arrow[black]{stealth}}}}];
\draw  plot[smooth, tension=.7] coordinates {(-1.5,2) (-1,1) (-1.5,-0.5)}[postaction={decorate, decoration={markings,mark=at position .5 with {\arrow[black]{stealth}}}}];
\node (v3) at (4.5,2.75) {};
\node (v4) at (4.25,0.25) {};
\node (v5) at (4,-1.5) {};
\draw [fill](v3) circle [radius=0.08];
\draw [fill](v4) circle [radius=0.08];
\draw [fill](v5) circle [radius=0.08];
\draw  plot[smooth, tension=.7] coordinates {(v3) (4,1.5) (v4)}[postaction={decorate, decoration={markings,mark=at position .6 with {\arrow[black]{stealth}}}}];
\draw  plot[smooth, tension=.7] coordinates {(4.5,2.75) (4.75,1.75) (4.25,0.25)}[postaction={decorate, decoration={markings,mark=at position .5 with {\arrow[black]{stealth}}}}];
\draw  plot[smooth, tension=.7] coordinates {(4.25,0.25) (4.25,-0.5) (v5)}[postaction={decorate, decoration={markings,mark=at position .5 with {\arrow[black]{stealth}}}}];
\node (v6) at (0,0.5) {};
\node (v7) at (2.5,0.5) {};
\draw [-latex] (v6) edge node[sloped,scale=0.8, below] {inclusion} node[sloped,scale=0.8, above] {$\lambda$} (v7);
\node at (-2.25,1.25) {$h_1$};
\node at (-0.8,0) {$h_2$};
\node at (3.75,2) {$e_1$};
\node at (5,1) {$e_2$};
\node at (4.55,-0.75) {$e_3$};
\node at (4.6,0.2) {$v$};
\end{tikzpicture}
\end{center}
\end{ex}
Inclusions are exactly monomorphisms in $\mathbf{Cau}$.
\begin{thm}
In $\mathbf{Cau}$, a morphism is an inclusion if and only if it is a monomorphism.
\end{thm}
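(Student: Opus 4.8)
The plan is to prove the two implications separately, in direct analogy with the earlier theorem identifying quotients with epimorphisms: the forward direction by a routine functoriality argument, the reverse direction by constructing small test causal-nets.

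For \textbf{inclusion $\Rightarrow$ monomorphism}: suppose $\lambda:G_1\to G_2$ is an inclusion and let $\mu_1,\mu_2:H\to G_1$ satisfy $\lambda\circ\mu_1=\lambda\circ\mu_2$. I would argue objectwise and morphismwise. For each vertex $w$ of $H$ we have $\lambda(\mu_1(w))=\lambda(\mu_2(w))$, so injectivity of $\lambda$ on vertices forces $\mu_1(w)=\mu_2(w)$. For each directed path $p$ of $H$, the paths $\mu_1(p)$ and $\mu_2(p)$ then have the same source and the same target in $G_1$, hence lie in a common hom-set of $\mathbf{P}(G_1)$; since $\lambda$ is faithful and $\lambda(\mu_1(p))=\lambda(\mu_2(p))$, we get $\mu_1(p)=\mu_2(p)$. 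Thus $\mu_1=\mu_2$. This direction uses nothing particular to $\mathbf{Cau}$: a functor that is injective on objects and faithful is a monomorphism in $\mathbf{Cat}$, hence in the full subcategory $\mathbf{Cau}$.

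For \textbf{monomorphism $\Rightarrow$ inclusion}: I would prove the contrapositive by exhibiting test causal-nets, exactly as in the epimorphism theorem. First, if $\lambda$ fails to be injective on vertices, say $\lambda(v_1)=\lambda(v_2)$ with $v_1\neq v_2$, take $H$ to be a single point $\{w\}$ and define $\mu_1(w)=v_1$, $\mu_2(w)=v_2$; then $\mu_1\neq\mu_2$ but $\lambda\circ\mu_1=\lambda\circ\mu_2$, contradicting monomorphicity, so $\lambda$ is injective on vertices. Second, assuming that, suppose $\lambda$ is not injective on directed paths and pick $p_1\neq p_2$ with $\lambda(p_1)=\lambda(p_2)$. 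Comparing sources and targets and using injectivity on vertices gives $s(p_1)=s(p_2)=:a$ and $t(p_1)=t(p_2)=:b$; acyclicity of $G_1$ forces $a\neq b$, since if $a=b$ both paths would have to be $\mathrm{Id}_a$. Now let $H$ be the causal-net with vertices $x,y$ and a single edge $f:x\to y$, and define $\mu_i:H\to G_1$ by $\mu_i(x)=a$, $\mu_i(y)=b$, $\mu_i(f)=p_i$; these are legitimate morphisms of causal-nets because a functor out of $\mathbf{P}(H)$ is freely determined by the image of $f$. Again $\mu_1\neq\mu_2$ while $\lambda\circ\mu_1=\lambda\circ\mu_2$, a contradiction. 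Hence $\lambda$ is an inclusion.

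The only step requiring a moment's care — and what I would flag as the instructive point rather than a genuine obstacle — is the path-injectivity argument: the two offending paths $p_1,p_2$ may have different lengths, so one cannot use a long directed path as the test object. The resolution is precisely that morphisms in $\mathbf{Cau}$ are functors, which need not preserve path length, so a single edge $f$ already suffices to ``name'' either path; one must also remember to exclude the degenerate sub-case where a nontrivial path and an identity share an image, which is ruled out by acyclicity. If one prefers, the reverse direction can instead be phrased using the characterization of inclusions (stated just before the theorem) as morphisms with no multiple-vertex, no multiple-edge, and no contraction, the first test causal-net killing multiple-vertices and the second killing both multiple-edges and contractions.
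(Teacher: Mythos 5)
Your proof is correct and follows essentially the same route as the paper's: the forward direction by checking agreement on vertices and generating edges, the reverse direction by contradiction using a one-point causal-net and a single-edge causal-net as test objects. Your extra care in deriving that the two offending paths are parallel (via vertex-injectivity) and in excluding the identity-path degenerate case by acyclicity is a small tightening of details the paper glosses over, but the argument is the same.
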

\begin{proof}
$(\Rightarrow)$ Let $\lambda:G_1\to G_2$ be an inclusion and $\lambda_1:H\to G_1$, $\lambda_2:H\to G_1$ be any two morphisms such that $\lambda\circ \lambda_1=\lambda\circ\lambda_2$. We want to show that $\lambda_1=\lambda_2$. For this, we only need to show that for each $w\in V(H)$, $\lambda_1(w)=\lambda_2(w)$ and for each $h\in E(H)$, $\lambda_1(h)=\lambda_2(h)$. In fact, for each $w\in V(H)$ and for each $h\in E(H)$, we have $\lambda\circ \lambda_1(w)=\lambda\circ\lambda_2(w)$ and $\lambda\circ \lambda_1(h)=\lambda\circ\lambda_1(h)$. Since $\lambda$ is an inclusion, we must have $\lambda_1(w)=\lambda_2(w)$ and $\lambda_1(h)=\lambda_2(h)$.

$(\Leftarrow)$ Let $\lambda:G_1\to G_2$ be a monomorphism,  we want to prove by contradiction that it is an inclusion. If $\lambda$ is not injective on vertices, that is, there are two distinct vertices $v_1,v_2$ of $G_1$ such that $\lambda(v_1)=\lambda(v_2)$, then we consider a causal-net $H$ formed by exactly one isolated vertex $w$ and define two morphisms $\lambda_1:H\to G_1$, $\lambda_2:H\to G_1$ by $\lambda_1(w)=v_1$ and $\lambda_2(w)=v_2$. Clearly, $\lambda\circ\lambda_1=\lambda\circ\lambda_2$ but $\lambda_1\neq\lambda_2$, which leads a contradiction.

Having shown that $\lambda$ must be injective on vertices, now we prove by contradiction that it is also injective on morphisms. Without loss of generality, assume there are two distinct parallel directed paths $\overrightarrow{p_1},\overrightarrow{p_2}:v_1\to v_2$ of $G_1$ such that $\lambda(\overrightarrow{p_1})=\lambda(\overrightarrow{p_2})$, then we consider a causal-net $H$ formed by two vertices $w_1,w_2$ and a directed edge $e:w_1\to w_2$, and define two morphisms $\lambda_1:H\to G_1$, $\lambda_2:H\to G_1$ as follows. $\lambda_1(w_1)=\lambda_2(w_1)=v_1$, $\lambda_1(w_2)=\lambda_2(w_2)=v_2$ and $\lambda_1(e)=\overrightarrow{p_1},\lambda_2(e)=\overrightarrow{p_2}$. Then we have $\lambda\circ\lambda_1=\lambda\circ\lambda_2$ but $\lambda_1\neq\lambda_2$, which leads a contradiction.
\end{proof}

The following theorem is a simple graph-theoretic fact.
\begin{thm}\label{coar-inclu}
Any morphism $\lambda$ of causal-nets can be uniquely represented as a composition of  a coarse-graining $\lambda_{cg}$ and an inclusion $\lambda_{in}$.
\begin{center}
\begin{tikzpicture}[scale=1.2]
\node (v1) at (-2,0.5) {$G_1$};
\node (v2) at (4,0.5) {$G_2$};
\draw[-latex]  (v1) edge node[sloped,scale=0.8, below] {morphism} node[sloped,scale=0.8, above] {$\lambda$}(v2);
\node (v4) at (1,-0.4) {$G_{cg}$};
\draw[-latex]  (v1) edge node[sloped,scale=0.8, above] {$\lambda_{cg}$}node[sloped,scale=0.8, below] {coarse-graining} (v4);
\draw[-latex]  (v4) edge node[sloped,scale=0.8, above] {$\lambda_{in}$}node[sloped,scale=0.8, below] {inclusion}(v2);
\end{tikzpicture}
\end{center}
\end{thm}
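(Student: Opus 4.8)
The plan is to imitate the explicit construction used in the proof of Theorem~\ref{cc}, building $G_{cg}$ as the natural ``combinatorial image'' of $\lambda$ obtained by collapsing only the data that $\lambda$ forgets, and then taking $\lambda_{in}$ to be the induced map into $G_2$. Concretely, given $\lambda\colon G_1\to G_2$, I first split $E(G_1)=Con(\lambda)\sqcup Seg(\lambda)\sqcup Subd(\lambda)$ as before. I then define an equivalence relation $\sim_\lambda$ on $V(G_1)$ by $v\sim_\lambda v'\Leftrightarrow\lambda(v)=\lambda(v')$ and an equivalence relation $\approx$ on $Seg(\lambda)\sqcup Subd(\lambda)$ by $e\approx e'\Leftrightarrow \lambda(e)=\lambda(e')$ as directed paths of $G_2$; note $e\approx e'$ forces $s(e)\sim_\lambda s(e')$ and $t(e)\sim_\lambda t(e')$, so $\approx$-classes have well-defined endpoints in $V(G_1)/\sim_\lambda$. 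I let $G_{cg}$ be the causal-net with $V(G_{cg})=V(G_1)/\sim_\lambda$ and $E(G_{cg})=(Seg(\lambda)\sqcup Subd(\lambda))/\approx$, with source and target induced from $G_1$. I define $\lambda_{cg}$ by $\lambda_{cg}(v)=[v]$, $\lambda_{cg}(e)=[e]$ for non-contracted $e$, and $\lambda_{cg}(e)=Id_{[s(e)]}$ for $e\in Con(\lambda)$ (well defined since $\lambda(e)=Id$ forces $s(e)\sim_\lambda t(e)$), and $\lambda_{in}$ by $\lambda_{in}([v])=\lambda(v)$, $\lambda_{in}([e])=\lambda(e)$. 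Both assignments are source/target compatible, so by freeness of the path category they extend uniquely to morphisms of causal-nets, and $\lambda_{in}\circ\lambda_{cg}=\lambda$ holds on edges and vertices, hence everywhere.

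Next I would verify that $\lambda_{cg}$ is a coarse-graining: it is surjective on vertices and on edges by construction (the class $[e]$ is hit by $e$ itself), so it has no null-vertex and no null-edge; and every edge is sent to a directed path of length $0$ or $1$, so $Subd(\lambda_{cg})=\emptyset$. Then I would verify that $\lambda_{in}$ is an inclusion by checking the structural characterization ``no multiple-vertex, no multiple-edge, no contraction'': no multiple-vertex, because $\sim_\lambda$ was chosen precisely so that $\lambda_{in}$ is injective on vertices; no multiple-edge, because if an edge $g$ of $G_2$ has $\lambda_{in}([e])=g$ then $\lambda(e)=g$ is an edge, so $e\in Seg(\lambda)$, and all such $e$ lie in a single $\approx$-class; and no contraction, because each $[e]$ comes from a non-contracted edge, so $\lambda_{in}([e])=\lambda(e)$ has length $\geq 1$.

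For uniqueness, I would argue that any factorization $\lambda=\lambda_{in}'\circ\lambda_{cg}'$ with $\lambda_{cg}'$ a coarse-graining and $\lambda_{in}'$ an inclusion is forced, up to a unique isomorphism of the middle causal-net, to be the one above. Injectivity of $\lambda_{in}'$ on vertices together with surjectivity of $\lambda_{cg}'$ identifies $V(G_{cg}')$ with the image $\lambda(V(G_1))\cong V(G_1)/\sim_\lambda$. Since a coarse-graining sends each edge to a path of length $\leq 1$ and an inclusion has no contraction, an edge $e$ of $G_1$ is contracted by $\lambda_{cg}'$ iff $\lambda(e)=Id$, i.e.\ $Con(\lambda_{cg}')=Con(\lambda)$; each non-contracted edge is sent to an edge of $G_{cg}'$, and two of them to the same edge exactly when their $\lambda$-images coincide, so $E(G_{cg}')\cong(Seg(\lambda)\sqcup Subd(\lambda))/\approx$ compatibly with sources, targets and with $\lambda_{cg},\lambda_{in}$.

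The bookkeeping (functoriality, source/target compatibility, $\lambda_{in}\circ\lambda_{cg}=\lambda$) is routine. The step I expect to require the most care is pinning down in exactly what sense $\lambda_{in}$ deserves to be called an inclusion: the clean ``simple graph-theoretic'' argument runs at the level of the structural characterization (no multiple-vertex, no multiple-edge, no contraction), and one should be attentive that a subdivided edge $[e]$ of $G_{cg}$ may sit inside $G_2$ parallel to a composite of other $\lambda_{in}$-images, so the argument is kept at the structural level rather than phrased as a statement about hom-sets of $\mathbf{P}(G_{cg})$; the same point is the only delicate one in the edge part of the uniqueness argument.
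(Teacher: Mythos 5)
Your construction of $G_{cg}$, $\lambda_{cg}$ and $\lambda_{in}$ is exactly the one the paper's proof sketches (the same $V(G_{cg})=V(G_1)/\sim_\lambda$ and the same edge set of classes of non-contracted edges), and the surjectivity/no-subdivision check for $\lambda_{cg}$ and the uniqueness analysis are fine. The genuine problem is the step you yourself single out as delicate: verifying that $\lambda_{in}$ is an inclusion. Retreating to the structural characterization ``no multiple-vertex, no multiple-edge, no contraction'' does not close this gap, because that characterization is strictly weaker than the definition of an inclusion actually used in this paper, namely a functor injective on objects and faithful, i.e.\ injective on directed paths (equivalently, a monomorphism of $\mathbf{Cau}$). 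Your parenthetical observation --- that a subdivided edge $[e]$ of $G_{cg}$ may sit inside $G_2$ parallel to a composite of other $\lambda_{in}$-images --- is precisely a failure of faithfulness, not a presentational subtlety that can be waved away by ``keeping the argument at the structural level.''

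Concretely, run your construction on the morphism of Example \ref{r2}: there $Con(\lambda)=\emptyset$ and both equivalence relations are trivial, so $G_{cg}\cong G_1$, $\lambda_{cg}$ is an isomorphism and $\lambda_{in}=\lambda$; but $\lambda(e_3)=h_2h_1=\lambda(e_2e_1)$ while $e_3\neq e_2e_1$, so $\lambda_{in}$ is not injective on the hom-set from $v_1$ to $v_3$ and hence not an inclusion. Worse, that morphism admits no factorization as a coarse-graining followed by an inclusion at all: any middle causal-net must contain an edge $g_3$ with image $h_2h_1$ together with edges $g_1,g_2$ with images $h_1,h_2$ and matching endpoints, and then $g_3$ and $g_2g_1$ are distinct parallel morphisms with equal image. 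So what your argument establishes (and what the paper's one-line proof, which shares this gap, establishes) is only the factorization of $\lambda$ through a morphism with no multiple-vertex, no multiple-edge and no contraction; this does coincide with ``inclusion'' when $\lambda_{in}$ has no subdivision (which is why the application in Theorem \ref{sce} survives), but not in general. You should either restrict the statement, weaken ``inclusion'' to the structural notion, or add a hypothesis on $\lambda$ excluding configurations like Example \ref{r2}.
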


\begin{proof}
Edges of $G_1$ can be classified into three classes: contractions, segments and subdivisions, that is, $E(G_1)=Con(\lambda)\sqcup Seg(\lambda)\sqcup Sub(\lambda)$.

We define $G_{cg}$ as follows. $$V(G_{cg})=\displaystyle \frac{V(G_1)}{\sim_{\scriptscriptstyle{\lambda}}},\ \ E(G_{cg})=\displaystyle\frac{Seg(\lambda)\sqcup Subd(\lambda)}{\sim_\lambda},$$
where $v_1\sim_{\scriptscriptstyle{\lambda}} v_2  \Longleftrightarrow \lambda(v_1)=\lambda(v_2)$, $e_1\sim_{\scriptscriptstyle{\lambda}} e_2  \Longleftrightarrow \lambda(e_1)=\lambda(e_2)$ as directed paths.  Clearly, the natural quotient $\lambda_{cg}:G_1\to G_{cg}$ has no subdivisions (for any $e\in Seg(\lambda)\sqcup Subd(\lambda)$, $\lambda_{cg}(e)$ is an edge of $G_{cg}$), thus it is a coarse-graining.

The morphism $\lambda_{in}:G_{cg}\to G_2$ is defined as follows. For any $v\in V(G_2)$, $\lambda_{in}([v]_{\sim_{\scriptscriptstyle{\lambda}}})=\lambda(v)$; for any $e\in Seg(\lambda)\sqcup Subd(\lambda)$, $\lambda_{in}([e]_{\sim_{\scriptscriptstyle{\lambda}}})=\lambda(e)$.
Evidently, $\lambda_{in}$ has no contractions and is an inclusion.
\end{proof}

\subsection{Immersion and strong-immersion}
In this subsection, we introduce two types of inclusions whose parallel notions in ordinary graph theory characterize those of an immersion-minor \cite{[W18]} and a strong-immersion-minor \cite{[RS10]}, respectively.

Given a morphism $\lambda:G_1\to G_2$, two subdivisions $e_1,e_2$ of $\lambda$ are called \textbf{edge-disjoint} if their images $\lambda(e_1)$ and $\lambda(e_2)$ have no common edges. We call $\lambda$ \textbf{edge-disjoint} if any two subdivisions of $\lambda$ are edge-disjoint, or in other words, if $\lambda$ does not create any joint-edges.
\begin{defn}
An inclusion is called an \textbf{immersion} if it is edge-disjoint.
\end{defn}
An immersion is an inclusion that does not create any joint-edges. The inclusion in Example \ref{joint-edge} is not an immersion because it creates a joint-edge $e_3$.  But as shown in the following example, an immersion may create joint-vertices.

\begin{ex}\label{immersion}
The following $\lambda$ is an example of an immersion with $\lambda(h_1)=e_3e_1, \lambda(h_2)=e_4e_2$, $\lambda(h)=e$, $\lambda(w)=v$, and $v$ is the unique joint-vertex created by $\lambda$.
\begin{center}
\begin{tikzpicture}[scale=0.9]
\node (v1) at (-0.5,2) {};
\node (v2) at (-0.5,-1) {};
\draw [fill](v1) circle [radius=0.08];\draw [fill](v2) circle [radius=0.08];
\draw  plot[smooth, tension=.7] coordinates {(v1) (-1,0.5) (v2)}[postaction={decorate, decoration={markings,mark=at position .55 with {\arrow[black]{stealth}}}}];
\draw  plot[smooth, tension=.7] coordinates {(-0.5,2) (0,1) (-0.5,-1)}[postaction={decorate, decoration={markings,mark=at position .5 with {\arrow[black]{stealth}}}}];
\node (v3) at (4.5,2.5) {};
\node (v4) at (4.6,0.6) {};
\node (v5) at (4.5,-1.5) {};
\draw [fill](v3) circle [radius=0.08];
\draw [fill](v4) circle [radius=0.08];
\draw [fill](v5) circle [radius=0.08];
\draw  plot[smooth, tension=.7] coordinates {(v3) (4,1.4) (v4)}[postaction={decorate, decoration={markings,mark=at position .55 with {\arrow[black]{stealth}}}}];
\draw  plot[smooth, tension=.7] coordinates {(v4) (4.2,-0.4) (v5)}[postaction={decorate, decoration={markings,mark=at position .5 with {\arrow[black]{stealth}}}}];
\draw  plot[smooth, tension=.7] coordinates {(v3) (4.8,1.6) (4.6,0.6)}[postaction={decorate, decoration={markings,mark=at position .5 with {\arrow[black]{stealth}}}}];
\draw  plot[smooth, tension=.7] coordinates {(4.6,0.6) (5,-0.4) (v5)}[postaction={decorate, decoration={markings,mark=at position .5 with {\arrow[black]{stealth}}}}];
\node (v6) at (1,0.6) {};
\node (v7) at (3.4,0.6) {};
\draw   [-latex](v6) edge node[sloped,scale=0.8, below] {immersion}node[sloped,scale=0.8, above] {$\lambda$} (v7);
\node at (-1.2,0.8) {$h_1$};
\node at (0.1,0) {$h_2$};
\node at (3.8,1.7) {$e_1$};
\node at (5,1.2) {$e_2$};
\node at (4,-1) {$e_3$};
\node at (5.1,-1) {$e_4$};
\node at (0,2.5) {$H$};
\node at (3.5,2.5) {$G$};
\node at (5,0.6) {$v$};
\node [above]at (-2,1.1) {$w$};
\draw [fill](-2,1) circle [radius=0.08];
\draw(-0.5,2)--(-2,1)[postaction={decorate, decoration={markings,mark=at position .5 with {\arrow[black]{stealth}}}}];
\draw(4.5,2.5)--(4.6,0.6)[postaction={decorate, decoration={markings,mark=at position .5 with {\arrow[black]{stealth}}}}];
\node at (-1.25,1.9) {$h$};
\node at (4.35,1.5) {$e$};
\node at (5,0.6) {$v$};
\end{tikzpicture}
\end{center}
\end{ex}

A causal-net $H$ is called an \textbf{immersion-minor} \cite{[W18]} of $G$, if $H$ can be obtained from $G$ by a sequence of operations of  deleting an isolated vertex, deleting an edge and an \textbf{edge-lifting} (replacing a length two path $e_2e_1$ by a length one path $h$, with the middle vertex of $e_2e_1$ retained). All these three types of graphical operations can be represented in the opposite direction by morphisms of causal-nets.

As the following example shown, an edge-lifting can be represented in the opposite direction as a composition of a merging after an edge-subdivision. Clearly, an edge-lifting does not change the number of vertices, but reduces the number of edges by one.
\begin{ex}\label{edge-lift}
The following figure shows an example of an edge-lifting, which can be represented by a morphism in the \textbf{opposite} direction, with $\lambda(h)=e_2e_1$, $\lambda(w)=v$. In this example, we say that the directed path $e_2e_1$ is lifted to an edge $h$.

\begin{center}
\begin{tikzpicture}[scale=0.75]
\node (v1) at (-1,2) {};
\node (v2) at (0,0) {};
\node (v3) at (-1,-2) {};
\node (v4) at (2.5,0) {};
\draw [fill](v1) circle [radius=0.08];
\draw [fill](v2) circle [radius=0.08];
\draw [fill](v3) circle [radius=0.08];
\draw [fill](v4) circle [radius=0.08];
\draw  (-1,2) -- (0,0)[postaction={decorate, decoration={markings,mark=at position .5 with {\arrow[black]{stealth}}}}];
\draw  (0,0) -- (-1,-2)[postaction={decorate, decoration={markings,mark=at position .5 with {\arrow[black]{stealth}}}}];
\draw  (0,0) -- (2.5,0)[postaction={decorate, decoration={markings,mark=at position .5 with {\arrow[black]{stealth}}}}];
\node (v5) at (7.5,1.5) {};
\node (v6) at (7.5,-1.5) {};
\node (v7) at (8.5,0) {};
\node (v8) at (11,0) {};
\draw [fill](v5) circle [radius=0.08];
\draw [fill](v6) circle [radius=0.08];
\draw [fill](v7) circle [radius=0.08];
\draw [fill](v8) circle [radius=0.08];
\draw  (7.5,1.5) -- (7.5,-1.5)[postaction={decorate, decoration={markings,mark=at position .5 with {\arrow[black]{stealth}}}}];
\draw  (8.5,0) -- (11,0)[postaction={decorate, decoration={markings,mark=at position .5 with {\arrow[black]{stealth}}}}];
\node at (0.3,0.3) {$v$};
\node (v9) at (3.5,0.5) {};
\node (v10) at (6.5,0.5) {};
\draw [dashed,-latex] (v9) edge node[sloped,scale=0.8, above] {edge-lifting}(v10);
\node (v11) at (6.5,-0.5) {};
\node (v12) at (3.5,-0.5) {};
\draw [-latex] (v11) edge  node[sloped,scale=0.8, below] {$\lambda$} node[sloped,scale=0.8, above] {immersion} (v12);
\node at (8.5,0.4) {$w$};
\node at (-1,1) {$e_1$};
\node at (-1,-1) {$e_2$};
\node at (7.2,0) {$h$};
\node at (10,-1.5) {$H$};
\node at (1,-1.5) {$G$};
\node (v9) at (3.5,0.5) {};
\node (v10) at (6.5,0.5) {};
\node (v11) at (6.5,-0.5) {};
\node (v12) at (3.5,-0.5) {};
\node at (-1,1) {$e_1$};
\node at (-1,-1) {$e_2$};
\node at (7.2,0) {$h$};
\node at (10,-1.5) {$H$};
\node at (1,-1.5) {$G$};
\node (v13) at (5,-4) {};
\node (v15) at (5,-7) {};
\node (v14) at (5,-5.5) {};
\node (v16) at (6,-5.5) {};
\node (v17) at (8.5,-5.5) {};
\draw [fill](v13) circle [radius=0.08];
\draw [fill](v14) circle [radius=0.08];
\draw [fill](v15) circle [radius=0.08];
\draw [fill](v16) circle [radius=0.08];
\draw [fill](v17) circle [radius=0.08];
\draw  (5,-4) --(5,-5.5)[postaction={decorate, decoration={markings,mark=at position .5 with {\arrow[black]{stealth}}}}];
\draw  (5,-5.5) -- (5,-7)[postaction={decorate, decoration={markings,mark=at position .5 with {\arrow[black]{stealth}}}}];
\draw  (6,-5.5) -- (8.5,-5.5)[postaction={decorate, decoration={markings,mark=at position .5 with {\arrow[black]{stealth}}}}];
\node at (4.5,-5.5) {$w'$};
\node at (4.5,-4.75) {$e_1$};
\node at (4.5,-6.25) {$e_2$};
\node at (6,-5.15) {$w$};
\node (v21) at (6.5,-4) {};
\node (v20) at (8,-2.5) {};
\node (v18) at (3.25,-4) {};
\node (v19) at (1.5,-2.5) {};
\draw [-latex] (v18) edge  node[sloped,scale=0.8, above] {merging $w,w'$}(v19);
\draw [-latex] (v20) edge node[sloped,scale=0.8, above] {subdividing $h$} (v21);
\end{tikzpicture}
\end{center}

\end{ex}

Immersions are inclusions that do not create any joint-edges, hence they are closed under composition.
The following theorem shows that immersions characterize immersion-minors exactly.
\begin{thm}\label{immersion-minor}
Let $H$, $G$ be two causal-nets.
$H$ is an immersion-minor of $G$ if and only if there is an immersion $\lambda:H\to G$.
\end{thm}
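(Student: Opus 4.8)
The plan is to prove the two implications separately, moving back and forth between the graph-theoretic operations that define an immersion-minor and the notion of an immersion in $\mathbf{Cau}$. For the ``only if'' direction I will show that each of the three elementary operations used to pass from $G$ to $H$ (deleting an isolated vertex, deleting an edge, performing an edge-lift) can be reversed by an immersion, and then appeal to the fact, noted just above the theorem, that immersions are closed under composition. For the ``if'' direction I will, conversely, start from an immersion $\lambda:H\to G$ and reconstruct a causal-net isomorphic to $H$ inside $G$: first delete the edges of $G$ that lie on no image path $\lambda(h)$, then collapse each remaining image path back to a single edge by a string of edge-lifts, and finally delete the interior vertices of the image paths that have been left isolated.

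For the ``only if'' direction, write $G=G_0\to G_1\to\cdots\to G_n=H$ where each arrow is one of the three elementary moves, and build an immersion $\mu_i:G_{i+1}\to G_i$ for each $i$. When $G_{i+1}=G_i-v$ or $G_{i+1}=G_i-e$, the morphism $\mu_i$ is the evident sub-causal-net inclusion: it is injective on vertices, faithful on paths (every directed path of $G_{i+1}$ is already a directed path of $G_i$), has no contraction and no multiple-edge, and is vacuously edge-disjoint since it has no subdivision, hence it is an immersion. When $G_{i+1}$ is obtained from $G_i$ by lifting a length-two path $e_2e_1$ (say $e_1:a\to b$, $e_2:b\to c$) to an edge $h:a\to c$ with the middle vertex $b$ retained, define $\mu_i$ to be the identity on vertices and on every edge other than $h$, and to send $h$ to the path $e_2e_1$. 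One then checks injectivity on vertices, faithfulness (a directed path of $G_{i+1}$ uses $h$ at most once since $G_{i+1}$ is acyclic, and the place where $h$ sat is recovered as the unique occurrence of the pattern $e_2e_1$ in the image path), the absence of contractions and of multiple-edges, and edge-disjointness (the only subdivision is $h$); so $\mu_i$ is again an immersion. Composing, $\mu_0\circ\mu_1\circ\cdots\circ\mu_{n-1}:H\to G$ is an immersion.

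For the ``if'' direction, let $\lambda:H\to G$ be an immersion. Being an inclusion, $\lambda$ has no contraction, so each $\lambda(h)$ is a directed path of length $\geq 1$; being edge-disjoint, the edge sets of $\lambda(h)$ and $\lambda(h')$ are disjoint for $h\neq h'$, so every edge of $G$ lies on at most one image path. Now operate on $G$: first delete all edges lying on no image path; next, running through the edges of $H$ in any order, collapse each $\lambda(h)=e_k\cdots e_1$ of length $k\geq 2$ by $k-1$ successive edge-lifts (first replace $(e_2,e_1)$ by a new edge $f_1$, then $(e_3,f_1)$ by $f_2$, and so on), each legal because the two edges are consecutive and because an edge-lift cannot create a directed cycle, and each retaining the corresponding interior vertex of $\lambda(h)$; by edge-disjointness the collapses belonging to different edges of $H$ do not touch one another. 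After all collapses the resulting causal-net has the vertex set of $G$, carries exactly one edge $\bar h$ from $\lambda(s(h))$ to $\lambda(t(h))$ for each $h\in E(H)$, and has every vertex outside $\lambda(V(H))$ isolated (the only surviving edges are the $\bar h$'s, each incident only to vertices in $\lambda(V(H))$); deleting those isolated vertices yields a causal-net on which $u\mapsto\lambda(u)$, $h\mapsto\bar h$ is an isomorphism onto $H$, injectivity on vertices being exactly the inclusion hypothesis. Hence $H$ is an immersion-minor of $G$.

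The routine parts are the functoriality and faithfulness verifications for the reverse immersions and the standard observation that an edge-lift preserves acyclicity. The delicate part is the bookkeeping in the ``if'' direction: one must verify that every interior vertex of the image paths --- in particular the joint-vertices, those coinciding with some $\lambda(u)$ --- ends up incident to precisely the edges it should after all the collapses, and that the order in which the image paths are collapsed is irrelevant. This is exactly where edge-disjointness, rather than mere faithfulness, is indispensable: it is what guarantees that collapsing one image path neither deletes an edge required by another nor alters its endpoints, so that the reconstruction terminates with a causal-net isomorphic to $H$.
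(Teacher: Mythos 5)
Your overall strategy is the natural one and, since the paper itself offers nothing beyond ``not difficult to verify,'' it is essentially the intended argument: reverse each of the three elementary operations by an explicit immersion and compose for the ``only if'' direction; delete the unused edges, collapse each image path by successive edge-lifts, and delete the leftover isolated vertices for the ``if'' direction. The verifications you supply (faithfulness of the edge-lift reversal via the unique occurrence of the pattern $e_2e_1$, preservation of acyclicity, the bookkeeping at joint-vertices) are all correct.

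There is, however, one step that does not follow from the paper's stated definitions, and it is the load-bearing one. In the ``if'' direction you assert that ``being edge-disjoint, the edge sets of $\lambda(h)$ and $\lambda(h')$ are disjoint for $h\neq h'$.'' The paper only requires that any two \emph{subdivisions} of $\lambda$ have edge-disjoint images; it says nothing about a segment whose single image edge lies on the image path of a subdivision. With that literal reading the statement you are proving is false: let $G$ be the directed path $x\xrightarrow{e_1}y\xrightarrow{e_2}z$ and let $H$ have edges $h_1:a\to b$ and $h_2:a\to c$, with $\lambda(h_1)=e_1$ and $\lambda(h_2)=e_2e_1$. This $\lambda$ is an inclusion with exactly one subdivision, hence vacuously edge-disjoint, hence an immersion in the paper's sense; yet $H$ is not an immersion-minor of $G$, because each of the three elementary operations strictly decreases $|V|+|E|$ while $H$ and $G$ have the same counts and are not isomorphic. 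So your proof is correct precisely for the strengthened (and surely intended) notion in which the images of \emph{all} pairs of distinct edges must be edge-disjoint; under the literal definition the quoted step fails and so does the theorem, so you should flag explicitly which notion you are using. The same caveat touches the ``only if'' direction, where you invoke closure of immersions under composition: that too fails for the literal definition (compose the $\lambda$ above with a subdivision of $e_1$), although the particular composite you construct does satisfy the stronger pairwise-disjointness property, as a short induction on the number of elementary operations shows, so that direction can be repaired without appealing to the general closure claim.
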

\begin{proof}
$(\Rightarrow)$ If $H$ is an immersion-minor of $G$, then $H$ can be obtained from $G$ by a sequence of operations of deleting an isolated vertex, deleting an edge and an edge-lifting, which means that there is a sequence of causal-nets $G=K_0\overset{O_1}{\rightsquigarrow} K_1\overset{O_2}{\rightsquigarrow}\cdots\overset{O_{n-1}}{\rightsquigarrow} K_{n-1}\overset{O_n}{\rightsquigarrow} K_{n}=H$ connected by three types of operations $O_i:K_{i-1}\rightsquigarrow K_{i}$ $(1\leq i\leq n)$. Each $O_i:K_{i-1}\rightsquigarrow K_{i}$ of these three types of operations  can be represented in the opposite direction by an immersion $\lambda_{i}:K_{i}\to K_{i-1}$. Since immersions are closed under composition, then $\lambda=\lambda_1\circ\cdots\circ\lambda_n:H\to G$ is an immersion.

$(\Leftarrow)$ We prove this direction by induction on the number $n$ of subdivisions. Let $\lambda:H\to G$ be an immersion with $n$ subdivisions. If $n=0$, then the image of $H$ is just a sub-causal-net of $G$. In this case, $H$ can be obtained from $G$ by a series of operations of deleting an isolated vertex and deleting an edge, which means that $H$ is an immersion-minor of $G$.

Assume that for $n\leq k$, the existence of an immersion implies that the immersion-minor relation. For $n=k+1$, let $h$ be an subdivision of $\lambda$ and assume $\lambda(h)=e_{n}e_{n-1}\cdots e_0$, then we can construct a sequence of causal-nets $K_0,K_1, \cdots, K_n$ with $K_0=G$ and a sequence of edge-liftings $G=K_0\overset{O_1}{\rightsquigarrow} K_1\overset{O_2}{\rightsquigarrow}\cdots\overset{O_{n-1}}{\rightsquigarrow} K_{n-1}\overset{O_n}{\rightsquigarrow} K_{n}$, where $O_1$ lifts the path $e_{1}e_{0}$ to edge $h_1$, $O_2$ lifts the path $e_2h_1$ to edge $h_2$, $\cdots$, $O_n$ lifts the path $e_nh_{n-1}$ to edge $h_n$.  The sequence of edge-liftings can be represented in the opposite direction by a sequence of immersions $K_n\overset{\phi_n}{\to} K_{n-1}\overset{\phi_{n-1}}{\to}\cdots\overset{\phi_2}{\to} K_1\overset{\phi_1}{\to} G=K_0$.

Notice that $\lambda$ is an immersion, it does not create any joint-edges. Therefore, $\lambda$ has a sequence of natural lifts $\lambda_i$s with respect to the sequence of immersions $\phi_i$s, where $\lambda_i:H\to K_i$ with $\lambda_i(h)=e_ne_{n-1}\cdots e_{i+1}h_i$ and the images of all vertices and all other edges unchanged. Especially, $\lambda_n(h)=h_n$, which means that $h$ is a segment (not a subdivision) of $\lambda$. In summary, we have the following commutative diagram of morphisms.

\begin{center}
\begin{tikzpicture}

\node (v1) at (-2,1) {$H$};
\node (v2) at (2.5,-1.5) {$G=K_0$};
\node (v3) at (0.5,-1.5) {$K_{1}$};
\node (v4) at (-1,-1.5) {$K_{2}$};
\node (v5) at (-2.5,-1.5) {$\cdots$};
\draw [-latex] (v1) edge node[sloped, above] {$\lambda$}(v2);
\draw [-latex] (v3) edge node[sloped, below] {$\phi_1$}(v2);
\draw [-latex] (v4) edge node[sloped, below] {$\phi_2$}(v3);
\draw [-latex] (v5) edge node[sloped, below] {$\phi_3$}(v4);
\node (v6) at (-4,-1.5) {$K_n$};
\draw [-latex] (v6) edge node[sloped, below] {$\phi_n$} (v5);
\draw [-latex] (v1) edge node[sloped, above] {$\lambda_1$}(v3);
\draw  [-latex](v1) edge node[sloped, above] {$\lambda_2$}(v4);
\draw [-latex] (v1) edge node[sloped, above] {$\lambda_n$}(v6);
\end{tikzpicture}
\end{center}

Evidently, $\lambda_n:H\to K_n$ is an immersion with $k$ subdivisions, which, by the induction hypothesis, implies that $H$ is an immersion-minor of $K_n$. By its construction, $K_n$ is an immersion-minor of $G$, which implies that $H$ is an immersion-minor of $G$.
\end{proof}

A morphism $\lambda$ is called \textbf{strong} if for any subdivision $e$ of $\lambda$, all internal vertices of $\lambda(e)$ are null-vertices of $\lambda$. In other words, an immersion is strong if all its subdividing-vertices are null-vertices (with no pre-images). It is not difficult to see that strong immersions are closed under composition.

The following notion is parallel to that of a strong-immersion-minor in ordinary graph theory.
\begin{defn}
A causal-net $H$ is a \textbf{strong-immersion-minor} of $G$, if there is a strong immersion $\lambda:H\to G$.
\end{defn}
 The immersion in Example \ref{immersion} is not strong, because the subdividing-vertex $v$ is not a null-vertex (with $w$ as a pre-image). An immersion is called a \textbf{weak immersion} when it is not a strong one.
\begin{ex}\label{strong-immersion}
We show an example of a strong-immersion-minor with $\lambda(h_1)=e_3e_1, \lambda(h_2)=e_4e_2$ and $\lambda(h)=e$. The unique subdividing-vertex $v$ is a null-vertex.
\begin{center}
\begin{tikzpicture}[scale=0.9]
\node (v1) at (-0.5,2) {};
\node (v2) at (-0.5,-1) {};
\draw [fill](v1) circle [radius=0.08];\draw [fill](v2) circle [radius=0.08];
\draw  plot[smooth, tension=.7] coordinates {(v1) (-1,0.5) (v2)}[postaction={decorate, decoration={markings,mark=at position .55 with {\arrow[black]{stealth}}}}];
\draw  plot[smooth, tension=.7] coordinates {(-0.5,2) (0,1) (-0.5,-1)}[postaction={decorate, decoration={markings,mark=at position .5 with {\arrow[black]{stealth}}}}];
\node (v3) at (4.5,2.5) {};
\node (v4) at (4.6,0.6) {};
\node (v5) at (4.5,-1.5) {};
\draw [fill](v3) circle [radius=0.08];
\draw [fill](v4) circle [radius=0.08];
\draw [fill](v5) circle [radius=0.08];
\draw  plot[smooth, tension=.7] coordinates {(v3) (4,1.4) (v4)}[postaction={decorate, decoration={markings,mark=at position .55 with {\arrow[black]{stealth}}}}];
\draw  plot[smooth, tension=.7] coordinates {(v4) (4.2,-0.4) (v5)}[postaction={decorate, decoration={markings,mark=at position .5 with {\arrow[black]{stealth}}}}];
\draw  plot[smooth, tension=.7] coordinates {(v3) (4.8,1.6) (4.6,0.6)}[postaction={decorate, decoration={markings,mark=at position .5 with {\arrow[black]{stealth}}}}];
\draw  plot[smooth, tension=.7] coordinates {(4.6,0.6) (5,-0.4) (v5)}[postaction={decorate, decoration={markings,mark=at position .5 with {\arrow[black]{stealth}}}}];
\node (v6) at (1,0.6) {};
\node (v7) at (3.4,0.6) {};
\draw   [-latex](v6) edge node[sloped,scale=0.8, below] {strong-immersion}node[sloped,scale=0.8, above] {$\lambda$} (v7);
\node at (-1.2,0.8) {$h_1$};
\node at (0.1,0) {$h_2$};
\node at (3.8,1.7) {$e_1$};
\node at (5,1.2) {$e_2$};
\node at (4,-1) {$e_3$};
\node at (5.1,-1) {$e_4$};
\node at (0,2.5) {$H$};
\node at (3.5,2.5) {$G$};
\draw [fill](-2,1) circle [radius=0.08];
\draw(-0.5,2)--(-2,1)[postaction={decorate, decoration={markings,mark=at position .5 with {\arrow[black]{stealth}}}}];
\draw(4.5,2.5)--(5.9,1.5)[postaction={decorate, decoration={markings,mark=at position .5 with {\arrow[black]{stealth}}}}];
\draw [fill](5.9,1.5) circle [radius=0.08];
\node at (5.5,2.1) {$e$};
\node at (-1.25,1.9) {$h$};
\node at (5,0.6) {$v$};
\end{tikzpicture}
\end{center}
\end{ex}

We will end this subsection by listing several corollaries of Theorem \ref{coar-inclu}.

\begin{cor}
Any edge-disjoint morphism $\lambda$ can be uniquely represented as a composition of  a coarse-graining $\lambda_{cg}$ and an immersion $\lambda_{im}$.
\begin{center}
\begin{tikzpicture}[scale=1.3]
\node (v1) at (-2,0.5) {$G_1$};
\node (v2) at (4,0.5) {$G_2$};
\draw[-latex]  (v1) edge node[sloped,scale=0.8, below] {edge-disjoint morphism} node[sloped,scale=0.8, above] {$\lambda$}(v2);
\node (v4) at (1,-0.5) {$G_c$};
\draw[-latex]  (v1) edge node[sloped,scale=0.8, above] {$\lambda_{cg}$}node[sloped,scale=0.8, below] {coarse-graining} (v4);
\draw[-latex]  (v4) edge node[sloped,scale=0.8, above] {$\lambda_{im}$}node[sloped,scale=0.8, below] {immersion}(v2);
\end{tikzpicture}
\end{center}
\end{cor}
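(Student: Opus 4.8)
The plan is to deduce the statement directly from Theorem~\ref{coar-inclu} by checking that the inclusion part of the canonical factorisation inherits edge-disjointness. First I would apply Theorem~\ref{coar-inclu} to the given edge-disjoint morphism $\lambda:G_1\to G_2$, obtaining the unique factorisation $\lambda=\lambda_{in}\circ\lambda_{cg}$ with $\lambda_{cg}:G_1\to G_{cg}$ a coarse-graining and $\lambda_{in}:G_{cg}\to G_2$ an inclusion, where, as recorded in the proof of that theorem, $V(G_{cg})=V(G_1)/\!\sim_\lambda$ and $E(G_{cg})=(Seg(\lambda)\sqcup Subd(\lambda))/\!\sim_\lambda$. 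It then suffices to prove two things: that $\lambda_{in}$ is in fact an immersion, and that any factorisation of $\lambda$ into a coarse-graining followed by an immersion must coincide with this one.

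For the uniqueness clause there is nothing to do: an immersion is a fortiori an inclusion, so any factorisation $\lambda=\mu_{im}\circ\mu_{cg}$ with $\mu_{cg}$ a coarse-graining and $\mu_{im}$ an immersion is in particular a factorisation into a coarse-graining followed by an inclusion, whence $\mu_{cg}=\lambda_{cg}$ and $\mu_{im}=\lambda_{in}$ by the uniqueness clause of Theorem~\ref{coar-inclu}. So the whole content lies in showing that $\lambda_{in}$ is edge-disjoint. The key bookkeeping observation is that the subdivisions of $\lambda_{in}$ are precisely the $\sim_\lambda$-classes of the subdivisions of $\lambda$. Indeed, since $\lambda_{cg}$ is a coarse-graining it has no subdivision, so it sends each $e\in Subd(\lambda)$ to a single edge $\lambda_{cg}(e)$ of $G_{cg}$, and the factorisation gives $\lambda_{in}(\lambda_{cg}(e))=\lambda(e)$, a directed path of length $\ge 2$; hence $\lambda_{cg}(e)$ is a subdivision of $\lambda_{in}$. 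Conversely, every edge of $G_{cg}$ is the $\sim_\lambda$-class of some $e\in Seg(\lambda)\sqcup Subd(\lambda)$; if $e\in Seg(\lambda)$ then $\lambda_{in}$ sends that class to $\lambda(e)$, which has length one and so is not a subdivision of $\lambda_{in}$, whence every subdivision of $\lambda_{in}$ has the form $\lambda_{cg}(e)$ with $e\in Subd(\lambda)$.

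With this in hand, take two distinct subdivisions $f_1\neq f_2$ of $\lambda_{in}$, say $f_i=\lambda_{cg}(e_i)$ with $e_i\in Subd(\lambda)$. Then $e_1\not\sim_\lambda e_2$, so in particular $e_1\neq e_2$, and the edge-disjointness hypothesis on $\lambda$ yields that $\lambda(e_1)$ and $\lambda(e_2)$ have no edge in common; since $\lambda_{in}(f_i)=\lambda(e_i)$, the paths $\lambda_{in}(f_1)$ and $\lambda_{in}(f_2)$ have no edge in common. Thus $\lambda_{in}$ is edge-disjoint, hence an immersion, which completes the argument.

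I expect the only delicate point to be the middle paragraph: correctly identifying $Subd(\lambda_{in})$ with the classes of $Subd(\lambda)$, and making sure that the collapsing of parallel edges performed by $\lambda_{cg}$ cannot manufacture two genuinely distinct subdivisions of $\lambda_{in}$ whose images share an edge in $G_2$ — but this is exactly what the edge-disjointness hypothesis on $\lambda$ forbids (in fact it already forces $\sim_\lambda$ to be trivial on $Subd(\lambda)$, since distinct edge-disjoint subdivisions have distinct $\lambda$-images). Everything else is formal, being carried by Theorem~\ref{coar-inclu} and the definitions of coarse-graining, inclusion, and immersion.
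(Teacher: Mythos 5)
Your argument is correct and is exactly the route the paper intends: the corollary is stated as an immediate consequence of Theorem~\ref{coar-inclu} (the paper gives no further proof), and your verification that $Subd(\lambda_{in})$ consists precisely of the $\sim_\lambda$-classes of $Subd(\lambda)$, so that edge-disjointness of $\lambda$ transfers to $\lambda_{in}$, supplies the missing details faithfully. The uniqueness reduction to the uniqueness clause of Theorem~\ref{coar-inclu} is also right.
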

\begin{proof}
By Theorem \ref{coar-inclu}, $\lambda$ can be uniquely decomposed as a composition of a coarse-graining $\lambda_{cg}$ and an inclusion $\lambda_{in}$. Since $\lambda$ is edge-disjoint, then $\lambda_{in}$ is edge-disjoint, which means that $\lambda_{in}$ is an immersion.
\end{proof}

\begin{cor}
Any strong morphism $\lambda$ can be uniquely represented as a composition of  a coarse-graining $\lambda_{cg}$ and a strong inclusion $\lambda_{str-in}$.
\begin{center}
\begin{tikzpicture}[scale=1.2]
\node (v1) at (-2,0.5) {$G_1$};
\node (v2) at (4,0.5) {$G_2$};
\draw[-latex]  (v1) edge node[sloped,scale=0.8, below] {strong morphism} node[sloped,scale=0.8, above] {$\lambda$}(v2);
\node (v4) at (1,-0.5) {$G_c$};
\draw[-latex]  (v1) edge node[sloped,scale=0.8, above] {$\lambda_{cg}$}node[sloped,scale=0.8, below] {coarse-graining} (v4);
\draw[-latex]  (v4) edge node[sloped,scale=0.8, above] {$\lambda_{str-in}$}node[sloped,scale=0.8, below] {strong-inclusion}(v2);
\end{tikzpicture}
\end{center}
\end{cor}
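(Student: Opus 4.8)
The plan is to deduce this directly from Theorem \ref{coar-inclu} by checking that the inclusion factor it produces is automatically strong when the original morphism is strong. First I would apply Theorem \ref{coar-inclu} to the strong morphism $\lambda:G_1\to G_2$, obtaining the unique factorization $\lambda=\lambda_{in}\circ\lambda_{cg}$ with $\lambda_{cg}:G_1\to G_{cg}$ a coarse-graining and $\lambda_{in}:G_{cg}\to G_2$ an inclusion. I would then recall from the proof of that theorem the explicit description $V(G_{cg})=V(G_1)/\sim_\lambda$ and $E(G_{cg})=(Seg(\lambda)\sqcup Subd(\lambda))/\sim_\lambda$, together with the fact that $\lambda_{in}$ sends the class $[e]$ of an edge $e\in Seg(\lambda)\sqcup Subd(\lambda)$ to the directed path $\lambda(e)$, and the fact that $\lambda_{cg}$ is surjective on vertices.

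Next I would identify the subdivisions of $\lambda_{in}$. Since $\lambda_{in}$ is an inclusion it has no contractions, so each edge of $G_{cg}$ is a segment or a subdivision of $\lambda_{in}$, and $[e]$ is a subdivision of $\lambda_{in}$ precisely when $l(\lambda_{in}([e]))=l(\lambda(e))\geq 2$, i.e. precisely when $e\in Subd(\lambda)$. Fixing such a subdivision $[e]$ with $e\in Subd(\lambda)$, we have $\lambda_{in}([e])=\lambda(e)$, so the internal vertices of $\lambda_{in}([e])$ are exactly the internal vertices of $\lambda(e)$. Because $\lambda$ is strong, every such internal vertex $w$ is a null-vertex of $\lambda$, i.e. $\lambda^{-1}(w)=\emptyset$. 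I would then argue that $w$ is also a null-vertex of $\lambda_{in}$: any preimage of $w$ under $\lambda_{in}$ is of the form $\lambda_{cg}(v)$ for some $v\in V(G_1)$ (surjectivity of $\lambda_{cg}$ on vertices), whence $\lambda(v)=\lambda_{in}(\lambda_{cg}(v))=w$, contradicting $\lambda^{-1}(w)=\emptyset$. Hence $\lambda_{in}$ is a strong inclusion, and we take $\lambda_{str-in}=\lambda_{in}$.

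Finally, uniqueness comes for free: any representation of $\lambda$ as a coarse-graining followed by a strong inclusion is in particular a representation as a coarse-graining followed by an inclusion, which is unique by Theorem \ref{coar-inclu}. The only point I would be careful about — and it is the main, though mild, obstacle — is lining up the bookkeeping of the factorization in Theorem \ref{coar-inclu} (which edges of $G_{cg}$ come from which edges of $G_1$, and the identity $\lambda_{in}([e])=\lambda(e)$) with the definitions of \emph{subdivision}, \emph{internal vertex} and \emph{null-vertex}; once that correspondence is made explicit, the argument reduces to the short implication above and there is no genuine difficulty.
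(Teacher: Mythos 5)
Your argument is correct and follows exactly the route the paper intends: the corollary is stated without proof as a consequence of Theorem \ref{coar-inclu}, and you supply the missing verification that the inclusion factor $\lambda_{in}$ inherits strongness from $\lambda$ (via $\lambda_{in}([e])=\lambda(e)$ and the surjectivity of $\lambda_{cg}$ on vertices), with uniqueness coming from the uniqueness in Theorem \ref{coar-inclu}. No gaps.
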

\begin{proof}
The proof is similar to that of the above corollary. The fact that $\lambda$ is strong implies that $\lambda_{in}$ is strong.
\end{proof}

\begin{cor}
Any strong edge-disjoint morphism $\lambda$ can be uniquely represented as a composition of  a coarse-graining $\lambda_{cg}$ and a strong-immersion $\lambda_{str-im}$.
\begin{center}
\begin{tikzpicture}[scale=1.5]
\node (v1) at (-2,0.5) {$G_1$};
\node (v2) at (4,0.5) {$G_2$};
\draw[-latex]  (v1) edge node[sloped,scale=0.8, below] {strong edge-disjoint morphism} node[sloped,scale=0.8, above] {$\lambda$}(v2);
\node (v4) at (1,-0.5) {$G_c$};
\draw[-latex]  (v1) edge node[sloped,scale=0.8, above] {$\lambda_{cg}$}node[sloped,scale=0.8, below] {coarse-graining} (v4);
\draw[-latex]  (v4) edge node[sloped,scale=0.8, above] {$\lambda_{str-im}$}node[sloped,scale=0.8, below] {strong-immersion}(v2);
\end{tikzpicture}
\end{center}
\end{cor}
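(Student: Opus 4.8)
The plan is to get this for free from the two preceding corollaries together with the uniqueness clause of Theorem \ref{coar-inclu}, rather than redoing any graph-theoretic bookkeeping. First I would observe that a strong edge-disjoint morphism $\lambda$ is in particular edge-disjoint and in particular strong, so both preceding corollaries apply to it. Applying the edge-disjoint corollary gives a factorisation $\lambda=\lambda_{im}\circ\lambda_{cg}$ with $\lambda_{cg}$ a coarse-graining and $\lambda_{im}$ an immersion; applying the strong corollary gives a factorisation $\lambda=\lambda_{str\text{-}in}\circ\lambda_{cg}'$ with $\lambda_{cg}'$ a coarse-graining and $\lambda_{str\text{-}in}$ a strong inclusion.

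Next, since an immersion is an inclusion and a strong inclusion is an inclusion, both of these displays are instances of the coarse-graining-then-inclusion factorisation of Theorem \ref{coar-inclu}. By the uniqueness asserted there, the two factorisations coincide: $\lambda_{cg}=\lambda_{cg}'$ and $\lambda_{im}=\lambda_{str\text{-}in}=:\mu$. Hence $\mu$ is simultaneously an immersion and a strong inclusion, i.e. an inclusion which is both edge-disjoint and strong; by the definitions in this section that is precisely a strong immersion. Setting $\lambda_{str\text{-}im}:=\mu$ yields $\lambda=\lambda_{str\text{-}im}\circ\lambda_{cg}$ as desired. Uniqueness of this new factorisation is immediate: any way of writing $\lambda$ as a strong immersion after a coarse-graining is again a coarse-graining-then-inclusion factorisation, so Theorem \ref{coar-inclu} forces it to agree with the one just produced.

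I do not expect any real obstacle here; the only thing to be careful about is invoking uniqueness correctly and recording the elementary observation that an inclusion which is at once an immersion and a strong inclusion is a strong immersion. If one instead preferred a self-contained argument, one would start from the factorisation $\lambda=\lambda_{in}\circ\lambda_{cg}$ of Theorem \ref{coar-inclu} and check directly that ``edge-disjoint'' and ``strong'' pass from $\lambda$ to $\lambda_{in}$; for this it suffices to note, using the explicit description of that factorisation, that $\lambda_{cg}$ is surjective on vertices (so a vertex is a null-vertex of $\lambda$ iff it is a null-vertex of $\lambda_{in}$) and that $\lambda_{cg}$ carries the subdivisions of $\lambda$ bijectively onto those of $\lambda_{in}$ with the image paths $\lambda(e)=\lambda_{in}(\lambda_{cg}(e))$ unchanged, whence edge-disjointness and strongness transfer verbatim.
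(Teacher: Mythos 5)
Your argument is correct, and in fact the paper offers no proof here at all: the text explicitly says ``Without giving proofs, we end this subsection by listing several results,'' so all three corollaries are left as consequences of Theorem~\ref{coar-inclu}. Your primary route --- apply the edge-disjoint corollary and the strong corollary separately, note that both resulting factorisations are coarse-graining-then-inclusion factorisations, and invoke the uniqueness clause of Theorem~\ref{coar-inclu} to force the two inclusion factors to coincide, hence to be simultaneously an immersion and a strong inclusion, i.e.\ a strong immersion --- is a clean bootstrap that costs nothing beyond the previous two corollaries; the uniqueness of the new factorisation follows for the same reason. Your fallback direct argument is presumably what the author had in mind for all three corollaries: in the canonical factorisation $\lambda=\lambda_{in}\circ\lambda_{cg}$, surjectivity of $\lambda_{cg}$ on vertices makes the null-vertices of $\lambda$ and of $\lambda_{in}$ agree, and $\lambda_{cg}$ maps $Subd(\lambda)$ onto $Subd(\lambda_{in})$ with the image paths $\lambda(e)=\lambda_{in}(\lambda_{cg}(e))$ unchanged, so both ``strong'' and ``edge-disjoint'' descend to $\lambda_{in}$. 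One small precision: that map on subdivisions is always surjective but is injective only because $\lambda$ is edge-disjoint (two distinct subdivisions with the same $\lambda$-image would share every edge and get identified in $G_{cg}$); surjectivity is all the transfer argument actually needs, so this does not affect correctness.
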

\begin{proof}
This is a direct consequence of the above two corollaries.
\end{proof}

\subsection{Weak and topological embedding}\label{wte}
In this subsection, we introduce more restricted types of inclusions. We need more terminologies.
Two subdivisions $e_1,e_2$ of morphism $\lambda$ are called \textbf{vertex-disjoint} if their images $\lambda(e_1)$ and $\lambda(e_2)$ have no common \textbf{internal} vertices. We call a morphism \textbf{vertex-disjoint} if any two of its subdivisions are vertex-disjoint. Vertex-disjoint is a more restricted property than edge-disjoint. Both  Example \ref{immersion} and \ref{strong-immersion} are not vertex-disjoint because there $\lambda(h_1)$ and $\lambda(h_2)$ have a common internal vertex $v$.

The following notion is a special type of an immersion, which may be useful somewhere.
\begin{defn}
An inclusion is called a \textbf{weak-embedding} if it is vertex-disjoint.
\end{defn}

As the following example shown, a weak-embedding may create joint-vertices, but none of those joint-vertices is a null-vertex.
\begin{ex}
The following figure shows an example of a weak-embedding, with $\lambda(h_1)=e_1$, $\lambda(h_2)=e_3e_2$. In this example, the weak-embedding $\lambda$ creates a joint-vertex $v$, which is not a null-vertex, hence it is not a strong-immersion.
\begin{center}

\begin{tikzpicture}[scale=1]
\node (v1) at (1.25,1.25) {};
\node (v2) at (0.5,0) {};
\node (v3) at (2,0) {};
\node (v4) at (5.75,1.75) {};
\node (v6) at (6.75,-0.5) {};
\node (v5) at (6.5,0.5) {};
\draw [fill](v1) circle [radius=0.08];
\draw [fill](v2) circle [radius=0.08];
\draw [fill](v3) circle [radius=0.08];
\draw [fill](v4) circle [radius=0.08];
\draw [fill](v5) circle [radius=0.08];
\draw [fill](v6) circle [radius=0.08];
\draw  (1.25,1.25) -- (0.5,0) [postaction={decorate, decoration={markings,mark=at position .5 with {\arrow[black]{stealth}}}}];
\draw  (1.25,1.25)-- (2,0)[postaction={decorate, decoration={markings,mark=at position .5 with {\arrow[black]{stealth}}}}];
\draw  (5.75,1.75) -- (6.5,0.5)[postaction={decorate, decoration={markings,mark=at position .5 with {\arrow[black]{stealth}}}}];
\draw  (6.5,0.5)-- (6.75,-0.5)[postaction={decorate, decoration={markings,mark=at position .5 with {\arrow[black]{stealth}}}}];
\draw  plot[smooth, tension=.7] coordinates {(v4) (5.75,0.75) (v5)}[postaction={decorate, decoration={markings,mark=at position .6 with {\arrow[black]{stealth}}}}];
\node (v7) at (2.75,0.25) {};
\node (v8) at (4.75,0.25) {};
\draw[-latex]  (v7) edge node[sloped,scale=0.8, below] {weak-embedding}node[sloped,scale=0.8, above] {$\lambda$}(v8);
\node at (0.55,0.75) {$h_1$};
\node at (1.9,0.75) {$h_2$};
\node at (5.5,1) {$e_1$};
\node at (6.4,1.25) {$e_2$};
\node at (6.9,0) {$e_3$};
\node at (6.75,0.5) {$v$};
\end{tikzpicture}
\end{center}

\end{ex}

A single edge-lifting, as shown in Example \ref{edge-lift}, can be represented in the opposite direction by a weak-embedding. As mentioned above, both  Example \ref{immersion} and \ref{strong-immersion} are not weak-embeddings, but they are immersions, which, by Theorem \ref{immersion-minor},  implies that a composition of a sequence of edge-liftings in general may not be represented in the opposite direction by a weak-embedding. So weak-embeddings are not closed under composition.

  The following notion is more common.
\begin{defn}
An inclusion is called a \textbf{topological embedding} if it is strong and vertex-disjoint.
\end{defn}

By definition, it is not difficult to see that topological embeddings are closed under composition.

A causal-net $H$ is called a \textbf{topological minor} of  $G$, if $G$ can be obtained from $H$ by a sequence of operations of adding an isolated vertex, adding an edge, and subdividing an edge (inserting a middle vertex into an edge). All these three types of graphical operations can be represented in the same direction by morphisms of causal-nets.
\begin{ex}
The following two morphisms $\lambda_1$ and $\lambda_2$ represent the topological minor relation of $H$ and $G$, where $\lambda_1(h_1)=e_2, \lambda_1(h_3)=e_4e_3$ and
$\lambda_2(h_1)=e_3e_2, \lambda_2(h_3)=e_4$. In this example, $\lambda_1$ subdivides $h_3$ and $\lambda_2$ subdivides $h_1$.
\begin{center}
\begin{tikzpicture}[scale=0.75]
\node (v1) at (-2.5,0.5) {};
\node (v2) at (-1.5,-3) {};
\node (v3) at (-0.5,-0.5) {};
\draw  (-2.5,0.5) -- (-1.5,-3)[postaction={decorate, decoration={markings,mark=at position .5 with {\arrow[black]{stealth}}}}];
\draw  (-2.5,0.5) -- (-0.5,-0.5)[postaction={decorate, decoration={markings,mark=at position .5 with {\arrow[black]{stealth}}}}];
\draw   (-0.5,-0.5) -- (-1.5,-3)[postaction={decorate, decoration={markings,mark=at position .5 with {\arrow[black]{stealth}}}}];

\node (v4) at (4.5,0.5) {};
\node (v5) at (5,-3) {};
\node (v6) at (6,0) {};
\node (v7) at (7,-1.5) {};
\node (v8) at (7.5,-3) {};
\draw  (4.5,0.5)-- (5,-3)[postaction={decorate, decoration={markings,mark=at position .5 with {\arrow[black]{stealth}}}}];
\draw   (4.5,0.5)--(6,0)[postaction={decorate, decoration={markings,mark=at position .5 with {\arrow[black]{stealth}}}}];
\draw  (6,0) -- (7,-1.5)[postaction={decorate, decoration={markings,mark=at position .5 with {\arrow[black]{stealth}}}}];
\draw (7,-1.5)-- (5,-3)[postaction={decorate, decoration={markings,mark=at position .5 with {\arrow[black]{stealth}}}}];
\draw (7,-1.5) -- (7.5,-3)[postaction={decorate, decoration={markings,mark=at position .5 with {\arrow[black]{stealth}}}}];
\node (v9) at (0.5,-1) {};
\node (v10) at (3.5,-1) {};
\draw  [-latex](v9) edge node[sloped,scale=0.8, below] {topological minor}node[sloped,scale=0.8, above] {$\lambda_1$ or $\lambda_2$} (v10);
\node at (-2.8,0.4) {$v_1$};
\node at (-2,-3) {$v_3$};
\node at (-0.2,-0.2) {$v_2$};
\node at (-1.2,0.2) {$h_1$};
\node at (-2.4,-1.2) {$h_2$};
\node at (-0.6,-1.6) {$h_3$};
\node at (4,0.4) {$w_1$};
\node at (4.6,-3) {$w_4$};
\node at (6.2,0.3) {$w_2$};
\node at (7.4,-1.4) {$w_3$};
\node at (7.9,-3) {$w_5$};
\node at (4.4,-1.6) {$e_1$};
\node at (5.4,0.5) {$e_2$};
\node at (6.8,-0.6) {$e_3$};
\node at (6.2,-2.6) {$e_4$};
\node at (7.6,-2.2) {$e_5$};
\draw [fill](v1) circle [radius=0.09];\draw [fill](v2) circle [radius=0.09];\draw [fill](v3) circle [radius=0.09];\draw [fill](v4) circle [radius=0.09];\draw [fill](v5) circle [radius=0.09];\draw [fill](v6) circle [radius=0.09];\draw [fill](v7) circle [radius=0.09];\draw [fill](v8) circle [radius=0.09];
\node at (-0.25,-2.5) {$H$};
\node at (4,-2.5) {$G$};
\end{tikzpicture}
\end{center}
\end{ex}

The graph-theoretic notion of a topological minor can be characterized by the category-theoretic notion of a topological embedding.
\begin{thm}\label{tm}
Let $H$, $G$ be two causal-nets.
$H$ is a topological-minor of $G$ if and only if there is a topological embedding $\lambda:H\to G$.
\end{thm}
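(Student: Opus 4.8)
The plan is to prove the two implications separately, following the scheme already used for immersions (Theorem~\ref{immersion-minor}) and the other minor characterizations. For the direction ``$H$ is a topological minor of $G$ $\Rightarrow$ there is a topological embedding $H\to G$'', I would first note that each of the three elementary operations defining the topological-minor relation is realized by a canonical morphism in the opposite direction which is a topological embedding: adding an isolated vertex or adding an edge to $G_i$ gives the evident inclusion $G_i\hookrightarrow G_{i+1}$, which has no subdivision and is therefore vacuously strong and vertex-disjoint; subdividing an edge $f$ of $G_i$ into $f_2f_1$ gives the morphism $G_i\to G_{i+1}$ sending $f\mapsto f_2f_1$ and fixing everything else, whose only subdivision is $f$, whose unique subdividing-vertex is the fresh middle vertex (hence a null-vertex), and which is vertex-disjoint for lack of a second subdivision. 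Then I would prove the lemma that topological embeddings are closed under composition: if $\lambda:G_1\to G_2$ and $\mu:G_2\to G_3$ are topological embeddings, then $\mu\circ\lambda$ is an inclusion (inclusions compose), it is strong because every internal vertex of $\mu(\lambda(e))$ is either an internal vertex of some $\mu(f)$ with $f$ a subdivision of $\mu$ --- hence a null-vertex of $\mu$, hence (by injectivity of $\mu$ on vertices) a null-vertex of $\mu\circ\lambda$ --- or the $\mu$-image of an internal vertex of $\lambda(e)$ with $e$ a subdivision of $\lambda$ --- hence the image of a null-vertex of $\lambda$, which again cannot lie in $\mu(\lambda(V(G_1)))$; and it is vertex-disjoint by a parallel case analysis on whether the relevant edges of $G_1$ already have long images under $\lambda$. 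Composing the elementary topological embeddings along a witnessing sequence $H=G_0,\dots,G_n=G$ then produces a topological embedding $H\to G$.

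For the converse, ``there is a topological embedding $\lambda:H\to G$ $\Rightarrow$ $H$ is a topological minor of $G$'', I would induct on the number $|Subd(\lambda)|$ of subdivisions of $\lambda$. If $|Subd(\lambda)|=0$ then $\lambda$ sends edges to edges, so it identifies $H$ with a sub-causal-net of $G$, and $G$ is recovered from $H$ by adding the missing vertices as isolated vertices and then the missing edges one at a time (each intermediate graph being a subgraph of $G$, hence acyclic). For the inductive step, choose a subdivision $e$ with $\lambda(e)=f_k\cdots f_1$, $k\ge 2$, internal vertices $v_1,\dots,v_{k-1}$. Strongness makes each $v_j$ a null-vertex and vertex-disjointness forbids any $v_j$ from lying on the image of another subdivision; together with faithfulness and injectivity on vertices, this shows that $\lambda(e)$ meets the image of any other edge of $H$ only at $\lambda(s(e)),\lambda(t(e))$, and that every edge of $G$ incident to some $v_j$ either belongs to the path $f_k\cdots f_1$ or is an ``extra'' edge not in the image of $\lambda$. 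I would then form $G'$ from $G$ by deleting $v_1,\dots,v_{k-1}$ and all edges incident to them and inserting a single new edge $\bar e\colon s(f_1)\to t(f_k)$; acyclicity of $G'$ follows from that of $G$, since a cycle through $\bar e$ would yield a cycle through $\lambda(e)$ in $G$. The morphism $\lambda'\colon H\to G'$ that agrees with $\lambda$ except for $e\mapsto\bar e$ is then a well-defined topological embedding with $|Subd(\lambda')|=|Subd(\lambda)|-1$, so by induction $H$ is a topological minor of $G'$; and $G$ is obtained from $G'$ by subdividing $\bar e$ back into $f_k\cdots f_1$ and then re-adding the deleted extra edges (each move legitimate, as every intermediate graph is a subgraph of $G$), whence $H$ is a topological minor of $G$ by transitivity of the relation.

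The routine but slightly delicate point --- the main obstacle --- is the bookkeeping around those ``extra'' edges together with the verification that $\lambda'$ is again a topological embedding with exactly one fewer subdivision: one must check that $\lambda'$ is well-defined (no image of an $H$-edge passes through a deleted vertex or uses a deleted edge), that its source and target are correct at $\bar e$, that removing the $v_j$'s with their extra edges and restoring them later are all valid topological-minor moves, and that strongness and vertex-disjointness are inherited because the images of the remaining subdivisions are left untouched. Each of these checks is forced by the two defining properties (strong and vertex-disjoint), and the dual ``closed under composition'' lemma plays the analogous role in the first direction; both amount to careful, elementary graph-theoretic arguments rather than anything conceptually new.
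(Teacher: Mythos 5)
Your proposal is correct and follows exactly the route the paper indicates: the paper offers no written proof beyond the remark that the statement "can be easily proved by induction on number of subdivisions of morphisms," and your argument is precisely that induction (for the converse direction) combined with realizing each elementary operation as a topological embedding and invoking closure of topological embeddings under composition (for the forward direction), which the paper also asserts without proof at the end of the same subsection. Your bookkeeping around the null internal vertices, the "extra" edges incident to them, and the inheritance of strongness and vertex-disjointness by $\lambda'$ supplies correctly the details the paper omits.
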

\begin{proof}
$(\Rightarrow)$ If $H$ is a topological-minor of $G$, then $G$ can be obtained from $H$ by a sequence of operations of adding an isolated vertex, adding an edge and subdividing an edge, which means that there is a sequence of causal-nets $H=K_0\overset{O_1}{\rightsquigarrow} K_1\overset{O_2}{\rightsquigarrow}\cdots\overset{O_{n-1}}{\rightsquigarrow} K_{n-1}\overset{O_n}{\rightsquigarrow} K_{n}=G$ connected by three types of operations $O_i:K_{i-1}\rightsquigarrow K_{i}$ $(1\leq i\leq n)$. Each $O_i:K_{i-1}\rightsquigarrow K_{i}$ of these three types of operations  can be represented in the same direction by a topological embedding $\lambda_{i}:K_{i-1}\to K_{i}$. Since topological embeddings are closed under composition, then $\lambda=\lambda_1\circ\cdots\circ\lambda_n:H\to G$ is a topological embedding.

$(\Leftarrow)$ We prove this direction by induction on the number $n$ of subdivisions. Let $\lambda:H\to G$ be a topological minor with $n$ subdivisions. If $n=0$, then the image of $H$ is just a sub-causal-net of $G$. In this case, $G$ can be obtained from $H$ by a series of operations of adding an isolated vertex and adding an edge, which means that $H$ is a topological-minor of $G$.

Assume that for $n\leq k$, the existence of a topological embedding implies the topological-minor relation. For $n=k+1$, let $h$ be an subdivision of $\lambda$ and assume $\lambda(h)=e_{n}e_{n-1}\cdots e_0$, then we can construct a sequence of causal-nets $K_0,K_1, \cdots, K_n$ with $K_0=H$ and a sequence of operations of subdividing an edge $H=K_0\overset{O_1}{\rightsquigarrow} K_1\overset{O_2}{\rightsquigarrow}\cdots\overset{O_{n-1}}{\rightsquigarrow} K_{n-1}\overset{O_n}{\rightsquigarrow} K_{n}$, where $O_1$ subdivides  $h$ to path $\overline{h}_1h_0$, $O_2$ subdivides  $\overline{h}_1$ to path $\overline{h}_2h_1$, $\cdots$, $O_n$ subdivides  $\overline{h}_{n-1}$ to path $\overline{h}_nh_{n-1}$.  The sequence of subdivisions can be represented in the same direction by a sequence of topological embeddings $H=K_0\overset{\phi_1}{\to} K_{1}\overset{\phi_{2}}{\to}\cdots\overset{\phi_{n-1}}{\to} K_{n-1}\overset{\phi_n}{\to} K_n$. Evidently, for any $1\leq i\leq n$,  $\phi_i\circ\phi_{i-1}\circ\cdots\circ\phi_1(h)=\overline{h}_ih_{i-1}h_{i-2}\cdots h_1h_0$.

Notice that $\lambda$ is a topological embedding, it is strong and vertex-disjoint. Therefore, $\lambda$ has a sequence of natural extensions $\lambda_i:K_i\to G$ with respect to the sequence of subdivisions $\phi_i:K_{i-1}\to K_i$ such that  $\lambda_1(h_0)=e_0$, $\lambda_1(\overline{h}_1)=e_ne_{n-1}\cdots e_1$; $\lambda_2(h_1)=e_1$, $\lambda_2(\overline{h}_2)=e_ne_{n-1}\cdots e_2$; $\cdots$; $\lambda_n(h_{n-1})=e_{n-1}$, $\lambda_n(\overline{h}_n)=e_n$;  and the images of all other vertices and edges unchanged. Evidently, for any $1\leq i\leq n$, $\lambda_{i}\circ\phi_i\circ\phi_{i-1}\circ\cdots\circ\phi_1(h)=e_ne_{n-1}\cdots e_1e_0$. Especially, $\lambda_n(\overline{h}_nh_{n-1}\cdots h_1h_0)=e_ne_{n-1}\cdots e_1e_0$. In summary, we have the following commutative diagram of morphisms.

\begin{center}
\begin{tikzpicture}
\node (v1) at (-2.5,1) {$H=K_0$};
\node (v2) at (-1,1) {$K_1$};
\node (v3) at (0.5,1) {$\cdots$};
\node (v4) at (2,1) {$K_{n-1}$};
\node (v5) at (3.5,1) {$K_n$};
\draw [-latex] (v1) edge node[sloped, above] {$\phi_1$} (v2);
\draw  [-latex] (v2) edge node[sloped, above] {$\phi_2$} (v3);
\draw [-latex]  (v3) edge node[sloped, above] {$\phi_{n-1}$} (v4);
\draw [-latex]  (v4) edge node[sloped, above] {$\phi_n$}(v5);
\node (v6) at (1.5,-1.5) {$G$};
\draw [-latex]  (v1) edge node[sloped, below] {$\lambda$} (v6);
\draw [-latex]  (v2) edge node[sloped, above] {$\lambda_1$} (v6);
\draw [-latex]  (v4) edge node[sloped, above] {$\lambda_{n-1}$}(v6);
\draw [-latex]  (v5) edge node[sloped, below] {$\lambda_n$} (v6);
\end{tikzpicture}
\end{center}

Evidently, $\lambda_n:K_n\to G$ is a topological embedding with $k$ subdivisions, which, by the induction hypothesis, implies that $K_n$ is a topological-minor of $G$. By the construction of $K_n$, $H$ is an topological-minor of $K_n$, which implies that $H$ is a topological-minor of $G$.
\end{proof}

\subsection{Subdivision and homeomorphism}
Let $\lambda:H\to G$ be a morphism, $v\in V(G)$ and $h\in E(H)$, we say that $h$ is a \textbf{cover} of $v$ or $v$ is \textbf{covered} by $h$,  if $v$ is a vertex of the directed path $\lambda(h)$.

\begin{defn}
A topological embedding $\lambda:H\to G$ is called a \textbf{subdivision} if each non-isolated vertex of $G$ has a cover and all isolated vertices of $G$ are simple.
\end{defn}

The operation of subdividing an edge is invertible, whose inverse operation is called a \textbf{smoothing}.
We call causal-net $G$ a \textbf{subdivision} of causal-net $H$ (or call $H$ a \textbf{smoothing} of G)  if $G$ can be obtained from $H$ by a sequence of operations of subdividing an edge, or equivalently, if  $H$ can be obtained from $G$ by a sequence of smoothings.

Subdividing an edge can be uniquely represented in the same direction by a subdivision, while smoothing can be represented in two ways and in the same direction by path-contractions.

\begin{ex} A subdivision $\lambda$ may have different left inverses $\chi_1,\chi_2$, where $\lambda(v_1)=w_1$, $\lambda(v_2)=w_3$; $\chi_1(w_1)=\chi_1(w_2)=v_1,\chi_1(w_3)=v_2$; $\chi_2(w_1)=v_1,\chi_2(w_2)=\chi_2(w_3)=v_2$. Both $\chi_1$ and $\chi_2$ are path-contractions, representing the same smoothing.
\begin{center}
  \begin{tikzpicture}[scale=0.7]
\node (v1) at (-0.5,1) {};
\node (v2) at (-0.5,-1.5) {};
\node (v3) at (4.5,1.5) {};
\node (v5) at (4.5,-2.5) {};
\node (v4) at (4.5,-0.5) {};
\node (v6) at (9.5,1) {};
\node (v7) at (9.5,-1.5) {};
\node (v8) at (0.5,-0.5) {};
\node (v9) at (3,-0.5) {};
\node (v10) at (6,-0.5) {};
\node (v11) at (8.5,-0.5) {};
\draw [fill](v1) circle [radius=0.075];
\draw [fill](v2) circle [radius=0.075];
\draw [fill](v3) circle [radius=0.075];
\draw [fill](v4) circle [radius=0.075];
\draw [fill](v5) circle [radius=0.075];
\draw [fill](v6) circle [radius=0.075];
\draw [fill](v7) circle [radius=0.075];
\draw   (-0.5,1) -- (-0.5,-1.5)[postaction={decorate, decoration={markings,mark=at position .5 with {\arrow[black]{stealth}}}}];
\draw (4.5,1.5) -- (4.5,-0.5)[postaction={decorate, decoration={markings,mark=at position .5 with {\arrow[black]{stealth}}}}];
\draw (4.5,-0.5) -- (4.5,-2.5)[postaction={decorate, decoration={markings,mark=at position .5 with {\arrow[black]{stealth}}}}];
\draw  (9.5,1)--(9.5,-1.5)[postaction={decorate, decoration={markings,mark=at position .5 with {\arrow[black]{stealth}}}}];
\node at (-1,1) {$v_1$};
\node at (-1,-1.5) {$v_2$};
\node at (4,1.5) {$w_1$};
\node at (5,-0.5) {$w_2$};
\node at (4,-2.5) {$w_3$};
\node at (10,1) {$v_1$};
\node at (10,-1.5) {$v_2$};
\draw [-latex]  (v8) edge  node[sloped,scale=0.8, below] {subdivision}node[sloped,scale=0.8, above] {$\lambda$} (v9);
\draw [-latex]  (v10) edge  node[sloped,scale=0.8, below] {smoothing}node[sloped,scale=0.8, above] {$\chi_1,\chi_2$} (v11);
\end{tikzpicture}
\end{center}
\end{ex}

\begin{thm}\label{subd}
Let $H$, $G$ be two causal-nets. $G$ is a subdivision of $H$ if and only if there is a subdivision $\lambda:H\to G$.
\end{thm}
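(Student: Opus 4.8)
The plan is to prove both implications, treating the forward direction as a direct construction and the converse by induction on the number of subdivisions of the morphism, exactly as the statement suggests.

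For the forward direction, suppose $G$ is obtained from $H$ by subdividing a subset $S\subseteq E(H)$ of edges, each $e\in S$ replaced by a directed path $P_e$ of length $l_e\ge 2$ through freshly added internal vertices. I would let $\lambda\colon H\to G$ be the set-inclusion $V(H)\hookrightarrow V(G)$ on vertices, send each $e\notin S$ to itself, send each $e\in S$ to $P_e$, and extend to paths by functoriality. Then I would verify the conditions packaged in the definition of a subdivision morphism one at a time: it is an inclusion (injective on vertices as a set-inclusion, faithful on paths since an $H$-path is recovered from its image by smoothing, no contraction since every edge maps to a path of length $\ge 1$); it is strong, because the internal vertices of each $\lambda(e)=P_e$ are exactly the new vertices of $G$, hence outside the image of $\lambda$ on vertices, hence null-vertices; it is vertex-disjoint, because the $P_e$ for distinct $e\in S$ use disjoint sets of new vertices; and the cover and simplicity conditions hold, since every new vertex of $G$ is internal to some $\lambda(e)$, every surviving non-isolated vertex of $G$ is an endpoint of some $\lambda(e)$, and the isolated vertices of $G$ are precisely those of $H$, each with itself as its only preimage. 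This part is routine bookkeeping.

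For the converse, let $\lambda\colon H\to G$ be a subdivision morphism and induct on $n=|Subd(\lambda)|$. In the base case $n=0$ the morphism maps each edge to an edge, and the goal is to see that $\lambda$ is an isomorphism, so that $G\cong H$ is a subdivision of $H$ with empty subdivided set. The cover condition forces every non-isolated vertex of $G$ to be an endpoint of $\lambda(h)$ for some edge $h$, and the simplicity condition handles the isolated ones, so $\lambda$ is bijective on vertices; to get surjectivity on edges I would invoke Theorem \ref{path}, which supplies a path-contraction $\chi\colon G\to H$ with $\chi\circ\lambda=\mathrm{id}_H$. Being a vertex-coarse-graining that is bijective on vertices, $\chi$ has no contraction and no multiple-edge and is surjective on edges, hence an isomorphism, whence so is $\lambda=\chi^{-1}$. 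For the inductive step, pick $e\in Subd(\lambda)$ with $\lambda(e)=f_k\cdots f_1$, where $k=l(\lambda(e))\ge 2$, and internal vertices $u_1,\dots,u_{k-1}$; let $H'$ be $H$ with $e$ replaced by a directed path of length $k$ through new vertices $w_1,\dots,w_{k-1}$, let $\mu\colon H\to H'$ be the resulting subdivision morphism (from the forward direction), and define $\lambda'\colon H'\to G$ to agree with $\lambda$ on $V(H)$ and on $E(H)\setminus\{e\}$, to send $w_i\mapsto u_i$, and to send the $i$-th new edge to $f_i$. One checks $\lambda'\circ\mu=\lambda$, that $\lambda'$ is again a subdivision morphism (inclusion, strong, vertex-disjoint, cover and simplicity are all inherited from $\lambda$, since the affected null-vertices are untouched), and that $|Subd(\lambda')|=n-1$. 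By the induction hypothesis $G$ is a subdivision of $H'$, while $H'$ is a subdivision of $H$ by construction, and the relation "is a subdivision of" is transitive, because subdividing pieces of already-subdivided edges only subdivides the original edges further. Hence $G$ is a subdivision of $H$.

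The step I expect to be the main obstacle is precisely the base case $n=0$: the theorem is true only because a subdivision morphism carries more data than "an inclusion sending edges to edges'' (which alone would permit $G$ to acquire a stray parallel edge outside the image), so extracting an honest isomorphism genuinely needs the cover and simplicity conditions for the vertices and the path-contraction left inverse of Theorem \ref{path} for the edges. Once that is in place, the inductive step is only the careful but mechanical check that the elementary-subdivision factorization $\lambda=\lambda'\circ\mu$ strictly decreases $|Subd(\lambda)|$ while preserving the subdivision-morphism property, combined with transitivity of the subdivision relation.
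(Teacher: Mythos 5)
Your proof is correct and follows exactly the route the paper indicates: the paper offers no written argument for this statement beyond the remark that it ``can be easily proved by induction on the number of subdivisions of morphisms,'' and your forward construction plus induction on $|Subd(\lambda)|$ is precisely that scheme, carried out in full. The one point worth recording is that your base case leans on Theorem \ref{path} to obtain edge-surjectivity --- a genuine necessity, since the cover and simplicity conditions in the definition of a subdivision only control vertices and by themselves would not exclude an edge of $G$ lying outside the image of $\lambda$ --- but as Theorem \ref{path} is stated earlier in the paper, this is a legitimate appeal and the argument is complete.
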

\begin{proof}
$(\Rightarrow)$ If $G$ is a subdivision of $H$, then $G$ can be obtained from $H$ by a sequence of operations of subdividing an edge, that is, there is a sequence of causal-nets $H=K_0\overset{O_1}{\rightsquigarrow} K_1\overset{O_2}{\rightsquigarrow}\cdots\overset{O_{n-1}}{\rightsquigarrow} K_{n-1}\overset{O_n}{\rightsquigarrow} K_{n}=G$ connected by edge-subdivisions $O_i:K_{i-1}\rightsquigarrow K_{i}$ $(1\leq i\leq n)$. Each $O_i:K_{i-1}\rightsquigarrow K_{i}$ can be represented in the same direction by a subdivision $\lambda_{i}:K_{i-1}\to K_{i}$. Since subdivisions are closed under composition, then $\lambda=\lambda_1\circ\cdots\circ\lambda_n:H\to G$ is a subdivision.

$(\Leftarrow)$ We prove this direction by induction on the number $n$ of subdivisions. Let $\lambda:H\to G$ be a subdivision with $n$ subdivisions. If $n=0$, then by the definition of a subdivision, we see that $\lambda$ is an isomorphism.

Assume that for $n\leq k$, the existence of a subdivision implies the subdivision relation. For $n=k+1$, let $h$ be an subdivision of $\lambda$ and assume $\lambda(h)=e_{n}e_{n-1}\cdots e_0$, then, just as the proof of Theorem \ref{tm}, we can construct a sequence of causal-nets $K_0,K_1, \cdots, K_n$ with $K_0=H$ and a sequence of operations of subdividing an edge $H=K_0\overset{O_1}{\rightsquigarrow} K_1\overset{O_2}{\rightsquigarrow}\cdots\overset{O_{n-1}}{\rightsquigarrow} K_{n-1}\overset{O_n}{\rightsquigarrow} K_{n}$, where $O_1$ subdivides  $h$ to path $\overline{h}_1h_0$, $O_2$ subdivides  $\overline{h}_1$ to path $\overline{h}_2h_1$, $\cdots$, $O_n$ subdivides  $\overline{h}_{n-1}$ to path $\overline{h}_nh_{n-1}$.  The sequence of operations can be represented in the same direction by a sequence of subdivisions $H=K_0\overset{\phi_1}{\to} K_{1}\overset{\phi_{2}}{\to}\cdots\overset{\phi_{n-1}}{\to} K_{n-1}\overset{\phi_n}{\to} K_n$. Evidently, for any $1\leq i\leq n$,  $\phi_i\circ\phi_{i-1}\circ\cdots\circ\phi_1(h)=\overline{h}_ih_{i-1}h_{i-2}\cdots h_1h_0$.

Since $\lambda$ is a subdivision, then $\lambda$ has a sequence of natural extensions $\lambda_i:K_i\to G$ with respect to the sequence of subdivisions $\phi_i:K_{i-1}\to K_i$ such that  $\lambda_1(h_0)=e_0$, $\lambda_1(\overline{h}_1)=e_ne_{n-1}\cdots e_1$; $\lambda_2(h_1)=e_1$, $\lambda_2(\overline{h}_2)=e_ne_{n-1}\cdots e_2$; $\cdots$; $\lambda_n(h_{n-1})=e_{n-1}$, $\lambda_n(\overline{h}_n)=e_n$;  and the images of all other vertices and edges unchanged. Evidently, for any $1\leq i\leq n$, $\lambda_{i}\circ\phi_i\circ\phi_{i-1}\circ\cdots\circ\phi_1(h)=e_ne_{n-1}\cdots e_1e_0$. Especially, $\lambda_n(\overline{h}_nh_{n-1}\cdots h_1h_0)=e_ne_{n-1}\cdots e_1e_0$. In summary, we have the following commutative diagram of morphisms.

\begin{center}
\begin{tikzpicture}[scale=1]
\node (v1) at (-2.5,1) {$H=K_0$};
\node (v2) at (-1,1) {$K_1$};
\node (v3) at (0.5,1) {$\cdots$};
\node (v4) at (2,1) {$K_{n-1}$};
\node (v5) at (3.5,1) {$K_n$};
\draw [-latex] (v1) edge node[sloped, above] {$\phi_1$} (v2);
\draw  [-latex] (v2) edge node[sloped, above] {$\phi_2$} (v3);
\draw [-latex]  (v3) edge node[sloped, above] {$\phi_{n-1}$} (v4);
\draw [-latex]  (v4) edge node[sloped, above] {$\phi_n$}(v5);
\node (v6) at (1.5,-1.5) {$G$};
\draw [-latex]  (v1) edge node[sloped, below] {$\lambda$} (v6);
\draw [-latex]  (v2) edge node[sloped, above] {$\lambda_1$} (v6);
\draw [-latex]  (v4) edge node[sloped, above] {$\lambda_{n-1}$}(v6);
\draw [-latex]  (v5) edge node[sloped, below] {$\lambda_n$} (v6);
\end{tikzpicture}
\end{center}

Evidently, $\lambda_n:K_n\to G$ is a subdivision with $k$ subdivisions, which, by the induction hypothesis, implies that $G$ is a subdivision of $K_n$. By the construction of $K_n$, it is a subdivision of $H$, which implies that $G$ is a subdivision of $H$.
\end{proof}

If there is a subdivision $\lambda:H\to G$, then the geometric realization $|\lambda|:|H|\to |G|$ of $\lambda$  must be a homeomorphism (topological isomorphism). But conversely, if the geometric realizations $|H|$ and $|G|$ are homeomorphic,  in general there may not be a subdivision between $H$ and $G$. So we introduce the following notion.

\begin{defn}
Two causal-nets $G_1$ and $G_2$ are called \textbf{homeomorphic}, if there exist a causal-net $K$ and two subdivisions $\lambda_1:G_1\to K$ and $\lambda_2:G_2\to K$, or equivalently, if there exist a causal-net $L$ and two subdivisions $\kappa_1:L\to G_1$ and $\kappa_2:L\to G_2$.
\begin{center}
\begin{tikzpicture}[scale=0.9]
\node (v1) at (-2,0) {$G_1$};
\node (v2) at (0,1) {$K$};
\node (v3) at (2,0) {$G_2$};
\node (v4) at (0,-1) {$L$};
\draw [-latex] (v1) edge node[sloped,scale=1, above] {$\lambda_1$}(v2);
\draw [-latex] (v3) edge node[sloped,scale=1, above] {$\lambda_2$}(v2);
\draw  [-latex](v4) edge node[sloped,scale=1, below] {$\kappa_1$}(v1);
\draw [-latex] (v4) edge node[sloped,scale=1, below] {$\kappa_2$} (v3);
\end{tikzpicture}
\end{center}
\end{defn}
In this definition, the causal-nets $K$ and $L$ are called a \textbf{common subdivision} and a \textbf{common smoothing} of $G_1$ and $G_2$, respectively.
We call a zig-zag of subdivisions a \textbf{homeomorphism}.

\begin{prop}
Two causal-nets are homeomorphic if and only if there is a homeomorphism between them.
\end{prop}
The relation of homeomorphism is an equivalence relation among causal-nets.

\subsection{Embedding as sub-causal-net}
In this subsection, we consider the simplest type of inclusions.

\begin{defn}
A morphism is called an \textbf{embedding} if it is an inclusion with no subdivisions.
\end{defn}

It is straightforward to verify the following result.
\begin{thm}\label{sub}
Let $H$, $G$ be two causal-nets. Then the following conditions are equivalent:

$(1)$ $H$ is a sub-causal-net of $G$;

$(2)$ there is an embedding $\lambda:H\to G$;

$(3)$ $H$ can be obtained from $G$ by a sequence of operations of deleting an edge, deleting an isolated vertex;

$(4)$ $G$ can be obtained from $H$ by a sequence of operations of adding an edge, adding an isolated vertex.
\end{thm}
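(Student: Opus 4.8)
The plan is to prove the cycle of implications $(1)\Rightarrow(2)\Rightarrow(3)\Rightarrow(1)$ together with the trivial equivalence $(3)\Leftrightarrow(4)$, so that all four statements coincide. Throughout I read ``$H$ is a sub-causal-net of $G$'' as holding up to isomorphism, exactly as the paper already does for quotient-causal-nets.

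For $(1)\Rightarrow(2)$: suppose $H$ is a sub-causal-net of $G$, so $V(H)\subseteq V(G)$, $E(H)\subseteq E(G)$, and the endpoint maps of $H$ are the restrictions of those of $G$. Define $\lambda\colon H\to G$ by sending each vertex and each edge to itself and each directed path $e_1\cdots e_n$ of $H$ to the same edge-sequence regarded as a directed path of $G$; this is a well-defined functor $\mathbf{P}(H)\to\mathbf{P}(G)$ because $E(H)$ is closed under $s$ and $t$. It is injective on vertices, faithful on morphisms (a directed path is determined by its edge-sequence), and sends every edge to an edge, hence has no subdivision; so $\lambda$ is an embedding.

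For $(2)\Rightarrow(3)$: let $\lambda\colon H\to G$ be an embedding. Since $\lambda$ has no contraction and no subdivision it carries each edge of $H$ to an edge of $G$, and since it is an inclusion it has no multiple-vertex and no multiple-edge, so it is injective on vertices and on edges while preserving sources and targets. Therefore $\lambda(V(H))$ and $\lambda(E(H))$, with the induced endpoint maps, form a sub-causal-net of $G$ isomorphic to $H$. Identifying $H$ with this image, delete the edges in $E(G)\setminus\lambda(E(H))$ one at a time, and afterwards delete the vertices in $V(G)\setminus\lambda(V(H))$; each vertex-deletion is legal because, once all edges outside $\lambda(E(H))$ have been removed, any $v\notin\lambda(V(H))$ is isolated, as every edge of $\lambda(E(H))$ has both endpoints in $\lambda(V(H))$ and so is not incident to $v$. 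This realizes $H$ from $G$ by the operations of $(3)$.

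For $(3)\Rightarrow(1)$ and $(3)\Leftrightarrow(4)$: deleting an edge or an isolated vertex from a causal-net yields a sub-causal-net, and the sub-causal-net relation is transitive, so $(3)\Rightarrow(1)$, closing the cycle; and since ``delete an edge'' reverses ``add an edge'' and ``delete an isolated vertex'' reverses ``add an isolated vertex'', reading a witnessing sequence for $(3)$ backwards gives one for $(4)$ and conversely. I do not expect a real obstacle: all four conditions unwind to the same combinatorial data. The only points requiring a moment's care are the identification up to isomorphism in $(2)\Rightarrow(3)$ --- an embedding presents $H$ only as a copy of a sub-causal-net, not literally as one --- and the order of deletions in that step: the edges incident to a vertex must be removed before the vertex itself, so that each intermediate graph is a genuine causal-net and every operation is admissible.
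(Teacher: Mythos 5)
Your proposal is correct; the paper itself omits the proof, declaring the result ``straightforward to verify,'' and your cycle of implications $(1)\Rightarrow(2)\Rightarrow(3)\Rightarrow(1)$ together with $(3)\Leftrightarrow(4)$ is exactly the natural verification that is being left to the reader. The two points you flag --- identifying $H$ with its isomorphic image under the embedding, and deleting incident edges before their endpoints so every intermediate stage is a causal-net --- are precisely the details worth making explicit.
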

The operation of adding an edge can be represented by the second type of fundamental morphism (see Section \ref{Relation}), and the operation of adding an isolated vertex can be represented by the third type of fundamental morphism. Clearly, any embedding is a composition of the second and third types of fundamental morphisms.

The following theorem is of particular importance for a category-theoretic reformulation of the Baez construction of spin network states \cite{[B96]}.
\begin{thm}\label{sce}
Any morphism $\lambda:G_1\to G_2$ can be uniquely represented as a composition of a subdivision $\lambda_s$, a coarse-graining $\lambda_{cg}$ and an embedding $\lambda_\iota$.
\begin{center}
\begin{tikzpicture}[scale=1.1]
\node (v1) at (-2.5,0.5) {$G_1$};
\node (v2) at (4,0.5) {$G_2$};
\draw[-latex]  (v1) edge  node[sloped,scale=0.8, above] {$\lambda$}node[sloped,scale=0.8, below] {morphism}(v2);
\node (v3) at (-0.75,-1.25) {$G_s$};
\node (v4) at (2.25,-1.25) {$G_{cg}$};
\draw[-latex]  (v1) edge node[sloped,scale=0.8, above] {$\lambda_s$}node[sloped,scale=0.8, below] {subdivision} (v3);
\draw[-latex]  (v3) edge node[sloped,scale=0.8, above] {$\lambda_{cg}$}node[sloped,scale=0.8, below] {coarse-graining}(v4);
\draw [-latex] (v4) edge  node[sloped,scale=0.8, above] {$\lambda_\iota$} node[sloped,scale=0.8, below] {embedding}(v2);
\end{tikzpicture}
\end{center}
\end{thm}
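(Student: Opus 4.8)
The plan is to obtain the desired three-term factorization by composing the two binary factorizations already established, namely Theorem \ref{coar-inclu} (any morphism $=$ coarse-graining followed by inclusion) and a refinement of Theorem \ref{sub}-type reasoning that splits off the subdivision part first. First I would apply Theorem \ref{coar-inclu} to $\lambda$ in a slightly different grouping: instead of (coarse-graining)$\circ$(nothing), I want to peel off a subdivision $\lambda_s:G_1\to G_s$ at the very front. Concretely, recall from the proof of Theorem \ref{coar-inclu} that $E(G_{cg})=\big(Seg(\lambda)\sqcup Subd(\lambda)\big)/\!\sim_\lambda$; the idea is to instead build $G_s$ by literally subdividing each edge $e\in Subd(\lambda)$ of $G_1$ into a directed path of length $l(\lambda(e))$, matching the length of its image, while leaving segments and contractions untouched. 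This defines a subdivision $\lambda_s:G_1\to G_s$ (it is strong and vertex-disjoint in the trivial way, each subdividing-vertex being fresh), and $\lambda$ factors through $\lambda_s$ because the universal property of subdividing an edge lets us route $\lambda$ past the inserted vertices. The induced map $\lambda':G_s\to G_2$ then has \emph{no} subdivision: every edge of $G_s$ is sent to an identity, an edge, by construction.

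Next I would apply Theorem \ref{coar-inclu} to this $\lambda':G_s\to G_2$. Since $\lambda'$ has no subdivision, its coarse-graining/inclusion factorization $\lambda'=\lambda_{in}\circ\lambda_{cg}$ has the feature that $\lambda_{cg}:G_s\to G_{cg}$ is a genuine coarse-graining (no subdivision, no null-vertex, no null-edge) and $\lambda_{in}:G_{cg}\to G_2$ is an inclusion that moreover has no subdivision, i.e. an embedding in the sense of the last definition, call it $\lambda_\iota$. Composing, $\lambda=\lambda_\iota\circ\lambda_{cg}\circ\lambda_s$ with $\lambda_s$ a subdivision, $\lambda_{cg}$ a coarse-graining, and $\lambda_\iota$ an embedding, which is exactly the claimed form.

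For uniqueness I would argue as follows. The embedding part $\lambda_\iota$ is forced to be the inclusion of the sub-causal-net of $G_2$ \emph{generated by the image} of $\lambda$ (on vertices: the image vertex set; on edges: the edges of $G_2$ lying on some $\lambda(e)$), since an embedding is determined by its image sub-causal-net and anything outside that image can never be hit by a composite whose last factor is an embedding. Once $G_{cg}\hookrightarrow G_2$ is pinned down, $\lambda_{cg}\circ\lambda_s:G_1\to G_{cg}$ is determined, and it is a morphism with the property that every edge of $G_1$ maps either to an identity (contractions), an edge (segments, subdivided into length-one pieces), or a path (subdivisions); the vertex set $V(G_s)$ must be $V(G_1)$ together with exactly one inserted vertex per internal vertex of each $\lambda(e)$, $e\in Subd(\lambda)$, with no identifications (a subdivision is injective on vertices), so $G_s$ and $\lambda_s$ are determined; and then $\lambda_{cg}$ is determined as the unique factorization of $\lambda_{cg}\circ\lambda_s$ through $\lambda_s$. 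I expect the main obstacle to be the bookkeeping in this uniqueness argument — in particular, verifying carefully that the "minimal" choice of $G_s$ (inserting exactly $l(\lambda(e))-1$ vertices on each subdivided edge and nothing more) is genuinely forced, rather than merely convenient; the existence half is essentially a diagram chase through the two prior factorization theorems and should go through smoothly once the construction of $\lambda_s$ is spelled out.
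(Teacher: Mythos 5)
Your proposal is correct and follows essentially the same route as the paper: first replace each subdivision $e\in Subd(\lambda)$ by a directed path matching the length of $\lambda(e)$ to obtain the subdivision $\lambda_s:G_1\to G_s$ and a residual morphism $\lambda':G_s\to G_2$ with no subdivision, then apply Theorem \ref{coar-inclu} to $\lambda'$ and observe that the resulting inclusion has no subdivision, hence is an embedding. Your sketch of the uniqueness argument (pinning down $\lambda_\iota$ as the inclusion of the image sub-causal-net and then recovering $G_s$) goes beyond what the paper writes out, which leaves uniqueness implicit, but it is consistent with the stated claim.
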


\begin{proof}
The subdivision $G_s$ is obtained from $G_1$ by applying the operation that replacing each subdivision $e\in Subd(\lambda)$ by a copy of its image $\lambda(e)$, which can be represented by the subdivision $\lambda_s:G_1\to G_s$ according to Theorem \ref{subd}. Then $\lambda$ can be naturally represented as  a composition of $\lambda_s$ and a morphism $\lambda':G_s\to G_2$ without subdivisions, that is, we have the following commutative diagram

\begin{center}
\begin{tikzpicture}[scale=1.2]
\node (v1) at (-2,0.5) {$G_1$};
\node (v2) at (4,0.5) {$G_2$};
\draw[-latex]  (v1) edge node[sloped,scale=0.8, below] {morphism} node[sloped,scale=0.8, above] {$\lambda$}(v2);
\node (v4) at (1,-0.5) {$G_s$};
\draw[-latex]  (v1) edge node[sloped,scale=0.8, above] {$\lambda_s$}node[sloped,scale=0.8, below] {subdivision} (v4);
\draw[-latex]  (v4) edge node[sloped,scale=0.8, above] {$\lambda'$}node[sloped,scale=0.8, below] {without subdivisions}(v2);
\end{tikzpicture}
\end{center}

By Theorem \ref{coar-inclu}, $\lambda'$ can be decomposed as a composition of a coarse-graining $\lambda_{cg}:G_s\to G_{cg}$ and an inclusion $\lambda_\iota:G_{cg}\to G_2$. Since $\lambda'$ has no subdivisions, $\lambda_\iota$ has no subdivisions, which means that $\lambda_\iota$ is an embedding. That is, we get the following commutative diagram

\begin{center}
\begin{tikzpicture}[scale=1.2]
\node (v1) at (-2,0.5) {$G_s$};
\node (v2) at (4,0.5) {$G_2$};
\draw[-latex]  (v1) edge node[sloped,scale=0.8, below] {without subdivisions} node[sloped,scale=0.8, above] {$\lambda'$}(v2);
\node (v4) at (1,-0.5) {$G_{cg}$};
\draw[-latex]  (v1) edge node[sloped,scale=0.8, above] {$\lambda_{cg}$}node[sloped,scale=0.8, below] {coarse-graining} (v4);
\draw[-latex]  (v4) edge node[sloped,scale=0.8, above] {$\lambda_\iota$}node[sloped,scale=0.8, below] {embedding}(v2);
\end{tikzpicture}
\end{center}

Combining the above two commutative diagrams, we complete the proof.
\end{proof}

\begin{thm}\label{fund}
A morphism is a coarse-graining if and only if it is a composition of fundamental coarse-grainings.
\end{thm}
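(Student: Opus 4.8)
The plan is to deduce the theorem from the decomposition results already in hand, peeling off a general coarse-graining into the three families of fundamental coarse-grainings (merging two vertices, coarse-graining two parallel edges, contracting an edge). The "if" direction is immediate: each of the fourth, fifth and sixth fundamental morphisms is a quotient with no subdivision, hence a coarse-graining, and since coarse-grainings are closed under composition, so is any composition of fundamental coarse-grainings. For the "only if" direction, start with a coarse-graining $\lambda:G\to H$; apply Theorem \ref{v-e} to write $\lambda=\lambda_\varepsilon\circ\lambda_\nu$ with $\lambda_\nu$ a vertex-coarse-graining and $\lambda_\varepsilon$ an edge-coarse-graining, and then Theorem \ref{c-m} to write $\lambda_\nu=\lambda_m\circ\lambda_{con}$ with $\lambda_{con}$ a contraction and $\lambda_m$ a merging. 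It then suffices to show separately that each of $\lambda_\varepsilon$, $\lambda_{con}$, $\lambda_m$ is a composition of fundamental coarse-grainings, since $\lambda=\lambda_\varepsilon\circ\lambda_m\circ\lambda_{con}$.

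For $\lambda_\varepsilon$: an edge-coarse-graining is given by a congruence on its source, whose classes are sets of mutually parallel edges; merging the members of each class two at a time realizes $\lambda_\varepsilon$ as a composition of fifth-type fundamental morphisms ("coarse-graining two parallel edges"), and acyclicity is never an issue because collapsing parallel edges does not alter the reachability relation. For $\lambda_{con}$: by Theorem \ref{contraction} it is a composition of simple contractions, so it is enough to decompose a single multi-edge-contraction $\mathcal{C}_\epsilon:G\to G/\{\epsilon\}$ with $\epsilon=\{e_1,\cdots,e_n\}$; I would first coarse-grain $e_1,\cdots,e_n$ into a single edge $e$ (that is $n-1$ fifth-type morphisms) and then contract $e$ (a sixth-type morphism). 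The point that makes the last step legal is that, since $G/\{\epsilon\}$ is again a causal-net, $G$ cannot contain a directed path from the common source to the common target of the $e_i$ other than the $e_i$ themselves — such a "bypass" would become a directed cycle after the contraction — so after the coarse-graining $e$ is the unique directed path between its endpoints, and contracting $e$ yields exactly $G/\{\epsilon\}$.

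The remaining case, $\lambda_m$ a merging, is where I expect the real work. By Theorem \ref{inc} each fiber of $\lambda_m$ is a coclique; I would process the non-trivial fibers one at a time, and inside a fiber $S=\{v_1,\cdots,v_k\}$ identify $v_1$ with $v_2$, then the resulting vertex with $v_3$, and so on, each single identification being the fourth-type fundamental morphism ("merging two vertices"). The main obstacle is to verify the invariant that at every stage the two vertices being identified are still incomparable in the current intermediate causal-net, so that each step genuinely is a fundamental morphism and the intermediate graph stays acyclic: if $u$ and $u'$ lie in a common $\lambda_m$-fiber and there were a directed path $u\to u'$ in the current quotient, its image under the residual merging onto $H$ would be a directed path of positive length from $\lambda_m(u)$ to $\lambda_m(u')=\lambda_m(u)$, i.e.\ a directed cycle in $H$, contradicting acyclicity. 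Granting this invariant, the composite of all these fourth-type morphisms over all fibers equals $\lambda_m$ (a merging is determined by its partition into fibers), and combining the three cases we obtain $\lambda$ as a composition of fundamental coarse-grainings, with the usual convention handling isomorphisms. Thus Cases (i) and (iii) are essentially immediate from Theorem \ref{contraction} and the congruence description of edge-coarse-grainings, while the bookkeeping in Case (ii) — maintaining incomparability through a chain of pairwise vertex merges — is the only genuinely delicate part.
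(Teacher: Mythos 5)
Your proof follows exactly the paper's route: the same chain Theorem \ref{v-e} $\to$ Theorem \ref{c-m} $\to$ Theorem \ref{contraction}, followed by the reduction of edge-coarse-grainings, simple contractions and mergings to the fifth, sixth and fourth fundamental types respectively. The only difference is that you actually verify the acyclicity bookkeeping (no bypass path before contracting a multi-edge, incomparability preserved through successive pairwise merges) that the paper dismisses as "almost obvious," and your verifications are correct.
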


\begin{proof}
$(\Leftarrow)$ Fundamental coarse-grainings are coarse-grainings whose compositions are coarse-grainings.

$(\Rightarrow)$ By Theorem \ref{v-e}, any coarse-graining is a composition of a vertex-coarse-graining and an edge-coarse-graining. By Theorem \ref{c-m}, any vertex-coarse-graining is a composition of a contraction and a merging. By Theorem \ref{contraction}, any contraction is a composition of simple contractions. Then the proof is reduced to the following three facts that any simple contraction is a composition of contracting an edge and a series of coarse-graining of two parallel edges, that any edge-coarse-graining is a composition of coarse-graining of two parallel edges, and that any merging is a composition of merging two vertices, which are almost obvious.
\end{proof}

The following result is one main result of this paper, called the \textbf{fundamental theorem of $\mathbf{Cau}$}, which is a consequence of Theorem \ref{subd}, Theorem \ref{sub}, Theorem \ref{sce} and Theorem \ref{fund}.
\begin{thm}\label{com}
Any morphism of causal-nets is a composition of six types of fundamental morphisms.
\end{thm}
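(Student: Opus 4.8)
The plan is to assemble the statement from the three structural decomposition results already in hand, without needing any new hard input. First I would apply Theorem \ref{sce} to write an arbitrary morphism $\lambda:G_1\to G_2$ as a composition $\lambda=\lambda_\iota\circ\lambda_{cg}\circ\lambda_s$, where $\lambda_s:G_1\to G_s$ is a subdivision, $\lambda_{cg}:G_s\to G_{cg}$ a coarse-graining, and $\lambda_\iota:G_{cg}\to G_2$ an embedding. It then suffices to show that each of these three factors is itself a composition of fundamental morphisms of the appropriate class: fundamental subdivisions (the first type) for $\lambda_s$, fundamental coarse-grainings (the fourth, fifth, sixth types) for $\lambda_{cg}$, and fundamental embeddings (the second and third types) for $\lambda_\iota$; concatenating the three resulting chains expresses $\lambda$ as a composition drawn from the six fundamental types.

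Two of these three reductions are immediate. The coarse-graining factor $\lambda_{cg}$ is a composition of fundamental coarse-grainings by Theorem \ref{fund}. For the embedding factor $\lambda_\iota$, I would invoke Theorem \ref{sub}: by its condition $(4)$, $G_2$ is obtained from $G_{cg}$ by a finite sequence of operations, each of which is either adding an edge or adding an isolated vertex, and these are exactly the second and third fundamental morphisms; hence $\lambda_\iota$ is a finite composition of fundamental embeddings.

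The only place where a genuine (if routine) argument is required is the subdivision factor. By Theorem \ref{subd}, $G_s$ is obtained from $G_1$ by subdividing a subset of its edges, and a fundamental subdivision of the first type replaces one edge by a directed path of length two, i.e.\ inserts a single new vertex. So I would induct on the total number of inserted subdividing vertices, $\sum_{e\in E(G_1)}\bigl(l(\lambda_s(e))-1\bigr)$. If this number is $0$, then $\lambda_s$ is an isomorphism and there is nothing to prove. Otherwise choose an edge $e$ of $G_1$ with $l(\lambda_s(e))\ge 2$, let $G'$ be the causal-net obtained from $G_1$ by one fundamental subdivision of $e$, factor $\lambda_s$ through $G'$ as $\lambda_s=\lambda_s'\circ(\text{that fundamental subdivision})$, and apply the inductive hypothesis to the residual subdivision $\lambda_s':G'\to G_s$, whose inserted-vertex count has dropped by one.

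I expect this induction to be the only real work; everything else is bookkeeping collating Theorems \ref{sce}, \ref{fund} and \ref{sub}. One should check that the intermediate objects produced along the way are genuine causal-nets, but acyclicity is preserved under subdivision, coarse-graining, and edge/vertex addition, so this is immediate. Note also that uniqueness of the decomposition is not claimed and is not needed here; only existence of \emph{some} factorization into the six fundamental types is asserted.
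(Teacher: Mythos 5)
Your proposal is correct and follows essentially the same route as the paper, which derives Theorem \ref{com} directly as a consequence of Theorems \ref{sub}, \ref{sce} and \ref{fund}; your explicit induction on the number of inserted subdividing vertices merely fills in the routine step for the subdivision factor that the paper leaves implicit.
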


In the following, we list several results, which may be helpful in understanding the combinatorial structure of $\mathbf{Cau}$.
\begin{cor}\label{inclu}
Any inclusion $\lambda:G_1\to G_2$ can be uniquely represented as a composition of a subdivision $\lambda_s$, a fusion $\lambda_{fu}$ and an embedding $\lambda_\iota$.
\begin{center}
\begin{tikzpicture}[scale=1]
\node (v1) at (-2.5,0.5) {$G_1$};
\node (v2) at (4,0.5) {$G_2$};
\draw[-latex]  (v1) edge  node[sloped,scale=0.8, above] {$\lambda$}node[sloped,scale=0.8, below] {inclusion}(v2);
\node (v3) at (-0.75,-1.25) {$G_s$};
\node (v4) at (2.25,-1.25) {$G_{fu}$};
\draw[-latex]  (v1) edge node[sloped,scale=0.8, above] {$\lambda_s$}node[sloped,scale=0.8, below] {subdivision} (v3);
\draw[-latex]  (v3) edge node[sloped,scale=0.8, above] {$\lambda_{fu}$}node[sloped,scale=0.8, below] {fusion}(v4);
\draw [-latex] (v4) edge  node[sloped,scale=0.8, above] {$\lambda_\iota$} node[sloped,scale=0.8, below] {embedding}(v2);
\end{tikzpicture}
\end{center}
\end{cor}

\begin{proof}
By Theorem \ref{sce}, $\lambda$ can be uniquely decomposed as a composition of a subdivision $\lambda_s$, a coarse-graining $\lambda_{cg}$ and an embedding $\lambda_{\iota}$. Since $\lambda$ is an inclusion, it has no contractions, therefore the coarse-graining $\lambda_{cg}$ has no contractions, which means that it is in fact a fusion.
\end{proof}

\begin{cor}\label{im}
Any immersion $\lambda:G_1\to G_2$ can be uniquely represented as a composition of a subdivision $\lambda_s$, a merging $\lambda_{m}$ and an embedding $\lambda_\iota$.
\begin{center}
\begin{tikzpicture}[scale=1]
\node (v1) at (-2.5,0.5) {$G_1$};
\node (v2) at (4,0.5) {$G_2$};
\draw[-latex]  (v1) edge  node[sloped,scale=0.8, above] {$\lambda$}node[sloped,scale=0.8, below] {immersion}(v2);
\node (v3) at (-0.75,-1.25) {$G_s$};
\node (v4) at (2.25,-1.25) {$G_{m}$};
\draw[-latex]  (v1) edge node[sloped,scale=0.8, above] {$\lambda_s$}node[sloped,scale=0.8, below] {subdivision} (v3);
\draw[-latex]  (v3) edge node[sloped,scale=0.8, above] {$\lambda_{m}$}node[sloped,scale=0.8, below] {merging}(v4);
\draw [-latex] (v4) edge  node[sloped,scale=0.8, above] {$\lambda_\iota$} node[sloped,scale=0.8, below] {embedding}(v2);
\end{tikzpicture}
\end{center}
\end{cor}

\begin{proof}
By Corollary \ref{inclu} and Corollary \ref{fusion-decomposition}, $\lambda$ can be uniquely decomposed as a composition of a subdivision $\lambda_s$, a merging $\lambda_m$, an edge-coarse-graining $\lambda_{\varepsilon}$ and an embedding $\lambda_{\iota}$. Notice that $\lambda$ is an immersion, which is edge-disjoint, so the edge-coarse-graining $\lambda_{\varepsilon}$ must be trivial.
\end{proof}

\begin{cor}
Any topological embedding $\lambda:G_1\to G_2$ can be uniquely represented as a composition of a subdivision $\lambda_s$ and an embedding $\lambda_\iota$.
\begin{center}
\begin{tikzpicture}[scale=1.2]
\node (v1) at (-2,0.5) {$G_1$};
\node (v2) at (4,0.5) {$G_2$};
\draw[-latex]  (v1) edge node[sloped,scale=0.8, below] {topological minor} node[sloped,scale=0.8, above] {$\lambda$}(v2);
\node (v4) at (1,-0.5) {$G_s$};
\draw[-latex]  (v1) edge node[sloped,scale=0.8, above] {$\lambda_s$}node[sloped,scale=0.8, below] {subdivision} (v4);
\draw[-latex]  (v4) edge node[sloped,scale=0.8, above] {$\lambda_\iota$}node[sloped,scale=0.8, below] {embedding}(v2);
\end{tikzpicture}
\end{center}
\end{cor}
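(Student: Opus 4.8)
The plan is to deduce this corollary from the general factorization of Theorem~\ref{sce}, which represents any morphism uniquely as a subdivision $\lambda_s$, a coarse-graining $\lambda_{cg}$ and an embedding $\lambda_\iota$. The entire content of the corollary is that when $\lambda$ is a topological embedding, the middle coarse-graining $\lambda_{cg}$ collapses to an isomorphism, so that $\lambda_\iota\circ\lambda_{cg}$ is again an embedding. Recall from the proof of Theorem~\ref{sce} that $\lambda_s:G_1\to G_s$ replaces each subdivision $e\in Subd(\lambda)$ by a copy of the path $\lambda(e)$, and that the remaining factor $\lambda':G_s\to G_2$ (equal to $\lambda_\iota\circ\lambda_{cg}$) has no subdivision. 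So it suffices to prove that for a topological embedding this $\lambda'$ is already an embedding, i.e.\ an inclusion with no subdivision; since an inclusion $\lambda_\iota\circ\lambda_{cg}$ forces the quotient $\lambda_{cg}$ to be injective on vertices and edges and contraction-free, this is exactly what makes $\lambda_{cg}$ an isomorphism.

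First I would record a preliminary structural fact: a topological embedding is edge-disjoint. Suppose two subdivisions $e_1,e_2$ had a common edge $a=(x\to y)$ in $\lambda(e_1)\cap\lambda(e_2)$. Since each $\lambda(e_i)$ has length $\geq 2$, every edge of it has at least one internal endpoint, so at least one of $x,y$ is internal to $\lambda(e_1)$ and at least one is internal to $\lambda(e_2)$. If some endpoint is internal to both paths, vertex-disjointness is violated; otherwise one is in the "crossed" situation where $y$ is the target of $\lambda(e_1)$, hence $y=\lambda(t(e_1))$ is the image of a $G_1$-vertex, while $y$ is internal to $\lambda(e_2)$, hence a null-vertex by strongness---a contradiction. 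Thus vertex-disjoint together with strong yields edge-disjoint, and this is the key fact powering the rest.

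Then I would verify the three defining properties of an embedding for $\lambda'$. No subdivision is automatic by construction, and no contraction follows because $\lambda$, being an inclusion, has no contraction, so every non-subdivision edge of $G_1$ is a segment. Injectivity on vertices splits into cases: two original vertices of $G_1$ stay distinct because $\lambda$ is injective on vertices; an original vertex and a new internal vertex stay distinct because the internal vertex is a null-vertex of $\lambda$ (strongness) while $\lambda$-images of $G_1$-vertices are not null-vertices; and two new internal vertices stay distinct either as distinct vertices of one directed, hence repetition-free, path, or, if they lie on different $\lambda(e_i)$, by vertex-disjointness. Injectivity on edges is handled in parallel: two segment-images coincide only if $\lambda$ fails to be faithful; a segment-image cannot equal an internal edge of some $\lambda(e)$ because that internal edge has a null-vertex endpoint whereas a segment-image has only $G_1$-vertex endpoints (strongness again); and two internal edges coincide only within one acyclic path (impossible) or across two subdivisions (ruled out by the edge-disjointness just established). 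Injectivity on vertices and edges together with the absence of subdivisions and contractions makes $\lambda'$ faithful on all directed paths, so $\lambda'$ is an inclusion with no subdivision, that is, an embedding.

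Finally, existence follows by setting $\lambda_\iota:=\lambda'$, so that $\lambda=\lambda'\circ\lambda_s$ exhibits $\lambda$ as an embedding after a subdivision; and uniqueness follows from the uniqueness clause of Theorem~\ref{sce}, since any factorization $\lambda=\lambda_\iota\circ\lambda_s$ into a subdivision followed by an embedding is simultaneously a subdivision--coarse-graining--embedding factorization with identity coarse-graining, and uniqueness of the latter pins down $\lambda_s$ and $\lambda_\iota$. The main obstacle is the edge-injectivity of $\lambda'$---and, underlying it, the lemma that strong plus vertex-disjoint implies edge-disjoint; once that lemma is in hand, everything else is case bookkeeping over the vertices and edges introduced in the construction of $G_s$.
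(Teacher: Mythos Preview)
Your proof is correct. The paper does not actually give a proof of this corollary: it is one of several results listed explicitly ``without giving proofs'' at the end of Section~\ref{wte}, so there is nothing to compare against beyond the implicit suggestion that it should follow from Theorem~\ref{sce}. Your argument does exactly that, and the key substantive step you supply---that strong plus vertex-disjoint forces edge-disjoint, hence the induced map $\lambda':G_s\to G_2$ is injective on edges as well as on vertices---is the right place to put the work. The case analysis you outline (original vs.\ new vertices, segment-images vs.\ edges internal to some $\lambda(e)$) is complete, and your justification that injectivity on vertices and edges together with the absence of contractions and subdivisions yields faithfulness is correct because the path category is free on the edges. The uniqueness deduction from the uniqueness clause of Theorem~\ref{sce} is also fine.
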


\begin{proof}
By Corollary \ref{im},  $\lambda$ can be uniquely decomposed as a composition of a subdivision $\lambda_s$, a merging $\lambda_m$ and an embedding $\lambda_{\iota}$. Notice that $\lambda$ is a topological embedding, it is strong and vertex-disjoint, therefore the merging $\lambda_m$ must be trivial.
\end{proof}

We depict the relations among various types of inclusions as follows.\\
\begin{center}
\begin{tikzpicture}[scale=0.85]
\draw  (-0.4,0.8) rectangle (1.6,0);
\node [scale=0.8] at (0.6,0.4) {embedding};
\draw  (-2.2,1.6) node (v2) {} rectangle (2,-1) node (v1) {};

\node [scale=0.8] at (0,1.2) {topological-embedding};
\draw  (-3,2.5) rectangle (v1);
\node [scale=0.8]at (-1,2) {strong-immersion};
\draw  (v2) rectangle (3,-2);
\node [scale=0.8]at (0.5,-1.5) {weak-embedding};
\draw  (-4,3.5) rectangle (3.5,-2.5);
\node [scale=0.8]at (-1.5,3) {immersion};
\draw  (-5,4.5) rectangle (4,-3);
\node [scale=0.8]at (-2.5,4) {inclusion};
\draw  (-1.9,-0.1) rectangle (0.2,-0.8);
\node [scale=0.8] at (-0.8,-0.42) {subdivision};
\end{tikzpicture}
\end{center}

\section{Generalized minor theory}
This section is devoted to a category-theoretic understanding of various kinds of generalized minors in $\mathbf{Cau}$.

\subsection{Categorical minor theory}
In this subsection,  we introduce a formal and unified framework for theories of minors in any small category, where the notions of a minor category and a gauged minor category serve as the pillars of this framework.

Let  $\mathcal{C}$ be an arbitrary small category and we are free to fix it in this subsection.
\begin{defn}\label{mp}
A \textbf{minor pair} $\{\mathcal{Q},\mathcal{E}\}$  of $\mathcal{C}$ consists of a composition-closed class $\mathcal{Q}$ of epimorphisms and a composition-closed class $\mathcal{E}$ of monomorphisms in $\mathcal{C}$, such that
$\mathcal{Q}\cap\mathcal{E}$ contains the class of isomorphisms in $\mathcal{C}$.
\end{defn}

A morphism in $\mathcal{Q}$ is called a \textbf{$\mathcal{Q}$-quotient}, and a morphism in $\mathcal{E}$ is called an \textbf{$\mathcal{E}$-embedding}.
\begin{ex}\label{e1}
Take $\mathcal{C}=\mathbf{Cau}$, there are several common minor pairs. We can fix the class $\mathcal{Q}$ of epimorphisms to be that of isomorphisms, and take the class $\mathcal{E}$ of monomorphisms to be that of immersions, strong-immersions, topological embeddings, and embeddings of causal-nets, respectively.
\end{ex}

\begin{ex}\label{e2}
Take $\mathcal{C}=\mathbf{Cau}$, $\mathcal{Q}=\{contractions\}$ and  $\mathcal{E}=\{embeddings\  of \ causal\text{-}nets\}$.
\end{ex}

The following notion, called a \textbf{categorical minor}, aims to formalize the operational type definitions of common minors.

\begin{defn}
Let $G_1, G_2$ be two objects of $\mathcal{C}$. A \textbf{${\{\mathcal{Q},\mathcal{E}\}}$-minor}, denoted by $T:G_1\rightsquigarrow G_2$, from source $G_1$ to target $G_2$ is a finite ordered zig-zag of $\mathcal{Q}$-quotients and $\mathcal{E}$-embeddings, that is, an alternating sequence $[K_0;\alpha_0;K_1;\alpha_1;\cdots;\alpha_{n-1};K_n]$ of objects $K_i$ and morphisms $\alpha_i\in \mathcal{Q}\cup\mathcal{E}$, such that $(1)$ $G_1=K_0$; $(2)$ $G_2=K_n$; $(3)$ if $\alpha_i$ is a backward morphism, that is, of the form $\alpha_i:K_{i+1}\to K_i$, then it must be a $\mathcal{Q}$-quotient,  and if $\alpha_i$ is a forward morphism, that is, of the form $\alpha_i:K_{i}\to K_{i+1}$, then it must be  an $\mathcal{E}$-embedding.
\end{defn}
$G_1$ and $G_2$ are called the \textbf{source} and the \textbf{target} of $T:G_1\rightsquigarrow G_2$, respectively. Two $\{\mathcal{Q},\mathcal{E}\}$-minors are called \textbf{parallel} if they have the same source and target.

Each $\alpha_i$ is called a \textbf{factor} of $T=[K_0;\alpha_0; K_1; \cdots; \alpha_{n-1}; K_n]$, and the number $n$ of factors is called the \textbf{length} of $T$. For simplicity, we also represent $T$ as $\alpha_0\dashv \alpha_1\dashv\cdots\dashv \alpha_n$. A successive pair $\alpha_i\dashv\alpha_{i+1}$ of factors is called a \textbf{composable pair} if the two factors are composable, that is, both of them are either forward or backward; otherwise, it is called a \textbf{defect} of $T$. A factor $\alpha_i$ is called an \textbf{iso-factor}, if $\alpha_i$ is an isomorphism. A minor is called an \textbf{iso-minor} if all its factors are iso-factors. Clearly, the source and target of an iso-minor are isomorphic.

\begin{ex}\label{m}
The following shows an example of a minor with length $6$, source $K_0$, target $K_6$. The factors $q_1,q_4, q_6$ are quotients and $\iota_2,\iota_3, \iota_5$ are embeddings. $\iota_2\dashv\iota_3$ is a composable pair; $q_1\dashv\iota_2$, $\iota_3\dashv q_4$, $q_4\dashv\iota_5$ and $\iota_5\dashv q_6$ are  defects.
\begin{center}
\begin{tikzpicture}
\node (v2) at (-1.5,0) {$K_0$};
\node (v1) at (0,1) {$K_1$};
\node (v3) at (1.5,0) {$K_2$};
\node (v4) at (3,-1) {$K_3$};
\node (v5) at (4.5,0) {$K_4$};
\node (v6) at (6,-1) {$K_5$};
\node (v7) at (7.5,0) {$K_6$};
\draw [-latex] (v1) edge node[sloped,scale=1, above] {$q_1$}(v2);
\draw [-latex] (v1) edge node[sloped,scale=1, above] {$\iota_2$}(v3);
\draw [-latex] (v3) edge node[sloped,scale=1, above] {$\iota_3$}(v4);
\draw  [-latex](v5) edge node[sloped,scale=1, above] {$q_4$}(v4);
\draw  [-latex](v5) edge node[sloped,scale=1, above] {$\iota_5$}(v6);
\draw [-latex] (v7) edge node[sloped,scale=1, above] {$q_6$}(v6);
\end{tikzpicture}
\end{center}
\end{ex}

We say that $[K_0;\alpha_0;K_1;\alpha_1;\cdots;\alpha_{n-1};K_n]$ and $[L_0;\beta_0;K_1;\beta_1;\cdots;\beta_{m-1};L_m]$ have the same \textbf{type} if they have the same length (that is, $n=m$) and for each $i$, $\alpha_i$ and $\beta_i$ have the same direction.
For simplicity, we will freely call a $\{\mathcal{Q},\mathcal{E}\}$-minor, a $\mathcal{Q}$-quotient and an $\mathcal{E}$-embedding simply a categorical-minor (or just minor), a quotient and an embedding, respectively, when the context is clear.

Two minors $T_1:G_1\rightsquigarrow G_2$ and $T_2:G_2\rightsquigarrow G_3$ are composable, whose composition $T_3=T_2\circ T_1:G_1\rightsquigarrow G_3$ is defined as the juxtaposition of sequences. That is, if $T_1=[G_1;\alpha_0;K_1;\alpha_1;\cdots;\alpha_{n};G_2]$, $T_2=[G_2;\beta_0;L_1;\beta_1;\cdots;\beta_{m};G_3]$, then $T_2\circ T_1=[G_1;\alpha_0;K_1;\alpha_1;\cdots;\alpha_{n};G_2;\beta_0;L_1;\beta_1;\cdots;\beta_{m};G_3]$. Clearly, the composition of minors is associative. A minor without a factor, that is, with length zero is called \textbf{trivial}.

We define a category $\mathbf{Min}$, called the \textbf{minor category} of $\mathcal{C}$ with respect to $\{\mathcal{Q},\mathcal{E}\}$,  as follows. The objects and morphisms of $\mathbf{Min}$ are objects of $\mathcal{C}$ and  $\{\mathcal{Q},\mathcal{E}\}$-minors, respectively. The composition of $\mathbf{Min}$ is defined as the composition of $\{\mathcal{Q},\mathcal{E}\}$-minors, which is clearly associative, and for each object $G$, its identity morphism is the trivial minor $[G]$.

\begin{defn}
 Let $H$ and $G$ be two objects of $\mathcal{C}$. $H$ is called a \textbf{$\{\mathcal{Q},\mathcal{E}\}$-minor} of $G$ if there is a $\{\mathcal{Q},\mathcal{E}\}$-minor $T:H\rightsquigarrow G$.
\end{defn}

In all these cases of Example \ref{e1}, we will see that, by Theorem \ref{immersion-minor}, \ref{tm} and \ref{sub}, the theory of $\{\mathcal{Q},\mathcal{E}\}$-minor categorifies the usual theory of immersion-minor, strong-immersion-minor, topological-minor and sub-causal-net relation, respectively. In the case of Example \ref{e2}, we will see that,  by Theorem \ref{contraction}, the theory of $\{\mathcal{Q},\mathcal{E}\}$-minor categorifies the usual theory of minor.

There are two types of defects. A defect is called a \textbf{quotient-sub} in $\{\mathcal{Q},\mathcal{E}\}$, if it is of the form $q\dashv i$ with $q\in \mathcal{Q}$ and $i\in \mathcal{E}$. A defect is called a \textbf{sub-quotient} in $\{\mathcal{Q},\mathcal{E}\}$, if it is of the form $i\dashv q$ with $q\in \mathcal{Q}$ and $i\in \mathcal{E}$.

\begin{defn}
A quotient-sub  $q_1\dashv\iota_1$ and a sub-quotient $\iota_2\dashv q_2$ are called \textbf{dual} to each other if they satisfy $\iota_2\circ q_1=q_2\circ \iota_1$, that is, the following diagram commutes.
\end{defn}
\begin{center}
\begin{tikzpicture}[scale=0.8]
\node (v1) at (0,0) {};
\node (v2) at (2,-1) {};
\node (v3) at (4,0) {};
\node (v4) at (2,1) {};
\draw [fill](v1) circle [radius=0.075];
\draw [fill](v2) circle [radius=0.075];
\draw [fill](v3) circle [radius=0.075];
\draw [fill](v4) circle [radius=0.075];
\draw [-latex] (v1) edge node[sloped,scale=1, below] {$\iota_2$}(v2);
\draw [-latex] (v3) edge node[sloped,scale=1, below] {$q_2$}(v2);
\draw [-latex] (v4) edge node[sloped,scale=1, above] {$q_1$}(v1);
\draw [-latex] (v4) edge node[sloped,scale=1, above] {$\iota_1$} (v3);
\end{tikzpicture}
\end{center}

In this case, we call them \textbf{dual defects} or simply \textbf{duals} of each other. Clearly, a dual of a quotient-sub is a sub-quotient  and a dual of a sub-quotient is a quotient-sub.

\begin{ex}\label{def}
Not all quotient-subs  have dual defects. If we take $\mathcal{C}=\mathbf{Cau}$, $\mathcal{Q}$ to be the class of all contractions and $\mathcal{E}$ to be the class of all embeddings, then we can see that the following defect does not have a dual defect. We can prove this by contradiction. If there is a dual defect $i'\dashv q'$, then on one hand, we must have $\iota'\circ q(e_3)\rightarrow \iota'\circ q(e_1)$, where $\rightarrow$ denotes the reachable order; and on the other hand, we must have $q'\circ\iota(e_1)\rightarrow q'\circ\iota(e_3)$. By the dual relation, we have $\iota'\circ q(e_1)=q'\circ\iota(e_1)$ and  $\iota'\circ q(e_3)=q'\circ\iota(e_3)$, which leads to a contradiction.

\begin{center}
\begin{tikzpicture}[scale=0.7]

\node (v1) at (0.5,2) {};
\node (v2) at (-0.5,1) {};
\node (v3) at (-0.5,-0.5) {};
\node (v4) at (0.5,-1.5) {};
\draw [fill](v1) circle [radius=0.075];
\draw [fill](v2) circle [radius=0.075];
\draw [fill](v3) circle [radius=0.075];
\draw [fill](v4) circle [radius=0.075];
\draw  (0.5,2) -- (-0.5,1)[postaction={decorate, decoration={markings,mark=at position .5 with {\arrow[black]{stealth}}}}];
\draw (0.5,2) -- (0.5,-1.5) [postaction={decorate, decoration={markings,mark=at position .5 with {\arrow[black]{stealth}}}}];
\draw (-0.5,-0.5)-- (0.5,-1.5) [postaction={decorate, decoration={markings,mark=at position .5 with {\arrow[black]{stealth}}}}];

\node (v5) at (6,-3) {};
\node (v6) at (5,-4) {};
\node (v7) at (5,-5.5) {};
\node (v8) at (6,-6.5) {};
\draw [fill](v5) circle [radius=0.075];
\draw [fill](v6) circle [radius=0.075];
\draw [fill](v7) circle [radius=0.075];
\draw [fill](v8) circle [radius=0.075];
\draw (6,-3)-- (5,-4)[postaction={decorate, decoration={markings,mark=at position .5 with {\arrow[black]{stealth}}}}];
\draw (5,-4)-- (5,-5.5)[postaction={decorate, decoration={markings,mark=at position .5 with {\arrow[black]{stealth}}}}];
\draw  (5,-5.5) --(6,-6.5)[postaction={decorate, decoration={markings,mark=at position .5 with {\arrow[black]{stealth}}}}];
\draw  (6,-3)-- (6,-6.5)[postaction={decorate, decoration={markings,mark=at position .5 with {\arrow[black]{stealth}}}}];
\node (v10) at (-4.5,-4.5) {};
\node (v11) at (-5.5,-5.5) {};
\node (v9) at (-5.5,-3.5) {};
\draw [fill](v9) circle [radius=0.075];
\draw [fill](v10) circle [radius=0.075];
\draw [fill](v11) circle [radius=0.075];
\draw (-5.5,-3.5)-- (-4.5,-4.5)[postaction={decorate, decoration={markings,mark=at position .5 with {\arrow[black]{stealth}}}}];
\draw  (-4.5,-4.5) -- (-5.5,-5.5)[postaction={decorate, decoration={markings,mark=at position .5 with {\arrow[black]{stealth}}}}];
\node (v12) at (-2,-1) {};
\node (v13) at (-4,-3) {};
\draw [-latex] (v12) edge node[sloped,scale=0.8, below] {contracting $e_2$} node[sloped,scale=0.8, above] { $q$}(v13);
\node (v14) at (2,-1) {};
\node (v15) at (4,-3) {};
\draw [-latex] (v14) edge node[sloped,scale=0.8, below] {deleting $e_4$} node[sloped,scale=0.8, above] { $\iota$}(v15);

\node at (1,0.5) {$e_2$};
\node at (-0.2,1.8) {$e_1$};
\node at (-0.2,-1.25) {$e_3$};
\node at (5.1,-3.2) {$e_1$};
\node at (6.4,-4.6) {$e_2$};
\node at (5.2,-6.2) {$e_3$};
\node at (4.6,-4.6) {$e_4$};
\node at (-4.8,-5.4) {$e_1$};
\node at (-4.6,-3.8) {$e_3$};
\end{tikzpicture}
\end{center}
\end{ex}
There are seven types of fundamental operations on minors.
\begin{defn}
A \textbf{dual transformation} of a minor $T$ is an operation of  replacing  a defect of $T$ with its dual defect.
\end{defn}

\begin{ex}
The following shows a  dual transformation of the minor in Example \ref{m}, where the defect $\iota_5\dashv q_6$ is replaced by its dual defect $q'_5 \dashv\iota'_6$.

\begin{center}
\begin{tikzpicture}
\node (v2) at (-1.5,0) {$K_0$};
\node (v1) at (0,1) {$K_1$};
\node (v3) at (1.5,0) {$K_2$};
\node (v4) at (3,-1) {$K_3$};
\node (v5) at (4.5,0) {$K_4$};
\node (v6) at (6,1) {$K'_5$};
\node (v8) at (6,-1) {$K_5$};
\node (v7) at (7.5,0) {$K_6$};
\draw [-latex] (v1) edge node[sloped,scale=1, above] {$q_1$}(v2);
\draw [-latex] (v1) edge node[sloped,scale=1, above] {$\iota_2$}(v3);
\draw [-latex] (v3) edge node[sloped,scale=1, above] {$\iota_3$}(v4);
\draw  [-latex](v5) edge node[sloped,scale=1, above] {$q_4$}(v4);
\draw  [latex-](v5) edge node[sloped,scale=1, above] {$q'_5$}(v6);
\draw [latex-] (v7) edge node[sloped,scale=1, above] {$\iota'_6$}(v6);
\draw  [-latex,dotted](v5) edge node[sloped,scale=1, above] {$\iota_5$}(v8);
\draw [-latex,dotted] (v7) edge node[sloped,scale=1, above] {$q_6$}(v8);
\end{tikzpicture}
\end{center}
\end{ex}

\begin{defn}
A \textbf{flip transformation} of $T=[K_0;\alpha_0; K_1; \cdots; \alpha_{n-1}; K_n]$ is an operation of replacing an iso-factor $\alpha_i$ of $T$ with its inverse $\alpha^{-1}_i$.
\end{defn}

Both dual and flip transformations change the types of minors, but they do not change the sources, targets and lengths of minors.

There are also operations that change the lengths of minors. Given a minor $$[K_0;\alpha_0;\cdots;\alpha_{n-1};K_n]$$ with length $n$, if it  has a composable pair $\alpha_i\dashv\alpha_{i+1}$,  then after composing $\alpha_i$ and $\alpha_{i+1}$, we get a parallel minor with length $n-1$ $$[K_0;\alpha_0;\cdots;K_i;\beta;K_{i+2};\cdots;\alpha_{n-1};K_n],$$ where $\beta=\alpha_i\circ \alpha_{i+1}$ or $\alpha_{i+1}\circ \alpha_i$. This operation of composing a composable pair  is called a \textbf{composable reduction}. The inverse operation that decomposes a factor into a composable pair is called a \textbf{factor decomposition}.

\begin{ex}
The following shows a composable reduction of the minor in Example \ref{m}.
\begin{center}
\begin{tikzpicture}
\node (v2) at (-1.5,0) {$K_0$};
\node (v1) at (0,1) {$K_1$};

\node (v4) at (3,-1) {$K_3$};
\node (v5) at (4.5,0) {$K_4$};
\node (v6) at (6,-1) {$K_5$};
\node (v7) at (7.5,0) {$K_6$};
\draw [-latex] (v1) edge node[sloped,scale=1, above] {$q_1$}(v2);
\draw [-latex] (v5) edge node[sloped,scale=1, above] {$q_4$}(v4);
\draw  [-latex](v1) edge node[sloped,scale=1, above] {$\iota_3\circ\iota_2$}(v4);
\draw  [-latex](v5) edge node[sloped,scale=1, above] {$\iota_5$}(v6);
\draw [-latex] (v7) edge node[sloped,scale=1, above] {$q_6$}(v6);
\end{tikzpicture}
\end{center}
\end{ex}

If $K_i=K_{i+1}$ and the factor $\alpha_i:K_i\to K_{i+1}$ is an automorphism, then we can remove $\alpha_i$ and get a parallel minor with length $n-1$
$$[K_0;\alpha_0;\cdots;K_i;\alpha_{i+1};K_{i+2};\cdots;\alpha_{n-1};K_n].$$
The operation of removing an automorphism  is called an \textbf{automorphism deletion}. The inverse operation of inserting an automorphism is called an \textbf{automorphism insertion}.

There are operations that change sources and targets.
\begin{defn}
Two minors $T=[K_0;\alpha_0;\cdots;\alpha_{n-1};K_{n}]$ and $T'=[L_0;\beta_0;\cdots;\beta_{n-1};L_{n}]$ of the same type are called \textbf{isomorphic} if there exists one isomorphism $\phi_i:K_i\to L_i$ for each $i=0,1,2,\cdots,n$, such that $\beta_i\circ \phi_i=\phi_{i+1}\circ\alpha_i$, if both $\alpha_i$ and $\beta_i$ are embeddings; or $\beta_i\circ \phi_{i+1}=\phi_{i}\circ\alpha_i$, if both $\alpha_i$ and $\beta_i$  are quotients.
\begin{center}
\begin{tikzpicture}
\node (v1) at (-1.5,1) {$K_i$};
\node (v2) at (1,1) {$K_{i+1}$};
\node (v4) at (1,-0.5) {$L_{i+1}$};
\node (v3) at (-1.5,-0.5) {$L_{i}$};
\draw [-latex] (v1) edge node[sloped,scale=1, above] {$\alpha_i$}(v2);
\draw[-latex]  (v3) edge node[sloped,scale=1, below] {$\beta_i$} (v4);
\draw[-latex]  (v2) edge (v4);
\draw[-latex]  (v1) edge (v3);
\node at (-1.8,0.2) {$\phi_i$};
\node at (1.4,0.2) {$\phi_{i+1}$};
\node (v6) at (4.5,1) {$K_i$};
\node (v5) at (7,1) {$K_{i+1}$};
\node (v8) at (4.5,-0.5) {$L_i$};
\node (v7) at (7,-0.5) {$L_{i+1}$};
\draw [-latex] (v5) edge node[sloped,scale=1, above] {$\alpha_i$}(v6);
\draw[-latex]  (v7) edge node[sloped,scale=1, below] {$\beta_i$} (v8);
\draw [-latex] (v6) edge (v8);
\draw  [-latex](v5) edge (v7);
\node at (4.2,0.2) {$\phi_i$};
\node at (7.4,0.2) {$\phi_{i+1}$};
\end{tikzpicture}
\end{center}
\end{defn}
Isomorphic minors have isomorphic sources and targets, and have the same length.
An \textbf{isomorphic transformation} of a minor $T$ is an operation of replacing $T$ with an isomorphic  minor $T'$.

We list properties of fundamental operations as follows.
\begin{center}
\begin{tikzpicture}[scale=0.9]

\draw  (-3.5,2) rectangle (1.5,1) node (v1) {};
\draw  (-3.5,1) rectangle (1.5,0) node (v4) {};
\draw  (-3.5,0) rectangle (1.5,-1) node (v7) {};
\draw  (-3.5,-1) rectangle (1.5,-2) node (v10) {};
\draw  (-3.5,-2) rectangle (1.5,-3) node (v13) {};
\draw  (1.5,2) rectangle (4.5,1) node (v2) {};
\draw  (4.5,2) rectangle (7.5,1) node (v3) {};
\draw  (7.5,2) rectangle (10.5,1);
\draw  (-3.5,-3) rectangle (1.5,-4);
\draw  (-3.5,-4) rectangle (1.5,-5) node (v18) {};
\draw  (v1) rectangle (4.5,0) node (v5) {};
\draw  (v2) rectangle (7.5,0) node (v6) {};
\draw  (v3) rectangle (10.5,0);
\draw  (v4) rectangle (4.5,-1) node (v8) {};
\draw  (v5) rectangle (7.5,-1) node (v9) {};
\draw  (v6) rectangle (10.5,-1);
\draw  (v7) rectangle (4.5,-2) node (v11) {};
\draw  (v8) rectangle (7.5,-2) node (v12) {};
\draw  (v9) rectangle (10.5,-2);
\draw  (v10) rectangle (4.5,-3) node (v14) {};
\draw  (v11) rectangle (7.5,-3) node (v15) {};
\draw  (v12) rectangle (10.5,-3);

\node at (-1,0.5) {dual transformation};
\node at (-1,-0.5) {flip transformation};
\node at (-1,-1.5) {composable reduction};
\node at (-1,-3.5) {automorphism deletion};
\node at (-1,-4.5) {automorphism insertion};
\node at (-1,-2.5) {factor decomposition};

\draw  (-3.5,-5) rectangle (1.5,-6);
\node at (-1,-5.5) {isomorphic transformation};
\node at (3,1.5) {type};
\node at (6,1.5) {source $\&$ target};
\node at (9,1.5) {length};
\node at (3,0.5) {$\surd$};
\node at (6,0.5) {$\times$};
\node at (9,0.5) {$\times$};
\node at (3,-0.5) {$\surd$};
\node at (6,-0.5) {$\times$};
\node at (9,-0.5) {$\times$};
\node at (3,-1.5) {$\surd$};
\node at (6,-1.5) {$\times$};
\node at (9,-1.5) {$\surd$};
\node at (3,-2.5) {$\surd$};
\node at (6,-2.5) {$\times$};
\node at (9,-2.5) {$\surd$};
\node at (3,-3.5) {$\surd$};
\node at (6,-3.5) {$\times$};
\node at (9,-3.5) {$\surd$};
\node at (3,-4.5) {$\surd$};
\node at (6,-4.5) {$\times$};
\node at (9,-4.5) {$\surd$};
\node at (3,-5.5) {$\times$};
\node at (6,-5.5) {$\surd/\times$};
\node at (9,-5.5) {$\times$};
\draw  (v13) rectangle (4.5,-4) node (v16) {};
\draw  (v14) rectangle (7.5,-4) node (v17) {};
\draw  (v15) rectangle (10.5,-4);
\draw  (1.5,-4) rectangle (4.5,-5) node (v19) {};
\draw  (v16) rectangle (7.5,-5) node (v20) {};
\draw  (v17) rectangle (10.5,-5);
\draw  (v18) rectangle (4.5,-6);
\draw  (v19) rectangle (7.5,-6);
\draw  (v20) rectangle (10.5,-6);
\node at (4,-6.5) {change=$\surd$,\ \  unchange=$\times$};
\end{tikzpicture}
\end{center}

\begin{defn}
Two minors are called \textbf{gauge equivalent} if they are connected by a series of fundamental operations, which are dual transformations, flip transformations,  composable reductions, factor decompositions, automorphism reductions, automorphism insertions and isomorphic transformations.
\end{defn}
In other words, the gauge equivalence relation is the equivalence relation among minors generated by the seven fundamental operations.

In the following lemmas, we show some properties of the gauge equivalence relation.
\begin{lem}\label{t1}
Any iso-minor is gauge equivalent to a trivial minor.
\end{lem}
\begin{proof}
Suppose $T=[K_0;\alpha_0;K_1;\alpha_1;\cdots;\alpha_{n-1};K_{n}]$ is an iso-minor, we will show that it is gauge equivalent to the trivial minor $[K_0]$. Since all $\alpha_i$s $(i=0,1,\cdots,n-1)$ are isomorphisms, then after applying flip transformations and composable reductions, $T$ can be transformed to a length $1$ minor $[K_0;\beta; K_n]$ with $\beta$ being a forward isomorphism, which, by the following commutative diagram, is isomorphic to the minor $[K_0;Id_{K_0};K_0]$. After an automorphism deletion, $[K_0;Id_{K_0};K_0]$  is transformed to the trivial minor $[K_0]$, thus $T$ is gauge equivalent to the trivial minor $[K_0]$. Similarly, we can show that $T$ is also gauge equivalent to the trivial minor $[K_n]$.
\begin{center}
\begin{tikzpicture}
\node (v1) at (-1.5,1) {$K_0$};
\node (v2) at (1,1) {$K_n$};
\node (v4) at (1,-0.5) {$K_0$};
\node (v3) at (-1.5,-0.5) {$K_0$};
\draw [-latex] (v1) edge node[sloped,scale=1, above] {$\beta$}(v2);
\draw[-latex]  (v3) edge node[sloped,scale=1, below] {$Id_{K_0}$} (v4);
\draw[-latex]  (v2) edge (v4);
\draw[-latex]  (v1) edge (v3);
\node at (-1.85,0.2) {$Id_{K_0}$};
\node at (1.4,0.2) {$\beta^{-1}$};
\end{tikzpicture}
\end{center}
\end{proof}

\begin{lem}\label{t2}
Gauge equivalent minors have isomorphic sources and targets.
\end{lem}
\begin{proof}
Suppose $T_1:K_1\rightsquigarrow L_1$ and $T_2:K_2\rightsquigarrow L_2$ are gauge equivalent, then there exist a series of minors $S_i$ and fundamental operations $O_j$, such that $T_1=S_0\overset{O_1}{\Rightarrow} S_1\overset{O_2}{\Rightarrow}\cdots\overset{O_{n-1}}{\Rightarrow} S_{n-1}\overset{O_n}{\Rightarrow} S_{n}=T_2$. If all $O_j$s are not isomorphic transformations, then, by the fact that all other fundamental operations do not change sources and targets, we must have that $K_1=K_2$ and $L_1=L_2$. If some of these $O_j$s are isomorphic transformations, then, by the fact that isomorphic minors have isomorphic sources and targets, we must have that $K_1$ is isomorphic to $K_2$ and $L_1$ is isomorphic to $L_2$.
\end{proof}

\begin{lem}\label{t3}
Let $E_1:K\rightsquigarrow H$, $T:H\rightsquigarrow G$, $E_2: G\to L$ be three composable minors. If $E_1$ and $E_2$ are iso-minors, then $E_2\circ T\circ E_1$ is gauge equivalent to $T$.
\end{lem}
\begin{proof}
We only need to show that $E_2\circ T$ is gauge equivalent to $T$, and the fact that $T\circ E_1$ is gauge equivalent to $T$ can be proved similarly. Without loss of generality, we assume that $E_2$ is a length one iso-minor $[G;\beta; L]$ with $\beta:G\to L$ being forward. By the following commutative diagram, we see that $[G;\beta; L]\circ T$ is isomorphic to $[G;Id_G; G]\circ T$. By applying an isomorphism deletion, we have that $[G;Id_G; G]\circ T$ is gauge equivalent to $T$. Therefore, $E_2\circ T$ is gauge equivalent to $T$.
\begin{center}
\begin{tikzpicture}
\node (v1) at (-1.5,1) {$G$};
\node (v2) at (1,1) {$L$};
\node (v4) at (1,-0.5) {$G$};
\node (v3) at (-1.5,-0.5) {$G$};
\draw [-latex] (v1) edge node[sloped,scale=1, above] {$\beta$}(v2);
\draw[-latex]  (v3) edge node[sloped,scale=1, below] {$Id_{G}$} (v4);
\draw[-latex]  (v2) edge (v4);
\draw[-latex]  (v1) edge (v3);
\node at (-1.85,0.2) {$Id_{G}$};
\node at (1.4,0.2) {$\beta^{-1}$};
\end{tikzpicture}
\end{center}
\end{proof}

\begin{lem}
There is a well-defined composition of gauge equivalence classes of minors.
\end{lem}

\begin{proof}
For any two minors $T_1:H\rightsquigarrow G$ and $T_2: K\rightsquigarrow L$, if there is an isomorphism $\alpha$ of $G$ and $K$, then we define the composition of their gauge equivalence classes $\overline{T_1:H\rightsquigarrow G}$ and $\overline{T_2: K\rightsquigarrow L}$ to be the gauge equivalence class of the minor $T_2\circ [G;\alpha;K]\circ T_1$. We need to show that the composition of gauge equivalence classes is well-defined. For this, we need to prove that the composition $\overline{T_2\circ [G;\alpha;K]\circ T_1}$ is independent of the choice of $\alpha$. There are several cases.

$(1)$ If $\alpha$ and $\beta$ are two forward isomorphisms, that is, $\alpha:G\to K$ and $\beta:G\to K$, we want to show that the
compositions $T_2\circ [G;\alpha;K]\circ T_1$ and $T_2\circ [G;\beta;K]\circ T_1$ are gauge equivalent. In fact, since both $\beta\circ\alpha^{-1}:K\to K$ and $Id_G:G\to G$ are automorphisms, we have, by applying automorphism insertions, that  $T_2\circ [G;\alpha;K]\circ T_1$ and $T_2\circ [G;\beta;K]\circ T_1$ are gauge equivalent to $T_2\circ [K;\beta\circ\alpha^{-1};K]\circ [G;\alpha;K]\circ T_1$ and $T_2\circ [G;\beta;K]\circ [G;Id_{G};G]\circ T_1$, respectively. By the following commutative diagram, we see that $T_2\circ [K;\beta\circ\alpha^{-1};K]\circ [G;\alpha;K]\circ T_1$ and $T_2\circ [G;\beta;K]\circ [G;Id_{G};G]\circ T_1$ are isomorphic and hence gauge equivalent, therefore, $T_2\circ [G;\alpha;K]\circ T_1$ and $T_2\circ [G;\beta;K]\circ T_1$ are gauge equivalent.
\begin{center}
\begin{tikzpicture}
\node (v1) at (-2,1) {$G$};
\node (v2) at (0.5,1) {$K$};
\node (v3) at (3,1) {$K$};
\node (v4) at (-2,-0.5) {$G$};
\node (v5) at (0.5,-0.5) {$G$};
\node (v6) at (3,-0.5) {$K$};
\draw[-latex]  (v1) edge (v2);
\draw[-latex]  (v2) edge (v3);
\draw[-latex]  (v4) edge (v5);
\draw[-latex]  (v5) edge (v6);
\draw[-latex]  (v1) edge (v4);
\draw[-latex]  (v2) edge (v5);
\draw [-latex] (v3) edge (v6);
\node at (-0.75,1.25) {$\alpha$};
\node at (1.75,1.25) {$\beta\circ\alpha^{-1}$};
\node at (-0.75,-0.75) {$Id_G$};
\node at (1.75,-0.75) {$\beta$};
\node at (-2.32,0.25) {$Id_G$};
\node at (0.9,0.25) {$\alpha^{-1}$};
\node at (3.35,0.25) {$Id_K$};
\end{tikzpicture}
\end{center}

$(2)$ If $\alpha$ and $\beta$ are two backward isomorphisms, the prove is similar to case $(1)$.

$(3)$ If one of $\alpha$ and $\beta$ is backward and the other is forward, then by applying a flip operation on $[G;\alpha;K]$,  we can transform these cases to cases $(1)$ and $(2)$.
\end{proof}

Summarizing above lemmas, we see that the gauge equivalence relation induces a \textbf{regular generalized congruence} \cite{[BBP99]} $\sim$ on the minor category $\mathbf{Min}$, which is an equivalence relation $\sim_o$ on objects of $\mathbf{Min}$ together with an equivalence relation $\sim_m$ on $\{\mathcal{Q},\mathcal{E}\}$-minors (morphisms of $\mathbf{Min}$) such that  $(T_1:K_1\rightsquigarrow L_1)\sim_m (T_2:K_2\rightsquigarrow L_2)$, by Lemma \ref{t2}, implies that  $K_1\sim_o K_2$ and $L_1\sim_o L_2$, where the equivalence relation $\sim_o$ is just the isomorphism relation of objects of $\mathcal{C}$ (also objects of $\mathbf{Min}$) and the equivalence relation $\sim_m$ is the gauge equivalence relation of $\{\mathcal{Q},\mathcal{E}\}$-minors.

The quotient category $\mathbf{Min}/\sim$ is called the \textbf{gauged minor category} of $\mathcal{C}$ with respect to $\{\mathcal{Q},\mathcal{E}\}$ and is denoted by $\textbf{GM}$. The objects of $\textbf{GM}$ are isomorphism classes of objects of $\mathcal{C}$. For any two objects $\overline{K}$ and $\overline{L}$ of $\textbf{GM}$, their hom-set is the set of gauge equivalence classes of minors  $T:K\rightsquigarrow L$. By Lemma \ref{t3}, the identity morphisms in $\textbf{GM}$ are the gauge equivalence classes of iso-minors.

The following notion was introduce in \cite{[K08]}, which naturally generalizes the notions of a poset and a causal-net.
\begin{defn}\label{acy}
An \textbf{acyclic category}  is a small category without non-tivial loops, that is, it satisfies the following two
conditions:  $(1)$ for any object $X$, $Hom(X,X)$ has only one element, the identity morphism; $(2)$  for any two objects $X$ and $Y$, $Hom(X, Y)\neq\emptyset$ and $Hom(Y, X)\neq\emptyset$ imply that $X=Y$.
\end{defn}
An acyclic category is also called a \textbf{loop-free category}. The second condition in the definition means that for any two distinct objects $X$ and $Y$, either $Hom(X,Y)$ or $Hom(Y,X)$ is empty. In an acyclic category, all isomorphisms and endomorphisms are identity morphisms. It is easy to see that all objects of an acyclic category form a poset with respect to the relation that $X\leq Y$ $\Longleftrightarrow$ $Hom(X,Y)\neq \emptyset$, where the first and second conditions imply the reflexivity and anti-symmetric properties of $\leq$, respectively. This poset is called the \textbf{object poset} of the acyclic category.

\begin{ex}
$(1)$ For any causal-net, its path category is an acyclic category.

$(2)$ Any sub-category of an acyclic category is an acyclic category.

$(3)$ If $\mathcal{C}$ is an acyclic category, $F:\mathcal{C}\to \mathcal{D}$ is a functor, then the image of $F$ is an acyclic category.

$(4)$ If $F:\mathcal{C}\to \mathcal{D}$ is a functor between acyclic categories, then it induces a monotonic mapping between the corresponding object posets.
\end{ex}

\begin{defn}
A minor pair $\{\mathcal{Q},\mathcal{E}\}$ is called \textbf{strict} if for any $\{\mathcal{Q},\mathcal{E}\}$-minor $$[K_0;\alpha_0;K_1;\cdots;\alpha_{n-1};K_{n}],$$ the condition that $K_0$ and $K_{n}$ are isomorphic implies that all $\alpha_i$s ($i=0,1,\cdots,n-1$) are isomorphisms, that is, any endo-$\{\mathcal{Q},\mathcal{E}\}$-minor is an iso-minor.
\end{defn}

The following is a fundamental result in categorical minor theory.
\begin{thm}
For any strict minor pair of a small category, the gauged minor category is an acyclic category.
\end{thm}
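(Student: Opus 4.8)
The plan is to verify directly the two conditions of Definition~\ref{acy} for $\mathbf{GM}$; essentially all the content sits in condition $(1)$, which I would prove by collapsing every endo-minor to a trivial minor.

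First I would take $[T]\in\mathrm{Hom}_{\mathbf{GM}}(A,A)$, represented by an endo-minor $T=[A;\alpha_0;K_1;\cdots;\alpha_{n-1};A]$. Strictness of the minor pair forces every $\alpha_i$ to be an isomorphism of $\mathcal{C}$, and since $\mathcal{Q}\cap\mathcal{E}$ is exactly the class of isomorphisms (Definition~\ref{mp}), each $\alpha_i$ lies in both $\mathcal{Q}$ and $\mathcal{E}$. Because $\mathcal{Q}$ and $\mathcal{E}$ are composition-closed, the composite of $\alpha_i$ with either adjacent morphism of $T$ again lies in $\mathcal{Q}\cup\mathcal{E}$, so every isomorphic reduction of $T$ is a genuine minor --- of length one less, again from $A$ to $A$ (gauge equivalence preserves source and target), and again with all morphisms isomorphisms (being composites of isomorphisms). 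Iterating this reduction, once the length reaches $1$ we are at $[A;\alpha_0;A]$ with $\alpha_0$ an isomorphism, whose isomorphic reduction is, by the special-case clause of the definition, the trivial minor $[A]$. Hence $T\sim[A]$, i.e.\ $[T]=\mathrm{id}_A$, so $\mathrm{Hom}_{\mathbf{GM}}(A,A)=\{\mathrm{id}_A\}$, which is condition $(1)$.

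Condition $(2)$ would then follow formally. Given $\mathrm{Hom}_{\mathbf{GM}}(A,B)\ne\emptyset$ and $\mathrm{Hom}_{\mathbf{GM}}(B,A)\ne\emptyset$, pick representatives $T\colon A\rightsquigarrow B$ and $S\colon B\rightsquigarrow A$; then $[S\circ T]\in\mathrm{Hom}_{\mathbf{GM}}(A,A)=\{\mathrm{id}_A\}$ and $[T\circ S]\in\mathrm{Hom}_{\mathbf{GM}}(B,B)=\{\mathrm{id}_B\}$ by condition $(1)$, so $[T]$ and $[S]$ are mutually inverse isomorphisms in $\mathbf{GM}$ and hence $A\cong B$. (Alternatively, applying the reduction of the previous step to the endo-minor $S\circ T$ shows that all morphisms of $T$ are isomorphisms of $\mathcal{C}$, so $T$ reduces either to a single isomorphism $A\to B$ or to the trivial minor.) This yields the conclusion $A=B$ of Definition~\ref{acy}$(2)$ once objects are regarded up to the canonical identification (equivalently, after passing to a skeleton of $\mathcal{C}$), and the two conditions together say that $\mathbf{GM}$ is an acyclic category.

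I expect the only slightly delicate point --- consistent with the statement being called ``obvious'' --- to be the bookkeeping in the reduction step of condition $(1)$: confirming that no composite created by an isomorphic reduction escapes $\mathcal{Q}\cup\mathcal{E}$ (this uses composition-closedness of $\mathcal{Q}$ and $\mathcal{E}$ together with $\mathcal{Q}\cap\mathcal{E}$ being the class of isomorphisms), that each reduction strictly decreases the length, and that the terminal length-one minor really is covered by the special-case definition of the isomorphic reduction of an endo-minor. None of this is a serious obstacle, so the argument should close routinely.
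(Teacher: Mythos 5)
The paper offers no argument for this theorem (it is dismissed as ``obvious''), and your direct verification of the two conditions of Definition~\ref{acy} is exactly the intended proof: strictness forces every morphism of an endo-minor to be an isomorphism, hence to lie in $\mathcal{Q}\cap\mathcal{E}$, and the reductions collapse the endo-minor to the trivial one, which gives condition $(1)$; condition $(2)$ then follows formally. Two fine points are worth recording. First, at a \emph{defect} both of whose morphisms are isomorphisms, none of the four composites $\alpha_i\circ\alpha_{i+1}$, $\alpha_{i+1}\circ\alpha_i$, etc.\ listed in the definition of isomorphic reduction is literally defined (the arrows do not match head to tail), so the collapse there is not a single ``isomorphic reduction''; one first applies a congruent transformation carrying the intermediate object along the invertible leg (turning that leg into an identity), after which the pair becomes composable and a composable reduction finishes the step. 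Your bookkeeping should acknowledge this, though it is a quirk of the paper's definitions rather than an obstacle. Second, your skeleton caveat in condition $(2)$ is not a cosmetic remark but a genuine necessity: $\mathbf{GM}$ has the same objects as $\mathcal{C}$, and any isomorphism $\phi\colon A\to B$ between \emph{distinct} objects lies in $\mathcal{Q}\cap\mathcal{E}$ and hence yields nontrivial minors in both directions, so the literal equality $A=B$ demanded by Definition~\ref{acy}$(2)$ fails whenever $\mathcal{C}$ is not skeletal (as $\mathbf{Cau}$ is not). The theorem is therefore only correct after passing to a skeleton or reading ``$=$'' as ``$\cong$''; you were right to flag this, and it is the one real imprecision in the statement rather than in your argument.
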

\begin{proof}
Let $\{\mathcal{Q},\mathcal{E}\}$ be a strict minor pair of small category $\mathcal{C}$. We want to show that its gauged minor category $\mathbf{GM}$ is an acyclic category.
If $T=[K;\alpha_0;K_1;\alpha_1;\cdots;\alpha_{n-1};L]$ is a minor with $K$ isomorphic to $L$, then all $\alpha_i$s ($i=0,1,\cdots,n-1$) are isomorphisms. By Lemma \ref{t1}, $T$ is gauge equivalent to trivial minor $[K]$. This proves that $\mathbf{GM}$ satisfies the first condition in the definition of an acyclic category.

Now we show that $\mathbf{GM}$ satisfies the second condition. Suppose $T_1=[H_1;\alpha_0;K_1;\cdots;\alpha_{n-1};G_1]$ and $T_2=[G_2;\beta_0;L_1;\cdots;\beta_{m-1};H_2]$ are two minors with $H_1$  isomorphic to $H_2$ and $G_1$ isomorphic to $G_2$, which means that there exist isomorphisms $\gamma_1:G_1\to G_2$ and $\gamma_2:H_2\to H_1$, then,  as shown below,  $[H_2;\gamma_2;H_1]\circ T_2\circ[G_1;\gamma_1;G_2]\circ T_1$ is an endo-minor of $H_1$. Since $\{\mathcal{Q},\mathcal{E}\}$ is strict, then all $\alpha_i$s and $\beta_j$s are isomorphisms, which implies that $H_1$ is isomorphic to $G_1$ and $H_2$ is isomorphic to $G_2$. This completes the proof.
\begin{center}

\begin{tikzpicture}[scale=0.9]
\node (v1) at (-2,1.5) {$H_1$};
\node (v2) at (1.5,1.5) {$G_1$};
\node (v3) at (1.5,-0.5) {$G_2$};
\node (v4) at (-2,-0.5) {$H_2$};
\draw [decorate,decoration={zigzag, amplitude=1},-latex] (v1) -- (v2);
\draw[decorate,decoration={zigzag, amplitude=1},-latex]   (v2) -- (v3);
\draw[decorate,decoration={zigzag, amplitude=1},-latex]   (v3) -- (v4);
\draw [decorate,decoration={zigzag, amplitude=1},-latex]  (v4) -- (v1);
\node at (-0.2,1.8) {$T_1$};
\node at (2.7,0.5) {$[G_1;\gamma_1;G_2]$};
\node at (-0.2,-0.82) {$T_2$};
\node at (-3.2,0.5) {$[H_2;\gamma_2; H_1]$};
\end{tikzpicture}
\end{center}

\end{proof}
In case that the  minor pair is strict, the object poset of \textbf{GM} is called a  \textbf{minor poset} of $\mathcal{C}$.

\begin{defn}
A minor is called a \textbf{quotient-sub} if it is gauge equivalent to a minor with the only defect being a quotient-sub. A minor is called a \textbf{sub-quotient} if it is gauge equivalent to a minor with the only defect being a sub-quotient.
\end{defn}
A minor is called \textbf{exact} if it is both a quotient-sub and a sub-quotient. In Example \ref{def}, since each of the three causal-nets has the identity morphism as its only automorphism, it is easy to see that the minor is not gauge equivalent to any sub-quotient, hence not an exact minor.

\subsection{Coarse-graining minor}
In this subsection, we take $\mathcal{C}=\mathbf{Cau}$, $\mathcal{Q}$ to be the class $\mathbf{CG}$ of all coarse-grainings and $\mathcal{E}$ to be the class $\mathbf{EM}$ of all embeddings.  In this case, a $\{\mathbf{CG}, \mathbf{EM}\}$-minor is called a \textbf{coarse-graining minor} or simply \textbf{CG-minor}. Clearly, $\{\mathbf{CG}, \mathbf{EM}\}$ is a strict minor pair.
We list four definitions of CG-minor relations, and show that they are equivalent.

\begin{defn}\label{D1}
$H$ is called a \textbf{CG-minor} of $G$ if there is a CG-minor $T:H\rightsquigarrow G$.
\end{defn}

By Theorem \ref{fund}, any coarse-graining is a composition of fundamental coarse-grainings,  that is, a composition of the fourth, fifth and sixth types of fundamental morphisms, which means that if $H$ is a coarse-graining of $G$, then $H$ can be obtained from $G$ by a sequence of operations of merging two vertices, coarse-graining of two parallel edges and contracting an edge. Similarly, by Theorem \ref{sub}, any embedding is a composition of fundamental embeddings, that is, a composition of the second and third types of fundamental morphisms, which means that if $H$ is a sub-causal-net of $G$, then  $H$ can be obtained from $G$ by a sequence of operations of deleting an isolated vertex and deleting an edge. It is easy to see that  Definition \ref{D1} can be equivalently formulated as follows.

\begin{defn}\label{D2}
$H$ is called a \textbf{CG-minor} of $G$ if $H$ can be obtained from $G$ by a sequence of the following five types of fundamental operations:
$(1)$ deleting an edge; $(2)$ deleting an isolated vertex; $(3)$ contracting an edge; $(4)$ merging two vertices; $(5)$ coarse-graining two parallel edges.
\end{defn}

Since contracting an edge can be viewed as a special case of contracting a multi-edge, and conversely,  contracting a multi-edge is equivalent to a composition of contracting an edge and a series of coarse-graining two parallel edges, then the third type of operation in Definition \ref{D2} can be equivalently replaced by that of contracting a multi-edge.

\begin{defn}
$H$ is called a \textbf{CG-minor} of $G$ if $H$ can be obtained from $G$ by a sequence of the following five types of fundamental operations:
$(1)$ deleting an edge; $(2)$ deleting an isolated vertex; $(3)'$ contracting a multi-edge; $(4)$ merging two vertices; $(5)$ coarse-graining two parallel edges.
\end{defn}

The fourth definition of a CG-minor is as follows.
\begin{defn}\label{D4}
$H$ is called a \textbf{CG-minor} of $G$ if  $H$ is a coarse-graining of a sub-causal-net of $G$, or equivalently, there is a causal-net $K$ with an embedding $\lambda_{\iota}:K\to G$ and a coarse-graining $\lambda_{cg}:K\to H$.
\begin{center}
\begin{tikzpicture}[scale=1.1]
\node (v1) at (0,-1) {$K$};
\node (v2) at (3.5,-2) {$G$};
\node (v3) at (-3.5,-2) {$H$};
\draw [-latex] (v1) edge node[sloped,scale=0.8, below] {$\lambda_{\iota}$}node[sloped,scale=0.8, above] {embedding}  (v2);
\draw [-latex] (v1) edge node[sloped,scale=0.8, below] {$\lambda_{cg}$}node[sloped,scale=0.8, above] {coarse-graining}(v3);
\draw [dashed,-latex] (v3) -- node[sloped,scale=0.8, below] {CG-minor} (v2);
\end{tikzpicture}
\end{center}

\end{defn}

To prove that Definition \ref{D1} is equivalent to Definition \ref{D4}, we need to show that the minor pair $\{\mathbf{CG}, \mathbf{EM}\}$ satisfies the following \textbf{dominating property}: any sub-quotient in  $\{\mathbf{CG}, \mathbf{EM}\}$ has a dual quotient-sub, that is, for any defect $\iota\dashv q$ with $q\in \mathbf{CG}$ and $\iota\in \mathbf{EM}$, there exists a defect  $q'\dashv \iota'$ with $q'\in \mathbf{CG}$ and an $\iota'\in \mathbf{EM}$, such that $q\circ \iota'=\iota\circ q'$. In this case, we say that the class $\mathbf{CG}$ dominates the class $\mathbf{EM}$, which means that the operations of coarse-grainings can always be placed after the operations of embeddings.

\begin{center}
\begin{tikzpicture}[scale=0.7]
\node (v1) at (0,0) {};
\node (v2) at (2,-1) {};
\node (v3) at (4,0) {};
\node (v4) at (2,1) {};
\draw [fill](v1) circle [radius=0.075];
\draw [fill](v2) circle [radius=0.075];
\draw [fill](v3) circle [radius=0.075];
\draw [fill](v4) circle [radius=0.075];
\draw [-latex] (v1) edge node[sloped,scale=0.8, below] {$\iota$}(v2);
\draw [-latex] (v3) edge node[sloped,scale=0.8, below] {$q$}(v2);
\draw [-latex] (v4) edge node[sloped,scale=0.8, above] {$q'$}(v1);
\draw [-latex] (v4) edge node[sloped,scale=0.8, above] {$\iota'$} (v3);
\end{tikzpicture}
\end{center}

The following lemma shows that when a fundamental embedding and a simple contraction form a sub-quotient, they are commutative in a suitable way. But, as shown in Example \ref{def}, when a fundamental embedding and a simple contraction form a quotient-sub, they are not commutative in general.
\begin{lem}\label{c1}
Let $\lambda_c:K_1\to H$ and $\lambda_\iota:K_2\to H$ be two morphisms of causal-nets. If $\lambda_c$ is a simple contraction, $\lambda_\iota$ is a fundamental embedding, then there exist a causal-net $K_3$ and two morphisms $\lambda'_c: K_3\to K_2$, $\lambda'_\iota:K_3\to K_1$ with $\lambda'_c$ being a simple contraction and $\lambda'_\iota$ being an embedding,  such that $\lambda_c\circ \lambda'_\iota=\lambda_\iota\circ \lambda_c'$.
\begin{center}
\begin{tikzpicture}
\node (v1) at (0,0) {$K_2$};
\node (v2) at (2,-1) {$H$};
\node (v3) at (4,0) {$K_1$};
\node (v4) at (2,1) {$K_3$};
\draw [-latex] (v1) edge node[sloped,scale=0.8, below] {$\lambda_{\iota}$}(v2);
\draw [-latex] (v3) edge node[sloped,scale=0.8, below] {$\lambda_{c}$}(v2);
\draw [-latex] (v4) edge node[sloped,scale=0.8, above] {$\lambda_c'$}(v1);
\draw [-latex] (v4) edge node[sloped,scale=0.8, above] {$\lambda_{\iota}'$} (v3);
\end{tikzpicture}
\end{center}
\end{lem}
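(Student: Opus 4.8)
The plan is to analyze, case by case, what kind of fundamental embedding $\lambda_\iota$ is, and in each case to exhibit the pullback-like square explicitly. Since a fundamental embedding is either the operation of adding an edge (the second type of fundamental morphism) or the operation of adding an isolated vertex (the third type), there are really only two cases, and the ``adding an isolated vertex'' case is almost trivial. Recall also that a simple contraction $\lambda_c:K_1\to H$ contracts a single multi-edge $\epsilon=\{e_1,\dots,e_n\}$ of $K_1$, identifying the endpoints of $\epsilon$ to a single vertex $v_0$ of $H$, and is the identity elsewhere; write $s(\epsilon),t(\epsilon)$ for the two endpoints of $\epsilon$ in $K_1$, so that $\lambda_c(s(\epsilon))=\lambda_c(t(\epsilon))=v_0$.

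First I would treat the case where $\lambda_\iota:K_2\to H$ adds an isolated vertex, i.e. $H=K_2+\{w\}$ for some isolated vertex $w$ of $H$. Since $v_0$ is not isolated in $H$ (it is the image of the contracted multi-edge, which has $n\geq 1$ edges, unless $n$-edge contractions with $n$ bigger are allowed; in any case $v_0$ has incident edges coming from $\epsilon$'s neighbors — more carefully, $v_0$ could be isolated in $H$ only if $s(\epsilon)=t(\epsilon)$ had no other incident edges, but then contracting $\epsilon$ would create a loop, contradicting that $K_1$ is a causal-net; so $v_0\neq w$). Hence $w$ already lies in the image of $\lambda_c$ as an isolated vertex $w$ of $K_1$. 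I would then set $K_3=K_2$ minus nothing, in fact take $K_3$ to be the causal-net obtained from $K_1$ by deleting the isolated vertex $w$; define $\lambda'_c:K_3\to K_2$ to be the simple contraction of the same multi-edge $\epsilon$ (which still makes sense since $\epsilon\subseteq E(K_3)$ and its endpoints are still there), and $\lambda'_\iota:K_3\to K_1$ to be the embedding adding back the isolated vertex $w$. Commutativity $\lambda_c\circ\lambda'_\iota=\lambda_\iota\circ\lambda'_c$ is then a direct check on vertices and edges.

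The main case is where $\lambda_\iota:K_2\to H$ adds a single edge $h$, with $s(h)=a$, $t(h)=b$ two vertices of $H=K_2$ (as vertex sets coincide; the edge set of $H$ is that of $K_2$ plus $h$). Here I would lift $h$ back along the contraction. The only subtlety is that $h$ might be incident to $v_0$, and $v_0$ has two preimages $s(\epsilon),t(\epsilon)$ in $K_1$, so I must choose which preimage to attach the lift of $h$ to. The clean choice, dictated by acyclicity, is: if $a=v_0$ (i.e. $h$ emanates from the contracted vertex) I attach the lifted edge $\tilde h$ at the \emph{target} endpoint $t(\epsilon)$, so $s(\tilde h)=t(\epsilon)$; symmetrically if $b=v_0$ then $t(\tilde h)=s(\epsilon)$; if $h$ is not incident to $v_0$ there is a unique lift. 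I then define $K_3=K_1+\{\tilde h\}$ (adding this one new edge $\tilde h$ to $K_1$), let $\lambda'_\iota:K_1\to K_3$ — wait, the square in the statement has $\lambda'_\iota:K_3\to K_1$, so I instead let $K_3$ be the causal-net with vertex set $V(K_1)$, edge set $E(K_1)\cup\{\tilde h\}$, and take $\lambda'_\iota:K_3\to K_1$ to be... no: $\lambda'_\iota$ must go $K_3\to K_1$, so $K_3$ must be a sub-causal-net of $K_1$, meaning the new edge lives on the $K_1$ side. Re-examining: $\lambda'_c:K_3\to K_2$ is a contraction and $\lambda'_\iota:K_3\to K_1$ is an embedding, so $K_3$ should be $K_1$ together with the extra structure that, after contracting $\epsilon$, becomes $h$. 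I would therefore take $K_3$ to be $K_1$ with the lifted edge $\tilde h$ adjoined, $\lambda'_\iota:K_3\to K_1$?? — that is not an embedding ($K_1$ is smaller). The correct reading: $\lambda'_\iota$ embeds $K_3$ \emph{into} $K_1$ is impossible if $K_3$ is bigger, so actually the lifted edge must already be present and the contraction $\lambda_c$ should be the one that is ``pulled back''. I would resolve this by instead observing the square must be oriented so that $K_3$ sits over the span, take $K_3$ with the extra edge and $\lambda'_\iota:K_3\to K_1$ the embedding is wrong; rather take $\lambda'_\iota:K_1\to K_3$. Given the literal statement I will honor it by building $K_3$ as the pushout: $K_3$ has vertices $V(K_1)$ and edges $E(K_2)\setminus\{h\}$ — i.e. I realize the clean construction is $K_3 := K_2$ with $h$ deleted and then $\epsilon$ re-inserted, which is a sub-causal-net of $K_1$ via the obvious $\lambda'_\iota$, and contracts to $K_2\setminus\{h\}\hookrightarrow$ — I would sort the exact orientation by direct inspection and then the commutativity is routine.

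The step I expect to be the genuine obstacle is precisely this choice of basepoint for the lifted edge when $h$ is incident to the contracted vertex $v_0$, together with checking that the resulting $K_3$ is still acyclic: attaching $\tilde h$ to the ``wrong'' endpoint of $\epsilon$ could create a directed cycle $\tilde h$-then-through-$\epsilon$. The resolution is that exactly one of the two choices avoids a cycle (since $\epsilon$ is directed from $s(\epsilon)$ to $t(\epsilon)$, attaching an outgoing lift at $t(\epsilon)$ and an incoming lift at $s(\epsilon)$ is forced), and I would prove acyclicity of $K_3$ by noting its simplification maps onto that of $K_1$ which is acyclic, the only new edge being a lift that respects the reachable order. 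Once $K_3$ and the two morphisms are pinned down, verifying $\lambda_c\circ\lambda'_\iota=\lambda_\iota\circ\lambda'_c$ is a finite check on the finitely many vertices and edges, and that $\lambda'_c$ is a simple contraction (of the copy of $\epsilon$ inside $K_3$) and $\lambda'_\iota$ an embedding is immediate from the construction.
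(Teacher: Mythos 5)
Your case split (isolated vertex versus edge) is the same as the paper's, but both branches have genuine problems and the main one is left unresolved. The root issue is that you treat the added edge $h$ of $H$ as something that must be \emph{lifted} to a new edge attached to one of the two preimages of the contracted vertex, whereas a simple contraction $\lambda_c:K_1\to H$ is a coarse-graining and hence surjective on edges: $h$ already has a unique preimage $\lambda_c^{-1}(h)$ in $K_1$, sitting at a determined endpoint. Once this is noticed, the whole discussion of which endpoint to attach $\tilde h$ to, the acyclicity worry, and the orientation crisis ($K_3$ larger than $K_1$ versus smaller) all evaporate. The correct construction, which is the paper's, is simply $K_3=K_1-\{\lambda_c^{-1}(h)\}$, with $\lambda'_\iota:K_3\to K_1$ the embedding that deletes that one edge and $\lambda'_c:K_3\to K_2$ the contraction of the same multi-edge $\varepsilon$ (note $\lambda_c^{-1}(h)\notin\varepsilon$, since edges of $\varepsilon$ map to an identity morphism, not to $h$). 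You gesture at essentially this at the very end (``edges of $K_2$ plus $\varepsilon$''), but you explicitly defer both the orientation and the commutativity check, so the main case is not actually proved.

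The isolated-vertex case also contains an error: you exclude the subcase where the added isolated vertex $w$ equals the contracted vertex $v=\lambda_c(v_1)=\lambda_c(v_2)$ by claiming that contracting a multi-edge whose endpoints have no other incident edges would create a loop. It does not: contracted edges are sent to the identity morphism $Id_v$, not to a loop edge, so $v$ can perfectly well be isolated in $H$ --- exactly when $\varepsilon$ is an isolated multi-edge of $K_1$. In that subcase one must take $K_3=(K_1-\varepsilon)-\{v_1,v_2\}$ with $\lambda'_c$ an isomorphism rather than a genuine contraction, which the paper handles as a separate case.
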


\begin{proof}
Suppose $\lambda_c$ contracts a multi-edge $\varepsilon=\{e_1, \cdots,e_n\}$ of $K_1$ with source $v_1$ and target $v_2$ and $v=\lambda_c(v_1)=\lambda_c(v_2)$.

$(1)$ If $\lambda_\iota$ deletes an isolated vertex $w$ of $H$, then we have two cases: $w=v$ and $w\neq v$.
In the former case, the multi-edge $\varepsilon$ must be an isolated multi-edges of $K_1$. We define $K_3$ as the sub-causal-net $(K_1-\{e_1, \cdots,e_n\})-\{v_1,v_2\}$, $\lambda_\iota':K_3\to K_1$ as the embedding representing the composition of operations of deleting $\varepsilon$, deleting $v_1$ and $v_2$; $\lambda_c':K_3\to K_2$ as the natural isomorphism.
In the latter case, $\lambda^{-1}(w)$ must be an isolated vertex of $K_1$, hence $\lambda_c^{-1}(w)\neq v_1, \lambda_c^{-1}(w)\neq v_2$. Define $K_3$ as the sub-causal-net $K_1-\{\lambda_c^{-1}(w)\}$, $\lambda_\iota':K_3\to K_1$ as the fundamental embedding of deleting $\lambda_c^{-1}(w)$ and $\lambda_c':K_3\to K_2$ as the simple contraction that contracts $\varepsilon$. In both cases, it is easy to check that $\lambda_c\circ \lambda'_\iota=\lambda_\iota\circ \lambda_c'$.

$(2)$ If $\lambda_\iota$ deletes an edge $e$ of $H$, then $\lambda_c^{-1}(e)$ is also an edge of $K_1$.  Clearly, $\lambda_c^{-1}(e)\not\in \varepsilon$. Define $K_3$  as the sub-causal-net $K_1-\{\lambda_c^{-1}(e)\}$, $\lambda_\iota':K_3\to K_1$ as the fundamental embedding representing the operation of  deleting $\lambda_c^{-1}(e)$ and $\lambda_c':K_3\to K_2$ as the simple contraction that contracts $\varepsilon$. It is easy to check that $\lambda_c\circ \lambda'_\iota=\lambda_\iota\circ \lambda_c'$.
\end{proof}

The following two lemmas shows that  fundamental embeddings and mergings of two vertices are commutative in a suitable way.

\begin{lem}\label{m1}
Let $\lambda_m:K_1\to H$ and $\lambda_\iota:K_2\to H$ be two morphisms of causal-nets. If $\lambda_m$ merges two vertices and $\lambda_\iota$ is a fundamental embedding, then there exist a causal-net $K_3$ and two morphisms $\lambda'_m: K_3\to K_2$, $\lambda'_\iota:K_3\to K_1$ with $\lambda'_m$  either merging two vertices or being an isomorphism and $\lambda'_\iota$ being an embedding, such that $\lambda_m\circ\lambda_\iota'=\lambda_\iota\circ\lambda_m'$.
\begin{center}
\begin{tikzpicture}
\node (v1) at (0,0) {$K_2$};
\node (v2) at (2,-1) {$H$};
\node (v3) at (4,0) {$K_1$};
\node (v4) at (2,1) {$K_3$};
\draw [-latex] (v1) edge node[sloped,scale=0.8, below] {$\lambda_{\iota}$}(v2);
\draw [-latex] (v3) edge node[sloped,scale=0.8, below] {$\lambda_{m}$}(v2);
\draw [-latex] (v4) edge node[sloped,scale=0.8, above] {$\lambda_m'$}(v1);
\draw [-latex] (v4) edge node[sloped,scale=0.8, above] {$\lambda_{\iota}'$} (v3);
\end{tikzpicture}
\end{center}
\end{lem}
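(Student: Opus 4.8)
The plan is to proceed by a case analysis on the type of the fundamental embedding $\lambda_\iota$, exactly as in the proof of Lemma \ref{c1}, and in each case to construct $K_3$ explicitly as a sub-causal-net of $K_1$ together with the obvious pair of morphisms. Write $\lambda_m:K_1\to H$ for the merging that identifies two incomparable vertices $v_1,v_2$ of $K_1$ to a single vertex $v=\lambda_m(v_1)=\lambda_m(v_2)$ of $H$; recall that by Theorem \ref{inc} the set $\{v_1,v_2\}$ is a coclique, so in particular $v_1,v_2$ share no edge in $K_1$, and that $\lambda_m$ is the identity on edges (it is a merging, hence has no contraction). First I would dispose of the case where $\lambda_\iota$ deletes an edge $e$ of $H$. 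Since $\lambda_m$ is a bijection on edges, $e$ corresponds to a unique edge $\lambda_m^{-1}(e)$ of $K_1$; set $K_3=K_1-\{\lambda_m^{-1}(e)\}$, let $\lambda_\iota':K_3\to K_1$ be the fundamental embedding deleting that edge, and let $\lambda_m':K_3\to K_2$ be the merging of $v_1$ and $v_2$ (these two vertices are still present and still incomparable in $K_3$, so this is a legitimate merging, or an isomorphism in the degenerate case where the deleted edge was the only thing making them distinct — but incomparability already forbids an edge between them, so $\lambda_m'$ is genuinely a merging). One then checks $\lambda_m\circ\lambda_\iota'=\lambda_\iota\circ\lambda_m'$ on vertices and edges directly.

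The second case is where $\lambda_\iota$ deletes an isolated vertex $w$ of $H$. Here the subtlety is whether $w=v$ or $w\neq v$. If $w\neq v$, then $\lambda_m^{-1}(w)$ is a single isolated vertex of $K_1$ distinct from $v_1,v_2$; put $K_3=K_1-\{\lambda_m^{-1}(w)\}$, take $\lambda_\iota'$ to be the deletion of that vertex and $\lambda_m'$ the merging of $v_1,v_2$, and verify commutativity. If $w=v$, then because $w$ is isolated in $H$ and $\lambda_m$ is a bijection on edges, both $v_1$ and $v_2$ must already be isolated in $K_1$; now $K_3=K_1-\{v_1,v_2\}$, with $\lambda_\iota':K_3\to K_1$ the embedding representing the successive deletion of the two isolated vertices $v_1$ and $v_2$, and $\lambda_m':K_3\to K_2$ the natural isomorphism (since $K_2$ is obtained from $K_1$ by identifying the two isolated vertices $v_1,v_2$, it is isomorphic to $K_1$ with one isolated vertex removed, which is exactly $K_3$ — here is the one place $\lambda_m'$ is an isomorphism rather than a merging, which is why the statement allows that alternative). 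Again the square commutes by inspection.

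The step I expect to need the most care is this last sub-case, $w=v$: one must argue that deleting $v$ from $H$ being legitimate (i.e. $v$ isolated in $H$) forces \emph{both} preimages $v_1,v_2$ to be isolated in $K_1$, not merely that their images coincide. This follows from $\lambda_m$ having no contraction (so it does not collapse any edge of $K_1$) together with surjectivity on edges: any edge incident to $v_1$ or $v_2$ in $K_1$ would map to an edge incident to $v$ in $H$, contradicting isolatedness of $v$. Once that is settled, identifying $K_3$ with $K_2$ up to the canonical isomorphism and checking $\lambda_m\circ\lambda_\iota'=\lambda_\iota\circ\lambda_m'$ is routine bookkeeping on the underlying vertex and edge maps, which I would not spell out in full. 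The remaining verifications in all cases are the same two-line checks of equality of functors (compare the value on each object and on each generating morphism), and they present no difficulty.
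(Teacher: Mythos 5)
Your proposal is correct and follows essentially the same route as the paper: the same case split on whether the fundamental embedding deletes an edge or an isolated vertex, the same sub-case distinction $w=v$ versus $w\neq v$ with the natural isomorphism appearing exactly when $w=v$, and the same explicit constructions of $K_3$, $\lambda'_\iota$, $\lambda'_m$. Your added justification that both preimages $v_1,v_2$ must be isolated when $v$ is isolated in $H$ (via the bijectivity of a merging on edges) is a point the paper asserts without argument, so that remark is a small improvement rather than a divergence.
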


\begin{proof}
Suppose $\lambda_m$ merges two vertices $v_1, v_2$ of $K_1$ to a vertex $w$ of $H$.

$(1)$ If $\lambda_\iota$ deletes an isolated vertex $u$ of $H$, we have two possible cases:  $u\neq w$ and $u=w$. In the former case, $v=\lambda_m^{-1}(u)$  must be an isolated vertex of $K_1$. We define $K_3=K_1-\{v\}$ that formed by deleting the isolated vertex $v$ from $K_1$. Define $\lambda_\iota':K_3\to K_1$ to be the fundamental embedding that deleting $v$, and $\lambda_m':K_3\to K_2$ be the fundamental morphism merging $v_1$ and $v_2$. In the latter case,  $w$ is an isolated vertex of $H$ implies that both $v_1$ and $v_2$ are isolated vertices of $K_1$. In this case, we define $K_3=K_1-\{v_1,v_2\}$, $\lambda_\iota'$ to be the embedding that deleting $v_1,v_2$ and $\lambda_m':K_3\to K_2$ to be the natural isomorphism.

$(2)$ If $\lambda_\iota$ deletes an edge $h$ of $H$,  we define $K_3=K_1-\{\lambda_\iota^{-1}(h)\}$, $\lambda_\iota':K_3\to K_1$ to be the fundamental embedding that deletes the edge $\lambda_\iota^{-1}(h)$ and $\lambda_m':K_3\to K_2$ to be the fundamental morphism merging $v_1$ and $v_2$.

In all the above cases, it is easy to check that $\lambda_m\circ\lambda_\iota'=\lambda_\iota\circ\lambda_m'$.
\end{proof}

\begin{lem}\label{m2}
Let $\lambda_\iota:H\to L_1$ and $\lambda_m:H\to L_2$ be two morphisms of causal-nets. If $\lambda_\iota$ is a fundamental embedding and $\lambda_m$ merges two vertices, then there exist a causal-net $L_3$ and two morphisms $\lambda'_m: L_1\to L_3$, $\lambda'_\iota:L_2\to L_3$ with $\lambda'_m$ being a fundamental morphism that merges two vertices, such that $\lambda_m'\circ \lambda_\iota=\lambda_\iota'\circ \lambda_m$, and $\lambda_{\iota'}$ being a fundamental embedding.
\begin{center}
\begin{tikzpicture}
\node (v1) at (0,0) {$L_2$};
\node (v2) at (2,-1) {$L_3$};
\node (v3) at (4,0) {$L_1$};
\node (v4) at (2,1) {$H$};
\draw [-latex] (v1) edge node[sloped,scale=0.8, below] {$\lambda_{\iota}'$}(v2);
\draw [-latex] (v3) edge node[sloped,scale=0.8, below] {$\lambda_{m}'$}(v2);
\draw [-latex] (v4) edge node[sloped,scale=0.8, above] {$\lambda_m$}(v1);
\draw [-latex] (v4) edge node[sloped,scale=0.8, above] {$\lambda_\iota$} (v3);
\end{tikzpicture}
\end{center}
\end{lem}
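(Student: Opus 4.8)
The plan is to construct $L_3$ together with $\lambda_m'$ and $\lambda_\iota'$ by an explicit pushout-type argument, splitting into cases according to the type of the fundamental embedding $\lambda_\iota$. Write $v_1,v_2\in V(H)$ for the two vertices that $\lambda_m$ identifies to a vertex $w$ of $L_2$; by Theorem \ref{inc} they are incomparable in $H$. Since $\lambda_\iota$ is a fundamental embedding, we identify $H$ with the sub-causal-net of $L_1$ obtained from $H$ either by adjoining one isolated vertex or by adjoining one edge.

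First I would dispose of the isolated-vertex case. If $\lambda_\iota$ adjoins an isolated vertex $u$, the reachability order is unchanged, so $v_1,v_2$ remain incomparable in $L_1$; set $L_3:=L_2+\{u\}$, let $\lambda_m'\colon L_1\to L_3$ merge $v_1$ and $v_2$, and let $\lambda_\iota'\colon L_2\to L_3$ adjoin $u$. Both composites $H\to L_3$ send $v_1,v_2$ to the copy of $w$ and agree with the identity on the rest of $H$, so the square commutes.

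Next suppose $\lambda_\iota$ adjoins an edge $e\colon a\to b$ with $a,b\in V(H)$, and assume first that $v_1,v_2$ are still incomparable in $L_1$. By Theorem \ref{inc} the merging $\lambda_m'\colon L_1\to L_3$ with $L_3:=L_1/\{v_1,v_2\}$ exists, and under the obvious identification $L_3=L_2+\{\bar e\}$ where $\bar e\colon\lambda_m(a)\to\lambda_m(b)$; the endpoints of $\bar e$ are distinct, since otherwise $e$ would place $v_1$ and $v_2$ in the reachability relation, contrary to assumption. Thus $\lambda_\iota'\colon L_2\to L_3$ is a fundamental embedding, and $\lambda_m'\circ\lambda_\iota=\lambda_\iota'\circ\lambda_m$ holds by construction, each side merely performing the merging of $v_1,v_2$ and recording $\bar e$.

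The step I expect to be the main obstacle is the remaining case, in which $\lambda_\iota$ adjoins an edge $e$ that creates a directed path between $v_1$ and $v_2$ in $L_1$, say $v_1\rightsquigarrow v_2$. Now $v_1,v_2$ can no longer be merged inside $L_1$, nor can the image edge be adjoined to $L_2$, without producing an oriented cycle, so the recipe of the previous paragraph fails. The natural remedy is to let $L_3$ be $L_1$ with $v_1,v_2$ merged \emph{and} this new directed path contracted; the left-hand map $\lambda_m'\colon L_1\to L_3$ then becomes a coarse-graining rather than a bare merging, which by Theorem \ref{fund} is still a composition of fundamental coarse-grainings. Verifying that the right-hand map $\lambda_\iota'\colon L_2\to L_3$ is of the prescribed type and that the square commutes is the delicate part, because mergings --- unlike embeddings --- interact with the reachability order; indeed this is the same phenomenon that obstructs the span-to-cospan passage in Example \ref{def}. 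In the use made of the lemma it suffices to restrict to fundamental embeddings whose new edge does not join the two merged vertices by a directed path, and then only the first two cases arise and the conclusion holds as stated.
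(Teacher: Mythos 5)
Your treatment of the first two cases (adjoining an isolated vertex, and adjoining an edge whose insertion leaves $\lambda_\iota(v_1)$ and $\lambda_\iota(v_2)$ incomparable in $L_1$) coincides with the paper's proof, and your extra check that the induced edge $\bar e$ of $L_3$ has distinct endpoints is a point the paper leaves implicit. The substantive issue is your third case, and you are right that it is not a removable difficulty: the lemma as stated fails there, and no choice of $L_3$ can repair it within the prescribed morphism classes. Take $H$ to be two isolated vertices $v_1,v_2$, let $\lambda_\iota$ adjoin the edge $e:v_1\to v_2$ (so $L_1$ is a single edge) and let $\lambda_m$ merge $v_1,v_2$ to a point $L_2=\{w\}$. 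Commutativity forces $\lambda_m'(\lambda_\iota(v_1))=\lambda_m'(\lambda_\iota(v_2))$, and since $L_3$ is acyclic the functor $\lambda_m'$ must then send $e$ to an identity; hence $\lambda_m'$ has a contraction and can be neither a merging nor (reading the statement literally) an embedding. The same obstruction occurs whenever $v_1\to a$ and $b\to v_2$ in $H$ and $\lambda_\iota$ adjoins $e:a\to b$. This is exactly the phenomenon of Example \ref{def} transposed to mergings: a quotient that alters the reachable order need not commute past an embedding in the span-to-cospan direction.

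Be aware that the paper's own proof of case $(2)$ has precisely this gap: it defines $L_3=L_1/\{\lambda_\iota^{-1}(v_1),\lambda_\iota^{-1}(v_2)\}$ without verifying that these two vertices remain incomparable in $L_1$, which by Theorem \ref{inc} is what is needed for the merging $\lambda_m'$ to exist. So your diagnosis is correct, and your proposed repair is the right one: either add the hypothesis that $\lambda_\iota(v_1)$ and $\lambda_\iota(v_2)$ are still incomparable in $L_1$ (automatic in the isolated-vertex case), or weaken the conclusion so that $\lambda_m'$ is only a vertex-coarse-graining, possibly with contractions. Your closing claim that the restricted statement suffices for the applications should be checked against each use, but note that, unlike Lemmas \ref{c1}, \ref{m1} and \ref{cos1}, this lemma is the span-to-cospan direction, which the paper does not actually invoke in establishing the equivalence of Definitions \ref{D1} and \ref{D4}. (A minor point: the prose of the statement swaps the types of $\lambda_m'$ and $\lambda_\iota'$ relative to the diagram; the intended reading, used above and in the paper's proof, is that $\lambda_m':L_1\to L_3$ merges two vertices and $\lambda_\iota':L_2\to L_3$ is the fundamental embedding.)
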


\begin{proof}
Suppose $\lambda_m$ merges two vertices $v_1, v_2$ of $H$ to a vertex $w$ of $L_2$.

$(1)$ If $\lambda_\iota$ deletes an isolated vertex $u$ of $L_1$, then $u\neq\lambda_\iota^{-1}(v_1)$, $u\neq\lambda_\iota^{-1}(v_1)$. We define $L_3=L_2\sqcup \{u\}$ that formed by adding an isolated vertex $u$ to $L_2$, $\lambda_m':L_1\to L_3$ to be the morphism merging $\lambda_\iota^{-1}(v_1)$ and $\lambda_\iota^{-1}(v_2)$ to the vertex $w$ and $\lambda_\iota':L_2\to L_3$ to be the fundamental embedding that deleting the isolated vertex $u$.

$(2)$ If $\lambda_\iota$ deletes an edge $e$ of $L_1$,  we define $L_3$ to be the quotient $L_1/\{\lambda_\iota^{-1}(v_1), \lambda_\iota^{-1}(v_2)\}$ that merges $\lambda_\iota^{-1}(v_1)$ and $\lambda_\iota^{-1}(v_2)$, $\lambda_m':L_1\to L_3$ to be the fundamental morphism that merges $\lambda_\iota^{-1}(v_1)$ and $\lambda_\iota^{-1}(v_2)$ and $\lambda_\iota':L_2\to L_3$ to be fundamental embedding that deletes the edge $\lambda_m'(e)$.

In both cases, it is easy to check that $\lambda_m\circ\lambda_\iota'=\lambda_\iota\circ\lambda_m'$.
\end{proof}

The following two lemmas shows that  fundamental embeddings and coarse-grainings of two parallel edges are commutative in a suitable way.

\begin{lem}\label{cos1}
Let $\lambda_{cg}:K_1\to H$ and $\lambda_\iota:K_2\to H$ be two morphisms of causal-nets. If $\lambda_{cg}$ coarse-grainings two parallel edges and $\lambda_\iota$ is a fundamental embedding, then there exist a causal-net $K_3$ and two morphisms $\lambda'_{cg}: K_3\to K_2$, $\lambda'_\iota:K_3\to K_1$ with $\lambda'_{cg}$ either coarse-graining two parallel edges or being an isomorphism and $\lambda'_\iota$ being either a fundamental embedding or a composition of fundamental embeddings, such that $\lambda_{cg}\circ\lambda_\iota'=\lambda_\iota\circ\lambda_{cg}'$.
\begin{center}
\begin{tikzpicture}
\node (v1) at (0,0) {$K_2$};
\node (v2) at (2,-1) {$H$};
\node (v3) at (4,0) {$K_1$};
\node (v4) at (2,1) {$K_3$};
\draw [-latex] (v1) edge node[sloped,scale=0.8, below] {$\lambda_{\iota}$}(v2);
\draw [-latex] (v3) edge node[sloped,scale=0.8, below] {$\lambda_{cg}$}(v2);
\draw [-latex] (v4) edge node[sloped,scale=0.8, above] {$\lambda_{cg}'$}(v1);
\draw [-latex] (v4) edge node[sloped,scale=0.8, above] {$\lambda_{\iota}'$} (v3);
\end{tikzpicture}
\end{center}
\end{lem}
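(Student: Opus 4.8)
The plan is to complete the given cospan to a commuting square by a case analysis on the type of the fundamental embedding $\lambda_\iota$ and on how the parallel pair collapsed by $\lambda_{cg}$ meets the cell deleted by $\lambda_\iota$. Write $\lambda_{cg}:K_1\to H$ for the coarse-graining that identifies two parallel edges $e_1,e_2$ of $K_1$ (both running from $v_1$ to $v_2$) to a single edge $e$ of $H$, so that $V(H)=V(K_1)$ and $E(H)=\big(E(K_1)\setminus\{e_1,e_2\}\big)\sqcup\{e\}$. Since $\lambda_\iota$ is a fundamental embedding it either deletes an isolated vertex of $H$ or deletes an edge of $H$; in each case I will exhibit $K_3$, $\lambda'_{cg}$ and $\lambda'_\iota$ explicitly, and the required identity $\lambda_{cg}\circ\lambda_\iota'=\lambda_\iota\circ\lambda_{cg}'$ will be a direct check on vertex sets and edge sets, entirely parallel to the verifications in Lemmas \ref{c1}, \ref{m1} and \ref{m2}.

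First I would handle the case that $\lambda_\iota$ deletes an isolated vertex $u$ of $H$. Since $e$ is incident to $v_1,v_2$ we have $u\neq v_1,v_2$, and because the only difference between $H$ and $K_1$ is that the parallel pair $e_1,e_2$ is replaced by the single edge $e$ with the same endpoints, a vertex is isolated in $H$ exactly when it is isolated in $K_1$; hence $u$ is isolated in $K_1$ as well. I then take $K_3=K_1-\{u\}$, let $\lambda'_\iota:K_3\to K_1$ delete $u$, and let $\lambda'_{cg}:K_3\to K_2$ coarse-grain the pair $\{e_1,e_2\}$, after which the square commutes. Next, if $\lambda_\iota$ deletes an edge $h\neq e$ of $H$, then $h$ is an edge of $K_1$ distinct from $e_1,e_2$, and I take $K_3=K_1-\{h\}$ with $\lambda'_\iota$ deleting $h$ and $\lambda'_{cg}$ coarse-graining $\{e_1,e_2\}$, which again commutes.

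The only delicate case is $h=e$, i.e.\ $\lambda_\iota$ removes exactly the edge produced by the coarse-graining; then $K_2$ contains none of $e_1,e_2,e$ between $v_1$ and $v_2$. Here I set $K_3=K_2$, take $\lambda'_{cg}:K_3\to K_2$ to be the identity (an isomorphism), and take $\lambda'_\iota:K_2\to K_1$ to be the composition of the two fundamental embeddings that add $e_1$ and then $e_2$ back. Both composites $\lambda_{cg}\circ\lambda_\iota'$ and $\lambda_\iota\circ\lambda_{cg}'$ then equal the inclusion of $K_2$ into $H$ as the sub-causal-net missing $e$, so the square commutes. This is exactly why the statement must permit $\lambda'_{cg}$ to be an isomorphism and $\lambda'_\iota$ to be a composition of fundamental embeddings rather than a single one.

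I expect this last sub-case to be the main obstacle: one has to notice that the cospan cannot in general be closed by a single coarse-graining of a parallel pair together with a single fundamental embedding, and that passing to the identity on one side and the two-step embedding that re-adds $e_1$ and $e_2$ on the other is both forced and sufficient. Everything else is routine bookkeeping of vertices and edges, of the same flavour as the proofs of Lemmas \ref{c1}, \ref{m1} and \ref{m2}.
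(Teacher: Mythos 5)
Your proof is correct and follows essentially the same case analysis as the paper: deletion of an isolated vertex, deletion of an edge $h\neq e$, and the delicate sub-case $h=e$ where $\lambda'_{cg}$ degenerates to an isomorphism and $\lambda'_\iota$ becomes a composition of two fundamental embeddings. The only cosmetic difference is that in the $h=e$ sub-case you take $K_3=K_2$ with $\lambda'_{cg}=\mathrm{id}$, while the paper takes $K_3=K_1-\{e_1,e_2\}$ with $\lambda'_{cg}$ the natural isomorphism; these are the same square up to renaming $K_3$ along that isomorphism.
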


\begin{proof}
Suppose $\lambda_{cg}$ coarse-grainings two parallel edges $e_1, e_2$ of $K_1$ to an edge $e$ of $H$.

$(1)$ If $\lambda_\iota$ deletes an isolated vertex $v$ of $H$, then $w=\lambda_{cg}^{-1 }(v)$ must be an isolated vertex of $K_1$.  We define $K_3=K_1-\{w\}$ formed by deleting $w$ from $K_1$,  $\lambda_\iota':K_3\to K_1$ to be the fundamental embedding that deletes $w$ and $\lambda_{cg}':K_3\to K_2$ being the coarse-graining of $e_1,e_2$ to the edge $\lambda_{\iota}^{-1}(e)$.

$(2)$ If $\lambda_\iota$ deletes an edge $h$ of $H$, we have two possible cases: $h=e$ and $h\neq e$. In the former case, we define $K_3=K_1-\{e_1,e_2\}$ formed by deleting $e_1$ and $e_2$ from $K_1$, $\lambda_\iota':K_3\to K_1$ to be the morphism deleting $e_1$ and $e_2$ and $\lambda_{cg}':K_3\to K_2$ to be the natural isomorphism. In the latter case, we define $K_3=K_1-\{\lambda_{\iota}^{-1}(h)\}$ formed by deleting the edge $\lambda_{\iota}^{-1}(h)$, $\lambda_\iota':K_3\to K_1$ to be the fundamental embedding deleting $\lambda_{\iota}^{-1}(h)$ and $\lambda_{cg}:K_3\to K_2$ to be the fundamental morphism that coarse-grainings $e_1$ and $e_2$ to the edge $\lambda^{-1}_\iota(e)$.

In all the above cases, it is easy to check that $\lambda_{cg}\circ\lambda_\iota'=\lambda_\iota\circ\lambda_{cg}'$.
\end{proof}

\begin{lem}\label{cos2}
Let $\lambda_\iota:H\to L_1$ and $\lambda_{cg}:H\to L_2$ be two morphisms of causal-nets. If $\lambda_\iota$ is a fundamental embedding and $\lambda_{cg}$ coarse-grainings two parallel edges, then there exist a causal-net $L_3$ and two morphisms $\lambda'_{cg}: L_1\to L_3$, $\lambda'_\iota:L_2\to L_3$ with $\lambda'_{cg}$ coarse-graining two parallel edges and $\lambda_{\iota}'$ being a fundamental embedding, such that $\lambda_{cg}'\circ \lambda_\iota=\lambda_\iota'\circ \lambda_{cg}$.
\begin{center}
\begin{tikzpicture}
\node (v1) at (0,0) {$L_2$};
\node (v2) at (2,-1) {$L_3$};
\node (v3) at (4,0) {$L_1$};
\node (v4) at (2,1) {$H$};
\draw [-latex] (v1) edge node[sloped,scale=0.8, below] {$\lambda_{\iota}'$}(v2);
\draw [-latex] (v3) edge node[sloped,scale=0.8, below] {$\lambda_{cg}'$}(v2);
\draw [-latex] (v4) edge node[sloped,scale=0.8, above] {$\lambda_{cg}$}(v1);
\draw [-latex] (v4) edge node[sloped,scale=0.8, above] {$\lambda_\iota$} (v3);
\end{tikzpicture}
\end{center}
\end{lem}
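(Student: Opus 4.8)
The plan is to complete the given span to a commuting square by a direct case analysis on the fundamental embedding $\lambda_\iota$, exactly parallel in spirit to the proof of Lemma \ref{cos1} but run in the ``pushout'' direction. First I would record the structure of $\lambda_{cg}$: it identifies two parallel edges $e_1,e_2$ of $H$, with common source $u_1$ and common target $u_2$, sending both to a single edge $e$ of $L_2$, while acting as the identity on $V(H)$ and on $E(H)-\{e_1,e_2\}$. The key observation is that a fundamental embedding only adjoins an isolated vertex or a single edge and never deletes or merges anything; hence the pair $\{e_1,e_2\}$ persists in $L_1$ as a genuine pair of parallel edges. This is precisely what guarantees that the morphism $\lambda'_{cg}$ constructed below is again a coarse-graining of two parallel edges, and not a degenerate (isomorphism) morphism, so that the conclusion matches the statement.

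Then the case analysis proceeds as follows. \emph{Case (i):} $\lambda_\iota$ adds an isolated vertex $v$. Since $v\notin V(H)$, it is unrelated to $e_1,e_2$; set $L_3=L_2\sqcup\{v\}$, let $\lambda'_\iota:L_2\to L_3$ be the fundamental embedding adjoining $v$, and let $\lambda'_{cg}:L_1\to L_3$ coarse-grain $e_1,e_2$ to $e$. \emph{Case (ii):} $\lambda_\iota$ adds an edge $h$ with source $a$ and target $b$, where $a,b\in V(H)=V(L_2)$. First note that, $L_1$ being acyclic, the possibility $a=u_2,\ b=u_1$ is excluded (it would create the $2$-cycle $e_1h$), so the only way $h$ can be parallel to $e_1,e_2$ is $a=u_1$, $b=u_2$. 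If this holds, set $L_3$ to be $L_2$ with a new edge $h:u_1\to u_2$ adjoined, let $\lambda'_\iota$ be the fundamental embedding adding $h$ (parallel to $e$), and let $\lambda'_{cg}:L_1\to L_3$ coarse-grain $e_1,e_2$ to $e$ (leaving $h$ untouched). Otherwise $h$ is not parallel to $e_1,e_2$; set $L_3=L_2\sqcup\{h\}$, with $\lambda'_\iota$ adjoining $h:a\to b$ and $\lambda'_{cg}$ coarse-graining $e_1,e_2$.

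In every case I would verify $\lambda'_{cg}\circ\lambda_\iota=\lambda'_\iota\circ\lambda_{cg}$ by evaluating both composites on each vertex and each edge of $H$: on vertices and on edges other than $e_1,e_2$ both sides are the evident ``inclusion followed by the identity'', and on $e_1$ (resp.\ $e_2$) both sides give the merged edge $e$ of $L_3$; the newly adjoined cell ($v$ or $h$) lies outside the common domain $H$, so contributes nothing to the check. There is no serious obstacle here --- this is the ``easy half'' of the commutation, in contrast with the span case, which genuinely fails (Example \ref{def}). The only points needing care are the modest bookkeeping items in Case (ii): one must check that adjoining $h$ to $L_2$ still yields a causal-net (acyclicity is inherited because coarse-graining two parallel edges does not change the reachability order beyond identifying $e_1$ with $e_2$, and $L_1$ is acyclic), and that $\lambda'_\iota$ really is a \emph{fundamental} embedding, i.e.\ that $h$ is attached between two vertices already present in $L_2$ --- which holds since $a,b\in V(H)=V(L_2)$. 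Once these are noted, the commutativity verification is purely mechanical.
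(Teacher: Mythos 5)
Your proof is correct and follows essentially the same route as the paper's: a case split on whether the fundamental embedding adjoins an isolated vertex or an edge, taking $L_3$ to be $L_2$ with the new cell adjoined (equivalently, the quotient of $L_1$ coarse-graining the surviving parallel pair), and checking commutativity cell by cell. Your extra sub-case in (ii) on whether $h$ is parallel to $e_1,e_2$ and the acyclicity remark are harmless refinements of the same argument.
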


\begin{proof}
Suppose $\lambda_{cg}$ coarse-grainings two parallel edges $e_1, e_2$ of $H$ to an edge  $e$ of $L_2$.

$(1)$ If $\lambda_\iota$ deletes an isolated vertex $u$ of $L_1$, then we define $L_3=L_2\sqcup \{u\}$, $\lambda_{cg}':L_1\to L_3$ to be the fundamental morphism that coarse-grainings the parallel edges $\lambda^{-1}_\iota(e_1), \lambda^{-1}_\iota(e_1)$ to $e$ and $\lambda_{\iota}':L_2\to L_3$ to be the fundamental embedding deleting the isolated vertex $u$.

$(2)$ If $\lambda_\iota$ deletes an edge $h$ of $L_1$, then $h\neq \lambda^{-1}_\iota(e_1)$ and $h\neq \lambda^{-1}_\iota(e_2)$. We define $L_3$ to be the quotient-causal-net of $L_1$ formed by coarse-graining edges $\lambda^{-1}_\iota(e_1)$ and $\lambda^{-1}_\iota(e_2)$ to an edge $h'$, $\lambda_{cg}':L_1\to L_3$ to be the fundamental morphism that coarse-grainings $\lambda^{-1}_\iota(e_1)$, $\lambda^{-1}_\iota(e_2)$ and $\lambda_{\iota}':L_2\to L_3$ to be the fundamental embedding deleting $\lambda_{cg}'(h)$ and with $\lambda_\iota'(e)=h'$.

In both cases, it is easy to check that $\lambda_{cg}'\circ\lambda_\iota=\lambda_\iota'\circ\lambda_{cg}$.
\end{proof}

\begin{thm}
The class $\mathbf{CG}$ dominates the class $\mathbf{EM}$ and Definition \ref{D1} is equivalent to  Definition \ref{D4}.
\end{thm}

\begin{proof}
By Theorem \ref{sub}, Theorem \ref{fund}, Lemma \ref{c1}, Lemma \ref{m1} and Lemma \ref{cos1}, we can see that $\mathbf{CG}$ dominates  $\mathbf{EM}$, which implies that Definition \ref{D1} and  Definition \ref{D4} are equivalent.
\end{proof}

This equivalence of Definition \ref{D1} and  Definition \ref{D4} implies that all CG-minors are quotient-sub and all sub-quotient CG-minor are exact.

We call $H$ an\textbf{ exact CG-minor} of $G$ if there exists an exact CG-minor $T:H\rightsquigarrow G$.
Exact CG-minors can be characterized as sub-coarse-grainings.
\begin{thm}
$H$ is an exact CG-minor of $G$ if and only if  $H$ is a sub-coarse-graining of $G$, or equivalently, there is a causal-net $L$ with an embedding $\lambda_{\iota}:K\to G$ and a coarse-graining $\lambda_{cg}:K\to H$.
\begin{center}
\begin{tikzpicture}[scale=1.1]
\node (v2) at (3.5,-2) {$G$};
\node (v3) at (-3.5,-2) {$H$};
\draw [dashed,-latex] (v3) -- node[sloped,scale=0.8, above] {exact CG-minor} (v2);
\node (v4) at (0,-3) {$L$};
\draw [latex-] (v4) edge node[sloped,scale=0.8, above] {$\lambda_{cg}$}node[sloped,scale=0.8, below] {coarse-graining}  (v2);
\draw [latex-] (v4) edge node[sloped,scale=0.8, above] {$\lambda_{\iota}$}node[sloped,scale=0.8, below] {embedding}(v3);
\end{tikzpicture}
\end{center}
\end{thm}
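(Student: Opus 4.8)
The plan is to deduce the theorem from facts already in place rather than to re-run the diagram chases. Recall that an exact minor is by definition one that is simultaneously a span minor and a cospan minor, and that above (as a consequence of the equivalence of Definitions \ref{D1} and \ref{D4}) we have recorded two facts: every CG-minor is a span minor, and every cospan CG-minor is exact. Hence for causal-nets $H$ and $G$, "$H$ is an exact CG-minor of $G$" is equivalent to "$H$ is a cospan CG-minor of $G$": one implication is trivial, the other is exactly the second recorded fact. So the theorem is reduced to showing that $H$ is a cospan CG-minor of $G$ if and only if there are a coarse-graining $\lambda_{cg}:G\to L$ and an embedding $\lambda_{\iota}:H\to L$ for some causal-net $L$, which is the content of "$H$ is a sub-coarse-graining of $G$".

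For the "if" direction I would simply exhibit the length-two zig-zag $T=[H;\lambda_{\iota};L;\lambda_{cg};G]$. When neither of $\lambda_{\iota},\lambda_{cg}$ is an isomorphism this is an irreducible minor whose unique successive pair is the cospan defect $\{\lambda_{\iota},\lambda_{cg}\}$ (common sink $L$), so $T$ is a cospan CG-minor from $H$ to $G$; by the recorded fact it is then exact, and $H$ is an exact CG-minor of $G$. If one of $\lambda_{\iota},\lambda_{cg}$ is an isomorphism, $H$ is literally a sub-causal-net, respectively a coarse-graining, of $G$, and these cases are covered directly by Theorem \ref{sub} and Theorem \ref{fund}.

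For the "only if" direction, let $H$ be an exact, hence cospan, CG-minor of $G$: there is a minor $T:H\rightsquigarrow G$ gauge equivalent to an irreducible minor $T'$ that has a cospan defect, and since cospan minors are regular we may take $T'$ to have at most one defect. Irreducibility forbids two successive morphisms of the same class, so every successive pair of $T'$ is a defect; a regular irreducible minor with a defect therefore has length exactly two and, the defect being a cospan defect, has the form $H\xrightarrow{\lambda_{\iota}}L\xleftarrow{\lambda_{cg}}G$ with $\lambda_{\iota}$ an embedding and $\lambda_{cg}$ a coarse-graining. If $T'$ has no defect it has length at most one, and $H$ is directly a sub-causal-net or a coarse-graining of $G$. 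In all cases $H$ is a sub-coarse-graining of $G$.

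The step that carries the real weight, and hence the main obstacle, is the claim used in the last paragraph that a cospan CG-minor can be brought to the regular, length-two cospan form $H\to L\leftarrow G$. This is where the asymmetric dominating property is essential: cospan defects of $(\mathbf{CG},\mathbf{EM})$ always possess dual span defects (Lemmas \ref{c1}, \ref{m1}, \ref{cos1}, via Theorem \ref{sub} and Theorem \ref{fund}), whereas by Example \ref{def} a span defect need not possess a dual. I would package this as a normalisation lemma proved by induction on length: take a minimal-length irreducible representative carrying a cospan defect within the gauge class, and if its length exceeded two, flip that cospan defect to its dual span defect and perform the resulting composable reductions to obtain a strictly shorter such representative, contradicting minimality. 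With this lemma the bookkeeping of the three paragraphs above completes the proof.
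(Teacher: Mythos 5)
The paper states this theorem without proof (it is presented as a corollary of the discussion preceding it), so there is no proof to compare against line by line; your first three paragraphs essentially reconstruct what the author evidently has in mind. Granting the two facts recorded just before the theorem --- every CG-minor is a span minor, and every cospan CG-minor is exact --- ``exact'' does reduce to ``cospan'', and under the paper's reading of a cospan minor (gauge equivalent to a \emph{regular} irreducible minor with a cospan defect, which irreducibility plus regularity force to be the length-two zig-zag $H\to L\leftarrow G$) the equivalence with ``sub-coarse-graining'' is an unwinding of definitions. That part of your proposal is sound, modulo the degenerate cases where one leg is an isomorphism, which you wave at rather than settle (a bare embedding or a bare coarse-graining is not literally gauge equivalent to an \emph{irreducible} length-two span or cospan, since inserting an identity leg produces a reducible minor); the paper ignores this too, so I will not press it.

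The genuine problem is your final paragraph. First, the normalisation lemma is not needed: once one accepts the paper's assertion that cospan minors are regular, the definition already hands you the length-two cospan form, exactly as your own third paragraph derives. Second, and more seriously, the induction you sketch does not work. Flipping a cospan defect to its dual span defect drives the zig-zag toward the all-span normal form, not toward a cospan: after the flip the two neighbouring pairs become composable, and the reduced representative may have no cospan defect left. Concretely, an irreducible zig-zag of type quotient--embedding--quotient has exactly one cospan defect; flipping it and composing the resulting pair of quotients yields a length-two \emph{span}, so the ``strictly shorter representative carrying a cospan defect'' you invoke need not exist and the contradiction with minimality evaporates. This failure is forced: by Example \ref{def} not every span defect has a dual, so not every CG-minor is exact, and any normalisation procedure that converted arbitrary zig-zags into cospans would contradict that. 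Delete the fourth paragraph, or replace it with the observation that no such normalisation is required because the definition of a cospan minor already supplies the length-two cospan representative.
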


As shown in Example \ref{def},  not all CG-minors are exact.

\subsection{Contraction minor}

In this subsection, we take $\mathcal{C}=\mathbf{Cau}$, $\mathcal{Q}$ to be the class $\mathbf{Con}$ of all contractions and $\mathcal{E}$ to be the class $\mathbf{EM}$ of all embeddings. In this case, a $\{\mathbf{Con}, \mathbf{EM}\}$-minor is called a \textbf{contraction minor} or simply \textbf{minor}. Clearly, $\{\mathbf{Con}, \mathbf{EM}\}$ is a strict sub-minor pair of $\{\mathbf{CG}, \mathbf{EM}\}$ and all contraction minors are CG-minors. By Lemma \ref{c1}, we see that $\mathbf{Con}$ dominates $\mathbf{EM}$.

We list three definitions of contraction minor relations, and show that they are equivalent.

\begin{defn}\label{cm1}
$H$ is called a \textbf{contraction minor} of $G$ if there is a contraction minor $T:H\rightsquigarrow G$.
\end{defn}

As in the case of CG-minor, Theorem \ref{contraction}  and Theorem  \ref{sub}  imply the equivalence of Definition \ref{cm1} and the following definition.

\begin{defn}\label{cm2}
$H$ is called a \textbf{contraction minor} of $G$ if $H$ can be obtained from $G$ by a sequence of the following three types of fundamental operations:
$(1)$ deleting an edge; $(2)$ deleting an isolated vertex; $(3)$ contracting a multi-edge.
\end{defn}

The equivalence of Definition \ref{cm1} and the following definition is a direct consequence of Theorem \ref{contraction}  and Theorem  \ref{sub} and  the dominating property of $\{\mathbf{Con},\mathbf{EM}\}$.
\begin{defn}\label{cm3}
$H$ is called a \textbf{contraction minor} of $G$ if  $H$ is a contraction of a sub-causal-net of $G$, or equivalently, there is a causal-net $K$ with an embedding $\lambda_{\iota}:K\to G$ and a contraction $\lambda_{con}:K\to H$.
\begin{center}
\begin{tikzpicture}[scale=1.1]
\node (v1) at (0,-1) {$K$};
\node (v2) at (3.5,-2) {$G$};
\node (v3) at (-3.5,-2) {$H$};
\draw [-latex] (v1) edge node[sloped,scale=0.8, below] {$\lambda_{\iota}$}node[sloped,scale=0.8, above] {embedding}  (v2);
\draw [-latex] (v1) edge node[sloped,scale=0.8, below] {$\lambda_{con}$}node[sloped,scale=0.8, above] {contraction}(v3);
\draw [dashed,-latex] (v3) -- node[sloped,scale=0.8, below] {contraction minor} (v2);
\end{tikzpicture}
\end{center}
\end{defn}
We call $H$ an\textbf{ exact contraction minor} of $G$ if there exists an exact contraction minor $T:H\rightsquigarrow G$.
Exact contraction minors can be characterized as sub-contractions.
\begin{thm}
$H$ is an exact contraction minor of $G$ if and only if  $H$ is a sub-contraction of $G$, or equivalently, there is a causal-net $L$ with an embedding $\lambda_{\iota}:K\to G$ and a contraction $\lambda_{con}:K\to H$.
\begin{center}
\begin{tikzpicture}[scale=1.1]
\node (v2) at (3.5,-2) {$G$};
\node (v3) at (-3.5,-2) {$H$};
\draw [dashed,-latex] (v3) -- node[sloped,scale=0.8, above] {exact contraction minor} (v2);
\node (v4) at (0,-3) {$L$};
\draw [latex-] (v4) edge node[sloped,scale=0.8, above] {$\lambda_{con}$}node[sloped,scale=0.8, below] {contraction}  (v2);
\draw [latex-] (v4) edge node[sloped,scale=0.8, above] {$\lambda_{\iota}$}node[sloped,scale=0.8, below] {embedding}(v3);
\end{tikzpicture}
\end{center}
\end{thm}
\begin{proof}
This theorem is a consequence of the definitions and  Lemma \ref{c1}.
\end{proof}

\begin{ex}
The following figure shows that $H$ is an exact minor of $G$.
\begin{center}
\begin{tikzpicture}[scale=0.7]
\node (v1) at (-2,-2) {};
\node (v2) at (-2,-3.5) {};
\node (v3) at (-3.5,-3.5) {};
\node (v4) at (-0.5,-3.5) {};
\node (v5) at (1,-3.5) {};
\node (v6) at (-0.5,-5) {};
\node (v8) at (-2.5,3) {};
\node (v7) at (-2.5,4.5) {};
\node (v9) at (-4,3) {};
\node (v10) at (-1,3) {};
\node (v12) at (0.5,3) {};
\node (v11) at (-1,1.5) {};
\draw  (-2,-2) -- (-2,-3.5)[postaction={decorate, decoration={markings,mark=at position .5 with {\arrow[black]{stealth}}}}];
\draw (-3.5,-3.5) -- (-2,-3.5)[postaction={decorate, decoration={markings,mark=at position .5 with {\arrow[black]{stealth}}}}];
\draw (-2,-3.5) -- (-0.5,-3.5)[postaction={decorate, decoration={markings,mark=at position .5 with {\arrow[black]{stealth}}}}];
\draw  (-0.5,-3.5) -- (1,-3.5)[postaction={decorate, decoration={markings,mark=at position .5 with {\arrow[black]{stealth}}}}];
\draw  (-0.5,-3.5) -- (-0.5,-5)[postaction={decorate, decoration={markings,mark=at position .5 with {\arrow[black]{stealth}}}}];
\draw  (-2.5,4.5) -- (-2.5,3)[postaction={decorate, decoration={markings,mark=at position .5 with {\arrow[black]{stealth}}}}];
\draw  (-2.5,4.5) -- (-4,3)[postaction={decorate, decoration={markings,mark=at position .5 with {\arrow[black]{stealth}}}}];
\draw  (-4,3) --(-2.5,3)[postaction={decorate, decoration={markings,mark=at position .5 with {\arrow[black]{stealth}}}}];
\draw  (-2.5,3) -- (-1,3)[postaction={decorate, decoration={markings,mark=at position .5 with {\arrow[black]{stealth}}}}];
\draw  (-1,3) --  (-1,1.5)[postaction={decorate, decoration={markings,mark=at position .5 with {\arrow[black]{stealth}}}}];
\draw  (-1,3) -- (0.5,3) [postaction={decorate, decoration={markings,mark=at position .5 with {\arrow[black]{stealth}}}}];
\draw  (0.5,3)  --  (-1,1.5)[postaction={decorate, decoration={markings,mark=at position .5 with {\arrow[black]{stealth}}}}];
\node (v13) at (2,3) {};
\draw  (0.5,3) -- (2,3)[postaction={decorate, decoration={markings,mark=at position .5 with {\arrow[black]{stealth}}}}];
\node (v14) at (8.5,-2) {};
\node (v15) at (8.5,-3.5) {};
\node (v18) at (8.5,-5) {};
\node (v16) at (7,-3.5) {};
\node (v17) at (10,-3.5) {};
\node (v20) at (8,3) {};
\node (v21) at (8,4.5) {};
\node (v24) at (8,1.5) {};
\node (v19) at (6.5,3) {};
\node (v22) at (9.5,3) {};
\node (v23) at (11,3) {};
\draw  (8.5,-2) -- (8.5,-3.5)[postaction={decorate, decoration={markings,mark=at position .5 with {\arrow[black]{stealth}}}}];
\draw   (7,-3.5)  --(8.5,-3.5)[postaction={decorate, decoration={markings,mark=at position .5 with {\arrow[black]{stealth}}}}];
\draw  (8.5,-3.5) -- (10,-3.5)[postaction={decorate, decoration={markings,mark=at position .5 with {\arrow[black]{stealth}}}}];
\draw  (8.5,-3.5) -- (8.5,-5)[postaction={decorate, decoration={markings,mark=at position .5 with {\arrow[black]{stealth}}}}];
\draw (6.5,3)-- (8,3)[postaction={decorate, decoration={markings,mark=at position .5 with {\arrow[black]{stealth}}}}];
\draw  (8,4.5)-- (8,3)[postaction={decorate, decoration={markings,mark=at position .5 with {\arrow[black]{stealth}}}}];
\draw  (8,4.5) -- (6.5,3)[postaction={decorate, decoration={markings,mark=at position .5 with {\arrow[black]{stealth}}}}];
\draw  (8,3) -- (9.5,3)[postaction={decorate, decoration={markings,mark=at position .5 with {\arrow[black]{stealth}}}}];
\draw  (9.5,3) -- (11,3)[postaction={decorate, decoration={markings,mark=at position .5 with {\arrow[black]{stealth}}}}];
\draw  (8,3) -- (8,1.5)[postaction={decorate, decoration={markings,mark=at position .5 with {\arrow[black]{stealth}}}}];
\draw  (9.5,3) -- (8,1.5)[postaction={decorate, decoration={markings,mark=at position .5 with {\arrow[black]{stealth}}}}];
\node (v25) at (-2,-1.5) {};
\node (v26) at (-2,1) {};
\node (v31) at (8.5,-1.5) {};
\node (v32) at (8.5,1) {};
\node (v29) at (2.5,-3.5) {};
\node (v30) at (6,-3.5) {};
\node (v27) at (2.5,3) {};
\node (v28) at (6,3) {};
\draw [-latex,dashed] (v25) edge node[sloped,scale=0.8, below] {embedding}(v26);
\draw [-latex] (v27) edge node[sloped,scale=0.8, above] {$\lambda(e)=Id_w$ }node[sloped,scale=0.8, below] {contracting $\{e\}$} (v28);
\draw [-latex] (v29) edge node[sloped,scale=0.8, above] {$\lambda(e)=Id_w$} node[sloped,scale=0.8, below] {contracting $\{e\}$}(v30);
\draw [-latex,dashed] (v31) edge node[sloped,scale=0.8, above] {embedding} (v32);
\node[scale=0.8] at (3.5,-0.5) {exact minor $=$ sub-contraction};
\draw [fill](v1) circle [radius=0.098];
\draw [fill](v2) circle [radius=0.098];
\draw [fill](v3) circle [radius=0.098];
\draw [fill](v4) circle [radius=0.098];
\draw [fill](v5) circle [radius=0.098];
\draw [fill](v6) circle [radius=0.098];
\draw [fill](v7) circle [radius=0.098];
\draw [fill](v8) circle [radius=0.098];
\draw [fill](v9) circle [radius=0.098];
\draw [fill](v10) circle [radius=0.098];
\draw [fill](v11) circle [radius=0.098];
\draw [fill](v12) circle [radius=0.098];
\draw [fill](v13) circle [radius=0.098];
\draw [fill](v14) circle [radius=0.098];
\draw [fill](v15) circle [radius=0.098];
\draw [fill](v16) circle [radius=0.098];
\draw [fill](v17) circle [radius=0.098];
\draw [fill](v18) circle [radius=0.098];
\draw [fill](v19) circle [radius=0.098];
\draw [fill](v20) circle [radius=0.098];
\draw [fill](v21) circle [radius=0.098];
\draw [fill](v22) circle [radius=0.098];
\draw [fill](v23) circle [radius=0.098];
\draw [fill](v24) circle [radius=0.098];
\node at (-2.4,2.6) {$v_1$};
\node at (-1.6,2.6) {$e$};
\node at (-1,3.4) {$v_2$};
\node at (7.6,2.7) {$w$};
\node at (-2,-3.9) {$v_1$};
\node at (-0.4,-3.1) {$v_2$};
\node at (8.2,-3.8) {$w$};
\node at (-1.2,-3.8) {$e$};
\node at (0.8,1.6) {$G$};
\node at (7.2,-2.2) {$H$};
\end{tikzpicture}
\end{center}
\end{ex}
As shown in Example \ref{def},  not all contraction-minors are exact.

\subsection{Fusion minor}

In this subsection, we take $\mathcal{C}=\mathbf{Cau}$, $\mathcal{Q}$ to be the class $\mathbf{FU}$ of all fusions and $\mathcal{E}$ to be the class $\mathbf{EM}$ of all embeddings. In this case, a $\{\mathbf{FU}, \mathbf{EM}\}$-minor is called a \textbf{fusion minor}. Clearly, $\{\mathbf{FU}, \mathbf{EM}\}$ is a strict sub-minor pair of $\{\mathbf{CG}, \mathbf{EM}\}$ and all fusion minors are CG-minors.

A minor pair is called \textbf{balanced} if any defect with respect to it has a dual defect. By Lemma \ref{m1}, Lemma \ref{m2}, Lemma \ref{cos1} and Lemma \ref{cos2}, we see that $\{\mathbf{FU}, \mathbf{EM}\}$ is balanced, which implies that all fusion minors are exact.

We list four equivalent definitions of fusion minor relations.

\begin{defn}\label{fu1}
$H$ is called a \textbf{fusion minor} of $G$ if there is fusion minor $T:H\rightsquigarrow G$.
\end{defn}

By Corollary \ref{fusion-decomposition}, any fusion is a composition of a merging and an edge-coarse-graining, both of which can be represent as a series of operations of  merging two vertices and a series of operations of coarse-graining two parallel edges, respectively. Therefore, together with Theorem \ref{sub},  Corollary \ref{fusion-decomposition} implies that Definition \ref{fu1} is equivalent to the following definition.

\begin{defn}\label{fu2}
$H$ is called a \textbf{fusion minor} of $G$ if $H$ can be obtained from $G$ by a sequence of the following three types of fundamental operations:
$(1)$ deleting an edge; $(2)$ deleting an isolated vertex; $(3)$ merging two vertices; $(4)$ coarse-graining two parallel edges.
\end{defn}
The equivalence of this definition and the following one follows from Corollary \ref{fusion-decomposition}, Theorem \ref{sub}, Lemma \ref{m1} and Lemma \ref{cos1}.

\begin{defn}\label{cm3}
$H$ is called a \textbf{fusion minor} of $G$ if  $H$ is a fusion of a sub-causal-net of $G$, or equivalently, there is a causal-net $K$ with an embedding $\lambda_{\iota}:K\to G$ and a fusion $\lambda_{fu}:K\to H$.
\begin{center}
\begin{tikzpicture}[scale=1.1]
\node (v1) at (0,-1) {$K$};
\node (v2) at (3.5,-2) {$G$};
\node (v3) at (-3.5,-2) {$H$};
\draw [-latex] (v1) edge node[sloped,scale=0.8, below] {$\lambda_{\iota}$}node[sloped,scale=0.8, above] {embedding}  (v2);
\draw [-latex] (v1) edge node[sloped,scale=0.8, below] {$\lambda_{fu}$}node[sloped,scale=0.8, above] {fusion}(v3);
\draw [dashed,-latex] (v3) -- node[sloped,scale=0.8, below] {fusion minor} (v2);
\end{tikzpicture}
\end{center}
\end{defn}
The equivalence of this definition  and the following one follows from Corollary \ref{fusion-decomposition}, Theorem \ref{sub}, Lemma \ref{m1}, Lemma \ref{m2}, Lemma \ref{cos1} and Lemma \ref{cos2}.

\begin{defn}
$H$ is called a \textbf{fusion minor} of $G$ if  $H$ is a sub-fusion of $G$, or equivalently, there is a causal-net $L$ with an embedding $\lambda_{\iota}:K\to G$ and a fusion $\lambda_{fu}:K\to H$.
\begin{center}
\begin{tikzpicture}[scale=1.1]
\node (v2) at (3.5,-2) {$G$};
\node (v3) at (-3.5,-2) {$H$};
\draw [dashed,-latex] (v3) -- node[sloped,scale=0.8, above] {fusion minor} (v2);
\node (v4) at (0,-3) {$L$};
\draw [latex-] (v4) edge node[sloped,scale=0.8, above] {$\lambda_{fu}$}node[sloped,scale=0.8, below] {fusion}  (v2);
\draw [latex-] (v4) edge node[sloped,scale=0.8, above] {$\lambda_{\iota}$}node[sloped,scale=0.8, below] {embedding}(v3);
\end{tikzpicture}
\end{center}
\end{defn}

We call $H$ an \textbf{exact fusion minor} of $G$ if there exists an exact fusion minor $T:H\rightsquigarrow G$. Since all fusion minors are exact, therefor $H$ is an exact fusion minor of $G$ if and only if $H$ is a fusion minor of $G$.

\section*{Acknowledgement}
The author thanks the reviewers of ACT2022 for their insightful feedbacks. Special thanks to one reviewer of ACT2022 for pointing some errors in the earlier version of this paper, showing counter-examples and more importantly asking some fundamental questions that inspire this long paper. Example \ref{str}, \ref{r2} and \ref{r3} are attributed to him. The author also thanks Z.Chen and H.T.Sun for helpful communications.


\textbf{Xuexing Lu}\hfill \\  Email: xxlu@uzz.edu.cn

\end{document}